\newcommand{\uvec}[1]{{\protect\underline{#1}}}
\newtheorem{theorem}{Theorem}[section]
\newtheorem{lemma}[theorem]{Lemma}
\newtheorem{corollary}[theorem]{Corollary}
\newtheorem{proposition}[theorem]{Proposition}
\numberwithin{equation}{section}
\theoremstyle{definition}
\newtheorem{definition}[theorem]{Definition}
\newtheorem{remark}[theorem]{Remark}
\def\Cset{\mathbb{C}}
 \def\Rset{\mathbb{R}}
 \def\Zset{\mathbb{Z}}
\def\Iset{\mathbb{I}}
\def\Pset{\mathbb{P}}
\def\norm{||}
\def\va{ \varepsilon}
\def\wt{\widetilde}
\def\wh{\widehat}
\def\leq{\leqslant }
\def\geq{\geqslant}
\def\RA{\mathbb{R}^{\mathcal A}} 
\def\A{\mathcal{A}}
\def\iat{I_{\alpha}^t}
\def\iatn{I_{\alpha}^{t,(n)}}
\def\iab{I_{\alpha}^b}
\def\Iset{\mathbb{I}}
\def\Pset{\mathbb{P}}
\def\va{ \varepsilon}
\def\wt{\widetilde}
\def\wh{\widehat}
\def\leq{\leqslant }
\def\geq{\geqslant}
\def\RA{\mathbb{R}^{\mathcal A}} 
\def\A{\mathcal{A}}
\def\iat{I_{\alpha}^t}
\def\iatn{I_{\alpha}^{t,(n)}}
\def\iab{I_{\alpha}^b}
\def\RS{\mathbb{R}^{\Sigma}}
\def\RSO{\mathbb{R}_0^{\Sigma}}
\def\D{{\mathcal D}}
\def\iin{I^{(n)}}
\def\vU{\varUpsilon}
\def\lr{\longrightarrow}
\def\vX{\varXi}
\def\R{\mathcal R}
\def\C{\mathcal C}
\def\ug{\underline \gamma}
\def\T{\mathcal T}
\def\B{\mathcal B}
\def\SW{\mathcal {SW}}
\def\RS{\mathbb{R}^{\Sigma}}
\def\RSO{\mathbb{R}_0^{\Sigma}}
\def\cD{{\mathcal D}}
\def\iin{I^{(n)}}
\def\vU{\varUpsilon}
\def\lr{\longrightarrow}
\def\vX{\varXi}
\def\Green{}
\def\Blue{}
\begin{document}

\title{On Roth type conditions,  duality and central Birkhoff sums for i.e.m}

\author[S. Marmi]{Stefano Marmi}
\address{Scuola Normale Superiore \\ Piazza dei Cavalieri, 7 \\ 56126 \\ Pisa \\ Italy}
\email{stefano.marmi@sns.it}
\author[C. Ulcigrai]{Corinna Ulcigrai}
\address{Institut f\"ur Mathematik \\ Universit\"at Z\"urich \\ Winterthurerstrasse 190 \\ CH-8057 \\ Z\"urich \\ Switzerland \\ and  
School of Mathematics \\ University of Bristol \\ University Walk \\ Bristol \\ BS8~1TW \\ United Kingdom}
\email{corinna.ulcigrai@math.uzh.ch}
\author[J.-C. Yoccoz]{Jean-Christophe Yoccoz}
\footnote{Here we summarize the timeline connecting the various texts written by J.C.Y. reporting on our joint work and on which the writing of this article is based.  The collaboration between the three authors began during a stay of the three of us at the Mittag-Leffler Institute in April and June 2010 and again at K.T.H. in Stockholm in February 2011. It then continued with frequent meetings until the last year of life of Jean-Christophe (the last mathematical discussions of the coauthors on this research project date of July 2015, when C.U.~met J.C.Y. at CIRM, and September 2015 when S.M. visited J.C.Y. in Paris).

In August 2010 J.C.Y. wrote a first text (12 pages) containing the results obtained on dual Birkhoff sums with the title "On Birkhoff sums for i.e.m." (a further version of the same text was written in March 2011). This constitutes the heart of Section \ref{sec:dualRoth} of this paper. Notable progress was  made during a visit of S.M. and J.C.Y. to C.U. in Bristol in October 2014, when the homological interpretation emerged  and J.C.Y. wrote a new version of this draft introducing the notion of KZ-hyperbolic translation surfaces  (this homological interpretation is included in  Section \ref{sec:homological}). 
Motivated by the discussions in Bristol, the notion of absolute Roth type i.e.m. emerged and was developed during further meetings in of S.M. with J.C.Y. in Paris in December 2014, March 2015 and September 2015. This notion, together with the completeness of backward rotation numbers, is the object of the text "Absolute Roth Type and Backward Rotation Number" (14 pages) written by J.C.Y.
in April 2015. This text formed the {\Green basis} for Section \ref{sec:absoluteRoth} and parts of it are here included as two Appendixes (Appendix \ref{secbac} and Appendix \ref{app:proofs}).  
The final version of this manuscript was prepared by S.M. and C.U. who are fully and solely responsible 
{\Green for} any mistake or imprecision.

Jean-Christophe discussed publicly the results obtained in our collaboration  in his talk  ``Probl\`emes de petits diviseurs pour les \'echanges d'intervalles'' on May 20, 2015 given at the ``Journ\'ee Surfaces plates'' held at the Institut Galil\'ee of the University of Paris 13 (as Carlos Matheus informed us) and also during his talk ``Diophantine conditions for interval exchange map'' on September 28, 2015 given in Oxford as part of the  Worshop \emph{Geometry and Dynamics on Moduli Spaces} at the 2015 Clay Research conference.}

\begin{abstract}
We introduce two Diophantine conditions on rotation numbers of interval exchange maps (i.e.m) and translation surfaces: the \emph{absolute Roth type condition} is a weakening of the notion of Roth type i.e.m., while the \emph{dual Roth type} condition is a condition on the \emph{backward} rotation number of a translation surface.  We show that results on the cohomological equation previously proved in \cite{MY} for restricted Roth type i.e.m.  (on the solvability under finitely many obstructions and the regularity of the solutions) can be extended to restricted \emph{absolute} Roth type i.e.m. 
Under the dual Roth type condition, we associate to a class of functions with \emph{subpolynomial} deviations of ergodic averages (corresponding to relative homology classes) \emph{distributional} limit shapes, which are constructed in a similar way to the \emph{limit shapes} of Birkhoff sums associated  in \cite{MMY3} to functions which correspond to positive Lyapunov exponents.
\end{abstract}

\date{October 27, 2017}

\maketitle


\tableofcontents

\section{Introduction}\label{sec:intro}

Diophantine conditions play a central role in the study of the dynamics of rotations of the circle, diffeomorphisms of the circle and more in general area-preserving flows on tori. These conditions, which convey information on how well a rotation number $\alpha$ can be approximated by rationals (and hence on small divisors problems), are often expressed in terms of growth rates for the entries of the continued fraction expansion of $\alpha$. 

In the study of dynamics on surfaces, one often requires similar Diophantine conditions on interval exchange maps and linear flows. Passing from genus one to higher genus, a natural generalization of linear flows on tori is indeed provided by linear flows on \emph{translation surfaces} (see $\S$~\ref{sec:transl} for definitions); \emph{interval exchange maps}, which will be shortened throughout this paper by \emph{i.e.m.}, are piecewise isometries which, analogously to rotations in genus one, arise are Poincar{\'e} maps of linear flows (see $\S$\ref{sec:iem} for the definition). An algorithm which plays in this context an analogous role to the continued fraction expansion is the \emph{Rauzy-Veech induction}, first introduced \cite{Ra, Ve1} and used since then as an essential tool for
proving many results on the ergodic and spectral properties  of i.e.m., flows on surfaces and rational billiards, see for example \cite{Ve2, Zo3, MMY1, For3, AF, AtFo, Ulc_Wea,  Ulc_Abs,  MMY2,  Chaika, Bu, Rav, KKU}.

Diophantine conditions on i.e.m. can be prescribed imposing conditions on the behaviour of the Rauzy-Veech induction matrices and the related (extended) Kontsevich-Zorich cocycle. 
 In this spirit, in their work \cite{MMY1} on  the cohomological equation for i.e.m.,  S.~M.~, P.~Moussa and J.-C.~Y.,  define a Diophantine condition on i.e.m. which generalize the notion of Roth type rotation and under which they show that the cohomological equation can be solved under finitely many obstructions (see Theorem~\ref{thmHolder1} for a generalized statement). 
 After Forni's  celebrated paper  \cite{For1} on the cohomological
equation associated to linear flows on surfaces of higher genus, this was the first result giving an explicit diophantine condition sufficient to guarantee the existence of a solution to the cohomological equation.  The i.e.m. which satify this condition are called  \emph{Roth type} i.e.m. (or i.e.m. of Roth type) and have full measure (as shown in \cite{MMY1}, see also \cite{MMM} for the sketch of a simpler proof based on the results in \cite{AGY}). A reformulation and a strenghthening of the Roth type condition (namely, \emph{restricted} Roth type) were then defined in \cite{MMY2} also for \emph{generalized} i.e.m. and provide the Diophantine condition under which a linearization result is proved in \cite{MMY2}. 


  


Two new  Diophantine conditions related to the Roth type condition for i.e.m.   are introduced in this paper, for the applications that we explain in  \S~\ref{sec:intro_dual} and \S~\ref{sec:intro_absolute} below. More precisely (in $\S$~\ref{sec:absoluteRoth}) we introduce the notion of  \emph{absolute Roth type} i.e.m., thus defining a class of i.e.m. which include and generalize Roth type i.e.m.~but for which the results mentioned before on linearization and the cohomological equation still hold.  
 We then define the notion of \emph{dual Roth type} for a translation surface (or more precisely, for its suspension data or backward rotation number, see $\S$~\ref{sec:dualRoth}). 
This is a Diophantine condition which is \emph{dual} to the Roth type condition for i.e.m. in a sense which will be made precise further on (see $\S$~\ref{sec:dualRoth} and $\S$~\ref{sec:homological}). 
Let us now explain the motivation for introducing these  conditions and the results which we proved assuming them, starting with the notion of dual Roth type.  


\smallskip
\subsection{Dual Roth type and distributional limit shapes.} \label{sec:intro_dual}
Results on deviations of ergodic averages and ergodic integrals are a central part of the study of i.e.m.\ and translation flows, see for example the works by Zorich \cite{Zo3, Zo2}, 
Forni \cite{For2}, Avila-Viana \cite{AV} and Bufetov \cite{Bu} among others.  
Deviations of ergodic averages, i.e. the oscillations of the Birkhoff sum $S_n f(x):= \sum_{k=0}^{n-1} f(T^k x)$ of a function $f:[0,1] \to \mathbb{R}$ of zero average over the orbit of (typical) point $x \in [0,1] $  under a  i.e.m. $T: [0,1] \to [0,1]$ are of polynomial nature. In \cite{Zo3} Zorich shows for example that for a typical i.e.m. $T$ and any {\Green  mean-zero function} $f$ constant on the intervals exchanged by $T$, we have $S_n f(x)=O(x^\nu)$ for some power exponent $\nu<1$; more precisely 
 for a full measure set of  $T$ there exists $\nu=\nu(f)$ such that for all $x \in [0,1]$, 
\begin{equation}\label{nu}
\limsup_{n\to + \infty} \frac{\log  S_n f(x)}{\log n} = \nu.
\end{equation}
Remark also that if $f$ is a \emph{coboundary} for $T$ with bounded transfer function, i.e. $f=g\circ T - T$ where the \emph{transfer function} $g:[0,1] \to \mathbb{R}$ is bounded, then Birkhoff sums $S_nf (x)$ are uniformely bounded (and in particular $\nu=0$). 
The power exponent can be understood in terms of Lyapunov exponents of (a suitable acceleration) of the Kontsevich-Zorich cocycle associated to the Rauzy-Veech induction \cite{Zo1, Zo2}. In particular,   
 $\nu$ is a ratio of Lyapunov exponents and depends on the position of the piecewise constant function $f$ (identified with a vector of $\mathbb{R}^d$) with respect to the Oseledets filtration of the Kontsevich-Zorich cocycle. { \Green Using this interpretation, it follows  from the work of Forni \cite{For2} (see also \cite{AV}) that,  for a typical choice of function $f$, $\nu$ is positive; furthermore one also has $\nu<1$ as an immediate consequence of the work of Veech \cite{Ve5} (see also \cite{For2} for a  more general result)). }
A powerful result of similar nature for ergodic integrals of smooth area-preserving flows was  proved by Forni in \cite{For2}: the \emph{power spectrum} of ergodic integrals is related to Lyapunov exponents of the Kontsevich-Zorich cocycle and Forni's invariant distributions.

	


\smallskip
A finer analysis of the behaviour of Birkhoff sums or integrals, beyond the \emph{size} of oscillations, appears in the works \cite{Bu, MMY3}. In \cite{MMY3}, motivated by the study of wandering intervals in affine i.e.m., S.~M., P.~Moussa and J.C.Y. introduced an object called \emph{limit shape} and used it to describe the \emph{shape} of ergodic sums (see $\S$~3.4 and $\S$~3.7.3 in  \cite{MMY3}). Roughly speaking these are obtained by looking at suitably rescaled Birkhoff sums, where time is renormalized according to the leading Laypunov exponent of the Kontsevich-Zorich cocycle, whereas the range of the sum  is renormalized using one of the other positive exponents, according to the choice of $f$. After this double rescaling one obtains a sequence of shapes exponentially converging (in the Hausdorff metric)  to the graph of a H\"older function.  
 In \cite{Bu}, Bufetov studies limit theorems for ergodic integrals of translation flows and describe weak limit distributions in terms of objects that he calls \emph{H\"older cocycles} (or, in the context of Markov compacta, \emph{finitely-additive measures}) and turn out to be \emph{dual} to Forni's invariant distributions (see \cite{Bu} for details). We remark that limit shapes and H\"older cocycles, despite having been introduced independently, are intrinsically related: limit shapes are essentially \emph{graphs} of  H\"older cocycles along flow leaves. {\Green Let us remark that similar results can also be proved for horocycle flows on negatively curved surfaces, see \cite{BuFo} were the existence of H\"older cocycles is proved in this context.}

\smallskip
Both limit shapes and H\"older cocycles are associated to functions which display \emph{truly polynomial} deviations, i.e. for which the exponent $\nu$ in \eqref{nu} is strictly positive.
{ \Green More precisely, from the} work of  Forni {\Green \cite{For2}} and Avila-Viana \cite{AV}  it follows that for a typical i.e.m. $T$ with  $d$ exchanged  subintervals,  the extended Kontsevich-Zorich cocycle  has $g$ positive Lyapunov exponents, $g$ negative and $s-1$ zero ones, where $d=2g+s-1$ and $g$ and $s$ can be computed from the combinatorics of $T$ ($g$ is the genus and $s$ is the number of marked points of any translation surface which suspends $T$, see \S~\ref{sec:transl}). For typical i.e.m. $T$, functions which are \emph{coboundaries} with bounded transfer functions  
  and hence have \emph{bounded} Birkhoff sums, can be associated to  the \emph{stable} space of the Kontsevich-Zorich cocycle, which correspond to \emph{negative} Lyapunov exponents. 

We construct in this paper  objects similar to the limit shapes introduced in \cite{MMY3} for functions which display \emph{subpolynomial} deviations of ergodic averages, i.e. functions for which $\nu=\nu(f)$ in \eqref{nu} is equal to zero, but are not coboundaries.  An important example of this type of function arise when considering rotations of the circle (which correspond to i.e.m. with $d=2$) and a  mean zero function $\chi=\chi_{[0,\beta]}-\beta$, where $\chi_{[0,\beta]}$ is the characteristic function of the interval $[0,\beta]$. The function $\chi$ can be seen as a piecewise constant function on a i.e.m. with $d=3$. It is well known that in this case $S_n \chi$ display logarithmic deviations (which can be described for example in terms of the Ostrowski expansion of $\beta$ with respect to $\alpha$, see for example \cite{BI}).  {\Green Results on the subdiffusive behaviour of these Birkhoff sums were proved for example in \cite{Huveneers, ConzeIsolaLeBorgne, ConzeLeBorgne}} {\Green (see also  \cite{BBH} for a related result in the context of substitutions).}  
Celebrated results on limit distributions for the Birkhoff sums $S_n \chi$ under additional randomness were proved for example by Kesten in \cite{Ke1, Ke2} {\Blue (where the rotation number is randomized)} and by Beck in \cite{Beck1, Beck2} {\Blue (where time is randomized, {\Blue see also \cite{DS, PS, BrUl, DS3} for recent extensions of Beck's temporal limit distribution results and \cite{DS, DS2} for results in the context of horocycle flows).}}

More in general, we show that for a typical i.e.m. $T$ {\Blue (for any number of exchanged intervals $d\geq 3$)} the characteristic function $\chi_{[0,\beta]}$ can be \emph{corrected} by adding a function constant on the intervals exchanged by $T$ in order to display subpolynomial deviations (see Propostion \ref{prop:correction}) and hence these seem to be 
 natural functions for future investigations towards a generalizations of Kesten's and Beck's results. 
 Functions with subpolynomial deviations can more generally be associated to \emph{relative} homology classes in $H_1(M,\Sigma, \mathbb{R})$  where $M$ is a (typical) translation surface with conical singularities $\Sigma $ (see $\S$~\ref{sec:homological}).

 We prove in \S~\ref{sec:dualRoth} that for a full measure set of translation surfaces, to each function with subpolynomial deviations (or more in general, in \S~\ref{sec:homological}, to each relative homology class with subpolynomial deviations), we can associate \emph{limit distribution} on H\"older functions, which is constructed in a similar way to the limit shapes in \cite{MMY3} (see $\S$~\ref{sec:functions} and $\S$~\ref{sec:convergence} for details). More precisely, limit shapes in \cite{MMY3}   are H\"older functions defined as pointwise limit of a sequence of piecewise affine functions 
 obtained by plotting suitably rescaled graphs of Birkhoff sums. 
Convergence exploits positivity of the Lyapunov exponent, which implies that oscillations on different scales are of exponentially smaller orders. 
 We  define similarly a sequence of piecewise affine functions in our setup, but they fail to converge pointwise since all oscillations are of the same magnitude. We prove on the other hand that there is convergence in the sense of distributions, when integrating these graphs against H\"older continuous functions (see Theorem~\ref{thm:convergence}). 

The  Diophantine condition that we need to ensure distributional convergence of this sequence of piecewise affine functions turns out to be a Roth type-like condition, but not for the usual {\Green Kontsevich}-Zorich cocycle, but for a dual cocycle. This is the (weak) \emph{dual Roth type} condition that we define in \S~\ref{sec:dualRothcondition}. In the proof of convergence, we also need to introduce the notion of \emph{dual special Birkhoff sums} (see \S~\ref{sec:dualBS}). This is based on a \emph{duality} between horizontal and vertical flow and future and past of the Teichmueller geodesic flow. The same duality is also exploited crucially in the paper by Bufetov \cite{Bu}. This duality has also a homological intepretation, which we explain in Section \ref{sec:homological}. 

The construction of limit distributions can also be done at the level of homology bases, as explain in Section \ref{sec:homological}.  Distributional limit objects  can hence be associated to relative homology classes $\Upsilon \in H_1(M, \Sigma, \mathbb{R})$ which have non trivial image  $\partial \Upsilon$ by the boundary operator $\partial : H_1(M, \Sigma, \mathbb{R}) \to H_0 (M \backslash \Sigma, \mathbb{R})$ (see $\S$. In view of the connection between the limit shapes in \cite{MMY3} and Bufetov's H\"older cocycles \cite{Bu} and the duality of the latter and Forni's invariant distributions (explained in \cite{Bu}, see ADD), our distributional limit objects are presumably connected to invariant distributions in relative homology.


\subsection{Absolute Roth type}\label{sec:intro_absolute}

To motivate the notion of absolute Roth type i.e.m., consider the following example. 
Let $[0,\beta)$ be an interval contained in the domain of an i.e.m. $T_0$ and let $\chi $ be the   (possibly {corrected}, as explained above) characteristic function of  $[0,\beta)$. To study the Birkhoff sums $S_n \chi $ of $\chi$ under $T_0$, 
 it is convenient to \emph{add} the discontinuity $\beta$ of $\chi$ as \emph{marked point} (or fake discontinuity), so that the function $\chi$ can be considered as function which is  \emph{constant} over the subintervals exchanged by an i.e.m. $T$ of $d+1$ of intervals. It is natural for many questions  to \emph{fix} $\chi$ and \emph{vary} the i.e.m. $T_0$. One would hence like to exploit {\Green a} Diophantine condition on $T$ which only depends on the underlying $T_0$ and not on the position of the fake discontinuity $\beta$.   

Let us recall that the number of exchanged intervals $d_0$ coincides with the dimension of the \emph{relative} homology $H(S_0, \Sigma_0, \mathbb{R})$, where  $M_0$ is any translation surface which \emph{suspends} the i.e.m. $T_0$ (see \S~\ref{sec:transl} for definitions) and $\Sigma_0$ is the set of conical singularities of $M_0$. The \emph{Roth type} condition for i.e.m. is defined in terms of the \emph{Rauzy-Veech cocycle}, which acts on the relative homology $H(M_0, \Sigma_0, \mathbb{R})$. If  $T_0$ in the above example is such that $d_0=2g$, where $g$ is the genus of $M_0$, i.e. $d_0$ equals the dimension of the \emph{absolute} homology $H(M_0, \mathbb{R})$, the growth of $S_n \chi $, where $\chi$ is a \emph{corrected} characteristic function,  is given by the growth of a \emph{relative} homology class in $H(M, \Sigma, \mathbb{R})$, where $M$ suspends $T$ and has singularities $\Sigma$. One would hence like to introduce Diophantine conditions on $T$ which depends only on the \emph{absolute} homology of $M$.

These considerations led us to define the notion of \emph{absolute Roth type} i.e.m. (given in \S~\ref{sec:absoluteRoth}). This is a  Diophantine condition on an i.e.m. $T$ defined in terms of the Rauzy-Veech cocycle, and hence on the action on   $H(M, \Sigma, \mathbb{R})$, but which depends on how the cocycle acts on the \emph{absolute} homology $H(M, \mathbb{R})$, which can be identified with a subspace of $\mathbb{R}^d$ defined in terms of the  combinatorial data of $T$ (see \S~\ref{sec:absoluteRoth} for details). We stress that the absolute Roth type condition is \emph{weaker} than Roth type, i.e. Roth type i.e.m. satisfy the absolute Roth type condition (see Lemma \ref{lem:Roth_implies_absRoth} in \S~\ref{ssabs}). 

While we were thinking of this notion, Chaika and Eskin \cite{ChaikaEskin} proved a result on Oseledets genericity of the Kontsevich-Zorich cocycle which, as an application,  implies  that given any translation surface $S$, for a.e. direction $\theta$ the i.e.m. obtained as Poincar{\'e} maps of the linear flow in direction $\theta$ to a segment in \emph{good position} (see \S~\ref{sec:transl} for definitions) satisfy the absolute Roth type condition. This motivated us to prove that several of the results on the cohomological equation for i.e.m. of (restricted) Roth type, in particular the main results in \cite{MY}, are still valid under the weaker (restricted) absolute Roth type condition. These results are presented in \S~\ref{sec:results} (see also \S~\ref{sec:aedirection} and Appendix~\ref{app:proofs}{\Green)}. 
 



\subsection{Outline of the paper}
We first recall background material, in particular definitions and notations for i.e.m and Rauzy Veech induction (see Section \ref{sec:background}). 
The reader who is already familiar with this can skip to Section \ref{secnot} where we simply summarize the notation.

\smallskip
In Section \ref{sec:absoluteRoth} we first recall the Roth type condition introduced by \cite{MMY1} and we then define absolute Roth type. We then states the results, first proved for Roth type, that still hold under this weaker condition. We prove in this section two crucial estimates that shows that Roth type allows to control the lengths of inducing subintervals for the positive acceleration of Rauzy-Veech  induction. 
The proofs of the results which extend from Roth type to absolute Roth type can then be adapted with minor modifications, which, for completeness, we discuss in an appendix (see Appendix \ref{app:proofs}).  We then recall the result by Chaika and Eskin in \cite{ChaikaEskin} and get as a Corollary that the results on the cohomological equation holds for every surface in a.e. direction.

\smallskip
In Section \ref{sec:dualRoth} we introduce the second variation of Roth type condition studied in this paper, namely dual Roth type. We first introduce the notion of dual Birkhoff sums, which are used to give the definition.  We also prove that some classical results, in particular estimates on the growth of special Birkhoff sums of BV fucntions, also holds for special dual Birkhoff sums under this dual Roth type condition.

\smallskip
In Section \ref{sec:centraldistributions} we study a distributional analogue of limit shapes for functions with subpolynomial deviations of ergodic averages. We first show how a natural class of functions belonging to the  central space of the  can be obtained by correcting indicatrix functions. We then define the affine graphs which plot the behaviour of central Birkhoff sums. Under the Roth type DC that we introduced in the previous section, we when show that these graphs converge in the sense of distributions (see Theorem~\ref{thm:convergence}).

\smallskip
Finally, in Section \ref{sec:homological}, we give a homological {\Green interpretation} of distributional limit shapes. We define suitable dual bases of relative homology and show that the graphs which we studied in the previous section and their convergence have a homological interpretation.

\smallskip
The Appendix \ref{secbac} contain the involved proof that backward rotation numbers are infinitely complete. The Appendix \ref{app:proofs}, as already mentioned, contains a summary of the structure of the proof of the results on the cohomological equation with an indication of the changes one has to do to adapet them to absolute Roth type i.e.m.

\subsection{Acknowledgements}
 We would like to thank Jon Chaika, Alex Eskin and Carlos Matheus for several useful discussions.  
 {\Blue We also thank the anonimous referee for useful comments.}   
C.U. is partially supported by the ERC Starting Grant \emph{ChaParDyn} and also acknowledges the support of the \emph{Leverhulme Trust} through a \emph{Leverhulme Prize} and the Royal Society and the  Wolfson Foundation through a \emph{Royal Society Wolfson Research Merit Award}.    
S.M. acknowledges support from UnicreditBank through the \emph{Dynamics and Information Research Institute} at the Scuola Normale Superiore.
The authors would like thank the University of Bristol, Centro di Ricerca Matematica Ennio de Giorgi, Mittag-Leffler Institute and IHES for hospitality during the visits that made this collaboration possible.  The research leading
to these results has received funding from the European Research Council under the European Union Seventh
Framework Program (FP/2007-2013) / ERC Grant Agreement n.~335989.

\section{Backgound material and notations}\label{sec:background}
In this section we recall basic definitions and notation for interval exchange maps and Rauzy-Veech induction. We refer to the lecture notes \cite{Y1,Y2,Y4,Zo1} for further information.  
 The reader who is already familiar with { this material might want to skip it and} to refer to \S~\ref{secnot} which provides a brief summary of the notation used.


\subsection{Interval exchange maps}\label{sec:iem} 
Let $I \subset \Rset$ be a bounded open interval.   An \emph{interval exchange map} (i.e.m)  i.e.m. $T$ on $I$ is defined by the following data.  Let $\A$ be an alphabet with $d\geq 2$ symbols. 
Consider two partitions (modulo $0$) of $I$ into $d$ open intervals indexed by $\A$ (the \emph{top} and \emph{bottom} partitions):
$$
I=\sqcup_{\alpha \in \A} I_\alpha^t = \sqcup_{\alpha \in \A} I_\alpha^b
$$
such that $|I_\alpha^t|=|I_\alpha^b|$ for every $\alpha \in A$.  The map $T$ is defined on $\sqcup I_\alpha^t$ so that its restriction to each $I_\alpha^t$ is a translation onto the corresponding $I_\alpha^b$. The partitions are determined by combinatorial data on one side, lengths on the other side, as follows.     The combinatorial data is a pair $\pi=(\pi_t,\pi_b)$ of bijections from $\A$ onto
$\{1, \ldots ,d\}$ which indicates in which order the intervals are met in the top and in the bottom partition respectively. We always assume that the combinatorial data are {\it irreducible}: for $1\leq k<d$, we have
\begin{equation}\label{def:irreducible}
\pi_t^{-1}(\{1,\ldots ,k\})\not=\pi_b^{-1}(\{1,\ldots ,k\})\; .
\end{equation}
The length data prescribe the lengths  $(\lambda_\alpha)_{\alpha\in\A}$ of the partition elements, i.e.\  $|I_\alpha^t|=|I_\alpha^b|= \lambda_\alpha$ for $\alpha \in A$. 
The i.e.m.\ $T=T_{\pi,\lambda}$ determined by these data acts on $I:=(0,\sum_{\alpha\in\A}\lambda_{\alpha})$; the subintervals of the top partition are
$$
I_\alpha^t = I_\alpha^t (T): = \left( \sum_{\pi_t\beta<\pi_t\alpha}\lambda_\beta , \sum_{\pi_t\beta\le\pi_t\alpha}\lambda_\beta\right)
$$
and those in the bottom partition are
$$
I_\alpha^b=I_\alpha^b(T) : = \left( \sum_{\pi_b\beta<\pi_b\alpha}\lambda_\beta , \sum_{\pi_b\beta\le\pi_b\alpha}\lambda_\beta\right)
\; .
$$
We also write simply $I_\alpha$ for $I_\alpha^t$, $\alpha \in A$ and refer to them as the \emph{intervals exchanged by } $T$ (or simply exchanged intervals).    
The i.e.m $T$ sends $\iat$ on $\iab$ by the translation $x \mapsto x+\delta_{\alpha}$, where the 
{\it translation vector} $(\delta_\alpha)_{\alpha\in\A}$ is given by
$$
\delta_\alpha = \sum_\beta\Omega_{\alpha\beta}\lambda_\beta
$$
and  $\Omega_{\pi}  = (\Omega_{\alpha,\beta})_{\alpha,\beta \in \A}$ is the antisymmetric matrix associated by the combinatorial data $\pi$ defined by 
\begin{equation}\label{Omega_def}
\Omega_{\alpha \,\beta}=\left\{
\begin{array}{cc}
+1 & \text{if } \pi_t(\alpha)<\pi_t(\beta),\pi_b(\alpha)>\pi_b(\beta),\\
-1 & \text{if } \pi_t(\alpha)>\pi_t(\beta),\pi_b(\alpha)<\pi_b(\beta),\\
0 &\text{otherwise.}
\end{array} \right.
\end{equation}
The points $u^t_1<\cdots<u^t_{d-1}$ separating the $ \iat$ are called the {\it singularities } of $T$.  The points $u^b_1<\cdots<u^b_{d-1}$ separating the $ \iab$ are called the singularities  of $T^{-1}$. We also define $u_0,u_d$ so that $I=(u_0,u_d)$. We will also write $u_i^t(T)$ and $u_i^b(T)$ when we want to stress the dependence on $T$.  
 The action of $T$ on $\sqcup I^t_\alpha\subset I$ can be extended to $I$ by right-continuity.  A {\it connection} is a triple $(u_i^t,u_j^b,m)$, where $m$ is a nonnegative integer, such that $ T^m(u_j^b)=u_i^t$.

\subsection{Translation surfaces and suspension data}\label{sec:transl}

Let $M$ be a compact connected orientable topological surface of genus $g\ge 1$ and let $\Sigma = \{A_1, \ldots , A_s\}$ be a non-empty finite subset 
of $M$. 
A {\it structure of translation surface} on $(M, \Sigma)$ is a maximal atlas $\zeta$ for $M- \Sigma$ of charts by open sets of $\mathbb{C} \simeq \mathbb{R}^2$ which satisfies two properties:
any coordinate change between two charts is locally a translation of $\mathbb{R}^2$ and 
for every $1 \leq i \leq s$, there exists an integer $\kappa_i \geq 1$ and a ramified covering $\pi : (V_i , A_i) \rightarrow (W_i , 0)$ of degree $\kappa_i$ such that every injective restriction of $\pi$ is a chart of $\zeta$. Here $V_i$ and $W_i\subset\mathbb{R}^2$ are neighborhoods respectively of  $A_i$ and $0$. 

A translation surface structure  naturally provides a complex structure and an holomorphic 
 $1$-form $\omega$ which does not vanish on $M-\Sigma$ and has at each point $A_i$ a zero of order $\kappa_i -1$. It also provides a \emph{flat metric} on $M-\Sigma$: the metric exhibits a (true) singularity at each $A_i$ such that 
$\kappa_i >1$ and the total angle around each $A_i \in \Sigma$ is $2 \pi \kappa_i $. The genus $g$ of the surface and the order of the zeros of $\omega$ are related by 
the Riemann-Roch formula $2 g-2= \sum^s_{i=1} \; (\kappa_i-1)$. 

The geodesic flow of the flat metric on $M- \Sigma$ gives rise to a $1$-parameter family of constant unitary directional 
flows on $M- \Sigma$, called \emph{linear flows},  containing in particular a \emph{vertical flow} $\partial / \partial y$ and a \emph{horizontal flow} $\partial / \partial x$. 
An orbit of the vertical (horizontal) flow which ends (resp.~starts) at a point of $\Sigma$ is called an {\it ingoing } (resp. {\it outgoing}) {\it vertical (horizontal) separatrix}. A {\it vertical connection} (resp.~{\it horizontal connection}) is an orbit of the vertical (resp.~horizontal) flow which both starts and ends at a point of $\Sigma$. 

The group $GL(2,\mathbb{R})$ acts naturally on traslation surfaces: given the structure of translation surface $\zeta$ on $(M, \Sigma)$ and $g \in GL(2,\mathbb{R})$, one defines a new structure $g\cdot \zeta$ by postcomposing the charts of $\zeta$ by $g$. There are two distinguished one-parameter subgroups of $GL(2,\mathbb{R})$ whose actions will turn out of be important (for example in \S~\ref{sec:aedirection}): the Teichm\"uller geodesic flow $(g_t)_{t\in\mathbb{R}}$ and the group of rotations $(r_\theta)_{\theta\in {S_1}}$, which are given respectively by
$$g_t = \begin{pmatrix} e^t & 0 \\ 0 & e^{-t} \end{pmatrix},\quad  t\in\Rset; \qquad  r_\theta =  \begin{pmatrix} \cos\theta & \sin\theta \\ -\sin\theta  & \cos\theta \end{pmatrix}, \quad \theta \in S^1.
$$
The rotation $r_\theta $  has the effect of rotating the flat surface $(M, \Sigma, \zeta)$ by the angle $\theta$. 


\medskip

Given a  translation surface $(M, \Sigma, \zeta)$, an open bounded horizontal segment  $I$ is in {\it good position} if 
\begin{enumerate}
\item $I$ meets every vertical connexion; 
\item the endpoints of $I$ are distinct and each of them either belongs to $\Sigma$ or is connected to a point of $\Sigma$ by a vertical segment not meeting $I$.
\end{enumerate}

If there is no vertical connexion, or no horizontal connexion, then such segments always exist. One may even ask that the left endpoint of $I$ is in $\Sigma$ ([Yo4, Proposition 5.7, p.16]) In particular, one can always find $g \in GL(2,\Rset)$, preserving the vertical direction, and a segment in good position which is horizontal for $g.\zeta$. The Poincar{\'e} first return map of the vertical flow on a horizontal segment in good position is a i.e.m. with minimal possible number of exchanged intervals. 

\medskip

Conversely, starting from an interval exchange map $T$, one can construct, following Veech \cite{Ve2}, a translation surface $M_T$ for which $T$ appears as a return map on a standard interval and the vertical flow on $M$ becomes a suspension flow on $T_I$. Furthermore, if $M$ is a translation surface with no horizontal and no vertical saddle connections and 
 $I$ is a horizontal interval in good position, the return map $T_I$ of the vertical flow of $M$ on $I$ is an i.e.m. of minimal number of exchanged intervals  and the translation surface $(M,\Sigma, \kappa, \zeta)$ can be recovered from  $T_I$ and appropriate {\it suspension data}  via Veech's {\it zippered rectangles construction} 
(\cite{Ve2}, see e.g.~\cite{Y4}, Section 4).  We will limit ourselves here to explain a simple version of this construction. 

Let $\pi= (\pi_t,\pi_b)$ be a combinatorial data. 
A vector $\tau \in \Rset^{\A}$ is a \emph{suspension vector} (for $\pi$) if it satisfies the following inequalities
\begin{equation}\label{def:suspdatum}
 \sum_{\pi_t (\alpha) < k} \; \tau_\alpha > 0 \; ,  \hspace{1cm} \sum_{\pi_b (\alpha) < k} \; \tau_\alpha <0 \hspace{2cm} \hbox{for all} \; 1 < k \leq d. \;
\end{equation}
We denote by $\Theta_\pi$ the cone of all suspension vectors for $\pi$. This cone is non-empty exactly when $\pi$ is irreducible. In this case indeed, one can see that $\theta_\pi$ contains the canonical suspension vector
\begin{equation}\label{cansusp}
\tau^{can}_\alpha = \pi_b(\alpha) - \pi_t(\alpha) \quad , \quad \alpha \in \A \; .\end{equation}
Let $T$ be an  i.e.m.\ with irreducible combinatorial data $\pi= (\pi_t,\pi_b)$ acting on an interval $I(T) = (u_0(T),u_d(T))$ and let $\tau \in \Rset^{\A}$ be a suspension vector. We identify as usual $\Rset^2$ with $\Cset$ and set,  
  for $\alpha \in \A$, 
\begin{equation}\label{def:zeta}
\lambda_{\alpha} = |\iat |=|\iab |, \qquad 
\zeta_{\alpha}:=\lambda_{\alpha}
+\sqrt {-1} \, \tau_\alpha , \end{equation} 
and then, for $0 \leq i \leq d$
$$ U_i = u_0 + \sum_{\pi_t (\alpha) \leq i} \zeta_{\alpha}, \quad V_i = u_0 + \sum_{\pi_b (\alpha) \leq i} \zeta_{\alpha}.$$
Consider the \emph{top} polygonal line connecting the points $U_0, U_1, \dots, U_d$ and 
the \emph{bottom} polygonal line connecting the points $V_0, V_1, \dots, V_d$. Both lines have the same endpoints, i.e. $U_0 = V_0 =u_0(T)$, $U_d = V_d$, and that, from the suspension
 condition \eqref{def:suspdatum}, all intermediary points in the top (resp. bottom) line lie in the upper (resp. lower) half-plane, i.e.\ $\Im U_i >0>\Im V_i$ for 
$0<i<d$.

Assume that {\it the two lines do not intersect except from their endpoints}. This is in particular the case for $\tau=\tau^{can}$. Then we can construct a translation surface $M(\lambda, \pi, \tau)$ considering the closed polygon bounded by the two lines and identifying for each $\alpha \in \A$ the $\zeta_\alpha$ side of the top line with the $\zeta_\alpha$ side of the bottom line through the appropriate translation. This produces a translation surface, whose singularity set $\Sigma$ is  by definition the image of the vertices of the polygon. 

The above non-intersection assumption, unfortunately, is not valid for all values of the data, but one can still associate to data  $(\lambda, \pi, \tau)$ as above a translation surface $M(\lambda, \pi, \tau)$. Consider the vector $q= -\Omega_\pi \tau$ and observe observe that, for all $\alpha \in {\A}$, since
$$q_\alpha = \sum_{\pi_t (\beta) < \pi_t (\alpha)} \tau_\beta - \sum_{\pi_b (\beta) < \pi_b (\alpha)} \tau_\beta,$$
it follows  from the suspension data condition \eqref{def:suspdatum} that $q_\alpha>0$ for each $\alpha \in \A$. The surface $M(\lambda, \pi, \tau)$ the general case is then built by indenfitying the boundaries of the $d$ rectangles $R_\alpha:= I_\alpha^t (T) \times [0, q_\alpha]$, $\alpha \in \A$, where $I_\alpha^t (T) \subset I:=I(T)$ are the intervals exchanged by $T=T_{\pi,\lambda}$. The top horizontal side of  $R_\alpha$ is identified by a translation to  $I_\alpha^b(T)\subset I$. The vertical sides are then identified so that the images of the points $U_0, U_1, \dots, U_d$ defined above are exactly the singularities of the idenfitied surface. We refer the reader to \cite{Y4}, Section 4 for details. This construction is known as Veech's {\it zippered rectangles construction}. Notice that $T$ is the Poincar{\'e} map first return of the vertical flow on $M(\lambda, \pi, \tau)$ to $I$ and $q_\alpha$ is the return time of any point $x \in I_\alpha=I_\alpha^t(T)$. 

We will denote by $M_T$ the surface $M(\lambda, \pi, \tau^{can})$ obtained as suspension from the canonical suspension data $\tau^{can}$.   
For $M=M_T$ or more in general $M=M( \pi, \lambda, \tau)$, one can see that the cardinality $s$ of the singularity set $\Sigma$, the genus $g$ of $M$ and the number $d$ of intervals are related by 
$d=2g+s-1$. 
Both $g$ and $s$ only depend on $\pi$, the genus $g$ can be computed from the antisymmetrix matrix $\Omega$ defined in \eqref{Omega_def} above: the rank of 
$\Omega$ is $2g$.  Another way to compute $s$ (and thus $g$) consists in going around the
marked points, a procedure that we briefly recall in \S~\ref{ssboundary}.

\begin{remark}\label{relativeHidentification}
One can show that $\{ \zeta_{\alpha},\, \alpha \in \A\}$ provide a basis of the relative homology group $H_1(M,\Sigma,\Rset)$  and hence one can  identify $\RA$ with $H_1(M,\Sigma,\Rset)$ by the map $(x_\alpha)_\alpha \mapsto [\sum_\alpha x_\alpha \zeta_\alpha]$ (see \cite{Y1},\cite{Y4}). 
\end{remark}

\subsection{A step of Rauzy--Veech algorithm, Rauzy Veech diagrams and Rauzy-Veech matrices}\label{secRV}

Let $T$ be an i.e.m with no connection. We have then $u^t_{d-1} \ne u^b_{d-1}$. Set $\widehat u_d := \max(u^t_{d-1},u^b_{d-1})$, $\widehat I := (u_0,\widehat u_d)$, and denote by $\widehat T$ the first return map of $T$ in $\widehat I$. The return time is $1$ or $2$. 
One checks that $\widehat T$ is an i.e.m on $\widehat I$ whose combinatorial data $\widehat \pi$ are canonically labeled by the same alphabet $\A$ than $\pi$ (cf.~\cite{MMY1}, p.~829). Moreover $\widehat T$ has no connection; this allows to iterate the algorithm.
We say that $\widehat T$ is deduced from $T$ by an elementary step of the Rauzy--Veech algorithm. We say that the step is of {\it top} (resp. {\it bottom}) {\it type} if $u^t_{d-1} < u^b_{d-1}$ (resp. $ u^t_{d-1} > u^b_{d-1}$). One then writes $\widehat \pi = R_t(\pi)$ (resp. $\widehat \pi = R_b(\pi)$).

\smallskip

A {\it Rauzy class} on the alphabet $\A$ is a nonempty set of irreducible combinatorial data
which is invariant under $R_t,R_b$ and minimal with respect to this property.
A {\it Rauzy diagram} is a graph whose vertices are the elements of a Rauzy class and whose arrows
connect a vertex $\pi$ to its images $R_t(\pi)$ and $R_b(\pi)$. Each vertex is therefore the origin of two arrows. As
$R_t,R_b$ are invertible, each vertex is also the endpoint of two arrows. An arrow connecting $\pi$ to $R_t(\pi)$ (respectively $R_b(\pi)$) is said to be of {\it top type} (resp.\ {\it bottom type}). The {\it winner} of an arrow of top (resp.\ bottom) type starting at $\pi=(\pi_t,\pi_b)$ with $\pi_t(\alpha_t)=
\pi_b(\alpha_b)=d$ is the letter $\alpha_t$ (resp.\ $\alpha_b$) while the {\it loser} is $\alpha_b$ (resp.\ $\alpha_t$). 

\smallskip
To an arrow $\gamma$ of a Rauzy diagram $\mathcal D$ starting at $\pi$ of top (resp.\ bottom) type, is associated
the matrix $B_\gamma\in SL(\Zset^{\A})$ defined by $$B_\gamma = \Iset + E_{\alpha_b\alpha_t} \qquad 
\text{(resp.\ } B_\gamma = \Iset +E_{\alpha_t\alpha_b}),$$
 where $E_{\alpha\beta}$ is the elementary matrix whose only nonzero coefficient is $1$ in position $\alpha\beta$. 

A path $\underline{\gamma}$ in a Rauzy diagram is {\it complete} if each letter in $\A$ is the winner of at least one arrow in $\underline{\gamma}$; it is $k$--{\it complete} if $\underline{\gamma}$ is the concatenation of $k$ complete paths. An infinite path is $\infty$--{\it complete} if it is the concatenation of infinitely many complete paths.


For a path ${\gamma}$ in $\mathcal D$ consisting of successive arrows $\gamma_1, \ldots, \gamma_n$, we set
$$ B_{\gamma} = B_{\gamma_n} \ldots B_{\gamma_1}\, . $$
It belongs to $SL(\Zset^{\A})$ and has nonnegative coefficients.
Writing $\pi$ for the origin of $\gamma$ and $\pi'$ for the endpoint of $\gamma$, one has 
\begin{equation}\label{omegarel} B_{\gamma}\; \Omega_{\pi} \;^t \negthinspace B_{\gamma} = \Omega_{\pi'}.
\end{equation}
Notice that from this relation it follows that 
$$ B_{\gamma}\, {\rm Im}\, \Omega_{\pi} = {\rm Im} \,\Omega_{\pi'}, \quad ^t B_{\gamma}^{-1} \ker  \Omega_{\pi} = \ker \Omega_{\pi'}.$$
Setting, for $v,w \in Im \Omega_{\pi}$,
$$
< \Omega_{\pi} v, \Omega_{\pi} w >:= {}^t v \, \Omega_\pi \, w
$$
defines a symplectic structure on ${\rm Im} \, \Omega_{\pi}$, and similarly on ${\rm Im} \, \Omega_{\pi'}$. Thus, \eqref{omegarel} shows that the cocycle $B_\gamma$ is indeed symplectic relative to the (degenerate) 
symplectic structure induced by the matrices $\Omega_\pi$ and $\Omega_{\pi'}$. 

\subsection{Iterations of the Rauzy-Veech map and the Zorich algorithm}\label{sec:algorithms}

Let $T=T^{(0)}$ be an i.e.m.\ with no connection. We
denote by $\A$ the alphabet for the combinatorial data $\pi^{(0)}$ of $T^{(0)}$ and by ${\mathcal D}$ the Rauzy diagram on $\A$ having $\pi^{(0)}$ as a vertex.

\smallskip
The i.e.m.\ $T^{(1)}$, with combinatorial data $\pi^{(1)}$, deduced from $T^{(0)}$ by the elementary step of the
Rauzy--Veech algorithm has also no connection. It is therefore possible to iterate this elementary step indefinitely
and get a sequence $T^{(n)}$ of i.e.m.\, with combinatorial data $\pi^{(n)}$, acting on a decreasing sequence $I^{(n)}$ of
intervals. Remark that for $m,n \in \Zset$ with $n \leq m$, $I^{(m)}$ is a subinterval of $I^{(n)}$ with the same left endpoint, and $T^{(m)}$ is the first return map into $I^{(m)}$ under iteration of $T^{(n)}$.  

We also define a sequence $\gamma_n$, $n \in \mathbb{N}$,  
of arrows in ${\mathcal D}$ associated to the successive steps of the algorithm, so that $\gamma_n$ is the arrow  from $\pi^{(n-1)}$ to $\pi^{(n)}$ corresponding to the $n^{th}$ step. For $n<m$, we also
write $\gamma (n,m)$ for the path from $\pi^{(n)}$ to $\pi^{(m)}$ composed by
\[
\gamma (n,m) =  \gamma_{n+1} \star  \cdots \star \gamma_{m-1} \star \gamma_{m} ,
\]
where $\star$ denotes the juxtapposition of paths. In particular $\gamma(n-1,n)$ coincides with the $n^{th}$ arrow $\gamma_n$. 

\smallskip
We write $\underline{\gamma}=\underline{\gamma}(T)$ for the infinite path $\gamma_1 \star \gamma_2  \star \cdots $ starting from $\pi^{(0)}$ formed by the $\gamma_n$, $n\geq 1$. It is $\infty$-complete (cf.~\cite{MMY1}, p.~832).
We call $\underline{\gamma}(T)$ the {\it rotation number} of $T$.
Conversely, an $\infty$-complete path is equal to $\underline{\gamma}(T)$ for some  i.e.m. with no connection.

\smallskip 

Given $T$ with no connection, let $\gamma(T)$ be its rotation number. 
For any intergers $m>n$, let

$$B(n,m):= B_{\gamma(n,m)} = B_{\gamma_m} B_{\gamma_{m-1}} \cdots B_{\gamma_{n+1}}$$
be the matrix associated to the path $\gamma (n,m)= \gamma_{n+1} \star \cdots \gamma_{m-1}\star \gamma_m$ from $\pi^{(n)}$ to $\pi^{(m)}$. 
In particular, for $n\geq 1$, $B(n-1,n)$ is the elementary matrix associated to the $n^{th}$ arrow $\gamma_n$ of $\gamma(T)$ connecting  $\pi^{(n-1)}$ to $\pi^{(n)}$.  For any $p\leq n \leq m$ the following cocycle relation holds:
\begin{equation}\label{matrix_cocycle_eq}
B(p,m) = B(n,m) B(p,n).
\end{equation}
We also denote by
\begin{equation}\label{columsum}
B_{\alpha}(n,m):=  \sum_{\beta \in \A} B(n,m)_{\alpha \beta }, \quad m>n, \quad \alpha, \beta \in \A.
\end{equation}

\smallskip
The entries of the matrix  $B(n,m)$, $m>n$, have the following dynamical interpretation. For any $\alpha, \beta \in \A$,   the entry $B(n,m)_{\alpha, \beta}$ gives the number of times  in the orbit $x,  T^{(n)}(x), \dots  $ of  any point $x\in I_{\alpha}^{(m)}$ under $T^{(n)}$ visits $I^{(n)}_\beta$ up to the first return time of $x$ to $I^{(m)}$.  Hence, $B_{\alpha}(n,m)$ gives the first return time to $I^{(m)}$ of any point $x\in I_{\alpha}^{(m)}$ under $T^{(n)}$.

\smallskip 
Following Zorich \cite{Zo2}, it is often convenient to group together in a single
Zorich step successive elementary steps of the Rauzy--Veech algorithm whose 
corresponding arrows have the same type (or equivalently the same winner); 
we therefore introduce a sequence $0=n_0<n_1<\ldots $ such that for each $k$ all 
arrows in $\gamma (n_k,n_{k+1})$ have the same type and this type is alternatively
top and bottom. For $n\geq 0$, the integer $k$ such that $n_k\leq n<n_{k+1}$ is called
the {\it Zorich time} and denoted by $Z(n)$.

\subsection{Dynamics of the continued fraction algorithms}\label{sec:RVdynamics}

Let $\mathcal R$ be a Rauzy class on an alphabet $\A$. The elementary step of the Rauzy--Veech algorithm, 
$$
(\pi, \lambda) \mapsto (\hat\pi, \hat\lambda )\, , 
$$
considered up to rescaling, defines a map from $\mathcal R\times \Pset ((\Rset^+)^{\A})$ to itself, denoted by $Q_{{\rm RV}} $. There exists a unique absolutely continuous measure ${m}_{{\rm RV}}$ invariant under these dynamics (\cite{Ve2}); it is conservative and ergodic but has infinite total mass, which does not allow all ergodic--theoretic machinery to apply. Replacing a Rauzy--Veech elementary step by a Zorich step gives a new map $Q_{{\rm Z}}$ on 
$\mathcal R\times \Pset ((\Rset^+)^\A)$. This map has now a {\it finite} absolutely continuous invariant measure ${m}_{{\rm Z}}$, which is ergodic (\cite{Zo2}).

\smallskip
It is also useful to consider the \emph{natural extensions} of the maps $Q_{{\rm RV}}$ and $Q_{{\rm Z}}$, defined through the suspension data which serve to construct translation surfaces from i.e.m.
 For $\pi\in\mathcal R$, let $\Theta_{\pi}$ be the convex open cone of suspension vectors  in $\Rset^{\A}$, see \eqref{def:suspdatum}.   
Define also 
$$
\Theta_\pi^t 
= \{ \tau\in\Theta_\pi\, , \, \sum_\alpha\tau_\alpha <0\}\; ,\qquad 
\Theta_\pi^b 
= \{ \tau\in\Theta_\pi\, , \, \sum_\alpha\tau_\alpha >0\}\; . 
$$

Let $\gamma\, : \, \pi\rightarrow\hat\pi$ be an arrow in the Rauzy diagram $\mathcal D$ associated to $\mathcal R$. Then $^tB_\gamma^{-1}$ sends $\Theta_\pi$ isomorphically onto $\Theta^t_{\hat\pi}$ (resp.\ $\Theta^b_{\hat\pi}$)
when $\gamma$ is of top type (resp.\ bottom type). 

The natural extension $\hat Q_{{\rm RV}}$ is the defined on 
$\sqcup_{\pi\in\mathcal R}\{\pi\}\times\Pset ((\Rset^+)^\A)\times\Pset (\Theta_\pi)$ by 
$$
(\pi,\lambda, \tau)\, \mapsto (\hat\pi, ^tB_\gamma^{-1}\lambda , ^tB_\gamma^{-1}\tau )
$$
where $\gamma$ is the arrow starting at $\pi$, associated to the map $Q_{{\rm RV}}$ at $(\pi,\lambda )$. 
The map $\hat Q_{{\rm RV}}$ has again a unique absolutely continuous invariant measure $\hat{m}_{{\rm RV}}$; it is ergodic, conservative but infinite. One defines similarly a natural extension $\hat Q_{{\rm Z}}$ for $ Q_{{\rm Z}}$; it has a unique absolutely continuous invariant measure $\hat{m}_{{\rm Z}}$, which is finite and ergodic.

\smallskip
 The sequences $(\pi^{(n)})_{n \in \Zset}$, $(\lambda^{(n)})_{n \in \Zset}$, $(\tau^{(n)})_{n \in \Zset}$ defined by the Rauzy-Veech algoritm satisfy, for $m \leq n$
$$ \lambda_\alpha^{(n)} = \sum_{\beta} B(m,n)^*_{\alpha\, \beta} \;\lambda^{(m)}_\beta, \qquad \tau_\alpha^{(n)} = \sum_{\beta} B(m,n)^*_{\alpha\, \beta} \;\tau^{(m)}_\beta,\qquad \text{where}\quad B^* = ^t\negmedspace B(m,n)^{-1}. $$
The associated sequences $\delta^{(n)}= \Omega_{\pi^{(n)}} \lambda^{(n)}$ and $q^{(n)} = \Omega_{\pi^{(n)}} \tau^{(n)}$, for $n \geq 0$, satisfy 
$$ \delta_\alpha^{(n)} = \sum_{\beta} B(m,n)_{\alpha\, \beta}\; \delta^{(m)}_\beta, \qquad  q_\alpha^{(n)} = \sum_{\beta} B_{\alpha\, \beta}.$$


\subsection{Special Birkhoff sums and the extended Kontsevich-Zorich cocycle}\label{ssKZ}

For a real number $r\ge 0$ we denote by $C^r(\sqcup I_\alpha)$   the product $\prod_{\alpha\in\A} C^r(\overline{I_\alpha})$. If $r=0$ we sometimes drop the 
exponent in $C^r(\sqcup I_\alpha)$.

Let $(T^{(n)})_{n \in \Zset}$ be a sequence of i.e.m. acting on intervals $I^{(n)}$ related by the Rauzy-Veech algorithm as in \S~\ref{ssRV}. Let $m \leq n$. 
For a function $\varphi \in C( \sqcup I_\alpha^{(m)})$, we define the {\it special Birkhoff sum} $S(m,n) \varphi \in C( \sqcup \iatn)$ by

\begin{equation}\label{specialBS_def}
  S(m,n) \varphi (x) = \sum_{0\leq j< r(x)} \varphi((T^{(m)})^j (x)), \quad \forall \alpha \in \A, \; \forall x \in \iatn, \end{equation}

where $r(x)$ is the return time of $x$ into $I^{(n)}$ under $T^{(m)}$. One has 
$$ r(x) = \sum_{\beta \in \A} B_{\alpha \, \beta} , \quad \forall \alpha \in \A, \; \forall x \in \iatn,$$
with $B = B(m,n)$.

 For $n \in \Zset$, $\Gamma(n) :=\Gamma(T^{(n)})$ is the $d$-dimensional subset of $C( \sqcup \iatn)$ consisting of functions which are constant on each $\iatn$. The space $\Gamma(T^{(n)})$ is canonically isomorphic to $\RA$. One can check that the operator $S(m,n)$ sends $\Gamma(T^{(m)})$ onto $\Gamma(T^{(n)})$, and the matrix of this restriction is $ B(m,n)$.

This defines a $d$-dimensional cocycle over $Q_{{\rm RV}}$ (and also above $\hat Q_{{\rm RV}}$, $ Q_{{\rm Z}}$ and $\hat Q_{{\rm Z}}$) called the \emph{(extended) Kontsevich-Zorich cocycle}.

\smallskip
Special Birkhoff sums operators  satisfy the following cocycle relation: for any  $n\leq n'\leq n''$, one has
$$S (n, n'') = S (n',n'') \circ S (n,n').$$
Observe also that, if $\psi$ is integrable on $I^{(n)}$, then $S (n,n') \psi$ is integrable on $I^{(n')}$ for any $n'\geq n$ and we have
$$\int_{I^{(n')}} S (n,n') \psi = \int_{I^{(n)}} \psi .$$


\subsection{The boundary operator}\label{ssboundary}

Let $\pi$ be irreducible combinatorial data over the alphabet $\A$. Define a $2d$-element set ${\mathcal S}$ by 
 $$ \mathcal S = \{ U_0 = V_0, U_1, V_1, \ldots, U_{d-1}, V_{d-1}, U_d = V_d \} .$$ 
These symbols correspond to the vertices of the polygon produced by the suspension of an i.e.m $T$ with  combinatorial data $\pi$ 
(cf.~\S~\ref{ssSuspension}). 

Going anticlockwise around the vertices (taking the gluing into account) produces a permutation\footnote{We remark that the presentation of the permutation $\sigma$ is different from \cite{MMY2}.}  $\sigma$ of ${\mathcal S}$:
  $$
 \begin{array}{lll}
 \sigma (U_i) = V_j & {\rm with }\; \pi_b^{-1}(j+1)= \pi_t^{-1}(i+1) & {\rm for}\; 0 \leq i < d \\
 \sigma (V_{j'}) = U_{i'} & {\rm with }\; \pi_t^{-1}(i')= \pi_b^{-1}(j') & {\rm for}\; 0 < j' \leq d.
 \end{array}
 $$ 
The cycles of $\sigma$ in $\mathcal S$ are canonically associated to the marked points on the translation surface $M_T$ obtained by suspension of $T$. We will denote by $\Sigma $ the set of cycles of $\sigma$, by $s$ the cardinality of $\Sigma$ (cf.~\S~\ref {ssSuspension}).


Let $\varphi \in C^0(\sqcup I_\alpha)$.  We write $\varphi(u_i^t-0)$ (resp. $\varphi (u_i^t +0)$) for its value at $u_i^t$ considered as a point in $[u_{i-1}^t,u_i^t]$ (resp.
 $[u_i^t, u_{i+1}^t]$). We will use the convention that $\varphi (u_0 -0) = \varphi (u_d +0) =0$.  
Let $T$ be an i.e.m. with combinatorial data $\pi$, and let $\Sigma$ be the set of cycles
of  the associated permutation $\sigma$ of ${\mathcal S}$. The {\it boundary operator} $\partial$ is the linear operator from $C^0(\sqcup I^t_\alpha)$ to $\RS$ defined by
\begin{equation}\label{eqbound}
(\partial \varphi)_C := \sum _{0 \leq i \leq d, \; U_i \in C} (\varphi (u_i^t - 0) - \varphi (u_i^t + 0)),
\end{equation}
for any $\varphi \in C^0(\sqcup I_\alpha)$, $C \in \Sigma$.

\begin{remark}
\label{remhomology}
The name boundary operator is due to the following homological interpretation. The space $\Gamma(T)\subset  C^0(\sqcup I_\alpha)$ is naturally isomorphic to the first relative homology group $H_1(M_T,\Sigma,\Rset)$ of the translation surface $M_T$: the characteristic function of $\iat(T)$ corresponds to the homology class $[\zeta_{\alpha}]$ of the side $\zeta_{\alpha}$ (oriented rightwards) of the polygon giving rise to $M_T$ (see \S~\ref{sec:transl}). Through this identification, the restriction of the boundary operator to $\Gamma (T)$ is the usual boundary operator
$$\partial: H_1(M_T,\Sigma,\Rset) \longrightarrow H_0(\Sigma,\Rset) \simeq \RS .$$
\end{remark}

\smallskip

We will denote by  $\Gamma_\partial (T)$ the kernel of the restriction of $\partial$ to $\Gamma (T)$. 
We note in the following proposition several useful properties of the cycles and of the boundary operator, for which we refer to \cite{MMY2} (and in particular to Proposition 3.2, p. 1597). 
\begin{proposition}\label{propboundary}
Let $T$ be an  i.e.m.\ with combinatorial data $\pi$: 
\begin{enumerate}
\item Let $\psi\in C^0([u_0,u_d])$. One has $\partial\psi = \partial (\psi\circ T)$.
\item If $\varphi\in C^0(\sqcup I_\alpha)$ satisfies $\varphi (u_i^t+0)=\varphi(u_i^t-0)$ for $0\le i\le d$ then $\partial\varphi =0$. 
\item The boundary operator $\partial\, :\, C^0(\sqcup I_\alpha) \to \RS$ is onto.
\item The restriction of $\partial$ to $\Gamma (T)$ has as kernel  $\Gamma_\partial (T)$ the image of 
$\Omega_{\pi}$.
\item The image of the restriction of $\partial$ to $\Gamma (T)$ is  
$\RSO:= \{x \in \RS,\; \sum_C x_C =0\}$.
\item Let $n\ge 0$, let $T^{(n)}$ be the i.e.m.\ obtained from $T$ by $n$ steps of the Rauzy--Veech algorithm. 
For $\varphi\in C^0(\sqcup I_\alpha )$ one has 
$$ \partial (S(0,n)\varphi )=\partial\varphi\, , $$
where the left--hand side boundary operator is defined using the combinatorial data $\pi^{(n)}$ of $T^{(n)}$.
\end{enumerate}
\end{proposition}

\subsection{Summary of notation}\label{secnot}
For convenience of the reader, in particular the reader who is already familiar with Rauzy-Veech induction, we summarize in this section the notation used in the rest of the paper.

\subsubsection*{Interval exchange maps}\label{ssRV}

\begin{itemize}
\item $\A$ is the alphabet used to index the intervals.
\item $d:= \# \mathcal A$.
\item $\pi = (\pi_t, \pi_b)$ are the combinatorial data of an i.e.m $T$, acting on an interval $I$.
\item $I_\alpha:= \iat=\iat(T)$, $\iab=\iab(T)$, for $\alpha \in \A$  are the subintervals in the top and bottom subdivisions of $I$.
\item $\lambda = (\lambda_\alpha)_{\alpha \in \A}$ are the length data, i.e.~$\lambda_\alpha=|\iat|=|\iab|$ for all $\alpha \in \A$.
\item $\delta = (\delta_{\alpha})_{\alpha \in \A}$ is the translation vector of $T$, i.e.~$T : I_\alpha^t \to I_\alpha^b $ is  given by $x \mapsto x + \delta_{\alpha}$.
\item $\Omega_{\pi}  = (\Omega_{\alpha,\beta})_{\alpha,\beta \in \A}$ is the antisymmetric matrix associated by the combinatorial data $\pi$, given by \eqref{Omega_def} (the length vector $\lambda$ and the translation vector $\delta$  are related by $\delta = \Omega \lambda$).
\item $u_0=u_0(T), u_1=u_1(T)$ endpoints of $I=I(T) = (u_0,u_d)$.
\item $u^t_1<\cdots<u^t_{d-1}$  singularities of $T$. We also write $u^t_i(T)=u_i^t(T)$, $1\leq i \leq d-1$.
\item  $u^b_1<\cdots<u^b_{d-1}$ singularities of $T^{-1}$. We also write $u^b_i(T)=u_i^b(T)$, $1\leq i \leq d-1$.
\item $T$ has \emph{no connection} if there is no $m$ nonnegative integer, such that $ T^m(u_j^b)=u_i^t$ for some $1\leq i,j <d$. 
\end{itemize}

\subsubsection*{Translation surfaces and suspension data}\label{ssSuspension}

\begin{itemize}
\item $\tau = (\tau_{\alpha})_{\alpha \in \A}$ is a \emph{suspension vector} for the combinatorial data $\pi$. 
\item $M(\pi,\lambda,\tau)$ translation surface obtained from the zippered rectangle construction with data $(\pi,\lambda, \tau)$. 
\item $M_T = M(\pi,\lambda,\tau^{can})$ canonical suspension  associated to the canonical suspension vector $\tau^{can}$ in \eqref{cansusp}. 
\item $\zeta_{\alpha} := \lambda_{\alpha} +i\tau_\alpha$, for $ \alpha \in \A$ are the periods of $ M(\pi,\lambda,\tau)$.
\item $\omega$ is the canonical holomorphic $1$-form on $M$.  
\item $g = g(\pi)$ is the genus of $M$; it is also the half-rank of $\Omega$.
\item $\Sigma$ is the set of marked points on $M$.
\item $s := \# \Sigma$ depends only on $\pi$. One has $d = 2g + s-1$.
\item $ \mathcal S := \{ U_0 = V_0, U_1, V_1, \ldots, U_{d-1}, V_{d-1}, U_d = V_d \}$ vertices of the suspension (singualarities of $M(\pi,\lambda,\tau)$).
\item $T$ is the return map to $I$ for the upwards vertical flow of $M$.
\item $q = (q_{\alpha})_{\alpha \in \A}$ given by $q = - \Omega \tau$ is the vector of heights of the rectangles in the zippered rectangle construction of $M(\pi,\lambda,\tau)$; $q_{\alpha}$ is the return time of $\iat$ to $I$ under the the vertical flow on  $M(\pi,\lambda,\tau)$.

\end{itemize}

\subsubsection*{The Rauzy-Veech renormalization algorithm}\label{ssec:RVren}

\begin{itemize}
\item $\mathcal R$ is the {\it Rauzy class} of the combinatorial data $\pi$ of $T$.
\item $\mathcal D$ is the associated {\it Rauzy diagram}. The set of vertices of 
$\mathcal D$ is $\mathcal R$. 
\item $\alpha_t$ is the element of $\A$ such that $\pi_t(\alpha_t) = d$; $\alpha_b$ is the element of $\A$ such that $\pi_b(\alpha_b) = d$.
\item Each arrow $\gamma$ in $\mathcal D$ has a {\it type  top} or {\it bottom}  and is associated to two elements of $\A$, called {\it winner} and {\it loser}; if $\gamma$ if of type {\it type  top} (resp.\ {\it bottom}), the winner is $\alpha_t$ and the loser $\alpha_b$ (resp.\ the winner is $\alpha_b$ and the loser $\alpha_t$).
\item $E_{\alpha\beta}$ is the elementary matrix whose only nonzero coefficient is $1$ in position $\alpha\beta$. 
\item $B_\gamma$, where  $\gamma$ is an arrow starting at $\pi$ of top (resp.\ bottom) type, is the matrix $B_\gamma = \Iset + E_{\alpha_b\alpha_t}$ (resp.\ $B_\gamma = \Iset +E_{\alpha_t\alpha_b})$).
\item $B_\gamma = B_{\gamma_n} \ldots B_{\gamma_1} $ if  $\gamma$ is a path $\gamma$ in $\mathcal D$ consisting of successive arrows $\gamma_1, \ldots, \gamma_n$.


\medskip
\item $T^{(n)}:= (\lambda^{(n)}, \pi^{(n)} )$  for $n \in \mathbb{N}$ is the sequence of (non-renormalized) i.e.m. obtained by applying the Rauzy-Veech algorithm starting from $T^{(0)}:= (\lambda^{(0)}, \pi^{(0)} )$ with no connection.
\item  $(\pi^{(n)},\lambda^{(n)},\tau^{(n)})$ for $n \in \mathbb{Z}$ is the sequence  
produced by the Rauzy-Veech induction from  $\left( \pi^{(0)}, \lambda^{(0)}, \tau^{(0)}\right):= (\pi,\lambda,\tau) $  
s.t. $M(\pi,\lambda,\tau)$ has {\it neither horizontal nor vertical connections}.
\item the interval $I^{(n)}$ is the domain of the i.e.m.~$T^{(n)}= \left( \pi^{(n)}, \lambda^{(n)}\right)$.
\item $I^{(n)}_\alpha = I_\alpha^t (T^{(n)})$, $\alpha \in \mathcal{A}$, are the subintervals exchanged by $T^{(n)}$.
\item  $\gamma_n = \gamma(n-1,n)$ for $n \in \mathbb{N}$ (or $n \in \Zset$)  is the  arrow  of $\mathcal D$ from $\pi^{(n-1)}$ to $\pi^{(n)}$. 
\item  $\gamma (n,m) $, for $n<m$, is the path in   $\mathcal R$ from $\pi^{(n)}$ to $\pi^{(m)}$ given by $ \gamma_{n+1} \star  \cdots \star \gamma_{m-1} \star \gamma_{m} $.
\item $\gamma(T)$ for $T=(\pi,\lambda)$ with no connection, is the \emph{rotation number} of $T$; $\gamma(T)$ is the juxtapposition $\gamma_1 \ast \gamma_2 \ast$ of the arrows $\gamma_1, \gamma_2, \dots $ 
\item $B(m,n)$, where $m<n$ is the matrix product $B(m,n):= B_\gamma(m,n) = B_{\gamma_{n}}B_{\gamma_{n-1}} \cdots B_{\gamma_{m+1}}$.
\item $B_\alpha (m,n): =  \sum_{\beta \in \A} B(m,n)_{\alpha \, \beta} $ is the return time of any $x\in I^{(n)}_\alpha$ into $I^{(n)}$ under $T^{(m)}$. 

\medskip
\item  $Q_{{\rm RV}} : \mathcal R\times \Pset ((\Rset^+)^{\A})  \to \mathcal R\times \Pset ((\Rset^+)^{\A}) $ \emph{Rauzy-Veech map} (rescaled step of Rauzy-Veech algorithm). 
\item  ${Q}_{{\rm Z}}: \mathcal R\times \Pset ((\Rset^+)^{\A}) \to \mathcal R\times \Pset ((\Rset^+)^{\A}) $ \emph{Zorich map}, acceleration of the Rauzy-Veech map; 
\item $Z(n)$ \emph{Zorich time}, i.e.~the integer  $k$ such that $n_k\leq n<n_{k+1}$ where $n_k$ are such that  $Q_{{\rm Z}}^k(T)= Q^{n_k}_{{\rm RV}}(T)$.
\item   ${m}_{{\rm RV}}$ (resp.\ ${m}_{{\rm Z}}$) unique absolutely continuous measure  invariant under  $Q_{{\rm RV}} $ (resp.\ ${Q}_{{\rm Z}}$).
\item  $\hat Q_{{\rm RV}}$  (resp.\ $\hat Q_{{\rm Z}}$), defined on $\mathcal R \times\Pset ((\Rset^+)^\A)\times\Pset (\Theta_\pi)$: \emph{natural extension} of $ Q_{{\rm RV}}$ (resp.\ $Q_{{\rm Z}}$).
\item $\hat{m}_Q$ (resp.\ $\hat{m}_Z$) invariant measures for $\hat Q_{{\rm RV}}$ (resp.\ $\hat Q_{{\rm Z}}$);  
\end{itemize}

\subsubsection*{Functional spaces, special Birkhoff sums  and boundary operator}\label{ssec:boundary}
\begin{itemize}
\item $C(\sqcup I_\alpha)$   space of functions on $\sqcup I_\alpha$ which belong to $C(\overline{I_\alpha})$ for every $\alpha \in \mathcal{A}$.
\item  $C^r(\sqcup I_\alpha)$ for $r\ge 1$  space of functions on $\sqcup I_\alpha$ which belong to  $C^r(\overline{I_\alpha})$ for every $\alpha \in \mathcal{A}$.
\item  $\Gamma(T)\subset C(\sqcup I_\alpha)$ $d$-dimensional space of functions which are constant on each $I_\alpha^{(n)}$.
\item Given $T$ with no connection,  $\Gamma(n) :=\Gamma(T^{(n)})$, where $T^{(n)} = \mathcal{R}^n(T)$, for $n \in \Zset$.

\medskip
\item  $S(m,n): C( \sqcup I_\alpha^{(n)}) \to C( \sqcup I_\alpha^{(m)})$ \emph{special Birkhoff sums} operator; for  $\varphi \in C( \sqcup I_\alpha^{(n)})$, for $\alpha \in \A$ and $ x \in I_\alpha^{(n)}$,  $S(m,n) \varphi (x) = \sum_{0\leq j< B(m,n)} \varphi((T^{(m)})^j (x))$; 
\item the restriction $S(m,n): \Gamma(m) \to \Gamma(n)$ is given by $B(m,n)= B_{\gamma(m,n)}$.

\medskip
\item $\sigma: \mathcal S \to \mathcal S$ permutation which identifies the vertices of $\mathcal S$ going clockwise around singularities.
\item  $\Sigma $ set of cardinality $s$ idenfified with  \emph{cycles} of $\sigma$.
\item  $\varphi(u_i^t\pm 0)$   value of $\varphi \in C^0(\sqcup I_\alpha)$ at $u_i^t$ considered as a point in $[u_{i}^t,u_{i+1}^t]$ or $[u_{i-1}^t,u_i^t]$ respectively.
\item  $\partial: C^0(\sqcup I_\alpha) \to \RS$ {\it boundary operator},    defined for any $\varphi \in C^0(\sqcup I_\alpha)$ and $C \in \Sigma$ by $(\partial \varphi)_C := \sum _{0 \leq i \leq d, \; U_i \in C} (\varphi (u_i^t - 0) - \varphi (u_i^t + 0))$.
\end{itemize}


\section{Absolute Roth type and applications}\label{sec:absoluteRoth}
In this section we define the absolute Roth type condition for i.e.m.\ We first recall the definition of (restricted) Roth type i.e.m. which was give in \cite{MMY1, MMY2}. 
The adjective \emph{absolute} refers here to \emph{absolute homology}, since as we will see this condition is expressed in terms of cones which are isomorphic to the absolute homology of the suspended surface; in contrast, the usual Roth type condition concerns \emph{relative} homology (we could have called it in this paper \emph{relative Roth type condition}, but we chose not to do so for consistency with the literature). We then state the extension of the main results from  \cite{MY} on  the cohomological equation to i.e.m. of (restricted) absolute Roth type (see Theoreom~\ref{thmHolder1} below, as well as  Theorem~\ref{thmcohom2} and Theorem~\ref{thmHolder2} in the Appendix~\ref{app:proofs}).  In \S~\ref{sec:crucialestimates} we prove two crucial estimates  which are needed to adapt the corresponding proofs to deal with absolute Roth type i.e.m. 

\subsection{Roth Type i.e.m.}\label{ssRoth}
We first recall the Roth type diophantine condition on an i.e.m.\ introduced in \cite{MMY1} and then slightly modified in \cite{MMY2}.

\smallskip
For a matrix $B = (B_{\alpha \beta})_{\alpha,\beta \in \A}$, we define 
\begin{equation}\label{def:norm} ||B || := \max_{\alpha} \sum_{\beta} | B_{\alpha \beta}|,\end{equation}
which is the operator norm for the $\ell_{\infty}$ norm on $\RA$.

Let $T$ be an i.e.m, with combinatorial data $\pi$.  In the space $\Gamma(T) \simeq \RA$ of piecewise constant functions on $I(T)$, we define the hyperplane
$$ \Gamma_0(T) := \{ \chi \in \Gamma(T) ,\; \int \chi(x) \; dx = 0\}.$$
Assuming that  $T$ has no connection, we also define the \emph{stable subspace}
$$ \Gamma_s(T) := \{ \chi \in \Gamma(T),\quad  \exists \,\sigma >0,\; ||B(0,n)\chi|| = \mathcal {O} (||B(0,n)||^{-\sigma}) \}.$$

As $\Gamma_s(T)$ is finite-dimensional, there exists an exponent $\sigma >0$ which works for every $\chi\in\Gamma_s(T)$. We fix such 
an exponent in the rest of the paper. 
The subspace $ \Gamma_s(T)$ is contained in ${\rm Im}\,\Omega (\pi )$, and is an isotropic subspace of this symplectic space.
We have 
$$\Gamma_s(n):= \Gamma_s(T^{(n)}) = B(0,n)\Gamma_s(T), \quad \text{for}\ n \geq 0.$$

\smallskip

Let us first define a suitable \emph{acceleration} of the Rauzy-Veech algorithm. 
 Consider a {\it rotation number} $\ug = \gamma_0 \star \gamma_1 \star \ldots $, i.e an infinite $\infty$-complete path in $\D$ obtained concatenating successive arrows $\gamma_0, \gamma_1, \ldots$. Define $\hat n_0 =0$. Define inductively $\hat n_k$ for $k\geq1$ as the smallest integer $n> \hat n_{k-1}$ such that the path 
$ \gamma_{\hat n_{k-1}+1} \star \ldots \star \gamma_{\hat n}$ is complete (see \ref{secRV}).

\smallskip

Following \cite{MY}, \emph{Roth type i.e.m.} are those i.e.m.\ whose rotation number satisfy three conditions (a) (\emph{matrix growth}), (b) (\emph{spectral gap}), (c) (\emph{coherence}). Furthermore, i.e.m. which in addition satisfy also a fourth condition (d) (\emph{hyperbolicity}), are called of  \emph{restricted Roth type}. 
  We  now recall the definition of these four conditions. The sequence  $(\hat n_k)$ that appears in Condition (a) is the sequence of accelerated times  defined 
just above. 


\smallskip
\begin{definition}\label{defRoth2}
An i.e.m. $T$ with no connection is of  \emph{restricted Roth type} if the following four
conditions are satisfied: 


\begin{itemize}
\item[(a)]
 ({\it matrix growth})
$$ \forall \tau >0, \; \exists \, C_{\tau} >0, \; \forall k> 0, \; \Vert B(\hat n_{k-1},\hat n_k) \Vert \leq C_{\tau} \Vert B(\hat n_0, \hat n_{k-1}) \Vert^{\tau}.$$


\item[(b)] ({\it spectral gap}) 
There exists $\theta >0$ such that
$$||B(0,n)_{|\Gamma_0(T)}|| = \mathcal {O} (||B(0,n)||^{1-\theta})\,.$$

\item[(c)] 
 ({\it coherence})  \hspace{3mm}  For $0 \leq m\leq n$, let 
\begin{eqnarray*}
B_s(m,n)&:& \Gamma_s(T^{(m)}) \to \Gamma_s(T^{(n)}) ,\\
  B_{\flat}(m,n)&: &\Gamma(T^{(m)}) /\Gamma_s(T^{(m)}) \to \Gamma(T(n)) /\Gamma_s(T^{(n)})  
  \end{eqnarray*}

be the operators  induced by $B(m,n)$.
We ask that, for all $\tau >0$,
$$||B_s(m,n)||= \mathcal {O} (||B(0,n)||^{\tau}), \quad ||(B_{\flat}(m,n))^{-1}|| = \mathcal {O}
(||B(0, n)||^{\tau})\,.$$

\item[(d)] ({\it hyperbolicity}) 
$\dim \Gamma_s (T) =g$.
\end{itemize}

The i.e.m. with no connection that satisfy conditions (a), (b) and (c) are called \emph{Roth type} i.e.m. Finally, we will say that i.e.m. with no connection that satisfy conditions (a) and (b) only are i.e.m. of \emph{weak} Roth type. We will sometimes in this paper refer to these conditions as \emph{direct}  Roth type conditions to distinguish it from the \emph{dual} Roth type condition which we will introduce later.

\end{definition}
In \cite{MMY1}, condition (a) had a slightly different (but equivalent) formulation, as explained in the remark below.
 


\begin{remark}\label{remRoth0} 
Condition (a) can be rephrased in the following way. Define $\wt n_0 =0$. 
Define inductively $\wt n_k$ for $k>0$ as the smallest integer $n> \wt n_{k-1}$ such that the matrix $B(\wt n_{k-1},n)$ has positive coefficients. Then condition (a) is equivalent to 
\begin{equation}\label{a_paper} 
 \forall \tau >0, \; \exists \, C_{\tau} >0, \; \forall k> 0, \; \Vert B(\wt n_{k-1},\wt n_k) \Vert \leq C_{\tau} \Vert B(\wt n_0, \wt n_{k-1}) \Vert^{\tau}.
\end{equation}
Indeed, let $\gamma$ be a finite path in $\D$. If $\gamma$ is not complete, at least one  of the coefficients of $B_{\gamma}$ is equal to $0$. Conversely, if $\gamma$ is 
$(2d-3)$-complete (or, when $d=2$, $2$-complete), all coefficients of $B_{\gamma}$  are positive, see \cite{MMY1} (more precisely, see the Lemma in $\S$~1.2.2, page 833). These two facts imply the equivalence of the two formulations, namely of condition (a) and  condition  \eqref{a_paper}. It is the condition above which was used in \cite{MY}.
\end{remark}
Let us remark that an i.e.m.\ $T$ with no connection satisfying condition (b)
  is uniquely ergodic. 
Let us also recall that for any combinatorial data, the set of i.e.m of relative restricted Roth type has \emph{full measure}.
First, it is obvious that almost all i.e.m. have no connection. That condition (c) is almost surely satisfied is a consequence of Oseledets theorem applied to the Kontsevich-Zorich cocycle. Condition (d) follows from the hyperbolicity of the (restricted) Kontsevich-Zorich cocycle, proved by Forni {\Green (\cite {For2})}. 
A proof that condition (a) has full measure is provided in \cite{MMY1}, but much better diophantine
estimates were later obtained in \cite{AGY}. A simpler proof where full measure is deduced from the estimates in \cite{AGY} is sketched in \cite{MMM}.
Finally, the fact that condition (b) has full measure is a  consequence from the fact that the
larger Lyapunov exponent of the Kontsevich-Zorich cocycle is simple (see \cite{Ve4}).



\subsection{The cones $\C(\pi)$}

We give in this section some preliminary definitions which we need in order to define  \emph{absolute} Roth type i.e.m.

\smallskip
We denote by $\C$ the positive open cone of $\RA$. Remark first that for $m<n$, the matrix $B(m,n)$ has positive coefficients iff
$$ \overline{B(m,n)(\C)} \subset \C \cup \{ 0 \}.$$ 
For $\pi \in \R$, 
let $\Omega_{\pi}$ be the antisymmetric matrix associated to $\pi$ (see \S~\ref{sec:iem}) and let $H(\pi) \subset \RA$ the image of $\Omega_{\pi}$. The dimension of $H(\pi)$ is $2g$ and the codimension of $H(\pi)$ is $s-1$, where $g$ is the genus of the translation surfaces associated to $\R$ and $s$ is the number of marked points (see \S~\ref{sec:transl}). We are interested in the case $s>1$ where $H(\pi) \subsetneqq \RA$. 

We denote by  $\C(\pi)$ the intersection
$\C \cap H(\pi)$.  Let us recall that, from the properties of the extended KZ-cocycle, 
for any oriented path $\gamma: \pi \to \pi'$ in $\D$, one has 
\begin{equation}\label{p1} 
B_{\gamma} (H(\pi)) = H(\pi'),\qquad B_{\gamma} (\C(\pi)) \subset \C(\pi').
\end{equation}

\smallskip
Let us explain the connection between $H(\pi)$ and absolute homology. 
Let $M$ be a translation surface with combinatorial data $\pi$, namely a surface of the form $M=M(\pi, \lambda, \tau)$ where $\lambda \in {(\mathbb{R}^+)}^\A$ and $\tau \in \Theta_\pi$. 
Denote by  $\Sigma$ the set of marked points of $M$. 
Recall from \S~\ref{sec:transl} that the homology classes $[\zeta_\alpha]$, $\alpha \in \A$ of the vectors $\zeta_\alpha$  defined in \eqref{def:zeta} provide a base of relative homology $H_1(M,\Sigma,\Zset)$ (see Remark~\ref{relativeHidentification}).  
When we identify $\RA$ to $H_1(M,\Sigma,\Rset)$ as explained in Remark~\ref{relativeHidentification}, 
the subspace $H(\pi)$ is identified with the absolute homology group $ H_1( M,\Rset) \subset H_1(M,\Sigma,\Rset) $, see for example \cite{Y2}. This homological interpretation will be discussed further in Section \ref{sec:homological}.


\smallskip
The following result is well known, we recall its proof for convenience.


\begin{proposition}\label{p2}
For any $\pi \in \R$, the open cone $C(\pi) \subset H(\pi)$ is not empty.
\end{proposition}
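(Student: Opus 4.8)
The plan is to show $\C(\pi) = \C \cap H(\pi)$ is nonempty by exhibiting an explicit element, using the concrete suspension picture. Recall that $H(\pi) = {\rm Im}\,\Omega_\pi$ and that for any suspension vector $\tau \in \Theta_\pi$ we have the associated height vector $q = -\Omega_\pi\tau \in H(\pi)$; moreover, as observed in \S\ref{sec:transl} (in the discussion of the general zippered rectangle construction), the suspension condition \eqref{def:suspdatum} forces $q_\alpha > 0$ for every $\alpha \in \A$, i.e.\ $q \in \C$. Hence $q \in \C \cap H(\pi) = \C(\pi)$, and the cone is nonempty. Since $\Theta_\pi$ is nonempty precisely when $\pi$ is irreducible (which is part of the standing assumptions on combinatorial data in a Rauzy class), this argument applies to every $\pi \in \R$.

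First I would recall the definition $H(\pi) = {\rm Im}\,\Omega_\pi$ and note that irreducibility of $\pi$ guarantees $\Theta_\pi \neq \emptyset$; in fact one may simply take $\tau = \tau^{can}$ as in \eqref{cansusp}. Next I would set $q := -\Omega_\pi \tau^{can}$, which visibly lies in ${\rm Im}\,\Omega_\pi = H(\pi)$. Then I would invoke the computation already displayed in \S\ref{sec:transl},
$$
q_\alpha = \sum_{\pi_t(\beta) < \pi_t(\alpha)} \tau_\beta \;-\; \sum_{\pi_b(\beta) < \pi_b(\alpha)} \tau_\beta,
$$
and conclude from the defining inequalities \eqref{def:suspdatum} of a suspension vector that $q_\alpha > 0$ for all $\alpha \in \A$, i.e.\ $q \in \C$. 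Therefore $q \in \C(\pi)$ and the cone is nonempty.

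There is essentially no obstacle here: the statement is, as the text says, well known, and the only content is to recall that the height vector of a zippered rectangle construction lies simultaneously in the image of $\Omega_\pi$ (by construction) and in the positive cone (by the suspension inequalities). The one point worth stating carefully is that $\C(\pi)$ is open \emph{as a subset of the subspace $H(\pi)$} (not of $\RA$), so that nonemptiness is not automatic from a boundary point; but the vector $q$ we produce has all coordinates strictly positive, hence lies in the interior $\C$ of the positive cone of $\RA$, and a fortiori in the relatively open set $\C \cap H(\pi)$. This completes the proof.
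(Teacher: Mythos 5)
Your proof is correct, but it takes a genuinely different and in fact more direct route than the paper's. The paper first reduces, via the invariance property \eqref{p1}, to the case of a \emph{standard} vertex $\pi$ (one with $\pi_t(\alpha_t)=\pi_b(\alpha_b)=d$ and $\pi_t(\alpha_b)=\pi_b(\alpha_t)=1$, which exists in every Rauzy class), and then exhibits an explicit vector with positive entries by writing down the coordinates of an absolute homology class — the class of the loop in the suspension polygon joining $U_0$ to $U_d$ and $U_1$ to $U_{d-1}$, which equals $2\sum_\alpha \zeta_\alpha - \zeta_{\alpha_t} - \zeta_{\alpha_b}$. You instead observe that the height vector $q=-\Omega_\pi\tau$ of the zippered rectangle construction already does the job for \emph{any} $\pi$ and any $\tau\in\Theta_\pi$: it lies in $H(\pi)={\rm Im}\,\Omega_\pi$ tautologically, and the suspension inequalities \eqref{def:suspdatum} give $q_\alpha>0$, a computation the paper already spells out in \S\ref{sec:transl}. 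This avoids both the reduction to standard permutations and the explicit homological argument; it is shorter and more self-contained (pure linear algebra once $\Theta_\pi\neq\emptyset$ is known, which the paper supplies via $\tau^{can}$). The paper's proof has the compensating advantage of producing an \emph{integral} vector in $\C(\pi)$, which can be convenient elsewhere, but for the bare nonemptiness claim your argument is cleaner. The one caveat you flagged — that openness of $\C(\pi)$ is relative to $H(\pi)$, so one genuinely needs a point with all coordinates strictly positive — is correctly handled, since $q_\alpha>0$ strictly for every $\alpha$.
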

\begin{proof}
Recall that an element $\pi = (\pi_t,\pi_b)$ is {\it standard} if there exist letters $\alpha_t, \alpha_b \in \A$ such that $\pi_t(\alpha_t) = \pi_b(\alpha_b) =d$ and 
$\pi_t(\alpha_b) = \pi_b(\alpha_t)=1$. Every Rauzy class contains at least one standard vertex. 
By \eqref{p1}, it is sufficient to prove that $C(\pi)$ is not empty when $\pi$ is standard. 


Consider the (absolute) homology class $\zeta \in H_1( M,\Zset) $ associated to the loop obtained by concatenation of paths in the polygon joining $U_0$ to $U_d$
and $U_1$ to $U_{d-1}$ (this is a loop because $\pi$ is standard). One has
$$
\zeta = 2 \sum_{\alpha \in \A} \zeta_{\alpha} - \zeta_{\alpha_t} - \zeta_{\alpha_b} =: \sum x_\alpha \zeta_\alpha .
$$
As $\zeta$ is an absolute homology class, one has $x \in H(\pi)$. Obviously, $x$ belongs also to $\C$.
\end{proof}

For later reference, we also state the 
\begin{proposition}\label{p4}
There exists a constant $C = C(\R) >1$ such that, for any path $\gamma: \pi \to \pi'$ in $\D$, one has, for $B:= B_{\gamma}$
$$ \Vert B_{\vert H(\pi)} \Vert \leq \Vert B \Vert \leq C \Vert B_{\vert H(\pi)} \Vert .$$
\end{proposition}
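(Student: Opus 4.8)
The statement to prove is Proposition~\ref{p4}: there is a constant $C=C(\R)>1$ so that for every path $\gamma:\pi\to\pi'$ in $\D$, writing $B=B_\gamma$, one has $\Vert B_{\vert H(\pi)}\Vert \le \Vert B\Vert \le C\,\Vert B_{\vert H(\pi)}\Vert$. The left inequality is immediate: $H(\pi)$ is a subspace of $\RA$, the norm $\Vert\cdot\Vert$ in \eqref{def:norm} is the operator norm for the $\ell_\infty$-norm, and restricting an operator to a subspace can only decrease the operator norm. So the whole content is the right inequality, i.e.\ a \emph{reverse} comparison: the action of $B$ on all of $\RA$ is controlled, up to a fixed multiplicative constant, by its action on the absolute-homology subspace $H(\pi)$.

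The plan is to exploit that $B=B_\gamma$ has nonnegative integer entries and that the cone $\C(\pi)=\C\cap H(\pi)$ is nonempty (Proposition~\ref{p2}). First I would observe that $\Vert B\Vert = \max_\alpha \sum_\beta B_{\alpha\beta} = \max_\alpha B_\alpha$, the largest row sum, equivalently $\Vert B\Vert = \Vert B \mathbf{1}\Vert_\infty$ where $\mathbf{1}=(1,\dots,1)$ is the all-ones vector, \emph{using crucially that all entries of $B$ are $\ge 0$}. Now fix once and for all, for each vertex $\pi$ in the (finite) Rauzy class $\R$, a vector $v_\pi \in \C(\pi)$; since $\C(\pi)$ is an open cone and $\R$ is finite, we may normalize so that every coordinate of every $v_\pi$ lies in a fixed interval $[c_0, c_1]$ with $0<c_0\le c_1$, where $c_0,c_1$ depend only on $\R$. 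Then coordinatewise $c_0\,\mathbf{1}\le v_\pi \le c_1\,\mathbf{1}$, and since $B$ has nonnegative entries, $c_0\, B\mathbf{1}\le B v_\pi \le c_1\, B\mathbf{1}$ coordinatewise, whence $\Vert B v_\pi\Vert_\infty \ge c_0 \Vert B\mathbf{1}\Vert_\infty = c_0 \Vert B\Vert$. On the other hand $v_\pi\in H(\pi)$, so $\Vert B v_\pi\Vert_\infty = \Vert B_{\vert H(\pi)} v_\pi\Vert_\infty \le \Vert B_{\vert H(\pi)}\Vert\, \Vert v_\pi\Vert_\infty \le c_1 \Vert B_{\vert H(\pi)}\Vert$ (the last step using $\Vert v_\pi\Vert_\infty \le c_1$, which holds after a further harmless normalization, or one simply absorbs $\max_\pi \Vert v_\pi\Vert_\infty$ into the constant). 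Combining, $\Vert B\Vert \le (c_1/c_0)\, \Vert B_{\vert H(\pi)}\Vert$, so $C:=c_1/c_0$ (taken $>1$) works and depends only on $\R$.

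The one point that needs a little care — and the place where I expect the only real friction — is the claim that the $v_\pi$ can be chosen with coordinates bounded below and above by constants depending only on $\R$: this is where nonemptiness of the \emph{open} cone $\C(\pi)$ (Proposition~\ref{p2}) is used, namely each $\C(\pi)$ contains a vector all of whose coordinates are strictly positive, and one then rescales so that, say, the minimum coordinate equals $1$; finiteness of $\R$ makes the resulting maximum coordinate bounded by a single constant $c_1=c_1(\R)$. (Concretely one can even take the explicit vector $x = 2\sum_\alpha\zeta_\alpha - \zeta_{\alpha_t}-\zeta_{\alpha_b}$ from the proof of Proposition~\ref{p2} at standard vertices and push it around by the $B_\gamma$'s for the finitely many short paths to non-standard vertices, but the soft argument above suffices.) Everything else is the elementary monotonicity of $v\mapsto Bv$ for entrywise-nonnegative $B$ together with the identification $\Vert B\Vert = \Vert B\mathbf 1\Vert_\infty$ and the trivial bound $\Vert B_{\vert H(\pi)}\Vert \le \Vert B\Vert$.
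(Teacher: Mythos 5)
Your proof is correct and is essentially the paper's own argument: fix a positive vector $v_\pi$ in the nonempty open cone $\C(\pi)\subset H(\pi)$ (Proposition~\ref{p2}), use nonnegativity of the entries of $B$ to compare $Bv_\pi$ with $B\mathbf 1$ coordinatewise, and absorb the ratio $\max_\beta (v_\pi)_\beta / \min_\beta (v_\pi)_\beta$ over the finitely many $\pi\in\R$ into the constant $C$. The paper phrases the same chain of inequalities directly as $\min_\beta v_\beta\,\Vert B\Vert \le \max_\alpha (Bv)_\alpha \le \Vert B_{\vert H(\pi)}\Vert \max_\beta v_\beta$, which is your comparison with slightly less scaffolding.
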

\begin{proof}
Recall that we choose as norm the operator norm for the $\ell^{\infty}$-norm on $ \RA$, see \eqref{def:norm}. The first inequality is trivial. The second is a consequence of Proposition \ref{p2}.
Indeed, fix a vector $v$ in $\C(\pi)$; one has $ \Vert B \Vert = \max_\alpha \sum_\beta B_{\alpha\,\beta}$. Hence, writing $w = Bv$,
\[
\min_\beta v_\beta\ \Vert B \Vert \leq 
\max_\alpha \sum_\beta B_{\alpha\,\beta} \; v_\beta 
= \max_\alpha w_\alpha \leq    \Vert B_{\vert H(\pi)} \Vert  \max_\beta v_\beta,
\]
which gives the required estimate with $C = \max_\pi (\frac {\max_\beta v_\beta}{\min_\beta v_\beta})$. 

\end{proof}

\subsection{Absolute Roth type i.e.m.}\label{ssabs}
To define the (restricted) \emph{absolute} Roth type Diophantine condition for  i.e.m. we will modify the definition of restricted Roth type recalled in the previous section. 
We will not change the last three conditions (b), (c) and (d),  but replace Condition (a) by a weaker Condition (a)'. 

Let $\underline{\gamma}$ be the rotation number of an i.e.m. with no connection and let $\pi^{(n)}$, $n\geq 0$ be the vertices of the paths and $B(m,n)$, $n>m$, the associated Rauzy-Veech matrices (see \S~\ref{secRV}). 
To introduce condition (a)', we set $n_0 =0$ and define $n_k$ for $k>0$ as the 
 smallest integer $n>  n_{k-1}$ such that the matrix $B( n_{k-1},n )$ satisfies

$$  \overline{B(n_{k-1},n)(\C(\pi^{(n_{k-1})}))} \subset \C(\pi^{(n)}) \cup \{ 0 \}.$$ 

Then we ask that
 $$ {(\rm a)'} \qquad \forall \tau >0, \; \exists \, C_{\tau} >0, \; \forall k> 0, \; \Vert B( n_{k-1}, n_k) \Vert \leq C_{\tau} \Vert B( n_0,  n_{k-1}) \Vert^{\tau}.$$

\begin{definition}\label{d1} 
An i.e.m with no connection is of  {\it absolute Roth type} if its rotation number satisfies conditions (a)', (b), (c), (d).
\end{definition}

The following Lemma shows that the absolute Roth type condition is weaker than the Roth type condition. 
\begin{lemma}\label{lem:Roth_implies_absRoth}
A rotation number which satisfies condition {\rm (a)} also satisfies condition {\rm (a)'}. Thus, (restricted) Roth type i.e.m. are  (restricted) absolute Roth type i.e.m.
\end{lemma}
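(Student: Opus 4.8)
The goal is to show that condition (a) implies condition (a)$'$, where (a) is formulated in terms of the times $\hat n_k$ at which concatenated paths first become complete (equivalently, by Remark~\ref{remRoth0}, the times $\wt n_k$ at which the Rauzy--Veech matrix first becomes strictly positive), whereas (a)$'$ is formulated in terms of the times $n_k$ at which $B(n_{k-1}, n)$ maps $\overline{\C(\pi^{(n_{k-1})})}$ into $\C(\pi^{(n)}) \cup \{0\}$, i.e.\ becomes ``positive relative to the absolute-homology cone''. The plan is to compare the two sequences of times: since a strictly positive matrix certainly maps the full positive cone $\C$ (hence the subcone $\C(\pi)$) into $\C$, each $n_k$ occurs no later than the corresponding $\wt n_k$; conversely I will argue that $n_k$ cannot occur much earlier, so that the two sequences are \emph{interleaved up to a bounded multiplicative/additive shift}. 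This boundedness, combined with the fact that the matrix norms $\Vert B(0,n)\Vert$ grow at a controlled rate across a single Rauzy--Veech step (each step multiplies the norm by a bounded factor since $B_\gamma = \Iset + E_{\alpha\beta}$), lets one transfer the subpolynomial bound from one subsequence of times to the other.

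More concretely, first I would record the elementary monotonicity: if $B(m,n)$ has strictly positive entries then $\overline{B(m,n)(\C(\pi^{(m)}))} \subset \C(\pi^{(n)}) \cup \{0\}$, because $\C(\pi^{(m)}) \subset \C$ and a strictly positive matrix sends $\overline{\C}\setminus\{0\}$ into $\C$, while $B(m,n)$ preserves the absolute-homology subspace by~\eqref{p1}; hence $\overline{B(m,n)(\C(\pi^{(m)}))} \subset \C(\pi^{(n)})$. This gives $n_1 \le \wt n_1$, and inductively (using the cocycle relation $B(n_{k-1},n) = B(\wt n_{k-1}, n)\, B(n_{k-1}, \wt n_{k-1})$ together with the fact that $B(n_{k-1},\wt n_{k-1})$ has nonnegative entries and $B(\wt n_{k-1}, \cdot)$ becomes strictly positive by time $\wt n_k$) one obtains $n_k \le \wt n_k$ for all $k$, \emph{provided} one checks the indices line up, i.e.\ that between consecutive $\wt n$-times at most one $n$-time can occur — or more safely, that $n_k \le \wt n_k$ without worrying about exact alignment, then control the reverse gap.

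For the reverse direction — bounding how far $\wt n_k$ can lag behind $n_k$ — the key point is that once $B(n_{k-1}, n_k)$ maps $\overline{\C(\pi^{(n_{k-1})})}$ strictly inside $\C(\pi^{(n_k)})$, the path $\gamma(n_{k-1}, n_k)$ is already ``rich enough'' that appending a bounded number of further arrows (those needed to complete a $(2d-3)$-complete, hence everywhere-positive, path, as quoted from \cite{MMY1} in Remark~\ref{remRoth0}) produces a strictly positive matrix; so $\wt n_{k} \le n_{k} + c_0$ for a constant $c_0 = c_0(\R)$ depending only on the Rauzy class, or at worst $\wt n_{k+1} \le n_{k+j_0}$ for a fixed $j_0$. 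Combined with $n_k \le \wt n_k$, this sandwich shows the two partitions of $\mathbb{N}$ into blocks are mutually refined up to bounded overlap. Finally I would use Proposition~\ref{p4} (or just the trivial bound $\Vert B_\gamma \Vert \le \Vert B_{\gamma_n}\Vert\cdots$ and $\Vert \Iset + E_{\alpha\beta}\Vert = 2$) to see $\Vert B(n, n+1)\Vert \le 2$, so that over a bounded number of extra steps the matrix norm changes by at most a bounded factor; hence the estimate $\Vert B(\wt n_{k-1}, \wt n_k)\Vert \le C_\tau \Vert B(0, \wt n_{k-1})\Vert^\tau$ from (a) translates into $\Vert B(n_{k-1}, n_k)\Vert \le C'_{\tau} \Vert B(0, n_{k-1})\Vert^{\tau}$ (possibly after replacing $\tau$ by $\tau/2$ and absorbing constants), which is exactly (a)$'$. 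The last sentence of the Lemma then follows immediately, since (a)$'$ together with (b), (c), (d) is the definition of (restricted) absolute Roth type and (a) together with those is (restricted) Roth type.

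The main obstacle I expect is the bookkeeping in the reverse inequality: making precise the claim that ``strict inclusion of the absolute cone'' after a path forces strict positivity of the matrix after only boundedly many more arrows. This is plausible because $\C(\pi)$ spans $H(\pi)$ and the path already acts as a contraction on the projectivized cone, but one has to be careful that positivity on the $2g$-dimensional subspace $H(\pi)$ does not automatically give positivity of all $d^2$ entries when $s>1$; the fix is to invoke the combinatorial completeness statement from \cite{MMY1} (every $(2d-3)$-complete path has all-positive matrix) and show the $n_k$-defining condition forces the underlying path to be complete, so that at most $2d-3$ further complete-ing steps are needed. Handling the edge case $s=1$ (where $H(\pi) = \RA$ and the two conditions literally coincide) is immediate and can be dispatched first.
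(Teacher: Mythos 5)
Your opening observation is correct and identical to the paper's: if $B(m,n)$ has strictly positive entries then $\overline{B(m,n)(\C(\pi^{(m)}))} \subset \C(\pi^{(n)}) \cup \{0\}$, whence $n_k \le \wt n_k$. The trouble is with the reverse-direction estimate, where you write that ``$\wt n_k \le n_k + c_0$ for a constant $c_0 = c_0(\R)$'' because ``appending a bounded number of further arrows\ldots produces a strictly positive matrix.'' This is false: a single \emph{complete} path (one where every letter wins at least once) can consist of an arbitrarily large number of Rauzy--Veech steps — for example, arbitrarily many arrows of top type followed by a handful of bottom arrows. The ``$(2d-3)$-complete $\Rightarrow$ positive matrix'' fact from \cite{MMY1} bounds the number of \emph{complete} blocks needed, not the number of \emph{arrows}, so it gives no bound on $\wt n_k - n_k$. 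Your fallback phrasing ``at worst $\wt n_{k+1} \le n_{k+j_0}$'' is also not a priori true (and in any case you don't use it in the subsequent step, which relies on $\|B(n,n+1)\| \le 2$ over ``a bounded number of extra steps''). You correctly flagged this as ``the main obstacle,'' but the proposed fix does not work.

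The paper sidesteps any bound on step counts. For each $k$, it chooses $j$ to be the smallest index with $\wt n_j \ge n_k$, so that $\wt n_{j-1} < n_k \le \wt n_j$. It then shows $n_{k+1} \le \wt n_{j+1}$ directly — because $B(\wt n_j, \wt n_{j+1}) > 0$ forces $B(n_k, \wt n_{j+1}) > 0$, which by the elementary observation forces the absolute-cone condition at time $\wt n_{j+1}$, so the next $n$-time $n_{k+1}$ can be no later. From the sandwich $\wt n_{j-1} < n_k \le n_{k+1} \le \wt n_{j+1}$ one gets $\|B(n_k, n_{k+1})\| \le \|B(\wt n_{j-1}, \wt n_{j+1})\|$ by entrywise monotonicity, and the right-hand side is then bounded using condition (a) \emph{itself} (applied to the $\wt n$-sequence) rather than any control on $\wt n_j - n_k$: one writes $\|B(\wt n_{j-1}, \wt n_{j+1})\| \le \|B(\wt n_{j-1},\wt n_j)\| \, \|B(\wt n_j, \wt n_{j+1})\|$, applies (a) twice, and converts $\|B(0,\wt n_j)\|$ back to a power of $\|B(0,n_k)\|$ via one more application of (a) to bound $\|B(n_k,\wt n_j)\| \le \|B(\wt n_{j-1},\wt n_j)\|$. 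Since $\tau > 0$ is arbitrary, the resulting exponent $2\tau + \tau^2$ can be made as small as one likes, yielding (a$'$). If you replace your bounded-step-count step with this use of condition (a) on the $\wt n$-blocks, the rest of your outline (and the closing remark deducing the lemma's second sentence from the definitions) goes through.
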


 \begin{proof}
 Observe first that, if $B(m,n)$, where $0\leq m < n$, has positive coefficients, we have
\begin{equation}\label{nested_inclusion}
  \overline{B(m,n)(\C(\pi^{(m)}))} \subset \C(\pi^{(n)}) \cup \{ 0 \}.
\end{equation}
Let $(n_k)_{k\geq 0}$ be the subsequence of times   used to defined absolute Roth type condition $(\rm a)'$ and let $(\wt n_k)_{k\geq 0}$ be the subsequence of times defined in Remark~\ref{remRoth0} so that the matrix $B(\wt n_{k-1},\wt n_k)$ has positive coefficients. Fix some $k\geq 0$. Let $j:= j(k)$ be the smallest $l$ such that $\wt n_l\geq n_k$. We claim that we have the inequalities
$$
\wt n_{j-1}< n_k\leq  \wt n_{j} , \qquad    n_{k+1} \leq \wt n_{j+1} .
$$
The first set of inequalities follow from the definition of $j=j(k)$; to prove the last inequality, notice that, since $B( \wt n_{j}, \wt n_{j+1})>0$, also $B( n_k, \wt n_{j+1})>0$
and thus by \eqref{nested_inclusion} we have that
$$
\overline{B( n_{k},\wt n_{j+1} )(\C(\pi^{(n_{k})}))} \subset \C(\pi^{(\wt n_{j+1})}) \cup \{ 0 \}.
$$
This, by definition of the sequence $(n_{k})_{k\geq 0}$, implies that $n_{k+1}\leq \wt n_{j+1}$. 

Recall that by Remark~\ref{remRoth0}, condition  $(\rm a)$ is equivalent to  asking that for all $ \tau >0$,  there exists $C_{\tau} >0$ such that $\Vert B(\wt n_{k-1},\wt n_k) \Vert \leq C_{\tau} \Vert B(\wt n_0, \wt n_{k-1}) \Vert^{\tau}$ for all positive integers $k$. Thus, using the inequalities among indexes and the cocycle identity, we get
\begin{align*}
 \Vert B( n_{k}, n_{k+1}) \Vert  \leq   \Vert B( \wt n_{j-1},\wt n_{j+1}) \Vert   &  
 \leq C^2_{\tau} \Vert B(0, \wt n_{j-1}) \Vert^{\tau} \Vert B(0, \wt n_{j}) \Vert^{\tau} 
\\
 & \leq C^2_{\tau} \Vert B(0, \wt n_{j-1}) \Vert^\tau \Vert B(0,  n_{k}) \Vert^{\tau} \Vert B(n_k, \wt n_{j}) \Vert^{\tau} 
  \\ & \leq C^2_{\tau}  \Vert B( 0,  n_{k}) \Vert^{2 \tau} \Vert B(\wt n_{j-1}, \wt n_{j}) \Vert^{\tau}  
\leq C'  \Vert B( 0,  n_{k}) \Vert^{2 \tau + \tau^2}  . 
\end{align*}
for some $C'=C'(\tau)$. Since $\tau>0$ was arbitrary, this shows that $(\rm a)'$ holds in this case. 
 \end{proof}
\begin{remark} On the other hand, the converse of Lemma \ref{lem:Roth_implies_absRoth} is in general not true  (for $s>1$), namely there exists i.e.m which are of absolute Roth type but \emph{not} of Roth type. For instance when $d=3$, one can construct i.e.m. which are obtained from  a Roth type rotation number by marking a point and there is a set of measure zero but full Hausforff dimension of marked points for which the $3$ i.e.m. fails to be Roth type.
\end{remark}

\smallskip

Finally, let us remark that since (restricted) Roth type i.e.m. have full measure, from Lemma \ref{lem:Roth_implies_absRoth} it also follows that (restricted) absolute Roth type i.e.m. also have full measure. 

\subsection{Two crucial estimates}\label{sec:crucialestimates}
We state and prove in this section  two crucial estimates on the lengths of induced {\Green subintervals} (see Corollary \ref{c1} and Proposition \ref{p6})  which  are needed in particular to prove results on the cohomological equation, but are also of independent interest (see for example {\cite{Kim,KM1}}).  
These estimates were proved for i.e.m. of  Roth type in \cite{MMY1}.
They require a different proof for i.e.m. of \emph{absolute} Roth type, which is given in this section. Once these estimates are proved, one can easily generalize all the main results on the cohomological equation proved in  \cite{MMY1, MY} for (restricted) Roth type i.e.m.  to (restricted) absolute Roth type i.e.m. (see \S~\ref{sec:results} and Appendix \ref{app:proofs}). 
  We present the proof of these estimates this section, since they show how the absolute Roth type condition is  used.


\smallskip
 Let $T$ be an i.e.m with combinatorial data in $\R$ and no connection. Let $\ug$ be the rotation number of $T$. Let $(T^{(n)})_{n \geq 0} $ be the sequence of i.e.m obtained from $T=: T^{(0)}$
by the Rauzy-Veech algorithm. Recall that, for $n\geq 1 $,  $T^{(n)}$ acts on $I^{(n)}\subset I^{(n-1)}$ exchanging the {\Green subintervals} $I_{\alpha}^{(n)}:= I_{\alpha}^t( T^{(n)})$, $\alpha \in \mathcal{A}$. Recall from \cite{MMY1} (see also Lemma 3.14 of \cite{MY}) that condition (a) in the Roth type definition implies the following estimates on  the lengths of induced {\Green subintervals}.

\begin{proposition}\label{propo1}
Assume that $\ug$ satisfies condition {\rm (a)}. Then, for any $\tau >0$, there exists $C_{\tau} >0$ such that, for any $\alpha \in \A$, any $n \geq 0$, one has
$$
C_{\tau}^{-1} \ \vert I^{(0)} \vert \ \Vert B(0,n) \Vert^{-1-\tau} \leq \vert I_{\alpha}^{(n)} \vert \leq C_{\tau} \ \vert I^{(0)}  \vert \ \Vert B(0,n) \Vert^{-1+\tau}.
$$
\end{proposition}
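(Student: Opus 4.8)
The starting point is the dynamical interpretation of the Rauzy--Veech matrices recalled in $\S$\ref{sec:algorithms}: for $n \ge 0$, the column sums $B_\alpha(0,n) = \sum_{\beta} B(0,n)_{\alpha\beta}$ are exactly the return times of $I_\alpha^{(n)}$ into $I^{(n)}$ under $T^{(0)}$, so that the subintervals $\{(T^{(0)})^j(I_\alpha^{(n)}) : \alpha \in \A,\ 0 \le j < B_\alpha(0,n)\}$ tile $I^{(0)}$ up to measure zero. Taking Lebesgue measure gives the exact identity $|I^{(0)}| = \sum_\alpha B_\alpha(0,n)\, |I_\alpha^{(n)}|$. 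This immediately yields the \emph{lower} bound: since $B_\alpha(0,n) \le \Vert B(0,n)\Vert$ for every $\alpha$, we get $|I^{(0)}| \le \Vert B(0,n)\Vert \sum_\alpha |I_\alpha^{(n)}| \le d\,\Vert B(0,n)\Vert\, \max_\alpha |I_\alpha^{(n)}|$; combined with a reverse comparison between $\max_\alpha |I_\alpha^{(n)}|$ and each $|I_\alpha^{(n)}|$, one obtains a lower bound of the form $|I_\alpha^{(n)}| \ge c\, |I^{(0)}|\, \Vert B(0,n)\Vert^{-1-\tau}$, where the loss of $\Vert B(0,n)\Vert^{-\tau}$ comes from the distortion control in the next step. (In fact no $\tau$ is needed for the lower bound if one is careful, but stating it with $\tau$ is harmless.)

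The \emph{upper} bound is the substantive part and is where condition (a) is used. The idea is the standard one for Roth type: decompose the window $[0,n]$ along the accelerated times $0 = \hat n_0 < \hat n_1 < \cdots$ (equivalently the positivity times $\wt n_k$ of Remark~\ref{remRoth0}), say $\wt n_{k-1} \le n < \wt n_k$ for the relevant $k$. On each block $[\wt n_{j-1}, \wt n_j]$ the matrix $B(\wt n_{j-1}, \wt n_j)$ has strictly positive coefficients, which gives \emph{bounded distortion} of the length vector across that block: there is a constant depending only on $\R$ such that for all $\alpha,\alpha'$, $|I_{\alpha}^{(\wt n_{j-1})}| \asymp |I_{\alpha'}^{(\wt n_{j-1})}|$ relative to the action of a positive matrix, hence $\max_\alpha |I_\alpha^{(\wt n_{j-1})}| \le C \min_\alpha |I_\alpha^{(\wt n_{j-1})}|$ after passing through one positive block, uniformly in $j$. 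Using the tiling identity at level $\wt n_{j-1}$ (relative to level $\wt n_j$) together with this distortion bound, one gets $\max_\alpha |I_\alpha^{(\wt n_j)}| \le C\, \Vert B(\wt n_{j-1}, \wt n_j)\Vert^{-1}\, \max_\alpha |I_\alpha^{(\wt n_{j-1})}|$ — i.e., the maximal length shrinks by roughly the inverse of the block matrix norm at each step. Iterating over $j = 1, \dots, k-1$ and then absorbing the last partial block $[\wt n_{k-1}, n]$ (which only helps, since lengths decrease) yields
$$\max_\alpha |I_\alpha^{(n)}| \le C^{k}\, |I^{(0)}|\, \prod_{j=1}^{k-1} \Vert B(\wt n_{j-1}, \wt n_j)\Vert^{-1}.$$
Finally, condition (a) in the form \eqref{a_paper} says precisely that each factor $\Vert B(\wt n_{j-1}, \wt n_j)\Vert$ is subpolynomial in $\Vert B(0, \wt n_{j-1})\Vert$, and a telescoping/submultiplicativity argument (together with the fact that $k$ itself is $O(\log \Vert B(0,n)\Vert)$, so $C^k$ is subpolynomial) converts the product into $\Vert B(0,n)\Vert^{-1+\tau}$ for arbitrary $\tau > 0$.

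The step I expect to be the main obstacle is making the telescoping in the last paragraph fully rigorous and uniform: one must bound $\prod_{j} \Vert B(\wt n_{j-1}, \wt n_j)\Vert^{-1}$ from above by $c_\tau \Vert B(0,n)\Vert^{-1+\tau}$, which requires combining the lower bound $\Vert B(0,n)\Vert \le \prod_j \Vert B(\wt n_{j-1}, \wt n_j)\Vert$ (trivial from submultiplicativity) with the upper control on each individual factor from (a), and showing that the accumulated constants $C^k$ and the accumulated $\tau$-losses over the $k \sim \log\Vert B(0,n)\Vert$ blocks stay subpolynomial. This is exactly the bookkeeping carried out in \cite{MMY1} (and recalled in Lemma~3.14 of \cite{MY}); since here we only assume condition (a) (not (a)'), the argument is \emph{literally} the one from those references, and the role of this Proposition in the present paper is merely to record the Roth-type statement that the subsequent Corollary~\ref{c1} and Proposition~\ref{p6} will \emph{upgrade} to the absolute Roth type setting using the cones $\C(\pi)$. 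Accordingly I would keep this proof short, citing \cite{MMY1} and Lemma~3.14 of \cite{MY} for the details and only indicating the tiling identity and the distortion-via-positive-blocks mechanism as above.
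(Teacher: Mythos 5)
The paper does not prove Proposition~\ref{propo1}: it is introduced with ``Recall from \cite{MMY1} (see also Lemma 3.14 of \cite{MY})\ldots'' and used as a black box. So your decision to keep the argument short and cite those references is in line with what the paper itself does, and you correctly identify the proposition's role (recording the Roth-type estimate that Corollary~\ref{c1} and Proposition~\ref{p6} then upgrade to the absolute setting).

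However, two statements inside your sketch are wrong. First, the parenthetical ``no $\tau$ is needed for the lower bound if one is careful'' is false: for general $n$ with $\wt n_{k-1}\le n<\wt n_k$ the lengths $|I_\alpha^{(n)}|$ need not be balanced, and the gap $\Vert B(\wt n_{k-1},\wt n_k)\Vert$, while subpolynomial in $\Vert B(0,n)\Vert$ by condition (a), is not bounded, so a genuine loss of $\Vert B(0,n)\Vert^{-\tau}$ is unavoidable in the lower bound. Second, your pivotal recursion $\max_\alpha |I_\alpha^{(\wt n_j)}| \le C\,\Vert B(\wt n_{j-1},\wt n_j)\Vert^{-1}\,\max_\alpha |I_\alpha^{(\wt n_{j-1})}|$ does not follow from positivity of $B(\wt n_{j-1},\wt n_j)$. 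Positivity gives $\min_\alpha |I_\alpha^{(\wt n_{j-1})}|\ge \max_\beta|I_\beta^{(\wt n_j)}|$, but to descend from $\min$ to the norm of the block one needs the row sums $B_\alpha(\wt n_{j-1},\wt n_j)$ to all be comparable to $\Vert B(\wt n_{j-1},\wt n_j)\Vert$, which is false in general (one row sum can be $\asymp\Vert B\Vert$ while another is $\asymp d$). As stated, your per-block shrinking step would give only $\min_\alpha|I_\alpha^{(\wt n_j)}|\lesssim \Vert B(\wt n_{j-1},\wt n_j)\Vert^{-1}\max_\alpha|I_\alpha^{(\wt n_{j-1})}|$, and upgrading $\min$ to $\max$ at level $\wt n_j$ costs exactly the $\tau$ loss from Remark~\ref{maxmin}. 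The cleaner route --- and the one actually followed in \cite{MMY1} and mirrored in this paper for condition (a$'$) in Proposition~\ref{p3} and Corollary~\ref{c1} --- is to first prove $\min_\alpha B_\alpha(0,n)\ge C_\tau^{-1}\Vert B(0,n)\Vert^{1-\tau}$ (using positivity at the accelerated times plus monotonicity in between), and then read off the upper bound on $|I_\alpha^{(n)}|$ from the exact tiling identity $|I^{(0)}|=\sum_\alpha B_\alpha(0,n)\,|I_\alpha^{(n)}|\ge B_\alpha(0,n)\,|I_\alpha^{(n)}|$; the lower bound then follows from $|I_\alpha^{(n)}|\ge |I_\alpha^{(\wt n_k)}|\ge |I^{(\wt n_{k+1})}|\ge |I^{(0)}|/\Vert B(0,\wt n_{k+1})\Vert$ together with condition (a) to absorb $\Vert B(0,\wt n_{k+1})\Vert$ into $\Vert B(0,n)\Vert^{1+\tau}$.
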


{
\begin{remark}\label{maxmin} One can also see (cf.~\cite{MMY2} Appendix C, Proposition C1, p. 1639) that condition {\rm (a)} of Roth type is equivalent to the following: for all $\tau>0$, there exists $C>0$ such that for all $n \in \mathbb{N}$, we have
$$
\max_{\alpha \in \A}  \vert I_{\alpha}^{(n)} \vert\leq C \left( \min_{\alpha \in \A}  \vert I_{\alpha}^{(n)}\right)^{1-\tau}.
$$ 
\end{remark}
}

We will find analogues  estimates to those provided by Prop.~\ref{propo1} when $\ug$ satisfies only the weaker condition {\rm (a')}. We deal first with the upper bound, which is the easiest one (and the same than in proposition \ref{propo1}). The upper bound will follow from the following Proposition (see Corollary \ref{c1} below). 

\begin{proposition}\label{p3}
Assume that $\ug$ satisfies condition (a'). Then, for any $\tau >0$, there exists $C_{\tau} >0$ such that, for any $\alpha \in \A$, any $n \geq 0$, one has
$$ \sum_{\beta \in \A} B_{\alpha \, \beta}(0,n) \geq C_{\tau}^{-1} \Vert B(0,n) \Vert^{1-\tau}.$$
\end{proposition}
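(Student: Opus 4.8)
The goal is to show that the column sums $\sum_{\beta} B_{\alpha\beta}(0,n)$, which by Proposition~\ref{propo1}'s dynamical interpretation measure return times, grow like $\Vert B(0,n)\Vert^{1-\tau}$, now only assuming the weaker condition (a')' rather than (a). The plan is to use the accelerated times $(n_k)_{k\geq 0}$ from the definition of condition (a'), chosen so that $B(n_{k-1},n_k)$ maps $\overline{\C(\pi^{(n_{k-1})})}$ into $\C(\pi^{(n_k)})\cup\{0\}$. First I would fix $n\geq 0$ and let $k$ be the largest index with $n_k\leq n$, so that $n_k\leq n< n_{k+1}$. The idea is to compare $\sum_\beta B_{\alpha\beta}(0,n)$ with the full norm $\Vert B(0,n)\Vert$ by splitting $B(0,n)=B(n_k,n)\,B(0,n_k)$ and using Proposition~\ref{p4} to pass between $\Vert B(0,n_k)\Vert$ and $\Vert B(0,n_k)_{|H(\pi^{(0)})}\Vert$.

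The key geometric input is that, because $B(n_{k-1},n_k)$ is strictly positive on the cone $\C(\pi^{(n_{k-1})})$ inside $H(\pi^{(n_{k-1})})$, the columns of the restricted matrix are comparable: more precisely, fixing a reference vector $v\in\C(\pi^{(0)})$ as in the proof of Proposition~\ref{p4}, the vector $w=B(0,n_k)v$ lies in $\C(\pi^{(n_k)})$ and satisfies $\max_\alpha w_\alpha \asymp \Vert B(0,n_k)_{|H}\Vert \asymp \Vert B(0,n_k)\Vert$ (up to the constant $C(\R)$). Since $v$ has entries bounded above and below by absolute constants, $w_\alpha = \sum_\beta B_{\alpha\beta}(0,n_k) v_\beta \asymp \sum_\beta B_{\alpha\beta}(0,n_k)$ for the index achieving the max — but I actually need this for \emph{every} $\alpha$, not just the maximal one. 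This is where I would invoke a projective-contraction argument: applying one more block $B(n_k, n_{k+1})$ (or a few blocks) makes all coordinates comparable, i.e.\ the cone $B(n_{k-1},n_k)(\overline{\C})$ is compactly contained in $\C$, so there is a Birkhoff-type constant $\rho=\rho(\R)$ with $\max_\alpha (\cdot)_\alpha \leq \rho \min_\alpha(\cdot)_\alpha$ on its image. Then $\sum_\beta B_{\alpha\beta}(0,n)\geq \sum_\beta B_{\alpha\beta}(0,n_{k+1})$ is within a bounded factor of $\Vert B(0,n_{k+1})\Vert$ for \emph{all} $\alpha$ — but $n_{k+1}>n$, so I must instead run the argument from $n$ downwards, or estimate $\Vert B(0,n_{k+1})\Vert \leq \Vert B(n_k,n_{k+1})\Vert\,\Vert B(0,n_k)\Vert \leq \Vert B(n_k,n_{k+1})\Vert\,\Vert B(0,n)\Vert$ and control $\Vert B(n_k,n_{k+1})\Vert$.

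Controlling $\Vert B(n_k,n_{k+1})\Vert$ is exactly where condition (a')' enters: it gives $\Vert B(n_k,n_{k+1})\Vert \leq C_\tau \Vert B(0,n_k)\Vert^\tau \leq C_\tau \Vert B(0,n)\Vert^\tau$ for every $\tau>0$. Chaining the estimates: $\sum_\beta B_{\alpha\beta}(0,n) \gtrsim \rho^{-1}\Vert B(0,n_k)_{|H}\Vert \gtrsim \rho^{-1}C(\R)^{-1}\Vert B(0,n_k)\Vert \geq \rho^{-1}C(\R)^{-1}\Vert B(0,n)\Vert\,\Vert B(n_k,n)\Vert^{-1}\ldots$ — here I'd need $\Vert B(n_k,n)\Vert$ bounded by $\Vert B(0,n)\Vert^\tau$ as well, which follows because $n< n_{k+1}$ means $B(n_k,n)$ is a proper sub-path of the block $B(n_k,n_{k+1})$ and hence has smaller norm. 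Putting $\tau' = 2\tau$ and absorbing constants yields $\sum_\beta B_{\alpha\beta}(0,n)\geq C_{\tau'}^{-1}\Vert B(0,n)\Vert^{1-\tau'}$, and since $\tau>0$ is arbitrary this is the claim. The main obstacle I anticipate is the bookkeeping to ensure the lower bound $w_\alpha\gtrsim\Vert B(0,n_k)\Vert$ holds \emph{uniformly in $\alpha$} rather than just for the maximizing index; the clean way around it is precisely the compact-containment of cones (Birkhoff contraction), which is available because $B(n_{k-1},n_k)$ is strictly positive on $\C(\pi^{(n_{k-1})})$ by the very definition of the times $(n_k)$ — this is the place where the absolute Roth condition does its work.
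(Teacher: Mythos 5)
Your overall plan matches the paper's: reduce to the accelerated times $n_k$, use the cone structure preserved by the blocks $B(n_{k-1},n_k)$ to make all column sums comparable, then transfer from $n_k$ to general $n$ by monotonicity of $B_{\alpha\beta}(0,\cdot)$ together with condition (a'). The paper's proof of Proposition~\ref{p3} runs exactly this way, working with a fixed reference vector $u\in\C(\pi^{(0)})$ and comparing $w_\alpha := (B(0,n_k)u)_\alpha$ across $\alpha$.

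The genuine gap is the claimed \emph{uniform} Birkhoff constant $\rho=\rho(\R)$. Compact containment of $B(n_{k-1},n_k)\bigl(\overline{\C(\pi^{(n_{k-1})})}\bigr)$ in $\C(\pi^{(n_k)})$ does not give a spread ratio $\max_\alpha w_\alpha / \min_\alpha w_\alpha$ bounded independently of $k$: the image cone becomes increasingly eccentric as $\Vert B(n_{k-1},n_k)\Vert$ grows. The correct statement is Lemma~\ref{l1}, which bounds the spread by $C\,\Vert B(n_{k-1},n_k)\Vert$ (the proof uses that extreme rays of $\C(\pi)$ are integral and their images under $B_\gamma$ have all coordinates $\geq 1$, so the max is at most $\sim\Vert B_\gamma\Vert$). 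This non-uniformity is precisely where condition (a') is forced to enter the cone step itself, not merely the later step controlling $\Vert B(n_k,n_{k+1})\Vert$. As you have written it, (a') appears only in the transfer from $n_k$ to $n$; the estimate at the times $n=n_k$ would then follow from the combinatorial definition of $(n_k)$ alone, with no Diophantine hypothesis — and the $\tau$-loss would look like an artifact of the last step rather than the essential feature of the statement that it really is. Replacing $\rho$ by $C\Vert B(n_{k-1},n_k)\Vert$ and bounding this, via (a'), by $C_\tau\Vert B(0,n_{k-1})\Vert^{\tau} \leq C_\tau\Vert B(0,n)\Vert^{\tau}$ repairs the chain and recovers the paper's argument. (The reversed inequality $\sum_\beta B_{\alpha\beta}(0,n)\geq\sum_\beta B_{\alpha\beta}(0,n_{k+1})$ you flag and discard yourself; the correct direction is downward to $n_k$, coupled with $\Vert B(0,n)\Vert\leq\Vert B(0,n_k)\Vert\,\Vert B(n_k,n_{k+1})\Vert$.)
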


Let us state and prove the corollary giving the upper bound  before giving the proof of Propostion \ref{p3}.
\begin{corollary}\label{c1}
For any $\tau >0$, there exists $C_{\tau} >0$ such that, for any $\alpha \in \A$, any $n \geq 0$, one has
$$ \vert I_{\alpha}^{(n)}  \vert \leq C_{\tau} \ \vert I^{(0)}  \vert \ \Vert B(0,n) \Vert^{-1 +\tau}.$$
\end{corollary}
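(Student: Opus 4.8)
The proof should be short once Proposition~\ref{p3} is available: the upper bound on $|I_\alpha^{(n)}|$ is exactly dual to the lower bound on the return times $B_\alpha(0,n)=\sum_{\beta}B_{\alpha\beta}(0,n)$. The plan is to combine the Rokhlin tower decomposition of $I^{(0)}$ over the subintervals $I_\alpha^{(n)}$ with the estimate of Proposition~\ref{p3}.

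First I would recall the dynamical interpretation of $B(0,n)$ from \S\ref{ssRV}: for $x\in I_\alpha^{(n)}$, the first return time of $x$ to $I^{(n)}$ under $T^{(0)}$ equals $B_\alpha(0,n)=\sum_{\beta\in\A}B_{\alpha\beta}(0,n)$, and the pieces $(T^{(0)})^j(I_\alpha^{(n)})$ for $\alpha\in\A$, $0\le j<B_\alpha(0,n)$ tile $I^{(0)}$ modulo zero. Taking Lebesgue measure gives
\begin{equation*}
|I^{(0)}|=\sum_{\alpha\in\A}B_\alpha(0,n)\,|I_\alpha^{(n)}|,
\end{equation*}
and since every summand is nonnegative, for each fixed $\alpha$ one has $B_\alpha(0,n)\,|I_\alpha^{(n)}|\le |I^{(0)}|$, i.e. $|I_\alpha^{(n)}|\le |I^{(0)}|/B_\alpha(0,n)$.

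Then I would simply insert the lower bound of Proposition~\ref{p3}, $B_\alpha(0,n)=\sum_{\beta}B_{\alpha\beta}(0,n)\ge C_\tau^{-1}\Vert B(0,n)\Vert^{1-\tau}$, to conclude $|I_\alpha^{(n)}|\le C_\tau\,|I^{(0)}|\,\Vert B(0,n)\Vert^{-1+\tau}$, which is the claim. No genuine obstacle remains at this stage: all the work (replacing the positivity-of-$B$ argument of \cite{MMY1} by one that uses only the cones $\C(\pi)$ and condition (a')) has been pushed into Proposition~\ref{p3}. The only minor point to state carefully is the tiling identity, which is the standard induced-interval / Kakutani–Rokhlin tower statement for Rauzy--Veech induction and holds verbatim for i.e.m. with no connection regardless of any Diophantine hypothesis.
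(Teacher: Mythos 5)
Your proof is correct and follows essentially the same route as the paper: the identity $|I^{(0)}|=\sum_\alpha B_\alpha(0,n)\,|I_\alpha^{(n)}|=\sum_{\alpha,\beta}B_{\alpha\beta}(0,n)|I_\alpha^{(n)}|$ is exactly the observation the paper invokes, and combining it with Proposition~\ref{p3} is precisely the intended argument. The only difference is cosmetic — you spell out the Kakutani--Rokhlin tower interpretation and isolate a single summand explicitly, whereas the paper states the identity and leaves the rest to the reader.
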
 

\begin{proof} The upper bound follows immediately from  Proposition \ref{p3} combined with the simple observation that 
\[ \sum_{\alpha,\beta \in \A}  B_{\alpha \, \beta}(0,n) \  \vert I_{\alpha}^{(n)}  \vert  =  \vert I^{(0)} \vert . \]
\end{proof}
For the proof of Proposition \ref{p3}, we need the following Lemma.  
\begin{lemma}\label{l1}
There exists a constant $C = C(\R)$ with the following property. Let $\gamma: \pi \to \pi'$ be a finite path in $\D$ such that 
$$ \overline {B_{\gamma} (\C(\pi))} \subset \C(\pi') \cup \{0\}.$$
For any  vector $w \in \overline {B_{\gamma} (\C(\pi))}$, one has
$$ \max_\alpha w_\alpha \leq C \Vert B_\gamma \Vert \min_\alpha w_\alpha.$$
\end{lemma}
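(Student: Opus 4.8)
We need: there is a constant $C = C(\R)$ such that whenever $\gamma : \pi \to \pi'$ is a finite path in $\D$ with $\overline{B_\gamma(\C(\pi))} \subset \C(\pi') \cup \{0\}$, every $w \in \overline{B_\gamma(\C(\pi))}$ satisfies $\max_\alpha w_\alpha \leq C \Vert B_\gamma \Vert \min_\alpha w_\alpha$.

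\begin{proof}[Proof plan]
The plan is to reduce to a \emph{compactness} argument over the finite set $\R$ of combinatorial data. First, note that $\overline{B_\gamma(\C(\pi))}$ is a closed convex cone, so it suffices to prove the inequality for $w = B_\gamma v$ with $v$ ranging over a fixed convex base (a ``slice'') of the cone $\C(\pi)$; by homogeneity of both sides in $w$ we lose nothing. A natural choice is to fix, for each $\pi \in \R$, a compact base $\Delta_\pi := \{ v \in \overline{\C(\pi)} : \sum_\alpha v_\alpha = 1\}$, and to use the vector $v_\pi \in \C(\pi)$ provided by Proposition~\ref{p2} (whose existence is the only place the hypothesis $\C(\pi) \neq \emptyset$ enters). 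Then $w = B_\gamma v$ for some $v \in \Delta_\pi$, and we want to compare $\max_\alpha w_\alpha$ and $\min_\alpha w_\alpha$.

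For the \textbf{upper bound on $\max_\alpha w_\alpha$}: since $B_\gamma$ has nonnegative integer entries and $v \in \Delta_\pi$ has $\sum_\alpha v_\alpha = 1$ with $v_\alpha \geq 0$, we get $w_\alpha = \sum_\beta (B_\gamma)_{\alpha\beta} v_\beta \leq \max_\alpha \sum_\beta (B_\gamma)_{\alpha\beta} = \Vert B_\gamma \Vert$. So $\max_\alpha w_\alpha \leq \Vert B_\gamma\Vert$, which already handles one factor cleanly. The real work is the \textbf{lower bound on $\min_\alpha w_\alpha$}, for which I would use the contraction hypothesis $\overline{B_\gamma(\C(\pi))} \subset \C(\pi') \cup \{0\}$: this says precisely that $B_\gamma$ maps the closed cone $\overline{\C(\pi)}$ (minus $0$) \emph{into the open cone} $\C(\pi')$, i.e. all coordinates of $w = B_\gamma v$ are strictly positive whenever $v \in \Delta_\pi$. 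Since $\Delta_\pi$ is compact and $v \mapsto \min_\alpha (B_\gamma v)_\alpha / \Vert B_\gamma \Vert$ is continuous and (by the hypothesis) strictly positive on $\Delta_\pi$, it attains a positive minimum $c(\gamma) > 0$; thus $\min_\alpha w_\alpha \geq c(\gamma) \Vert B_\gamma \Vert$ and we would be done with $C = 1/c(\gamma)$ --- except that $c(\gamma)$ a priori depends on $\gamma$, and there are infinitely many admissible paths. The heart of the argument is to make $c(\gamma)$ bounded below \emph{uniformly} in $\gamma$, with a bound depending only on $\R$.

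The \textbf{main obstacle} is exactly this uniformity. The key observation that removes the $\gamma$-dependence is that the quantity $\min_\alpha (B_\gamma v)_\alpha$ can be compared to the corresponding quantity for a \emph{single} cleverly chosen vector: take the vector $v_\pi \in \C(\pi)$ from Proposition~\ref{p2} and note $v_\pi \geq \varepsilon_\pi \cdot \mathbf{1}$ coordinatewise for some $\varepsilon_\pi > 0$ depending only on $\pi$ (hence only on $\R$, as $\R$ is finite). Then for any $v \in \Delta_\pi$ we have $v \leq (\max_\beta v_\beta)\mathbf 1 \leq \varepsilon_\pi^{-1}(\max_\beta v_\beta) v_\pi \leq \varepsilon_\pi^{-1} v_\pi$ and, in the other direction, using $\sum v_\beta = 1$ so $\max_\beta v_\beta \geq 1/d$, one gets $v \geq \tfrac{1}{d}(\min_\beta (v_\pi)_\beta / \max_\beta (v_\pi)_\beta)^{-1}\dots$ --- more simply, $B_\gamma v \geq (\min_\alpha(B_\gamma)_{\alpha\beta_0})\, v_{\beta_0}$ for the index $\beta_0$ realizing $\max_\beta v_\beta \geq 1/d$. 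Comparing this with $B_\gamma v_\pi$, whose entries are all positive by the hypothesis on $\gamma$, and invoking Proposition~\ref{p4} (which bounds $\Vert B_\gamma\Vert$ by $C(\R)\Vert (B_\gamma)_{|H(\pi)}\Vert$ and in whose proof one already sees that $\Vert B_\gamma\Vert \min_\beta (v_\pi)_\beta \leq \max_\alpha (B_\gamma v_\pi)_\alpha \leq \Vert (B_\gamma)_{|H(\pi)}\Vert \max_\beta (v_\pi)_\beta$), one extracts $\min_\alpha (B_\gamma v_\pi)_\alpha \geq$ (constant depending only on $\R$) $\cdot \Vert B_\gamma\Vert$. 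Since $d$, $\varepsilon_\pi$ and the entries of $v_\pi$ all range over finite sets indexed by $\R$, this produces the desired uniform $C = C(\R)$, completing the proof.
\end{proof}
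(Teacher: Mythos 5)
Your plan correctly isolates the crux --- bounding $\min_\alpha w_\alpha$ below, uniformly over the infinitely many admissible paths $\gamma$ --- but the argument you propose does not achieve it, and the intermediate claim it rests on is unsupported and in fact false. You assert, ``invoking Proposition~\ref{p4}'', that $\min_\alpha(B_\gamma v_\pi)_\alpha \geq c\,\Vert B_\gamma\Vert$ for a constant $c$ depending only on $\R$. But the inequalities you cite from the proof of Proposition~\ref{p4},
\[
\min_\beta(v_\pi)_\beta\,\Vert B_\gamma\Vert \;\leq\; \max_\alpha(B_\gamma v_\pi)_\alpha \;\leq\; \Vert(B_\gamma)_{|H(\pi)}\Vert\,\max_\beta(v_\pi)_\beta,
\]
control only the \emph{maximum} coordinate of $B_\gamma v_\pi$, not the minimum. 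There is no reason for the smallest coordinate to scale with $\Vert B_\gamma\Vert$: a row of $B_\gamma$ can keep a bounded sum while $\Vert B_\gamma\Vert$ grows without bound (this happens routinely for Rauzy matrices --- after many top steps with the same winner, one row sum is large while the others stay bounded), and in that case $\min_\alpha(B_\gamma v_\pi)_\alpha$ stays bounded while $\Vert B_\gamma\Vert\to\infty$. This does not contradict the lemma, which only asks for the \emph{ratio} $\max/\min$ to be $\leq C\Vert B_\gamma\Vert$; your asserted bound would imply the ratio is $O(1)$ for $v_\pi$, a strictly stronger statement that you do not need and do not prove. There is also a secondary gap: to pass from $v_\pi$ to a general $v\in\overline{\C(\pi)}$ you would need a lower bound $v\geq c'\,v_\pi$, but $v$ may lie on the boundary of the cone (some coordinates zero), so it is not bounded below by any positive multiple of the interior vector $v_\pi$.

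The ingredient the paper uses, and which your sketch omits entirely, is \emph{rationality} of the subspace $H(\pi)$. The extremal rays of $\C(\pi)$ are spanned by primitive \emph{integral} vectors $v^i$, and $B_\gamma$ has integer entries, so each $w^i := B_\gamma v^i$ is an integral vector with strictly positive coordinates (by the cone-contraction hypothesis). Integrality then gives $\min_\alpha w^i_\alpha \geq 1$, for free and uniformly in $\gamma$. Combined with the easy estimate $\max_\alpha w^i_\alpha \leq \Vert B_\gamma\Vert\,\Vert v^i\Vert \leq C(\pi)\Vert B_\gamma\Vert$, this gives the lemma on the extremal rays; writing a general $v=\sum_i t_i v^i$ with $t_i\geq 0$ and noting that the coordinate-ratio bound is stable under nonnegative combinations finishes the proof. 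This arithmetic input is precisely what makes the lower bound on $\min_\alpha w_\alpha$ uniform in $\gamma$; no soft compactness argument can supply it, because the family of admissible paths is not compact.
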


\begin{proof}
Fix $\pi \in \R$. As the subspace $H(\pi)$ is rational, one can find primitive integral vectors $v^i \in H(\pi)$ which span the extremal rays of $\C(\pi)$. Let $C(\pi)$ be the supremum over $i$ of the $ \Vert v^i \Vert$. Let $\gamma: \pi \to \pi'$ be as in the lemma, and let $w^i := B_\gamma v^i$. Each $w^i$ is an integral vector with positive coordinates. We have therefore 
$$ \max_\alpha w_\alpha^i \leq \Vert B_\gamma \Vert \ \Vert v^i \Vert  \leq C(\pi)  \Vert B_\gamma \Vert \leq C(\pi)  \Vert B_\gamma \Vert  \min_\alpha w_\alpha^i.$$

For  $v \in \overline{\C(\pi)} $, $w = B_\gamma v$, we write $v = \sum_i t_i v^i$ with 
$t_i \geq 0$. For $\alpha,\beta \in \A$, we obtain
$$ w_\alpha = \sum_i t_i w_\alpha^i \leq C(\pi) \Vert B_\gamma \Vert \sum_i t_i w_\beta^i = C(\pi) \Vert B_\gamma \Vert \  w_\beta.
$$
Therefore the conclusion of the lemma holds with $C(\R):= \max_{\pi \in \R} C(\pi)$.                
\end{proof}

\begin{proof}[Proof of Proposition \ref{p3}]
We first prove the inequality of the proposition when $n = n_k$. Fix $u \in \C(\pi^{(n_0)})$; let 
$$v = B(n_0,n_{k-1}) u,\quad  w = B(n_0,n_k) u = B(n_{k-1},n_k) v.$$ 
For $\alpha, \alpha' \in \A$, using Lemma~\ref{l1}, condition {\rm (a)'} and that $\Vert B(0,n) \Vert$ is non decreasing in $n$, one has that
\begin{eqnarray*}
\sum_{\beta \in \A}  B_{\alpha \, \beta}(0,n_k) &\leq&  (\min_\beta u_\beta)^{-1} \sum_{\beta \in \A}  B_{\alpha \, \beta}(0,n_k) u_\beta =  (\min_\beta u_\beta)^{-1}  w_\alpha \\
&\leq& C \Vert B(n_{k-1},n_k) \Vert (\min_\beta u_\beta)^{-1}  w_{\alpha'} \\
& \leq& C  C'_{\tau} \Vert B(0, n_{k-1}) \Vert ^{\tau}  (\min_\beta u_\beta)^{-1} \sum_{\beta \in \A}  B_{\alpha' \, \beta}(0,n_k) u_\beta \\
& \leq& C  C'_{\tau} \Vert B(0, n_{k}) \Vert ^{\tau} \ \frac { \max_\beta u_\beta}{ \min_\beta u_\beta} \sum_{\beta \in \A}  B_{\alpha' \, \beta}(0,n_k).
\end{eqnarray*}
As $\sum_{\alpha,\beta} B_{\alpha \, \beta}(0,n_k)$ is of the order of $\Vert B(0,n_k)  \Vert $, we obtain the inequality of the proposition for $n = n_k$.

\medskip

When $n_k < n < n_{k+1}$, we use that each $B_{\alpha,\beta}(0,n)$ is a non-decreasing function of $n$. For $\alpha \in \A$, we have

$$
\sum_{\beta \in \A}  B_{\alpha \, \beta}(0,n) \geq \sum_{\beta \in \A}  B_{\alpha \, \beta}(0,n_k) \geq C_{\tau}^{-1} \Vert B(0,n_k) \Vert^{1-\tau}.
$$ 
On the other hand, we have
$$ \Vert B(0,n) \Vert \leq \Vert B(0, n_k) \Vert \Vert B(n_k,n_{k+1}) \Vert \leq \wt C_\tau
\Vert B(0, n_k) \Vert^{1 + \tau}.
$$
As $\tau>0$ is arbitrary, we obtain the estimate of the proposition. 
\end{proof}

\bigskip

The estimate of $\vert I_\alpha^{(n)} \vert $ from below, as stated in Proposition \ref{propo1}, is \emph{not} a consequence of condition {$(\mathrm{a}')$} (even for $d=3$) . The   true statement is slightly more sophisticated:

\begin{proposition}\label{p6}
Assume that $\ug$ satisfies condition $(\mathrm{a}')$. Then, for any $\tau >0$, there exists $C_{\tau} >0$ such that, for any $\alpha \in \A$, any $\ell \geq 0$, one has
$$
\vert I_{\alpha}^{(n_\ell)} \vert \geq C^{-1}_{\tau} \ \vert I^{(0)} \vert \ \Vert B(0,n_k) \Vert^{-\tau-1}
$$
where $k$ is the largest integer greater or equal to $\ell$ such that 
$ \sum_\beta B_{\beta\,\alpha}(n_\ell,n_k) <s$.
\end{proposition}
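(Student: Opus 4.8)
The plan is to estimate $|I_\alpha^{(n_\ell)}|$ from below by reducing to the column-sum estimates already established. Recall the basic identity $\sum_{\alpha,\beta} B_{\alpha\beta}(0,n)\,|I_\alpha^{(n)}| = |I^{(0)}|$ used in the proof of Corollary~\ref{c1}; a cleaner way to isolate a single interval is to use that for fixed $\beta$ the column sums give return times, so that $\sum_\alpha B_{\beta\,\alpha}(n_\ell,n)\,|I_\alpha^{(n)}| = |I_\beta^{(n_\ell)}|$ (the interval $I_\beta^{(n_\ell)}$ is partitioned by the towers over the $I_\alpha^{(n)}$). Thus to bound $|I_\alpha^{(n_\ell)}|$ from below it suffices to find \emph{some} $n\ge n_\ell$ and some $\beta$ with $B_{\beta\,\alpha}(n_\ell,n)\ge 1$ for which $|I_\alpha^{(n)}|$ is controlled from below by $\|B(0,n)\|^{-1-\tau}|I^{(0)}|$ — and then use monotonicity $|I_\alpha^{(n_\ell)}|\ge |I_\alpha^{(n)}|$ together with the fact that $\|B(0,n)\|$ has not grown too much between $n_\ell$ and $n$.

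First I would analyse the combinatorial meaning of the index $k$. By definition of the accelerated sequence $(n_k)$, the matrix $B(n_{k-1},n_k)$ is the first one making $\overline{B(\C(\pi^{(n_{k-1})}))}\subset\C(\pi^{(n_k)})\cup\{0\}$; in particular $B(n_\ell,n_j)$ need not be positive for $j$ slightly larger than $\ell$, and the condition $\sum_\beta B_{\beta\,\alpha}(n_\ell,n_k)<s$ says precisely that, up to time $n_k$, the letter $\alpha$ has been ``visited'' fewer than $s$ times in the relevant sense — so $k$ is the largest such $j$, i.e.\ at the very next stage the column of $\alpha$ becomes large. The key point is that one Zorich/acceleration block can only multiply norms polynomially: using condition $(\mathrm a')$ one gets $\|B(0,n_{k+1})\|\le C_\tau\|B(0,n_k)\|^{1+\tau}$, and more importantly for us $\|B(n_\ell,n_{k+1})\|$ is controlled. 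So I would aim to prove the lower bound at time $n_{k+1}$ (or wherever $\alpha$ first gets a big column since $n_\ell$) and transfer it back to time $n_\ell$ by monotonicity, paying only a polynomial factor in $\|B(0,n_k)\|$.

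To get the lower bound on $|I_\alpha^{(m)}|$ at the good time $m$, I would run the argument of Proposition~\ref{p3} / Lemma~\ref{l1} ``locally'': fix a vector $u\in\C(\pi^{(n_\ell)})$ with $\max_\beta u_\beta/\min_\beta u_\beta$ bounded by $C(\R)$, set $w=B(n_\ell,m)u$, and use Lemma~\ref{l1} (applicable once $B(n_\ell,m)$ sends $\C(\pi^{(n_\ell)})$ into the closure of $\C(\pi^{(m)})$, which holds for $m=n_{k+1}$ by definition of the $n_j$) to compare $w_\alpha$ with $w_{\alpha'}$ for all $\alpha'$, hence with $\|B(n_\ell,m)\|$ up to a polynomial factor $\|B(0,m)\|^\tau$. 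This makes $\sum_\beta B_{\alpha\,\beta}(n_\ell,m)$ comparable to $\|B(n_\ell,m)\|$, hence (via the cocycle relation and Proposition~\ref{p3} at level $n_\ell$) to $\|B(0,m)\|$ up to $\|B(0,m)\|^\tau$; plugging into $\sum_{\alpha,\beta}B_{\alpha\beta}(0,m)|I_\alpha^{(m)}|=|I^{(0)}|$ and throwing away all but the $\alpha$-term gives $|I_\alpha^{(m)}|\ge C_\tau^{-1}|I^{(0)}|\,\|B(0,m)\|^{-1-\tau}$. Finally replace $\|B(0,m)\|$ by $\|B(0,n_k)\|$ using $m\le n_{k+1}$ and the polynomial bound $\|B(0,n_{k+1})\|\le C_\tau\|B(0,n_k)\|^{1+\tau}$, and use $|I_\alpha^{(n_\ell)}|\ge|I_\alpha^{(m)}|$.

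\textbf{Main obstacle.} The delicate point is \emph{why} $k$ is defined through the condition $\sum_\beta B_{\beta\,\alpha}(n_\ell,n_k)<s$ rather than simply through positivity of the block matrix, and matching this exactly with where Lemma~\ref{l1} becomes usable. The subtlety is that, unlike in the Roth (not ``absolute'') case, $B(n_\ell,n_{k})$ may fail to be positive and may even have the whole $\alpha$-column small, so one cannot directly say $\sum_\beta B_{\alpha\beta}(n_\ell,n_k)\asymp\|B(n_\ell,n_k)\|$; the role of $s$ (the number of marked points, i.e.\ the corank of $H(\pi)$) is that after the letter $\alpha$ has been a loser/winner fewer than $s$ times the cone-contraction into $\C(\pi^{(\cdot)})$ has not yet ``reached'' $\alpha$, whereas at step $n_{k+1}$ it does — this is the content one must extract, presumably from the combinatorics of how $\C(\pi)=\C\cap H(\pi)$ sits inside $\C$ and how Rauzy moves act on its extremal rays. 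I expect the bulk of the work to be this combinatorial/linear-algebraic lemma identifying the right time $m\in\{n_k,n_{k+1}\}$ at which $\alpha$'s column is guaranteed to be large and $B(n_\ell,m)(\C(\pi^{(n_\ell)}))$ is guaranteed to land in $\overline{\C(\pi^{(m)})}$; once that is in hand, the rest is a routine repetition of the Proposition~\ref{p3} estimates localized to the window $[n_\ell,m]$.
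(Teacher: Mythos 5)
Your plan has a genuine gap in the key analytic step, and the crucial combinatorial idea of the paper's proof is missing.

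The step ``plugging into $\sum_{\alpha,\beta}B_{\alpha\beta}(0,m)|I_\alpha^{(m)}|=|I^{(0)}|$ and throwing away all but the $\alpha$-term gives $|I_\alpha^{(m)}|\ge C_\tau^{-1}|I^{(0)}|\,\|B(0,m)\|^{-1-\tau}$'' is backwards. Throwing away non-negative terms from a sum equal to $|I^{(0)}|$ gives $B_\alpha(0,m)|I_\alpha^{(m)}|\leq |I^{(0)}|$, and hence an \emph{upper} bound on $|I_\alpha^{(m)}|$ — this is exactly Corollary~\ref{c1}. The row-sum comparability $\sum_\beta B_{\alpha\beta}(0,m)\asymp\|B(0,m)\|$ (up to $\|B(0,m)\|^\tau$), which you correctly extract from Lemma~\ref{l1}, controls the coefficient $B_\alpha(0,m)$ in that sum, but the identity $\sum_\gamma B_\gamma(0,m)|I_\gamma^{(m)}|=|I^{(0)}|$ gives no way to isolate a single term from below without additional information. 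So the ``good time $m$'' analysis, even if you found the right $m$, would not produce the lower bound.

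The mechanism the paper actually uses is quite different and relies on the geometry of marked points rather than the comparability of row sums. Once the $\alpha$-column sum satisfies $\sum_\beta B_{\beta\alpha}(n_\ell,n_{k+1})\geq s$, the interval $I_\alpha^{(n_\ell)}$ splits into at least $s$ level-$n_{k+1}$ subintervals, giving at least $s+1$ boundary points inside $[u_0,u_d]$. By pigeonhole, two of these boundary points must be joined by vertical separatrix segments to the \emph{same} marked point, and the horizontal segment between them therefore closes up to an \emph{absolute} homology class $\zeta$. Expressed in the basis $(\zeta_\beta^{(n_{k+1})})$, $\zeta$ has integral non-negative coordinates lying in $H(\pi^{(n_{k+1})})$, so in $\overline{\C(\pi^{(n_{k+1})})}$; applying $B(n_{k+1},n_{k+2})$ and using the defining property of $n_{k+2}$, its coordinates at level $n_{k+2}$ are positive integers. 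This gives the clean inequality $|I_\alpha^{(n_\ell)}|\geq\int_\zeta\Re\,\omega\geq\sum_\beta\lambda_\beta^{(n_{k+2})}=|I^{(n_{k+2})}|$, and one finishes with the trivial $|I^{(n_{k+2})}|\geq c\,|I^{(0)}|\,\|B(0,n_{k+2})\|^{-1}$ plus two applications of (a'). Your speculation about extremal rays of $\C(\pi)$ in the last paragraph is not the right lever; the role of $s$ is purely the pigeonhole count of marked points, and the heavy lifting is done by the homology class, not by cone contraction.
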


\begin{proof}
Fix $\alpha \in \A$. Let $n_k$ be as in the proposition. We have 
$ \sum_\beta B_{\beta\,\alpha}(n_\ell,n_{k+1}) \geq s$.
It means that the interval $I_{\alpha}^{(n_\ell)}$ decomposes into at least $s$ subintervals
of level $n_{k+1}$. Therefore there are at least $s+1$ points (including the endpoints of $I_\alpha^{(n_\ell)}= I_{\alpha}^t (T^{(n_\ell)})$) bounding these subintervals and two of them must correspond to the \emph{same} marked point. The (horizontal) segment inside the transversal which has these two {\Green points} as endpoints hence give an \emph{absolute} homology class $\zeta$, which we express in the base $\zeta^{(n_{k+1})}_\alpha$, $\alpha \in \A$, as 
$$ \zeta := \sum_\beta x_\beta \zeta^{(n_{k+1})}_\beta.$$
The coefficients $x_\beta$ are integral, non-negative and the vector $x = (x_\beta)$ belongs to 
$H(\pi^{(n_{k+1})})$.

Let us also represent $\zeta$ at level $n_{k+2}$ as
$$  \zeta := \sum_\beta X_\beta \zeta^{(n_{k+2})}_\beta.$$
 The vectors $x$ and $X$ are related by
$$ X = B(n_{k+1},n_{k+2}) x.$$
As $x \in \overline{\C(\pi^{(n_{k+1})})}$ and the coefficients $X_\beta$ are integers, we get from the definition of $n_{k+2}$ that $X_\beta \geq 1$ for all $\beta \in A$. 
We therefore have
$$ \vert I_{\alpha}^{(n_\ell)} \vert  
\geq \int_\zeta {\rm Re} \, \omega \geq \sum_\beta \int_{\zeta^{(n_{k+2})}_\beta} {\rm Re}\, \omega  = \sum_\beta \lambda^{(n_{k+2})}_\beta 
   \geq \vert  I^{(n_{k+2})} \vert.$$
One has also 
$$
\vert I^{(0)}\vert = \sum_{\beta,\mu} B_{\beta\,\mu}(0,n_{k+2}) \vert I_{\beta}^{(n_{k+2})} \vert \leq   \vert  I^{(n_{k+2})} \vert \sum_{\beta,\mu} B_{\beta\,\mu}(0,n_{k+2}), 
$$
and we conclude from condition {\rm (a')},
$$ 
\vert I_{\alpha}^{(n_\ell)}  \vert \geq C^{-1} \vert I^{(0)} \vert \ \Vert B(0,n_{k+2}) \Vert^{-1} \geq C^{-1}_{\tau} \ \vert I^{(0)}  \vert \ \Vert B(0,n_k) \Vert^{-\tau-1}.
$$
\end{proof}

\subsection{Results on the cohomological equation for absolute Roth type i.e.m.}\label{sec:results}

We show in this paper that the main results on the cohomological equations for i.e.m. which were proved in \cite{MY} under the (restricted) Roth type diophantine condition, actually hold under the weaker  (restricted) {\it absolute Roth type} condition. One of the motivations for this weakening is shown in \S~\ref{sec:aedirection}.

As recalled in the introduction, motivated by Forni's  celebrated paper  \cite{For1} on the cohomological equation for area preserving flows on surfaces of higher genus, in \cite{MMY1} Roth type i.e.m. were introduced in order to provide  an \emph{explicit} diophantine condition sufficient to guarantee the existence of a solution to the cohomological equation up to finitely many obstructions. More precisely, in \cite{MMY1} it is proven that, given a sufficiently regular datum (namely, for functions which are of absolutely continuous on each of the intervals $I^t_\alpha$ with derivative of bounded variation and of zero mean on $I$),  after subtracting (to the datum) a correction function which is constant on each $I^t_\alpha$, the cohomological equation has a bounded solution.  
In \cite{MMY2} this result was used for proving a linearization theorem 
for generalized interval exchange maps with a  rotation number of restricted Roth type. More recently, for restricted Roth type i.e.m., in \cite{MY}  
a more refined regularity result was proved,   
namely the H\"older regularity of the solutions of the cohomological equation with $C^r$, $r>1$ data.
 
In this Section we limit ourselves to the statement of the generalization of the main theorem of \cite{MY} (Theorem 3.10, p.127). In Appendix B we will give the proof of this Theorem, as well as the statement and the proof of the generalizations of other two results of \cite{MY}: the growth estimate for special Birkhoff sums with $C^1$ data (Theorem 3.11, p. 128) and the regularity result for the solutions of the cohomological equation in higher differentiability (Theorem 3.22, p. 138). 


\smallskip

Let $T$ be an i.e.m. and let $r\geq 0$ be a real number. 
The boundary operator  $\partial$ which appears in the following result is the operator defined in  
 in \S~\ref{ssboundary}.  
 We denote by $C^r_{0} (\sqcup I_\alpha)$ the kernel of the boundary operator $\partial$ in $C^r (\sqcup I_\alpha)$. 
We recall from \S~\ref{ssboundary} that the coboundary of a continuous function belongs to the kernel of the boundary  operator (it follows from properties (1) and (2) of  Proposition~\ref{propboundary}).
%


\smallskip
The following result generalizes Theorem 3.10 of \cite{MY} to {\it absolute} restricted Roth type i.e.m. 

\begin{theorem}[H\"older  solutions to the cohomological equation] \label{thmHolder1} 
Let $T$ be an i.e.m. of absolute restricted Roth type. Let 
$\Gamma_u(T)$ be a subspace of $\Gamma_\partial (T)$
which is supplementing $\Gamma_s(T)$ in $\Gamma_\partial (T)$. Let $r >1$ be a real number. 
 There exist $\bar \delta >0$ 
and  bounded linear operators $L_0: \varphi \mapsto \psi$ from 
 $C^r_{0} (\sqcup I_\alpha)$ to the space $C^{\bar \delta}([u_0,u_d])$ of 
H\"older continuous functions of exponent $\bar \delta$ and  $L_1: \varphi \mapsto \chi$ from $C^r_{0} (\sqcup I_\alpha)$ to $\Gamma_u(T)$ such that any $\varphi \in C^r_{0} (\sqcup I_\alpha)$ satisfies
$$ \varphi = \chi + \psi \circ T - \psi\; , \;\; \int_{u_0}^{u_d}\psi (x)dx = 0\; . $$
The operators $L_0$ and $L_1$ are uniquely defined by the conclusions of the theorem.
\end{theorem}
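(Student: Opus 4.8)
The plan is to follow the renormalization scheme used to prove Theorem 3.10 of \cite{MY} essentially verbatim, exploiting the fact that conditions (b), (c), (d) in the definition of absolute restricted Roth type coincide with those of restricted Roth type, and that the \emph{only} place where condition (a) enters the argument of \cite{MY} is through the control of the lengths of the induced subintervals, i.e.\ through the estimates of Proposition \ref{propo1}. Under the weaker condition (a'), these are replaced by the two crucial estimates proved in \S\ref{sec:crucialestimates}: the upper bound of Corollary \ref{c1} and the (slightly weaker) lower bound of Proposition \ref{p6} along the subsequence $n_\ell$. So the first step is to fix the accelerated Rauzy--Veech times $0=n_0<n_1<\cdots$ given by the cone condition defining (a'), and to recall that by Proposition \ref{propboundary}(6) the special Birkhoff sum operators $S(0,n_k)$ preserve the kernel of the boundary operator, so that the whole construction can be carried out inside $C^r_0(\sqcup I^{(n_k)}_\alpha)$.

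Second, I would carry out the inductive decomposition. Starting from $\varphi^{(0)}=\varphi$, at stage $k$ one has $\varphi^{(k)}\in C^r_0(\sqcup I^{(n_k)}_\alpha)$; one forms $S(n_k,n_{k+1})\varphi^{(k)}$, subtracts the piecewise--constant function $\chi^{(k)}\in\Gamma(T^{(n_{k+1})})$ recording its mean on each level-$(n_{k+1})$ interval, and sets $\varphi^{(k+1)}$ to be the remainder, which has zero mean on each interval and still lies in $\ker\partial$. Transported back to level $0$ by $B(0,n_{k+1})^{-1}$, the class of $\chi^{(k)}$ lies in $\Gamma_\partial(T)=H(\pi)\cong H_1(M,\Rset)$ (Proposition \ref{propboundary}(4)); by hyperbolicity (d) one has $\Gamma_\partial(T)=\Gamma_s(T)\oplus\Gamma_u(T)$ with both summands of dimension $g$, and coherence (c) guarantees this splitting is respected by the cocycle up to subexponential errors. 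One then defines $\psi^{(k)}$ as the primitive on $I^{(n_k)}$ of $\varphi^{(k)}$ minus the lift of $\chi^{(k)}$, sets $\psi=\sum_{k\ge 0}\psi^{(k)}$ (normalised to zero mean) for the transfer function, and defines $\chi\in\Gamma_u(T)$ to be the sum of the $\Gamma_u$-components of the back-transported $\chi^{(k)}$; the $\Gamma_s$-components are absorbed into the telescoping thanks to the decay $\Vert B_s(m,n)\Vert=\mathcal{O}(\Vert B(0,n)\Vert^{-\sigma})$ built into the definition of $\Gamma_s(T)$.

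Third comes the convergence and regularity estimate. Since $\varphi^{(k)}$ has zero mean on each interval, the cancellation in special Birkhoff sums (as in \cite{MMY1}) gives $\Vert\psi^{(k)}\Vert_\infty$ bounded by a product of $\max_\alpha|I^{(n_k)}_\alpha|$, a power of $\Vert B(0,n_k)\Vert$ coming from the chain rule for $S(n_k,n_{k+1})$, and the spectral gap factor $\Vert B(0,n_k)_{|\Gamma_0}\Vert=\mathcal{O}(\Vert B(0,n_k)\Vert^{1-\theta})$ from (b); feeding in the upper bound $|I^{(n_k)}_\alpha|\lesssim|I^{(0)}|\,\Vert B(0,n_k)\Vert^{-1+\tau}$ of Corollary \ref{c1} makes $\Vert\psi^{(k)}\Vert_\infty$ geometrically small in $k$, so $\psi=\sum_k\psi^{(k)}$ is bounded. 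To promote $\psi$ from bounded to H\"older one pairs this geometric decay of the oscillation at stage $k$ against the spatial scale at stage $k$: the gap between consecutive discontinuities of $\psi^{(k)}$ is bounded below by $|I^{(0)}|\,\Vert B(0,n_{k'})\Vert^{-1-\tau}$ for the auxiliary index $k'$ of Proposition \ref{p6}, and the standard interpolation (oscillation $\lesssim\rho^k$ on intervals of length $\gtrsim\sigma^{-k}$ yields H\"older exponent $\bar\delta=\log(1/\rho)/\log\sigma-\tau>0$) gives a positive $\bar\delta$. All constants are explicit functions of $\tau$, hence of $T$, so $L_0$ and $L_1$ are bounded; uniqueness follows because if $\chi+\psi\circ T-\psi=0$ with $\psi$ bounded and $\chi\in\Gamma_u(T)$, then $S(0,n)\chi=\psi-\psi\circ T^{(n)}$ is bounded, forcing $\chi\in\Gamma_s(T)$, hence $\chi=0$ since $\Gamma_u(T)\cap\Gamma_s(T)=0$, and then $\psi$ is $T$-invariant, so $\psi=0$ by unique ergodicity (a consequence of (b)) and $\int\psi=0$.

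The main obstacle I anticipate is precisely the bookkeeping in the third step: in \cite{MY} the clean lower bound of Proposition \ref{propo1} is invoked at every level $n$, whereas Proposition \ref{p6} only delivers a lower bound at the accelerated times $n_\ell$ and with $\Vert B(0,n_{k'})\Vert$ — with $k'$ possibly much larger than $\ell$ — on the right-hand side. One has to check that the renormalization scheme only ever needs the length lower bound at the times $n_\ell$ actually used, and that replacing $\Vert B(0,n_\ell)\Vert$ by $\Vert B(0,n_{k'})\Vert$ costs only a polynomially bounded factor, by condition (a') applied to the block from $n_\ell$ to $n_{k'}$, which is absorbed into the arbitrarily small $\tau$-loss. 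Everything else is a transcription of the argument in \cite{MY}, together with Proposition \ref{propboundary} to handle the boundary operator; the statements and proofs of the two companion results of \cite{MY} (the $C^1$ growth estimate for special Birkhoff sums and the higher-regularity result) are adapted in the same way, and will be given in Appendix \ref{app:proofs}.
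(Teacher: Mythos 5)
Your overall strategy coincides with the paper's: conditions (b), (c), (d) are untouched, so the proof of Theorem~3.10 in \cite{MY} is followed verbatim, and the only input that changes under condition (a') is the control of the induced lengths, which you correctly replace by Corollary~\ref{c1} (the upper bound, unchanged) and Proposition~\ref{p6} (the lower bound, now available only along the accelerated subsequence $n_\ell$ and with $\Vert B(0,n_k)\Vert$, $k\geq\ell$, on the right-hand side). You also correctly locate the critical subtlety in the H\"older-regularity step. The skeleton of the argument, the role of Proposition~\ref{propboundary} to stay inside $\ker\partial$, and the reduction of Theorems~\ref{thmcohom2} and~\ref{thmHolder2} to the same adaptation are all as in Appendix~\ref{app:proofs}.

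The proposed resolution of that subtlety, however, does not go through as stated. You suggest that ``replacing $\Vert B(0,n_\ell)\Vert$ by $\Vert B(0,n_{k'})\Vert$ costs only a polynomially bounded factor, by condition (a') applied to the block from $n_\ell$ to $n_{k'}$.'' But (a') controls each single step $\Vert B(n_{j-1},n_j)\Vert$ against $\Vert B(0,n_{j-1})\Vert^\tau$; it gives no polynomial bound on the whole block $\Vert B(n_\ell,n_{k'})\Vert$ in terms of $\Vert B(0,n_\ell)\Vert$, and the index $k'$ produced by Proposition~\ref{p6} has no a priori bound on $k'-\ell$ (it is determined only by the combinatorial condition $\sum_\beta B_{\beta\alpha}(n_\ell,n_{k'})<s$). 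The paper's actual resolution is not to compare back to $\Vert B(0,n_\ell)\Vert$ at all. In the space decomposition of $(x_-,x_+)$ one lets $k_{\min}$ be the smallest auxiliary index over all the constituent intervals, obtains $|x_+-x_-|\gtrsim\Vert B(0,n_{k_{\min}})\Vert^{-1-\tau}$ directly from Proposition~\ref{p6}, and then bounds the oscillation $|\psi(x_+)-\psi(x_-)|$ by $\Vert\varphi\Vert_{C^r}\,\Vert B(0,n_{k_{\min}})\Vert^{-\delta_2/2}$ by splitting the sum over scales at $m=k_{\min}$ — using the improved bound from the partition into at most $s-1$ pieces for $m\leq k_{\min}$ and the cruder bound for $m>k_{\min}$ — and then summing with Propositions~\ref{propo3} and~\ref{propo4}. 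These summability statements, as well as the multiplicativity estimate of Proposition~\ref{p5}, also need to be re-proved under (a') (via the cone argument of Lemma~\ref{l1}); your write-up does not flag them, and the specific comparison you propose would fail without them. The gap is localized, but real.
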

 
\begin{remark}
The exponent $\bar \delta$ depends only  on $r$ and the constants $\theta$, 
$\sigma$ appearing in \S~\ref{ssRoth}. Thus, since in the difference of definition between absolute Roth type and Roth type does not concern the two  costants  
$\theta$ (spectral gap) e $\sigma$ (in the definition of stable space), the exponent $\bar \delta$ in this Theorem~\ref{thmHolder1}  is the same than the exponent in Theorem 3.10 of \cite{MY}. {\Green One can show that $\bar \delta$ tends to $0$ as $r >1$ tends to $ 1$ } {\Green and conjecture that one can take any $\bar \delta>0$ for any fixed $r>1$. This is indeed the case in the Sobolev scale, as proved by Forni in \cite{For3}. Very recently, this optimal loss of derivatives was also proved in the H{\"o}lder class in the pseudo-Anosov case (namely in the case of periodic Rauzy-Veech induction), see \cite{FGL}}. 
\end{remark}

The proofs of these Theorems for i.e.m. of restricted absolute Roth type follow essentially the same proofs given in the respective papers for restricted Roth type. The main difference is in the 
 proof of two crucial estimates on lengths of subintervals of i.e.m.  in a Rauzy-Veech orbit, presented above in \S~\ref{sec:crucialestimates}.  Once these estimates are proved, the proofs of the Theorems for restricted Roth type i.e.m. can be followed and adapted with minor modifications to the case of restricted absolute Roth type i.e.m. For completeness, in the Appendix \ref{app:proofs} we follow the original proof and indicate, for convenience of the interested reader, where and which modifications are needed.

\subsection{Results on the cohomological equation for translation surfeces in a.e. direction}\label{sec:aedirection}

The notion of (absolute) Roth type i.e.m. naturally leads to define (absolute) Roth type translation surfaces: 
the following definition of Roth type translation surfaces was given in \cite{MMY2} and we now extend it to absolute Roth type. 

\begin{definition}\label{absRothsurface}
Let $(M,\Sigma, \zeta)$ be a translation surface. It is of {\it absolute Roth type}  if there exists {\it some} open bounded horizontal segment $I$  in  good position such that the return map $T_I$ of the vertical flow on $I$ is an i.e.m. of {\it absolute Roth type}. 
Absolute restricted Roth type, restricted Roth type and Roth type translation surfaces are defined analogously.
\end{definition}              
 In Appendix C of \cite{MMY2} it is proved that for a (restricted) Roth type translation surface, $T_I$ will actually be of (restricted) Roth type for {\it any} horizontal segment $I$ in good position.  Analogously, one can show that for an absolute (restricted) Roth type translation surface, $T_I$ will be of absolute (restricted) Roth type for {\it any} horizontal segment $I$ in good position. 

\medskip

In \cite{ChaikaEskin} 
Chaika and Eskin prove that, for all translation surfaces $(M,\Sigma,\zeta)$ and for almost every angle $\theta$, the translation surface $(M,\Sigma,r_\theta\cdot\zeta)$ obtained by rotating by $r_\theta$ the translation structure $\zeta$ is generic for the Teichm\"uller geodesic flow and Oseledets generic for the Kontsevich-Zorich cocycle.  As an application of this result,  they prove the following Lemma, which we now state using the definition of absolute Roth type translation surface just given above.\footnote{In Section 1.2.2 of \cite{ChaikaEskin}, the authors define a diophantine conditon on the vertical flow of the translation surface, called in the paper \emph{Roth type} and consisting of three conditions (a), (b) and (c) (see Definition 1.13, \cite{ChaikaEskin}). Their definition is given in a geometric language, but one can check that it is equivalent to our notion of \emph{absolute} Roth type translation surface (Definition \ref{absRothsurface} above), and not to the definition of Roth types surface given in \cite{MMY1}. Lemma 1.16 in \cite{ChaikaEskin} shows that Condition (a) holds for a.e. $\theta$, while the proof that (b) and (c) also holds for a.e. $\theta$ is given at the end of Section 1.2.2.} 

\begin{lemma}[ \cite{ChaikaEskin}]\label{lemma:CE}
For all translation surfaces $(M,\Sigma,\zeta)$,  for almost every angle $\theta$, the translation surface  $(M,\Sigma,r_\theta\cdot\zeta)$ is of absolute Roth type.
\end{lemma}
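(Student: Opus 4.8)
The plan is to unwind Definitions~\ref{absRothsurface} and~\ref{d1} and, for a conull set of angles $\theta$, to verify directly the four conditions (a'), (b), (c), (d) for the return map of the vertical flow of $(M,\Sigma,r_\theta\cdot\zeta)$ on a suitable segment, taking the genericity statement of \cite{ChaikaEskin} as the only external input. First I would fix $(M,\Sigma,\zeta)$: by Eskin--Mirzakhani--Mohammadi its $SL(2,\Rset)$--orbit closure is an affine invariant submanifold $\mathcal M$ carrying a unique invariant ergodic probability measure $\nu$, and the theorem of Chaika and Eskin gives a conull set of $\theta$ for which $r_\theta\cdot\zeta$ is generic for the Teichm\"uller flow $(g_t)$ with respect to $\nu$ and Oseledets generic for the Kontsevich--Zorich cocycle over $(g_t,\nu)$. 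Discarding also the countably many $\theta$ for which $r_\theta\cdot\zeta$ has a vertical saddle connection, the surface $r_\theta\cdot\zeta$ then has no vertical connection and carries a horizontal segment $I$ in good position; set $T=T_I$. This $T$ has no connection, hence a well--defined rotation number $\ug(T)$, and Veech's zippered rectangles construction identifies the Rauzy--Veech orbit of $T$ together with its cocycle $B(m,n)$ with a coding of the forward $g_t$--orbit of $r_\theta\cdot\zeta$ and of the Kontsevich--Zorich cocycle along it; throughout I would use Proposition~\ref{p4} to pass freely between $\Vert B\Vert$ and $\Vert B_{\vert H(\pi)}\Vert$. Since being of absolute (restricted) Roth type is independent of the choice of $I$ in good position (the analogue of Appendix~C of \cite{MMY2} quoted after Definition~\ref{absRothsurface}), it suffices to treat this single $T$.

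Conditions (b) and (c) should follow at once from Oseledets genericity. Condition (c) (coherence) asks that the cocycle restricted to $\Gamma_s(T^{(m)})$ and the inverse of the cocycle induced on $\Gamma(T^{(m)})/\Gamma_s(T^{(m)})$ both grow subexponentially relative to $\Vert B(0,n)\Vert$; this is immediate for an Oseledets generic point once one knows $\Vert B(0,n)\Vert$ grows at the rate of the top Lyapunov exponent. Condition (b) (spectral gap) is then exactly the simplicity of that top exponent for the Kontsevich--Zorich cocycle (Veech~\cite{Ve4}, Forni~\cite{For2}), again read off from genericity. Condition (d), $\dim\Gamma_s(T)=g$, should come from the non--uniform hyperbolicity of the restricted Kontsevich--Zorich cocycle on the absolute part $H(\pi)$ --- by Forni~\cite{For2} for the Masur--Veech measure, and over $\mathcal M$ via the appropriate hyperbolicity criterion --- which yields exactly $g$ strictly negative Lyapunov exponents there; for the Oseledets generic $r_\theta\cdot\zeta$, the stable space $\Gamma_s(T)$, defined by $\Vert B(0,n)\chi\Vert=\mathcal O(\Vert B(0,n)\Vert^{-\sigma})$, is then precisely the sum of the Oseledets subspaces of strictly negative exponent, of dimension $g$.

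The heart of the argument is condition (a'). Here the accelerated times $n_k$ are defined by the first instant at which $B(n_{k-1},n)$ carries the \emph{absolute} cone $\C(\pi^{(n_{k-1})})$ strictly inside $\C(\pi^{(n)})$; this cone is nonempty by Proposition~\ref{p2}, and geometrically the required inclusion is achieved as soon as the Rauzy--Veech path performs a bounded combinatorial pattern which is ``complete'' on absolute homology, something that happens in uniformly bounded Teichm\"uller time whenever the $g_t$--orbit of $r_\theta\cdot\zeta$ lies in a fixed compact subset of $\mathcal M$. By genericity the proportion of Teichm\"uller time spent outside a large compact set is small and individual excursions outside it are subexponentially long; transporting this through the coding gives $\Vert B(n_{k-1},n_k)\Vert\leq C_\tau\Vert B(0,n_{k-1})\Vert^{\tau}$ for every $\tau>0$, which is (a'). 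This is in substance Lemma~1.16 of \cite{ChaikaEskin}.

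I expect the real obstacle to be not any new estimate but the reconciliation flagged in the footnote: one has to check that the geometric Diophantine condition used in \cite{ChaikaEskin}, phrased through the Oseledets filtration on absolute cohomology, is genuinely equivalent to the conjunction of (a'), (b), (c), (d) --- in particular that the conditions of \cite{ChaikaEskin} do force $\dim\Gamma_s(T)=g$ as required by Definition~\ref{d1} --- and that the coding between the Teichm\"uller geodesic and the Rauzy--Veech orbit respects all the relevant norms and invariant subspaces. Once this dictionary is in place, the four verifications above are routine.
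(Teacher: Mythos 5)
Your proposal departs from the paper in an important way: the paper does not prove this lemma at all --- it is, quite literally, a citation. The footnote attached to the lemma explains that the Diophantine condition defined in Section 1.2.2 of \cite{ChaikaEskin} (their Definition 1.13, consisting of three conditions) is, once unwound, equivalent to the \emph{absolute} Roth type condition, and that Chaika and Eskin prove it holds for a.e.\ direction (their Lemma 1.16 for the matrix-growth condition, plus remarks at the end of their \S1.2.2 for the spectral gap and coherence). So the paper's own ``proof'' is a dictionary between two Diophantine definitions and a pointer to an external theorem. Your attempt to reconstruct the Chaika--Eskin argument from scratch is a genuinely different (and much more labour-intensive) route, and in outline it is a fair sketch of what their Lemma 1.16 does for the matrix-growth condition.

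However, there is a real gap: you claim to verify condition (d), $\dim\Gamma_s(T)=g$, and your justification (``non-uniform hyperbolicity of the restricted KZ cocycle on $H(\pi)$, by Forni for Masur--Veech and over $\mathcal M$ via the appropriate hyperbolicity criterion'') does not go through for arbitrary $\mathcal M$. Forni's hyperbolicity theorem is specific to the Masur--Veech measure; there are $SL(2,\Rset)$-orbit closures whose Kontsevich--Zorich cocycle has zero exponents, and for such $\mathcal M$ the stable space $\Gamma_s(T)$ has dimension strictly less than $g$ for every generic direction. This is precisely the point of the Remark immediately following the lemma in the paper: one gets \emph{restricted} absolute Roth type --- which is where condition (d) lives --- only under the additional hypothesis $\lambda_g>0$ for the orbit closure. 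The lemma as stated (for \emph{all} translation surfaces, with no hypothesis on $\mathcal M$) can only concern conditions $(\mathrm{a}')$, (b), (c). Note that Definition~\ref{d1} as printed lists (d) among the conditions for ``absolute Roth type'', but the Remark and the footnote make clear this is a slip: the tetrad $(\mathrm{a}')$, (b), (c), (d) is what \emph{restricted} absolute Roth type should mean, consistently with Definition~\ref{defRoth2}. Your proof would therefore fail for, say, a surface whose orbit closure is one of the known loci with totally degenerate KZ spectrum. Dropping the verification of (d) (and flagging it as requiring $\lambda_g>0$, per the Remark) would repair the argument.
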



\begin{remark}
Furthermore, it also follows from  \cite{ChaikaEskin} that\footnote{More precisely, it follows from Lemma 1.16 and Thm 1.4 of \cite{ChaikaEskin}, see the comment at the very end of $\S$ 1.2.2 in \cite{ChaikaEskin}.} if the orbit closure $\mathcal{M}:= \overline{SL(2,\mathbb{R})\cdot (M,\Sigma,\zeta)}$ under the linear action of $SL(2,\mathbb{R})$  is such that $\lambda_g>0$, where $\pm \lambda_i$, for $1\leq i\leq g$ here denote the Lyapunov exponents of the {\Green Kontsevich}-Zorich cocycle restricted to the locus $\mathcal{M}$, then,  for almost every angle $\theta$, the translation surface  $(M,\Sigma,r_\theta\cdot\zeta)$ is of \emph{restricted} absolute Roth type.
\end{remark}

Combining this Remark with the Theorem \ref{thmHolder1} proved in this paper, we get the following corollary\footnote{In \cite{ChaikaEskin},  a result on the solvability of the cohomological equation (under finitely many obstructions) for all translation surfaces in a.e. direction is stated (see the paragraph after Theorem 1.14 in \cite{ChaikaEskin}) and deduced from Lemma~\ref{lemma:CE} and \cite{MMY1}, but since the Definition 1.13 in \cite{ChaikaEskin} is \emph{not} equivalent to Roth type (but to absolute Roth type, see previous footnote), the results from \cite{MMY1} cannot be directly applied, but require the strenghthening proved in this paper. In private communications, though, Chaika and Eskin told us that their proof can be modified to actually show that all translation surfaces are of  Roth type for a.e.  $\theta$.}.

\begin{corollary}
Assume that $(M,\Sigma,\zeta)$ is  such that the orbit closure $ \overline{SL(2,\mathbb{R})\cdot (M,\Sigma,\zeta)}$ has $\lambda_g>0$. Then,  for almost every angle $\theta$,  any i.e.m. $T$ obtained as first return map of the linear flow in direction $\theta$ on $(M,\Sigma,\zeta)$ on a segment in good position satisfies the conclusion of Theorem \ref{thmHolder1}. Thus, for any $r>1$ there exists $\bar \delta >0$ such that for any  $\varphi \in C^r_{0} (\sqcup I^t_\alpha (T))$, there exists a H\"older continuous function $\psi \in C^{\bar \delta}([u_0(T),u_d(T)])$ with zero mean and a piecewise constant correction $\chi \in \Gamma_u(T)$ such that $ \varphi = \chi + \psi \circ T - \psi $. 
\end{corollary}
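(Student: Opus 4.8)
The plan is to obtain the statement by chaining together three facts that are already available: the Chaika--Eskin genericity result in the form of the Remark following Lemma~\ref{lemma:CE}, the fact that the absolute (restricted) Roth type property of a translation surface passes to the return map on \emph{any} horizontal segment in good position, and finally Theorem~\ref{thmHolder1} applied to the resulting i.e.m.

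First I would translate the statement about the linear flow in direction $\theta$ into one about a vertical flow. Recall from \S~\ref{sec:transl} that $r_\theta$ acts on a translation structure by postcomposing the charts, so that the flow in direction $\theta$ on $(M,\Sigma,\zeta)$ is conjugate, via the identity on the underlying surface, to the vertical flow $\partial/\partial y$ on $(M,\Sigma,r_\theta\cdot\zeta)$; moreover a segment $I$ which is in good position for the direction $\theta$ on $(M,\Sigma,\zeta)$ becomes exactly a horizontal segment in good position for $(M,\Sigma,r_\theta\cdot\zeta)$, and the first return map of the direction-$\theta$ flow to $I$ is precisely the i.e.m.\ $T_I$ associated to the vertical flow of $(M,\Sigma,r_\theta\cdot\zeta)$. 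Thus ``any i.e.m.\ $T$ obtained as first return map of the linear flow in direction $\theta$ on a segment in good position'' is the same as ``any i.e.m.\ $T_I$ obtained from the vertical flow of $(M,\Sigma,r_\theta\cdot\zeta)$ on a horizontal segment $I$ in good position''.

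Next I would invoke the Remark following Lemma~\ref{lemma:CE}: since by hypothesis the orbit closure $\mathcal{M}=\overline{SL(2,\mathbb{R})\cdot(M,\Sigma,\zeta)}$ has $\lambda_g>0$, it follows from \cite{ChaikaEskin} that for almost every $\theta$ the surface $(M,\Sigma,r_\theta\cdot\zeta)$ is of \emph{restricted absolute Roth type}. Then, by Definition~\ref{absRothsurface} together with the extension (stated just after that definition, the analogue for the absolute case of Appendix~C of \cite{MMY2}) of the independence of the choice of horizontal segment in good position, the return map $T_I$ on \emph{any} such segment is an i.e.m.\ of restricted absolute Roth type. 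Combining this with the previous paragraph, for a.e.\ $\theta$ every i.e.m.\ $T$ as in the statement is of restricted absolute Roth type.

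Finally I would fix $r>1$, choose a supplement $\Gamma_u(T)$ of $\Gamma_s(T)$ in $\Gamma_\partial(T)$, and apply Theorem~\ref{thmHolder1} to $T$: it yields the exponent $\bar\delta>0$ and the bounded operators $L_0,L_1$, so that $\psi:=L_0\varphi\in C^{\bar\delta}([u_0(T),u_d(T)])$ has zero mean, $\chi:=L_1\varphi\in\Gamma_u(T)$, and $\varphi=\chi+\psi\circ T-\psi$ for every $\varphi\in C^r_0(\sqcup I^t_\alpha(T))$, which is exactly the asserted conclusion. The only non-routine point is the identification carried out in the second paragraph --- checking that a segment in good position for the direction $\theta$ really does give, under $r_\theta$, a horizontal segment in good position and that the two notions of first return map coincide --- but this involves no estimate and is immediate from the definition of good position in \S~\ref{sec:transl} and the $GL(2,\mathbb{R})$-action; everything else is a direct citation.
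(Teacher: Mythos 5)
Your proposal is correct and follows exactly the route the paper has in mind: invoke the Remark after Lemma~\ref{lemma:CE} to get that $(M,\Sigma,r_\theta\cdot\zeta)$ is of restricted absolute Roth type for a.e.\ $\theta$, note (as stated after Definition~\ref{absRothsurface}) that the return map on \emph{any} horizontal segment in good position then inherits restricted absolute Roth type, and apply Theorem~\ref{thmHolder1}. The paper treats this as immediate and gives no written proof, so your spelling out of the $r_\theta$-conjugacy and the ``any segment in good position'' step is a faithful expansion of the intended argument, not a different one.
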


{\Green 
\begin{remark}
We remark that results on the cohomological equation for almost all directions on a fixed translation surface were first proved by Forni in \cite{For1, For3}. The result in this Corollary, compared with the latter results, improves on the loss of derivatives. Hower, a stronger assumption is assumed, namely hyperbolicity of the KZ cocycle\footnote{We already recalled that  the Masur-Veech mreasure is hyperbolic, as proved in {\Green \cite{For2}}. Results on the presence (or absence) of zero Lyapunov exponents and hyperbolicity for other $SL(2,\mathbb{R})$ invariant measures appear in several works in the literature, see for example \cite{For5, Fi}.} While this paper was under review, two new results on the cohomological equation have appeared: in  \cite{FMM} the Diophantine condition from \cite{MMY1} is weakened to allow the presence of zero Lyapunov exponents, but the loss of derivatives is worst than in our (or \cite{MMY1}) results; in \cite{FGL}, the optimal loss of derivatives without hyperbolicity assumptions is proved, but only  in the pseudo-Anosov (periodic Rauzy-Veech induction) special case (a measure zero class).
\end{remark}}

\section{Dual Roth Type}\label{sec:dualRoth}

We define in this section another  Diophantine condition for translation surfaces  which we call {\it dual Roth type}. 
In order to define it, we need first to introduce \emph{dual special Birkhoff sums} (see \S~\ref{sec:dualBS}). This is a notion which we believe is of independent interest, which is based on a form of \emph{duality} for Rauzy-Veech induction (which heuristically correspond to considering backward time in the Teichmueller geodesic flow and flipping the role of horizontal and vertical flow on translation surfaces). {\Blue It should be noticed that this type of duality was exploited also in the work of Bufetov \cite{Bu}, who discovered a duality between invariant distributions for the horizontal (vertical) flow and finitely additive functionals for the vertical (horizontal) flow. An analogous duality was later found by Bufetov and Forni for horocycle flows on hyperbolic surfaces in their work \cite{BuFo}.} 
Our motivation for introducing a dual special Birkhoff sums operator and dual Roth type conditions  in the combinatorial set up of Rauzy-Veech induction came from the results  proved in Section \ref{sec:centraldistributions}.

\subsection{Backward Rauzy-Veech induction and backward rotation numbers}\label{sec:backwardRauzyVeech}

%


Let ${\Blue M:}= M(\pi, \lambda, \tau)$ be a translation surface constructed from combinatorial data $\pi \in \R$, length data $\lambda \in \mathcal{C}$ and suspension data $\tau \in \Theta_\pi$. We assume in this section   that $M$ has \emph{no horizontal saddle connections}. 
Then, starting from $(\pi^{(0)},\lambda^{(0)},\tau^{(0)}):=(\pi,\lambda,\tau)$, we can iterate indefinitely the Rauzy-Veech \emph{backward} algorithm and (using the notation recalled in \S~\ref{sec:background}) construct the sequence as $\zeta_\alpha^{(n)}=\lambda^{(n)}_\alpha+i \tau_\alpha^{(n)}$  for $n\leq 0$ follows:

\begin{itemize}
\item when $\sum_\alpha \tau_\alpha^{(n)} <0$, we perform the inverse of an elementary \emph{top} step of the Rauzy-Veech algorithm
\begin{eqnarray*}
\pi^{(n-1)}&=& R_t^{-1} \left(\pi^{(n)}\right)\\
\zeta_{\alpha_w}^{(n-1)} &=&  \zeta_{\alpha_w}^{(n)} + \zeta_{\alpha_{\ell}}^{(n)}, \\
\zeta_\alpha^{(n-1)} &=& \zeta_\alpha^{(n)}, \qquad \forall \alpha \ne \alpha_w, 
\end{eqnarray*}
where $\alpha_w$, $\alpha_{\ell}$ are the letters such that 
$$
\pi_t^{(n)}(\alpha_w) = d, \qquad \pi_b^{(n)} (\alpha_\ell) = \pi_b^{(n)}(\alpha_w) +1;
$$
\item when $\sum_\alpha \tau_\alpha^{(n)} >0$, we perform the inverse of an elementary \emph{bottom} step of the Rauzy-Veech algorithm
\begin{eqnarray*}
\pi^{(n-1)}&=& R_b^{-1} \left(\pi^{(n)}\right)\\
\zeta_{\alpha_w}^{(n-1)} &=&  \zeta_{\alpha_w}^{(n)} + \zeta_{\alpha_{\ell}}^{(n)}, \\
\zeta_\alpha^{(n-1)} &=& \zeta_\alpha^{(n)}, \qquad \forall \alpha \ne \alpha_w, 
\end{eqnarray*}
where $\alpha_w$, $\alpha_{\ell}$ are the letters such that 
$$
\pi_b^{(n)}(\alpha_w) = d, \qquad \pi_t^{(n)} (\alpha_\ell) = \pi_t^{(n)}(\alpha_w) +1.
$$
\end{itemize}
 The case $\sum_\alpha \tau_\alpha(n) =0$ never occurs because there are no 
 horizontal saddle connections.
 
 \smallskip
 
We obtain in this way a {\it backward rotation number}, i.e an infinite  path $\ug$ in $\D$ with terminal point $\pi^{(0)}$ of the form
$$\ug = \ldots \star \gamma_{n-1}\star \gamma_{n}\star \ldots \gamma_{-1} \star \gamma_{0},$$ 
where $\gamma_{n}$, $n\geq 0$, is the arrow from $\pi^{(n-1)}$ to $\pi^{(n)}$.

\begin{remark}\label{remark1}
Remark that the equations for $\zeta_\alpha^{(n)}=\lambda^{(n)}_\alpha+i \tau_\alpha^{(n)}$ split into independent sets of equations for $\lambda^{(n)}$ and $\tau^{(n)}$. Since 
 the type of  backward move (top or bottom) depends only on $\tau^{(n)}$, it follows that 
 the backward rotation number $\ug$ only depends on $(\pi, \tau)$. 
 \end{remark}

%
\begin{remark}\label{remark2} 
The backward rotation number is defined as soon as the preferred rightwards separatrix in $M$ (issued from the leftmost vertex $O$ of the polygon) is not a saddle connection. Other 
 horizontal connections are apparently not detected by the algorithm (i.e the algorithm does not stop in this case).

In particular, if $\tau$ is such that its components $\tau_\alpha$, $\alpha \in \A$, are irrationally independent over $\mathbb{Q}$, then the backward Rauzy algorithm does not stop. Thus, to any $\pi$ and a.e. $\tau \in \Theta_\pi$, one can associate to the pair $(\pi, \tau)$ a backward rotation number. 
\end{remark}

\smallskip
Recall that a finite path in $\D$ is {\it complete} if every letter in $\A$ is the winner of at least one of its arrows and that an infinite path  is {\it $\infty$-complete}  if it is the concatenation of infinitely many finite complete paths. 

\begin{proposition}[Completeness of backward rotation numbers]\label{p7}
When $M= M(\pi, \lambda, \tau)$ has no horizontal connection, the backward rotation number $\ug$ associated to $(\pi, \tau)$ is $\infty$-complete.
\end{proposition}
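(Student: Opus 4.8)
The plan is to reduce the statement to a classical fact about forward rotation numbers. Recall from the discussion in \S\ref{sec:transl} that $M=M(\pi,\lambda,\tau)$ has no horizontal saddle connection exactly when the horizontal flow on $M$ is well-behaved; equivalently, exchanging the roles of the horizontal and vertical directions, the horizontal flow on $M$ plays the role of a ``vertical flow'' for the translation surface $M'$ obtained from $M$ by the rotation $r_{-\pi/2}$ (or, combinatorially, by the symmetry reversing top and bottom). Under this identification, an inverse top (resp.\ bottom) step of the Rauzy--Veech algorithm on $M$ — which we read off from the sign of $\sum_\alpha \tau_\alpha^{(n)}$ — becomes a forward bottom (resp.\ top) step for the dual data. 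The arrow sequence $\ug = \dots \star \gamma_{n-1}\star \gamma_n \star \dots \star \gamma_0$, read in reverse order, is then precisely the (forward) rotation number of an i.e.m.\ with no connection associated with the dual length data $\lambda' $, which comes from $\tau$ via $\lambda'=$ (the vector playing the role of lengths for the transposed combinatorics). Since by the result recalled in \S\ref{sec:algorithms} (cf.\ \cite{MMY1}, p.~832) the forward rotation number of any i.e.m.\ with no connection is $\infty$-complete, the reversed path is $\infty$-complete, and hence $\ug$ is $\infty$-complete as a concatenation of the same finite complete blocks (completeness of a finite path is insensitive to reversing the order of its arrows, since it only asks that every letter be a winner of some arrow).

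**Key steps, in order.**
First I would make precise the duality at the level of data: given $(\pi,\tau)$ with $\tau\in\Theta_\pi$ and no horizontal connection, produce the ``transposed'' combinatorial data $\pi^\vee$ (swapping $\pi_t\leftrightarrow\pi_b$, equivalently replacing $\Omega_\pi$ by $-\Omega_\pi = {}^t\Omega_\pi$) together with a genuine length vector built from $\tau$, and check that the no-horizontal-connection hypothesis for $M$ translates into the no-connection hypothesis for the i.e.m.\ $T^\vee$ determined by $(\pi^\vee,|\tau|\text{-data})$. Second, I would verify the combinatorial dictionary: the backward step rule on $(\pi^{(n)},\tau^{(n)})$ stated in \S\ref{sec:backwardRauzyVeech} (top inverse step when $\sum\tau^{(n)}_\alpha<0$, bottom inverse step when $>0$, never zero because there is no horizontal connection) matches, arrow for arrow, the forward Rauzy--Veech step for $T^\vee$ run from the appropriately chosen initial point, with ``top'' and ``bottom'' interchanged. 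Third, I would invoke the known statement that the forward rotation number of an i.e.m.\ with no connection is $\infty$-complete; this gives a decomposition of the reversed path into finite complete blocks. Fourth, I would observe that reversing a finite path does not change which letters appear as winners, so each block remains complete when read in the original (backward) order; concatenating the blocks shows $\ug$ is $\infty$-complete.

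**Main obstacle.** The genuinely delicate point is the second step: establishing cleanly that the backward algorithm, as defined here purely through the sign of $\sum_\alpha \tau_\alpha^{(n)}$ and the inverse maps $R_t^{-1}, R_b^{-1}$, really is the forward Rauzy--Veech algorithm for a bona fide i.e.m.\ with no connection — in particular one must confirm that the ``winner/loser'' bookkeeping in the backward formulas for $\zeta^{(n-1)}_{\alpha_w}=\zeta^{(n)}_{\alpha_w}+\zeta^{(n)}_{\alpha_\ell}$ corresponds to the matrices ${}^tB_\gamma^{-1}$ acting on suspension cones (as recalled in \S\ref{sec:RVdynamics}), and that the relevant orbit never halts, which is exactly the no-horizontal-connection assumption (Remark~\ref{remark2}). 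An alternative, possibly cleaner, route avoiding the explicit dual surface is a direct combinatorial argument: suppose for contradiction that some letter $\alpha$ is the winner of only finitely many arrows among $\dots,\gamma_{n},\dots,\gamma_0$; then, going far enough back, $\alpha$ is never a winner, so the column or row of $\alpha$ in the cumulative backward matrices stabilizes, which forces the segment $\zeta^{(n)}_\alpha$ to have constant (hence bounded) length while all the other $\zeta$'s grow — and one shows this geometrically produces a horizontal saddle connection (the orbit of the corresponding separatrix closes up), contradicting the hypothesis. I would present whichever of the two arguments the earlier sections (notably \S\ref{sec:backwardRauzyVeech} and the completeness statement cited from \cite{MMY1}) make most economical; the reduction-by-duality argument is shorter to write but leans on a duality that the paper develops in \S\ref{sec:dualRoth} anyway, so it is the natural choice.
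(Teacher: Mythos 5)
Your first route (the duality reduction) contains a gap that is a misconception rather than a fillable detail. The backward algorithm, governed by the sign of $\sum_\alpha\tau_\alpha^{(n)}$, is \emph{not} the forward Rauzy--Veech algorithm of any i.e.m.\ with dual length data. Geometrically, the ``domain'' that the dual first-return maps live on is the disjoint union $L^{(m)}=\sqcup_\alpha L^{(m)}_\alpha$ of vertical sides, not a single interval; the induction on these unions is a variant of the da Rocha algorithm, as the paper itself observes right after introducing dual special Birkhoff sums in \S\ref{sec:dualBS}. Combinatorially, a forward RV step compares the two specific quantities $\lambda_{\alpha_t}$ and $\lambda_{\alpha_b}$, while a backward step tests the sign of the full sum $\sum_\alpha\tau_\alpha^{(n)}$; no assignment of positive ``dual lengths'' built from $\tau$ (which has entries of both signs) makes these decision rules coincide arrow-for-arrow. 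So you cannot simply reverse the path and invoke $\infty$-completeness of forward rotation numbers: that theorem applies to i.e.m., and the object you would be applying it to is not one. This is precisely why the paper gives a direct combinatorial proof and explicitly flags that a more geometric argument would be of interest.

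Your second route is the right overall strategy (failure of completeness forces a horizontal connection), but your description of the mechanism --- a separatrix ``closing up'' because one $\zeta_\alpha$ stays bounded while the others grow --- is not how the contradiction arises, and it skips the real work. The paper's proof turns on the Lyapunov-type quantity $H(n)$, the sum of the $2d-2$ positive numbers $H^t_k(n)$, $-H^b_k(n)$ coming from the suspension inequalities. One shows $H(n)$ is strictly decreasing as $n\to-\infty$, hence convergent, which already forces the arrow types to switch infinitely often (Lemma~\ref{l3}). Along switching times one then gets $H^t_{d-1}(n)$, $H^b_{d-1}(n)$, $H_d(n)\to 0$, and hence $\tau^{(n)}_\alpha\to 0$ for every $\alpha$ that wins infinitely often. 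The letters in $\A'$ (those winning only finitely often) have \emph{constant} $\tau_\alpha$, and since $H_d(n)$ alternates in sign while the non-$\A'$ part vanishes, $\sum_{\alpha\in\A'}\tau_\alpha=0$. The step you omit, and the genuinely delicate one, is Lemma~\ref{l10}: a monotone argument on the auxiliary subsets $\A^t_{>0}(n)$ and $\A^b_{<0}(n)$, which track the signs of the partial $\tau$-sums over initial segments of $\A'$-letters in the top/bottom orderings at time $n$, showing that these subsets eventually empty out. Only then can one read off, at a late enough switching time, a consecutive block of $\A'$-letters whose initial partial $\tau$-sums are all positive with total sum zero --- which is literally a horizontal saddle connection, contradicting the hypothesis. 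Without this bookkeeping of partial sums of $\tau$ over $\A'$, the contradiction does not materialize; convergence of individual entries $\tau^{(n)}_\alpha$ to zero is not by itself enough.
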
 
%

We give the  proof of this Proposition  in the Appendix \ref{secbac}, since the proof is quite long and rather involved combinatorially.  It would be interesting to know whether the converse to the above Proposition is true (see Remark \ref{remark2} above). 
%

\subsection{Dual special Birkhoff sums}\label{sec:dualBS}
Let $M(\pi, \lambda, \tau)$ be a translation surface as above that has \emph{no horizontal saddle connections}. Thus, we have well defined data $(\pi^{(n)},\lambda^{(n)},\tau^{(n)})$ for any $n \in \mathbb{Z}$. For any $n\in\mathbb{Z}$ we denote by $T^{(n)}$ the i.e.m. with data $(\pi^{(n)},\lambda^{(n)})$. Let $I^{(n)}=I(T^{(n)})$ be the sequence of intervals on which $T^{(n)}$ acts for $n\in\mathbb{Z}$. 

\smallskip
Let us recall that  special Birhoff sums (see \eqref{specialBS_def}) are operators $S(m,n)$, where $m \leq n$, which map from functions defined on $\sqcup I_\alpha^{(m)}$ to functions defined on $\sqcup I_\alpha^{(n)}$  and that, when acting on the space of piecewise constant functions, $S(m,n): \Gamma(m)  \to \Gamma(n) $ is represented by the KZ cocycle matrix $ B(m,n)$ (see \S~\ref{ssKZ}).


We will now define a dual operator, which we call \emph{dual special Birkhoff sums}. It is useful to remark that this operator plays for the \emph{horizontal} linear flow on a translation surface the role of the special Birkhoff sums play for the \emph{vertical} flow. 
%

\smallskip

Let $q^{(n)}$ be the vectors given by $q^{(n)}=-\Omega_{\pi^{(n)}} \, \tau^{(n)}$, so that the translation surface $M(\pi^{(n)}, \lambda^{(n)}, \tau^{(n)})$ can be represented as a zippered rectangle over $T^{(n)}$ with heights $q^{(n)}$ (see \S~\ref{sec:transl}). 
For $n \in \Zset$, $\alpha \in \A$, let 
$$L_{\alpha}^{(n)}:= \left(0,q_{\alpha}^{(n)}(\pi,\tau)\right), \qquad L^{(n)}:=\bigsqcup_{\alpha \in \A} L_{\alpha}^{(n)}.$$
The disjoint union  $L^{(n)}$ will be the domain of the functions on which the $n^{th}$ special Birkhoff sum operator will act. Geometrically, one should think of the intervals $L_{\alpha}^{(n)}$, $\alpha \in \A$, as \emph{vertical} intervals, one for each zippered rectangle representation with heights $q^{(n)}$ of the translation surface  $M(\pi^{(n)}, \lambda^{(n)}, \tau^{(n)})$. Remark that, for any $n\leq 0$, $q_{\alpha}^{(n)}$ depends only on $\pi$ and $\tau$ (see Remark \ref{remark1}). 

For $n' \leq n$, $\alpha, \in \A$, we can write  the interval $L_{\alpha}^{(n)}$  as a disjoint union (modulo $0$) of a number of translated copies of the  $L_{\beta}^{(n')}$ as follows (see also Figure~\ref{fig:pastsums}).
%
%


 Let us recall that  $B_{\alpha}(n',n):=  \sum_{\beta \in \A} B(n',n)_{\alpha \beta }$ gives the return time in $I^{(n)}$ of $I_{\alpha}^{(n)}$ under $T^{(n')}$ (see \S~ref{sec:algorithms}). For $0 \leq j < B_{\alpha}(n',n)$, let $\beta(\alpha,j)$ be the letter in $\A$ such that $$(T^{(n')})^j (I_{\alpha}^{(n')}) \subset I_{\beta(\alpha,j)}^{(n)}.$$ 
Then, for any $\alpha \in \mathcal{A}$, we can write 
$$
q_{\alpha}^{(n)} = \sum_{0\leq j < B_\alpha(n',n)} q_{\beta(\alpha,j)}^{(n')}. 
$$
Correspondingly, one can write the interval $L_{\alpha}^{(n)}$, which has length $q_{\alpha}^{(n)} $,  
as union of intervals of lengths $q_{\beta(\alpha,l)}^{(n')}$, each of which is  translated copy of the corresponding interval $L_{\beta(\alpha,l)}^{(n')}$, i.e. 
$$
L_{\alpha}^{(n)} = \bigcup_{0\leq j < B_\alpha(n',n)} \left( L_{\beta(\alpha,j)}^{(n')}(\pi, \tau) +  Sq(j) 
\right), \qquad \text{where} \quad S q (j)
:= \sum_{0\leq i < j} q_{\beta(\alpha,i)}^{(n')}(\pi, \tau) .
$$
Thus, collecting together all $j$ such that $\beta(\alpha,j)=\beta$ for a fixed $\beta \in \mathcal{A}$ (there are $ B(n',n)_{\alpha,\,\beta}$ of them)  and then denoting by $j_{\ell, \alpha,\beta}^{(n',n)} (L_{\beta}^{(n')})$ the $\ell^{th}$ shifted copy of $L_{\beta}^{(n')}$, for $0\leq \ell < B(n',n)_{\alpha,\,\beta}$, we can write 
\begin{equation} \label{decompL}
L_{\alpha}^{(n)}= 
 \bigcup_{\beta \in \A}\ \bigcup_{\substack{0 \leq j < B_\alpha(n',n), \\ \text{s.\ t.}\ \beta(\alpha,j) =\beta}}
 \left( L_{\beta(\alpha,j)}^{(n')} + Sq(j)\right)  
 = \bigcup_{\beta \in \A}\  \bigcup_{0 \leq \ell < B(n',n)_{\alpha,\,\beta}} j_{\ell, \alpha,\beta}^{(n',n)} (L_{\beta}^{(n')}).
\end{equation}



 We define for every $n'\leq n$, an operator $S^\sharp (n,n')$ that sends functions on  $L^{(n)}$ to functions on  $L^{(n')}$. Let $\psi$ be a function defined  on  $L^{(n)}$. Then the image $S^\sharp (n,n')\psi$ is the function $\Psi(x)$  on $L^{(n')}$ defined as follows: 
\begin{equation}\label{def:dualBS}
\begin{split}
S^\sharp (n,n')\psi = \Psi(x)&
 : =  \sum_{\alpha \in \A}\, \sum_{\substack{ 0 \leq j < B_\alpha(n',n) \\ \text{s.\ t.}\ \beta(\alpha,j) =\beta }} 
 \left( L_{\beta(\alpha,j)}^{(n')} + Sq(j)\right)   \\ &
  = \sum_{\alpha \in \A} \sum_{ 0 \leq \ell < B(n',n)_{\alpha,\,\beta}} \psi(j_{\ell, \alpha,\beta}^{(n',n)}(x)), \qquad \text{if }\,  x \in L_{\beta}^{(n)} .
\end{split}
\end{equation}
We will call $S^\sharp (n,n')\psi$ the \emph{dual special Birkhoff sums} and $S^\sharp (n,n')$ \emph{dual special Birkhoff sum operator}. 

\smallskip
Let $\Gamma^\sharp (n)$ be the space of functions on $L^{(n)}$ which are constant on each $L_{\alpha}^{(n)}$; it is canonically isomorphic to $\RA$. 
The operator $S^\sharp (n,n')$ sends $\Gamma^\sharp (n)$ to $\Gamma^\sharp (n')$ and the matrix w.r.t. the canonical bases is the transpose matrix
$^tB(n',n)$ (notice that here the order of $n,n'$ is reverted). This explains the name {\it dual Birkhoff sum operator} for $S^\sharp (n,n')$.

\begin{figure}
    \includegraphics[width=.8\textwidth]
		{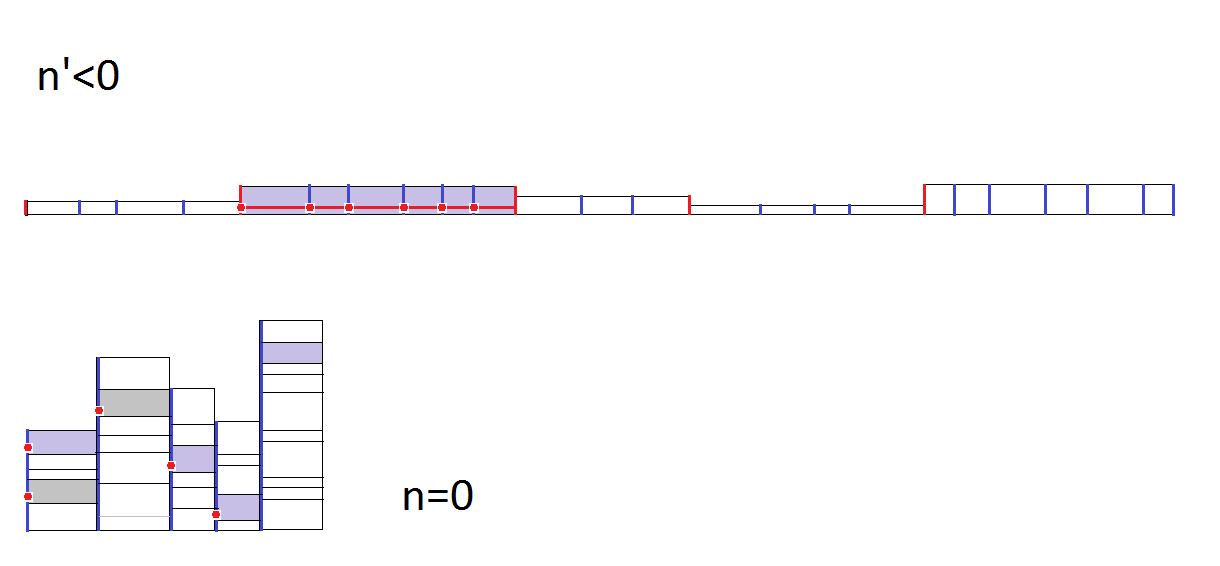}
		\caption{Geometric interpretations of the \emph{dual} special Birkhoff sums $ S^\sharp(n,n')$, $\quad n'<n$. \label{fig:pastsums} }
	\end{figure}
	\begin{figure}
    \includegraphics[width=.8\textwidth]
		{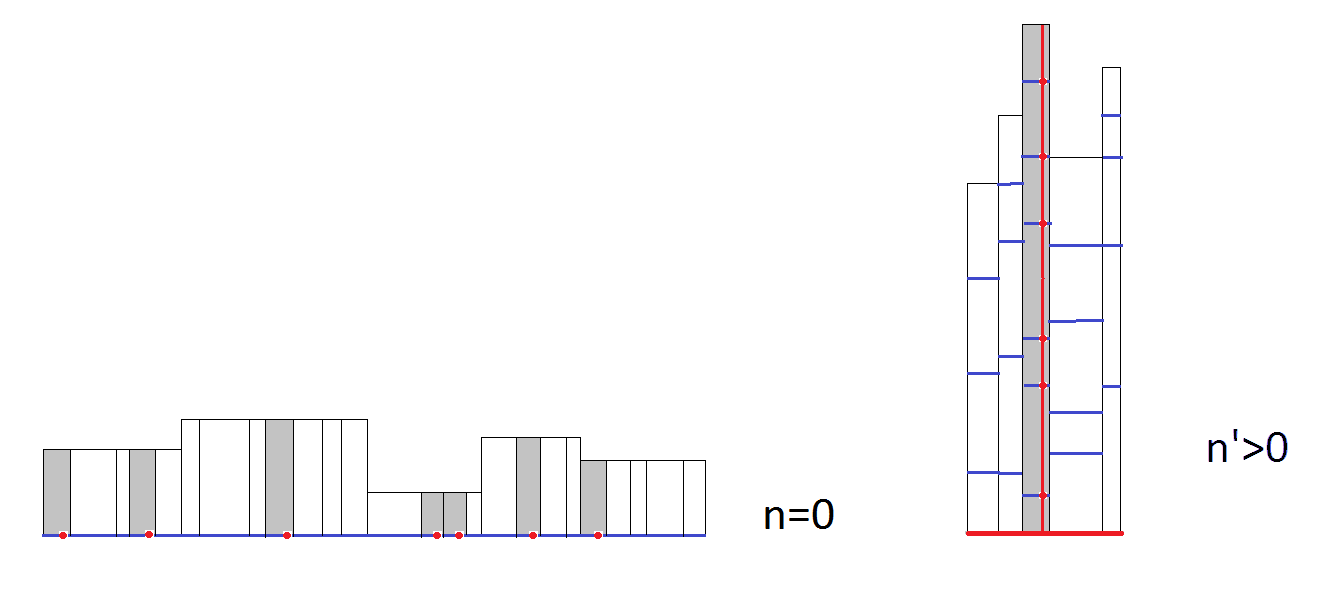}
		\caption{Geometric interpretations of the (\emph{direct}) special Birkhoff sums $ S(n,n')$, $\quad n'>n$.\label{fig:futuresums}}
\end{figure}

\begin{remark}
One can interpret
this definition as a \emph{special Birkhoff sum} for a Poincar{\'e} map of the \emph{horizontal} translation flow as follows. For each $n'\leq n, \alpha \in \A$, {\Green identify} each interval $L^{(n')}_\alpha$ with the left vertical side of the rectangle $R^{(n')}_\alpha = I^{(n')}_\alpha \times [0,q^{(n')}_\alpha] $ of the zippered rectangle presentation of $M(\pi^{(n')}, \lambda^{(n')}, \tau^{(n')})$. 
For $x\in L^{(n')}_\beta$, consider the leaf of the horizontal flow that starts at $x$, up to the first time it hits again the union $\cup_{\alpha\in \A}L^{(n'{\Blue)}}_\alpha$ (see the top part of Figure~\ref{fig:pastsums}, the horizontal leaf is contained in the coloured column). This piece of horizontal leaf hits the union $L^{(n)}$ (on which $\psi$ is defined)  
in exactly $\sum_\alpha B(n',n)_{\alpha,\,\beta}$ points (also shown in top part of Figure~\ref{fig:pastsums}, as dots along the horizontal leaf).  The dual special Birkhoff sum $S^\sharp (n,n')\psi (x) $ is obtained by adding the values of the function $\psi$ at these hitting points.


To highlight the duality with (direct) special Birkhoff sums, let us remark that the special Birkhoff sums can be defined similarly as follows (refer to Figure~\ref{fig:futuresums}). Given $n''>n $ and a function $\phi$ defined on $I^{(n)}$, $S^\sharp (n,n'') \phi(x)$  is obtained by considering a leaf of the \emph{vertical} flow starting from $x \in I^{(n'')}_\beta$ up to its first return to $I^{(n'')}$ and summing up the values of the function $\phi$ at the  hitting  points of this piece of a leaf with the interval $I^{(n)}$ (as shown in the {\Blue right picture} of Figure~\ref{fig:futuresums}). 
One can also give an equivalent definition of (direct) special Birkhoff sum similar (and dual to) the definition  \eqref{def:dualBS}. Namely, the interval $I^{(n)}_\alpha$ can be written as a union of translated copies of the intervals $I^{(n'')}_\beta$ (as one can see in the {\Blue left picture} of Figure~\ref{fig:futuresums})). Let $j_{\ell, \alpha,\beta}^{(n,n'')}\left( I^{(n'')}_\beta \right)$ be the $\ell$'s copy of the interval  $I^{(n'')}_\beta$ inside  $I^{(n)}_\alpha$, where $0\leq \ell < B (n,n'')_{\alpha,\beta}$. Then
$$
S(n,n'') \phi(x) = \sum_{\alpha \in \A} \, \sum_{0\leq \ell < B (n,n'')_{\alpha,\beta}} j_{\ell, \alpha,\beta}^{(n,n'')}(x) .
$$ 

Conversely, in order to give a definition of dual special Birkhoff sum formulated in a similar way to the classical definition of (direct) special Birkhoff sums (i.e. as a sum over an orbit of the induced  i.e.m. $T^{(n)}$ up to its first return to $I^{(n'')}$), one would need to describe the applications that arise as first return maps of the horizontal linear flow on the unions $L^{(m)}$, $m\leq 0$ of vertical intervals. One can describe these as piecewise isometries of a special type and the algorithm that produces the induced map on $L^{(m-1)}$ given the one on $L^{(m)}$ turns out to be related to the da Rocha algorithm \cite{daRocha}. We refer to \cite{dualRV} in which the authors prove that a variant of da Rocha induction is \emph{dual} (in the sense of Scheweiger \cite{Schweiger}) to Rauzy-Veech induction for i.e.m.\end{remark}

\smallskip
The dual Birkhoff sums operators $S^\sharp (n,n')$, $n'<n$, enjoy similar properties to the (direct) Birkhoff sums operators $S (n,n'')$, $n''>n$ (recalled in \S~\ref{ssKZ}). In particular, for $n''\leq n' \leq n$, one has:
$$S^\sharp (n, n'') = S^\sharp (n',n'') \circ S^\sharp (n,n').$$
\smallskip
Observe moreover that, if $\psi$ is integrable on $L^{(n)}$, then $S^\sharp (n,n') \psi$ is integrable on $L^{(n')}$ and we have
$$\int_{L^{(n')}} S^\sharp (n,n') \psi = \int_{L^{(n)}} \psi .$$
We will denote by $^tB_0(n',n)$ the restriction of $S^\sharp (n,n')$ to the hyperplane $\Gamma_0^\sharp (n) \subset \Gamma^\sharp (n)$ of functions of zero mean value.

\subsection{Dual Roth type condition}\label{sec:dualRothcondition}
We can now define the {\it dual Roth type} Diophantine condition. 

\smallskip
 Let $\underline \gamma^*= \cdots \star \gamma^*_{-1}\star \gamma^*_0$ be a backward rotation number, i.e. an infinite path in the Rauzy diagram $\mathcal D$ ending in $\pi$. 
$(\pi, \tau)$, where $\tau \in \Theta_\pi$, by the Rauzy-Veech algorithm.  
Let us assume that $\underline \gamma^*$ is infinitely complete. 
 Define $\hat n_0 =0$. Given $\hat n_k$ for some $k{\color{blue}\leq} 0$, define inductively $\hat n_{k-1}\leq  0 $ as the largest (negative) integer $n <  n_{k}$ such that the path 
$\gamma( n, n_{k})= \gamma_{ n+1} \star \ldots \star \gamma_{n_{k}}$ is $\infty$-complete (see \S~\ref{secRV}).

\smallskip
 As we saw in  Proposition \ref{p7}, $\underline \gamma^*$  is infinitely complete as long as $\underline \gamma^*$ is the backward rotation number of $M(\pi,\lambda, \tau)$ with no horizontal connection. In particular, since having no connection is a full measure condition and horizontal connections depend on the suspension datum only, for any combinatorial data $\pi$, for a.e. $\tau \in \Theta_\pi$ (with respect to the induced Lebesgue measure on $\Theta_\pi$), we can associate to $(\pi, \tau)$ an infinitely complete backward rotation number $\underline \gamma^*$. 


\smallskip
We can hence give the following definition, where  the sequence  $(\hat n_k)$ that appears in Condition (a) is the sequence of accelerated times defined above. 


\begin{definition}\label{def:dualRoth}
A pair of suspension data $(\pi, \tau)$, where  $\tau \in \Theta_\pi$, is of \emph{weak dual  Roth type} (or, often in this paper, simply \emph{dual Roth type}) if one can associate to $(\pi, \tau)$ an infinitely complete backward rotation number $\underline \gamma^*$  and  the following  conditions are satisfied:

\begin{itemize}
\item[(a)] ({\it dual matrices growth}) for every $\varepsilon >0$, there exists $C_0=C_0(\va)>0$ such that, for all $k\leq 0$
$$||B( \hat n_{k-1}, \hat n_{k})|| \leq C_0 ||B(\hat n_{k},0)||^{\va};$$

\item[(b)] ({\it dual spectral gap}) there exists $\theta >0$ and $C_1>0$ such that, for all $n \leq 0$
$$||{}^tB_0(n,0)||\leq C_1 ||\;^tB(n,0)||^{1-\theta} ,$$
where ${}^tB_0(n,0)$ is the restriction of $\;^tB(n,0)$ to the hyperplane $\Gamma^\sharp (0)$.
\end{itemize}

\end{definition}

These two conditions are dual to conditions (a) and (b) in the (direct) Roth type condition.   For the results in this paper, we do not require a third condition dual to the coherence condition (i.e. condition (c)) which appears in the (direct) Roth type condition, see Definition~\ref{defRoth2} in \S~\ref{ssRoth}). Hence we define this dual Diophantine condition with only (a) and (b), but we call it  \emph{weak} dual Roth type condition (and in parallel we call \emph{weak} Roth type the direct condition with (a) and (b) only) to leave room to the possibility of defining also a dual Roth type (or restricted dual Roth type) with additional conditions.

\begin{remark}
A stronger condition is to require that $(\pi,\tau)$ has {\it bounded type}, i.e that the matrices $B(n_{k}, n_{k-1})$ for $k< 0$ above stay uniformly bounded (equivalently, these matrices take their values in  a finite set). This clearly implies condition (a). Condition (b) follows from the uniform contraction of the Hilbert metric in the positive cone of $\RA$.
\end{remark}

{\Blue 

\begin{remark}\label{dualremRoth0} 
As in the case of Condition (a) of the (direct) Roth type condition (see Remark~\ref{remRoth0}), also 
Condition (a) of Definition~\ref{def:dualRoth} can be rephrased equivalently using different sequences of times, for example using the sequence ${(\tilde{n}_k)}_{k\leq 0}$ defined setting $ \tilde{n}_0 =0$ and,  for $k<0$ defining inductively $ \tilde{n}_{k-1}$ as the largest integer $n<  \tilde{n}_{k}$ such that the matrix $B(n, \tilde{n}_{k-1})$ has positive coefficients. 
\end{remark}
}


\smallskip
Definition \ref{def:dualRoth} clearly does not depend on the choice of the norm.
It is convenient for the following sections to take as the norm of a matrix the sum of all coefficients in absolute value, namely 
\[
\Vert B \Vert : = \sum_{\alpha, \beta \in \A} |B_{\alpha,\beta}| . 
\]
The absolute values will not appear  since the matrices
that we consider here all have nonnegative entries. In particular, notice that with this choice of norm we have that $\Vert B  \Vert = \Vert {}^t B \Vert$. We will use this observation freely throuought the next sections.

\subsection{Dual lengths control}\label{sec:dualcontrol}
The following Lemma shows that Condition (a) in  the dual Roth type condition imply a control on  lengths of the intervals $L_{\beta}^{(m)}$, $m<0$, which is analogous to the control of the lengths of the subintervals  $I_{\alpha}^{(n)}$, $n>0$ given by Condition (a) of the  Roth type condition or Condition (a) of the absolute Roth type condition (see \S~\ref{sec:crucialestimates}). 

\begin{proposition}\label{lengthsL}
For all $\va >0$ there exists $C_0= C_0(\va)>0$ and  $C_1= C_1(\va)>0$ such that, for all $\beta \in \A$, 
$$  C_0 \, ||B(m,0)||^{-1 -\va}  \leq  |L_{\beta}^{(m)}| \leq C_1\, ||B(m,0)||^{-1 +\va}. $$
\end{proposition}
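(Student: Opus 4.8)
The plan rests on the identification $|L_\beta^{(m)}| = q_\beta^{(m)}$, where $q^{(m)} = -\Omega_{\pi^{(m)}}\tau^{(m)}$ is the vector of heights of the zippered rectangle representation of $M(\pi^{(m)},\lambda^{(m)},\tau^{(m)})$. Combining \eqref{omegarel} with the transformation law of the $\tau^{(n)}$ under the cocycle (cf.\ \S~\ref{sec:RVdynamics}) one gets $q^{(0)} = B(m,0)\,q^{(m)}$ for every $m\le 0$. Writing $c_\beta(m):=\sum_{\alpha\in\A}B(m,0)_{\alpha\beta}$ for the $\beta$-th column sum of $B(m,0)$ and $Q:=\sum_{\alpha\in\A}q_\alpha^{(0)}>0$ (a constant depending only on the initial data $(\pi,\lambda,\tau)$), summing over the first index turns this into the fundamental identity
$$Q = \sum_{\beta\in\A} c_\beta(m)\,q_\beta^{(m)}.$$
Here $c_\beta(m)$ is the $\beta$-th row sum of the matrix ${}^tB(m,0)$ of the dual special Birkhoff sum operator $S^\sharp(0,m)$, i.e.\ the value on $L_\beta^{(m)}$ of $S^\sharp(0,m)\mathbf 1$, so the identity is just $\int_{L^{(m)}}S^\sharp(0,m)\mathbf 1 = \int_{L^{(0)}}\mathbf 1$. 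It is the exact mirror of the forward identity $|I^{(0)}| = \sum_\beta(\text{return time of }I_\beta^{(n)})\,|I_\beta^{(n)}|$, under the dictionary $|I_\beta^{(n)}|\leftrightarrow q_\beta^{(m)}$, $B(0,n)\leftrightarrow{}^tB(m,0)$, $n\ge 0\leftrightarrow m\le 0$; with this dictionary, Proposition~\ref{lengthsL} is precisely the dual of the forward length estimate of Proposition~\ref{propo1}.

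Granting the identity, both inequalities follow as in the forward case. For the upper bound, it gives $q_\beta^{(m)}\le Q\,c_\beta(m)^{-1}$, so it suffices to prove the column-sum lower bound $c_\beta(m)\ge C_\va^{-1}\Vert B(m,0)\Vert^{1-\va}$ for all $\beta$ (the dual of the estimate of Proposition~\ref{p3}, which holds under condition (a) by \cite{MMY1}). For the lower bound, the identity and the pigeonhole principle produce an index $\alpha$ with $\sum_\beta B(m,0)_{\alpha\beta}\ge\Vert B(m,0)\Vert/d$ (the norm being the sum-of-entries norm fixed in \S~\ref{sec:dualRothcondition}), whence $\max_\beta q_\beta^{(m)}\ge d^{-1}(\min_\alpha q_\alpha^{(0)})\Vert B(m,0)\Vert^{-1}$; combined with the comparability estimate $\max_\beta q_\beta^{(m)}\le C_\va(\min_\beta q_\beta^{(m)})^{1-\va}$ — the dual of the maximum/minimum characterization of condition (a) recalled in Remark~\ref{maxmin} — this yields $q_\beta^{(m)}\ge\min_{\beta'}q_{\beta'}^{(m)}\ge C_\va^{-1}\Vert B(m,0)\Vert^{-1-\va}$ after adjusting $\va$.

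It then remains to establish the two auxiliary facts — the column-sum lower bound and the comparability estimate — which I would do by transcribing the arguments of \cite{MMY1,MMY2} with the following substitutions: the forward accelerated positivity times are replaced by the backward positivity times $(\tilde n_k)_{k\le 0}$ of Remark~\ref{dualremRoth0}, along which $B(\tilde n_{k-1},\tilde n_k)$ has all entries $\ge 1$; the forward condition (a) is replaced by condition (a) of Definition~\ref{def:dualRoth}, which bounds $\Vert B(\tilde n_{k-1},\tilde n_k)\Vert$ by $\Vert B(\tilde n_k,0)\Vert^{\va}$, i.e.\ by the norm of the orbit segment lying between time $\tilde n_k$ and the base point $0$; the interval lengths $\lambda^{(n)}$ evolving forward under $B(0,n)$ are replaced by the heights $q^{(m)}$ evolving via $q^{(0)}=B(m,0)q^{(m)}$; and $\infty$-completeness of the forward rotation number is replaced by $\infty$-completeness of the backward rotation number, which holds by Proposition~\ref{p7} since $M(\pi,\lambda,\tau)$ has no horizontal connection. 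Since the sum-of-entries norm is transpose invariant, every norm estimate carries over verbatim; the engine of the comparability estimate is the elementary inequality $q_\beta^{(\tilde n_k)}=\sum_{\beta'}B(\tilde n_{k-1},\tilde n_k)_{\beta\beta'}q_{\beta'}^{(\tilde n_{k-1})}\ge\sum_{\beta'}q_{\beta'}^{(\tilde n_{k-1})}\ge\max_{\beta'}q_{\beta'}^{(\tilde n_{k-1})}$, valid for all $\beta$ at a backward positivity time, while running the computation of the proof of Proposition~\ref{propo1} against the fundamental identity above gives the two-sided control on the $c_\beta(m)$. The main obstacle — really the only new input — is Proposition~\ref{p7}, whose combinatorially involved proof is deferred to Appendix~\ref{secbac}; with it in hand, the remaining work is a routine but bookkeeping-heavy transcription of the forward argument, the one subtlety being to keep track consistently of the transpose matrices ${}^tB(n',n)$ governing the decomposition of the $L_\alpha^{(n)}$ into copies of the $L_\beta^{(n')}$ (see \S~\ref{sec:dualBS}).
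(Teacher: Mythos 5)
Your approach is essentially correct and rests on the same ingredients as the paper's proof: the relation $q^{(0)}=B(m,0)q^{(m)}$ (i.e.\ the paper's decomposition \eqref{Lrel} summed over $\alpha$), completeness of the backward rotation number (Proposition~\ref{p7}, which makes blocks $B(\tilde n_{k-1},\tilde n_k)$ positive), and the dual growth condition~(a). For the upper bound, the column-sum lower bound you flag as the key input is precisely what the paper's Lemma~\ref{aux_lemma} supplies (in the stronger entry-wise form $\max(1,B_{\alpha\beta}(m,0))\geq C'\|B(m,0)\|^{1-\va}$), and the transcription you sketch — a positive block $B(\hat n_k,\hat n_{k+2d-3})$ bracketed by pieces controlled by~(a), with transpose-invariance of the sum-of-entries norm doing the bookkeeping — is exactly the paper's argument. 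For the lower bound, the paper takes a slightly shorter path than your pigeonhole-plus-comparability route: it applies your ``engine'' inequality $q^{(m)}_\beta\geq\max_{\beta'}q^{(\hat n_{k-2d+2})}_{\beta'}$ once directly (since $B(\hat n_{k-2d+2},m)$ is positive by $(2d-3)$-completeness of the backward path), bounds the right-hand side below by $c\,\|B(\hat n_{k-2d+2},0)\|^{-1}$ using the same decomposition, and absorbs the excess $\|B(\hat n_{k-2d+2},0)\|\lesssim\|B(m,0)\|^{1+\va}$ with condition~(a). This inlines the content of your dual comparability estimate $\max_\beta q^{(m)}_\beta\leq C(\min_\beta q^{(m)}_\beta)^{1-\va}$ rather than extracting it as a standalone dual of Remark~\ref{maxmin}, which saves one layer of transcription. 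Both routes use exactly the same inputs, so your sketch does compile to a valid proof; just be aware that your two ``auxiliary facts'' are not citations but genuinely need proof — the paper proves the first in full as Lemma~\ref{aux_lemma}, and the second is most economically obtained by the direct argument above rather than as a formal dual of \cite{MMY2}'s Proposition~C1.
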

We remark that the estimate from below is not needed for our immediate purposes, but useful in a more general context. 
We conclude this section by proving this Proposition. The proof using the following auxiliary Lemma.

\begin{lemma}\label{aux_lemma}
For every $\varepsilon >0$, there exists $C'=C'(\va)>0$ such that, for all $m\leq 0$, all $\alpha, \,\beta \in \A$
$$ \max \left(1,B_{\alpha,\,\beta}(m,0)\right) \geq C' ||B(m,0)||^{1-\va}.   $$
\end{lemma}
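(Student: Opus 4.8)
The plan is to reduce the inequality to the special times $\tilde n_k$, $k\leq 0$, of Remark~\ref{dualremRoth0} and to exploit there that Condition~(a) forces the first block $P_0:=B(\tilde n_{-1},\tilde n_0)$ to be a \emph{bounded} positive matrix. Recall that in this section $\Vert B\Vert=\sum_{\alpha,\beta}|B_{\alpha\beta}|$, that $\tilde n_0=0$, that $B(\tilde n_{k-1},\tilde n_k)$ has all (integer) entries $\geq 1$ with $\tilde n_{k-1}$ maximal with this property, and that by Remark~\ref{dualremRoth0} Condition~(a) of Definition~\ref{def:dualRoth} reads: for every $\va>0$ there is $C_0=C_0(\va)>0$ with $\Vert B(\tilde n_{k-1},\tilde n_k)\Vert\leq C_0\Vert B(\tilde n_k,0)\Vert^{\va}$ for all $k\leq 0$. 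I would use freely that any Rauzy--Veech product $B(p,q)$ has nonnegative integer entries with diagonal entries $\geq 1$, so that the entries of $B(p,0)$ are non-decreasing as $p$ decreases and $\Vert\cdot\Vert$ is submultiplicative on nonnegative matrices; write $B_\alpha(p,q):=\sum_{\beta\in\A} B(p,q)_{\alpha\beta}$ for the row sums.

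\emph{Bounded first block and comparable row sums.} Condition~(a) with $k=0$ gives $\Vert P_0\Vert\leq C_0\Vert B(0,0)\Vert^{\va}=C_0 d^{\va}=:M$, since $B(0,0)=\Iset$; being positive, $P_0$ has all its entries in $[1,M]$. Hence for $j\leq -1$, writing $B(\tilde n_j,0)=P_0\,B(\tilde n_j,\tilde n_{-1})$ (cocycle relation) and using that left-multiplication by a matrix with entries in $[1,M]$ sends any nonnegative nonzero matrix to one all of whose row sums lie in $[\Vert B(\tilde n_j,\tilde n_{-1})\Vert,\,M\Vert B(\tilde n_j,\tilde n_{-1})\Vert]$, one gets $B_\alpha(\tilde n_j,0)\geq M^{-1}\max_{\alpha'}B_{\alpha'}(\tilde n_j,0)\geq (Md)^{-1}\Vert B(\tilde n_j,0)\Vert$ for \emph{every} $\alpha$.

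\emph{The estimate at the times $\tilde n_j$.} Fix $j\leq -2$. Since $B(\tilde n_j,0)=B(\tilde n_{j+1},0)\,B(\tilde n_j,\tilde n_{j+1})$ and the second factor has all entries $\geq 1$, every entry satisfies $B(\tilde n_j,0)_{\alpha\beta}\geq B_\alpha(\tilde n_{j+1},0)\geq (Md)^{-1}\Vert B(\tilde n_{j+1},0)\Vert$, using the previous step for $j+1\leq -1$. On the other hand Condition~(a) gives $\Vert B(\tilde n_j,0)\Vert\leq\Vert B(\tilde n_{j+1},0)\Vert\,\Vert B(\tilde n_j,\tilde n_{j+1})\Vert\leq C_0\Vert B(\tilde n_{j+1},0)\Vert^{1+\va}$, hence $\Vert B(\tilde n_{j+1},0)\Vert\geq C_0^{-1}\Vert B(\tilde n_j,0)\Vert^{1-\va}$ (using $\Vert\cdot\Vert\geq 1$). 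Combining, $B(\tilde n_j,0)_{\alpha\beta}\geq c_0\Vert B(\tilde n_j,0)\Vert^{1-\va}$ for all $\alpha,\beta$ and all $j\leq -2$, with $c_0=c_0(\va):=(MdC_0)^{-1}$; the cases $j\in\{-1,0\}$ are immediate since there $\Vert B(\tilde n_j,0)\Vert\leq M$ is bounded, so $\max(1,B(\tilde n_j,0)_{\alpha\beta})\geq 1\geq M^{-1}\Vert B(\tilde n_j,0)\Vert^{1-\va}$. Thus $\max(1,B(\tilde n_j,0)_{\alpha\beta})\geq c_0\Vert B(\tilde n_j,0)\Vert^{1-\va}$ for all $j\leq 0$.

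\emph{General $m$ and conclusion.} Given $m<0$, choose the minimal $k\leq 0$ with $\tilde n_k\geq m$, so that $\tilde n_{k-1}<m\leq \tilde n_k$. From $B(m,0)=B(\tilde n_k,0)\,B(m,\tilde n_k)$ and the diagonal of $B(m,\tilde n_k)$ being $\geq 1$ we get $\max(1,B(m,0)_{\alpha\beta})\geq\max(1,B(\tilde n_k,0)_{\alpha\beta})$. Since $m>\tilde n_{k-1}$, the entries of $B(m,\tilde n_k)$ are dominated by those of $B(\tilde n_{k-1},\tilde n_k)$, so $\Vert B(m,\tilde n_k)\Vert\leq\Vert B(\tilde n_{k-1},\tilde n_k)\Vert\leq C_0\Vert B(\tilde n_k,0)\Vert^{\va}$; hence $\Vert B(m,0)\Vert\leq C_0\Vert B(\tilde n_k,0)\Vert^{1+\va}$ and $\Vert B(\tilde n_k,0)\Vert\geq C_0^{-1}\Vert B(m,0)\Vert^{1-\va}$ (for $k\leq -1$), while for $k=0$ one has $\Vert B(m,0)\Vert\leq\Vert P_0\Vert\leq M$ bounded, so $\max(1,B(m,0)_{\alpha\beta})\geq 1\geq M^{-1}\Vert B(m,0)\Vert^{1-\va}$. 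Feeding the previous step into the case $k\leq -1$ gives $\max(1,B(m,0)_{\alpha\beta})\geq c_0 C_0^{-1}\Vert B(m,0)\Vert^{(1-\va)^2}$, and running the whole argument with $\va$ replaced by $\va/2$ yields the stated inequality with $C'(\va):=c_0(\va/2)C_0(\va/2)^{-1}$ (shrunk if necessary to absorb the bounded cases). The crux --- hence the main obstacle --- is the second paragraph: one must observe that Condition~(a) makes the first positive block $P_0$ \emph{bounded} (it is compared against $\Vert B(\tilde n_0,0)\Vert=\Vert\Iset\Vert=d$), and that left-multiplication by a bounded positive matrix equalises all row sums; without this, the natural bound $B(\tilde n_j,0)_{\alpha\beta}\geq B_\alpha(\tilde n_{j+1},0)$ is only informative for the rows $\alpha$ that are already large and cannot be made uniform in $\alpha,\beta$.
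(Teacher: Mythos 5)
Your proof is correct, and it is worth comparing the route you took with the one in the paper. Both arguments rest on the same two ingredients --- positivity of Rauzy--Veech blocks between consecutive accelerated times, and the dual matrix-growth condition (a) --- but the accounting is organised differently. The paper works with the sequence $(\hat n_k)$ of Definition~\ref{def:dualRoth} (defined by completeness of the path), goes back $2d-3$ blocks on \emph{each} side to extract two positive matrices $X$ and $Y$, writes $B(\hat n_k,0)=YVX$, and lower-bounds $B_{\alpha\beta}(m,0)\geq \max_{\alpha',\beta'}V_{\alpha'\beta'}\geq c\Vert V\Vert$ by observing that $V\mapsto\max_{\alpha\beta}V_{\alpha\beta}$ is a matrix norm; it then compares $\Vert V\Vert$ to $\Vert B(m,0)\Vert$ via condition (a). You instead work directly with the $(\tilde n_k)$ of Remark~\ref{dualremRoth0}, so a single block $B(\tilde n_{k-1},\tilde n_k)$ is already positive, and --- this is the genuinely new ingredient in your write-up --- you exploit condition (a) at $k=0$, which is usually a vacuous normalisation, to note that the first block $P_0=B(\tilde n_{-1},0)$ is not merely positive but \emph{bounded} by $M=C_0 d^{\va}$. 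Left-multiplication by a positive matrix with entries in $[1,M]$ forces all row sums of $B(\tilde n_j,0)$ to be within a factor $Md$ of $\Vert B(\tilde n_j,0)\Vert$, and then positivity of $B(\tilde n_j,\tilde n_{j+1})$ converts row sums into individual entries. You thus avoid the norm-equivalence step and the double $2d-3$ back-tracking; the trade-off is that your constant $M$ depends on $\varepsilon$ through $C_0(\varepsilon)$, and you must handle the shallow levels $j\in\{-1,0\}$ and $k=0$ explicitly, where the paper instead truncates at $\hat n_{-4d+5}$. Your step at the very end ($\max(1,B(\tilde n_k,0)_{\alpha\beta})\geq c_0 C_0^{-(1-\varepsilon)}\Vert B(m,0)\Vert^{(1-\varepsilon)^2}$ and then replacing $\varepsilon$ by $\varepsilon/2$) is a slightly cleaner way to absorb the iterated exponents than the paper's implicit accounting with $\tilde C_0,\tilde C_1$.

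Two small matters of hygiene: the display in your second paragraph implicitly uses that for $\Vert\cdot\Vert\geq 1$ and $C_0\geq 1$ one has $(C_0^{-1}x)^{1/(1+\va)}\geq C_0^{-1}x^{1-\va}$, which you should state since it silently assumes $C_0\geq 1$ (harmless, as one may always enlarge $C_0$); and when you write ``$\tilde n_{k-1}$ maximal with this property'' you are correcting a typographical slip in Remark~\ref{dualremRoth0} (where $B(n,\tilde n_{k-1})$ should read $B(n,\tilde n_k)$) --- worth flagging rather than silently fixing.
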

\begin{proof}
{
Let $(\hat n_{k})_{ k\geq 0}$ be the subsequence of induction times defined at the beginning of \S~\ref{sec:dualRothcondition}. } 
Without loss of generality, we can assume that $m \leq \hat n_{-4d+5}$, since if $\hat n_{-4d+5} < m\leq 0$, 
the conclusion is true as soon as we take $C' \leq || B(\hat n_{-4d+5},0)||^{-1}$. Therefore, if $k$
the unique integer  such that $\hat{n}_{k-1}< m \leq  \hat{n}_k$, we can assume that $k \leq -4d+5$.   
 By \cite{MMY1} (see Lemma in $\S$~1.2.2 of \cite{MMY1}), all coefficients of the matrix $X:=B(  \hat n_k, \hat n_{k+2d-3})$ are strictly positive, hence at least equal to $1$; the same is true for the coefficients of the matrix $Y:=B(\hat n_{-2d+3},0)$. Moreover, 
 Setting $V:=B( \hat n_{k+2d-3}, \hat n_{-2d+3} )$ (which is well defined since $k< -4d+6$), we can write (recalling the cocycle relation \eqref{matrix_cocycle_eq}) 
 $$U :=B(\hat n_k,0)  =  B(\hat n_{-2d+3},0) B( \hat n_{k+2d-3}, \hat n_{-2d+3} ) B(  \hat n_k, \hat n_{k+2d-3}) =    YVX.$$ 
Thus, we have that, for all $\alpha, \,\beta,\, \alpha', \,\beta' \in \A$ (using also that $ B (n,0)_{\alpha,\beta}$ is a non decreasing  function of $n$ and the definition of $k$), 
$$B_{\alpha,\,\beta}(m,0) \geq U_{\alpha,\,\beta} \geq Y_{\alpha,\,\alpha'}V_{\alpha',\,\beta'}X_{\beta',\,\beta} \geq V_{\alpha',\,\beta'}.$$ 
Choosing $\alpha', \beta'$ such that $V_{\alpha',\,\beta'} = \max_{\alpha,\beta} V_{\alpha,\beta}$, since 
 $\max_{\alpha,\beta} V_{\alpha,\beta}$ is a norm on $\mathbb{R}^\A$ and all norms on $\mathbb{R}^d$ are equivalent,  we get that there exists $c>0$ such that
$$B_{\alpha,\,\beta}(m,0) \geq c ||V||.$$ 
On the other hand, from condition $(a)$ in the Definition \ref{def:dualRoth} of dual Roth type and the choice of $k$, we have that for any $\va>0$ there exists $\tilde C_0 = \tilde C_0(\va)$ and $\tilde C_1 =\tilde C_1(\va)$ such that
$$||B(m,0)|| \leq ||B(\hat n_{k-1},0)||  \leq ||YV||\;||B(\hat n_{k-1}, \hat n_{k+2d-3} )|| \leq \tilde C_0 ||YV||^{1+\va} \leq \tilde C_1 ||V||^{1+\va},$$
giving the estimate of the lemma.
\end{proof}

\begin{proof}[Proof of {\Blue Proposition}~\ref{lengthsL}]
If follows from the geometric decomposition \eqref{decompL} that, for every $\alpha \in \A$, {
 for every $n'<n''$}
\begin{equation}\label{Lrel} |L_{\alpha}^{({
n''})}| =   \sum_{\beta \in \A}B_{\alpha,\,\beta}({
n',n''})|L_{\beta}^{({
n'})}|,\end{equation}
which gives that for every $\beta \in \A$, {
 for every $n'<n''$}
\begin{equation}\label{star} \quad \quad |L_{\beta}^{({\Blue n'})}| \leq  {\Blue \max_{\alpha \in \A} \left((B_{\alpha,\,\beta}({\Blue n',n''}))^{-1}  |L_{\alpha}^{({\Blue n''})}| \right)} \leq  \max_{\alpha \in \A} (B_{\alpha,\,\beta}({\Blue n',n''}))^{-1}  {\Blue \max_{\alpha \in \A}} |L_{\alpha}^{({\Blue n''})}|.
\end{equation}
The estimate from above follows from  Lemma \ref{aux_lemma} and the inequality \eqref{star} above {\Blue for $n':=m$ and $n'':=0$}. 

\smallskip
To prove the estimate from below, we define $k{\Blue \leq} 0$ as in the proof of the previous Lemma, i.e.\ so that $\hat n_{k-1} < m \leq \hat{n}_k$.
  Writing (recalling the cocycle equation \eqref{matrix_cocycle_eq}) 
$$V=:B( \hat n_{k-2d+2},0) =  B(m,0) B(\hat n_{k-2d+2}, m) = B(m,0) X , \qquad \text{for}\ X:= B(\hat n_{k-2d+2}, m),$$ 
 we know again (from $m> \hat n_{k-1}$, $k-1-(k-2d+2)=2d-3$ and the Lemma in $\S$~1.2.2 of \cite{MMY1}) that all coefficients of $X$ are at least equal to $1$. 
We have then that for all $\beta \in \A$ ({\Blue by \eqref{Lrel}} for $n':={\Blue \hat{n}_{k-2d+2}}$, $n'':=m$)
$$|L_{\beta}^{(m)}| \geq \max_{\alpha \in \A} |L_{\alpha}^{({\Blue \hat{n}_{k-2d+2}})}|.$$
From {\Blue \eqref{Lrel}} {\Blue for $n':={\Blue \hat{n}_{k-2d+2}}$, $n'':=0$ and the definition of $||\cdot ||$, }
 we get that {\Blue  there exists $c>0$ such that}
$$ \max_{\alpha \in \A} |L_{\alpha}^{({\Blue \hat{n}_{k-2d+2}})}| \geq c ||B({\Blue \hat{n}_{k-2d+2}},0)||^{-1}.$$
Finally,  {\Blue  using repeatingly} condition (a) of the dual Roth type definition~\ref{def:dualRoth}  {\Blue  (and then $m\leq \hat{n}_k$ and monotonicity of $n\mapsto  ||B(n,0)||$)}, we have that for every $\va$ there exists $C(\va)$ such that
$$||B({\Blue \hat{n}_{k-2d+2}},0)|| \leq ||B(m,0)||\; ||B( \hat{n}_{k-2d+2},  {\Blue \hat{n}_{k}})|| {\Blue  \leq C(\va) ||B(\hat{n}_{k},0)||^{1+\va}} \leq C(\va) ||B(m,0)||^{1+\va}.$$
The three previous inequalities  imply the required estimate.
\end{proof}

\subsection{Estimates on dual special Birkhoff sums of  H\"older functions}\label{sec:dualHolderestimates}

The main result in this section is a bound on the growth of dual special Birkhoff sums for H\"older continuous functions under the dual Roth type condition. The main result proved in this section is the following proposition. The analogous estimate for growth of (direct) special Birkhoff sums for H\"older continuous  was given in \cite{MMY1} (see the Proposition at the end of $\S$~2.3 in \cite{MMY1}).

\begin{proposition}\label{estimatedualBS}
Assume that $(\pi, \tau)$ is of dual Roth type. Then, for every function $\psi$ on $L^{(0)}$ which has mean value $0$ and is H\"older continuous for some exponent $\eta \in (0,1)$, the dual Birkhoff sums $S^\sharp (0,n) \psi$ satisfy
$$||S^\sharp (0,n) \psi||_{C^0} \leq C(\eta) ||\psi||_{C^{\eta}}\,  \;||B(n,0)||^{1- \frac 1{20}\theta \eta},$$
where $\theta$ is the exponent that appears in the definition of dual Roth type (see (b) in Definition \ref{def:dualRoth}). 
\end{proposition}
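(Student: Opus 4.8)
The plan is to mimic, in the dual setting, the classical argument for growth of special Birkhoff sums of H\"older functions from \cite{MMY1}, using the dual length control (Proposition~\ref{lengthsL}) and the dual spectral gap condition (b) in Definition~\ref{def:dualRoth}. First I would fix the accelerated sequence $(\hat n_k)_{k\leq 0}$ from \S~\ref{sec:dualRothcondition} and work one accelerated step at a time, estimating $S^\sharp(\hat n_{k+1}, \hat n_k)$ acting on a H\"older function $\psi_k$ defined on $L^{(\hat n_{k+1})}$. The key point is to split $\psi_k = \overline{\psi_k} + (\psi_k - \overline{\psi_k})$, where $\overline{\psi_k}\in\Gamma^\sharp(\hat n_{k+1})$ is the function whose value on each $L^{(\hat n_{k+1})}_\alpha$ is the mean of $\psi_k$ over that interval. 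On the piecewise-constant part $\overline{\psi_k}$, the dual Birkhoff sum operator acts by the transpose matrix ${}^tB(\hat n_k,\hat n_{k+1})$, and since $\psi_k$ has zero total mean so does $\overline{\psi_k}$ (integrals are preserved under $S^\sharp$, as noted after \eqref{def:dualBS}); hence the restricted operator ${}^tB_0$ applies and condition (b) gives the gain $\|{}^tB(\hat n_k,0)\|^{1-\theta}$ rather than $\|{}^tB(\hat n_k,0)\|$. On the mean-zero-on-each-interval remainder, I would use H\"older continuity: on $L^{(\hat n_{k+1})}_\alpha$ the oscillation of $\psi_k$ is at most $\|\psi_k\|_{C^\eta}\,|L^{(\hat n_{k+1})}_\alpha|^{\eta}$, and by Proposition~\ref{lengthsL} this is $O(\|\psi_k\|_{C^\eta}\,\|B(\hat n_{k+1},0)\|^{(-1+\varepsilon)\eta})$, which is a genuinely small quantity.

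The second ingredient is to control how the H\"older seminorm itself evolves: $S^\sharp(\hat n_{k+1},\hat n_k)\psi_k$ is again H\"older on $L^{(\hat n_k)}$, and since each $L^{(\hat n_k)}_\beta$ contains (translated copies of) at most $\|B(\hat n_k,\hat n_{k+1})\|$ sub-intervals of level $\hat n_{k+1}$ on which we sum values of $\psi_k$, one gets $\|S^\sharp(\hat n_{k+1},\hat n_k)\psi_k\|_{C^\eta} \leq \|B(\hat n_k,\hat n_{k+1})\|\,\|\psi_k\|_{C^\eta}$ up to combinatorial constants (a shift does not change the H\"older seminorm, and the lengths $|L^{(\hat n_k)}_\beta|$ are $\geq|L^{(0)}|\|B(\hat n_k,0)\|^{-1-\varepsilon}$ by Proposition~\ref{lengthsL}, so the denominators in difference quotients are harmless after absorbing a matrix-norm power). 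By the dual matrix growth condition (a), $\|B(\hat n_k,\hat n_{k+1})\| = \|B(\hat n_{k+1},\hat n_k)\|$ (with our symmetric norm) is $O(\|B(\hat n_{k+1},0)\|^{\varepsilon})$, so the H\"older seminorm grows subexponentially relative to $\|B(\cdot,0)\|$. One then iterates: writing $S^\sharp(0,n) = S^\sharp(\hat n_{k_0+1},n)\circ\cdots$ telescoped over accelerated steps down to level $n$ (and handling the last partial block between $\hat n_{k_0}$ and $n$ directly via condition (a)), the product of the per-step gains $\|B(\hat n_{k+1},0)\|^{-\theta}$ against the accumulated loss from the H\"older-seminorm growth and from the remainder terms yields a net exponent $1 - c\,\theta\eta$ for some explicit constant $c$; tracking the constants carefully (which is where the somewhat lossy $\tfrac1{20}$ comes from — one loses factors at each of: the two pieces of the splitting, the seminorm recursion, and summing the geometric-type series over $k$) produces the stated $1 - \tfrac1{20}\theta\eta$.

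The main obstacle, and the step requiring the most care, is the bookkeeping of the H\"older seminorm under iteration: one must show that the polynomial-in-$\|B(\cdot,0)\|$ factors accumulated from the growth of $\|\cdot\|_{C^\eta}$ over all accelerated steps between $0$ and $n$ do not swallow the exponential gain $\|B(\cdot,0)\|^{-\theta}$ per step. This is exactly the mechanism by which, for a telescoped product $\prod_k \|B(\hat n_{k+1},0)\|^{-\theta}\cdot(\text{subexponential})$, one needs the gains to dominate; the resolution is that condition (a) forces $\|B(\hat n_{k+1},\hat n_k)\|$ to be subpolynomial in $\|B(\hat n_{k+1},0)\|$, so along the subsequence the total loss is $\|B(n,0)\|^{O(\varepsilon)}$ for $\varepsilon$ as small as we like, while the total gain over the $\sim\log\|B(n,0)\|/(\text{step size})$ steps is a fixed fractional power $\|B(n,0)\|^{-c\theta}$ — but making this rigorous requires either (i) summing over the per-step estimates with the correct weights, or (ii) a direct estimate splitting $[n,0]$ at a single well-chosen intermediate time $\hat n_{k}$ with $\|B(\hat n_k,0)\|\approx\|B(n,0)\|^{1/2}$, applying the crude bound on one half and the gain on the other, exactly as in the proof of Proposition~\ref{p3} above. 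I would use approach (ii), which localizes all the constant-tracking to a single application of (a) and (b) and is the cleanest route to the explicit exponent.
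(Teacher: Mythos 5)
Your decomposition of $\psi_k$ into its piecewise-constant average $\overline{\psi_k}$ plus a mean-zero-on-each-piece remainder is exactly the decomposition the paper uses (there it is written as $S^\sharp(m,m-1)\psi_m = \psi_{m-1} + \nu_{m-1}$ with $\nu_{m-1} \in \Gamma_0^\sharp(m-1)$), and your use of Proposition~\ref{lengthsL} and of condition (b) for the respective pieces is right. But there is a genuine gap in the engine that is supposed to drive the estimate, and your ``approach (ii)'' does not close it.

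First, the paper does \emph{not} need to track the H\"older seminorm of $\psi_m$ under iteration, which you identify as the main obstacle. The point is that for $x,y$ in the same $L_\beta^{(m)}$ one has $\psi_m(x) - \psi_m(y) = S^\sharp(0,m)\psi(x) - S^\sharp(0,m)\psi(y)$ (the piecewise-constant $\nu$-pieces drop out when differencing within a piece), and the right-hand side is a difference of dual Birkhoff sums of the \emph{original} $\psi$, which is bounded directly by $\|\psi\|_{C^\eta}\,|L_\beta^{(m)}|^{\eta}\sum_\alpha B_{\alpha\beta}(m,0)$. This short-circuits the seminorm recursion entirely, and together with Proposition~\ref{lengthsL} gives $\|\psi_m\|_{C^0},\ \|\nu_{m-1}\|_{C^0}\ \lesssim \|\psi\|_{C^\eta}\|B(m,0)\|^{1-\eta(1-\varepsilon)}$ with no accumulation to worry about. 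Second, your notion of ``per-step gains $\|B(\hat n_{k+1},0)\|^{-\theta}$'' to be multiplied across accelerated steps is not what condition (b) provides: (b) bounds $\|{}^tB_0(n,0)\|$ relative to time $0$, not the restricted cocycle over an arbitrary sub-block $[m,n]$. To get a useful bound on $\|{}^tB_0(n,m)\|$ one must pass through $B_0(n,m)=B_0(m,0)^{-1}B_0(n,0)$ and symplecticity of $B(m,0)$, and this is only effective when $\|B(m,0)\|$ is small compared to $\|B(n,0)\|$. That is precisely why the proof sums over $m$ in the telescoping identity $S^\sharp(0,n)\psi = \psi_n + \sum_{n\le m\le 0} S^\sharp(m,n)\nu_m$ and splits the sum at the threshold $\|B(m,0)\|\lessgtr\|B(n,0)\|^{\theta/3}$, using the spectral-gap route for small $m$ and the trivial bound combined with condition (a) (via $\|B(n,m)\|\,\|B(m,0)\|\le C_\varepsilon\|B(n,0)\|^{1+\varepsilon}$) for large $m$. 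Your ``approach (ii)'' — a single split at $\hat n_k$ with $\|B(\hat n_k,0)\|\approx\|B(n,0)\|^{1/2}$ — does not localize the bookkeeping: after $S^\sharp(0,\hat n_k)$ the piecewise-constant average $\overline{\phi}$ can already be of size $\|B(\hat n_k,0)\|$, and controlling $S^\sharp(\hat n_k,n)\overline{\phi}$ requires exactly the spectral gap relative to $\hat n_k$ that you do not have, so the crude-bound-on-one-half/gain-on-the-other split does not produce a net gain. You should replace approach (ii) by the term-by-term sum with the threshold case split.
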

The rest of the section is devoted to the proof of this Proposition. We obtain an estimate on the $C^0$-norm of $S^\sharp (0,n) \psi$ by the same method used for (direct) Birkhoff sums. Let us remark that following the same methods in \cite{MMY1}, the Proposition \ref{estimatedualBS} can also be proved for  functions of bounded variation.
%

\begin{proof}[Proof of Proposition \ref{estimatedualBS}]
Let $(\pi, \tau)$ be of dual Roth type and let $\psi$ be a function on $L^{(0)}$ as in the assumptions of the Proposition. 
We first write $\psi= \psi_0 + \nu_0$, where $\nu_0 $ belongs to the space $ \Gamma_0^\sharp (0)$ (defined in $\S$~\ref{sec:dualBS}) and $\psi_0$ has mean value $0$ on \emph{each} interval $L_{\alpha}^{(0)}$.
For $0 \geq m >n$, we write
$$S^\sharp(m,m-1)\psi_m = \psi_{m-1} + \nu_{m-1},$$
where $\nu_{m-1} \in \Gamma_0^\sharp (m-1)$ and $\psi_{m-1}$ has mean value $0$ on  each interval $L_{\alpha}^{(m-1)}$.
We have then 
\begin{equation}\label{spadesuit} \quad \quad S^\sharp (0,n) \psi = \psi_n + \sum_{n\leq m\leq 0} S^\sharp (m,n) \nu_m.
\end{equation}
To estimate the terms in this sum, we first observe that, for $n\leq m \leq 0$ and each $\beta \in \A$ and  $x,y \in L_{\beta}^{(m)}$,  using that $\psi$ is H\"older-continuous, we have that
$$|\psi_m(x) -\psi_m(y)| = |S^\sharp (0,m) \psi (x) -S^\sharp (0,m) \psi (y)| \leq ||\psi||_{C^{\eta}} |L_{\beta}^{(m)}|^{\eta} \sum_{\alpha \in \A}B_{\alpha,\,\beta}(m,0).$$ 
As $\psi_m$ has mean $0$ in each $L_{\beta}^{(m)}$, this gives
$$||\psi_m||_{C^0}  \leq ||\psi||_{C^{\eta}} |L_{\beta}^{(m)}|^{\eta} \sum_{\alpha \in \A}B_{\alpha,\,\beta}(m,0).$$ 
The estimate from above in Lemma \ref{lengthsL} gives, for all $\va>0$ 
$$||\psi_m||_{C^0}  \leq C(\va)||\psi||_{C^{\eta}} ||B(m,0) ||^{1-\eta(1-\va)} ,$$ 
and thus also, as $S^{\sharp}(m,m-1)\psi_m=\psi_{m-1}+\nu_{m-1}$,  
$$||\nu_{m-1}||_{C^0}  \leq C(\va)||\psi||_{C^{\eta}} ||B(m,0) ||^{1-\eta(1-\va)} .$$ 
From the formula \eqref{spadesuit} above, we get, taking $\va = \frac 12$
$$ ||S^\sharp (0,n) \psi||_{C^0} \leq C ||\psi||_{C^{\eta}} \sum_{m=0}^n || {}^t B_0(n,m)||\;||B(m,0) ||^{1-\frac 12 \eta}.$$
In this sum, we distinguish two cases:

\smallskip
\noindent {\it Case 1.} When $||B(m,0) || \leq ||B(n,0) ||^{\frac {\theta}3}$, we write $B_0(n,m) = B_0(m,0)^{-1} B_0(n,0)$ and obtain, from condition (b) and the symplecticity of $B(m,0)$
\begin{eqnarray*}
|| {}^t B_0(n,m)||\;||B(m,0) ||^{1-\frac 12 \eta} &\leq & ||B_0(m,0)^{-1}||\;|| B_0(n,0)|| \;||B(n,0) ||^{\frac {\theta}3} \\
  &\leq &||B(m,0)^{-1}||\;|| B_0(n,0)|| \;||B(n,0) ||^{\frac {\theta}3} \\
   &\leq & C||B(m,0)||\;||B(n,0)||^{1-\theta} \;||B(n,0) ||^{\frac {\theta}3} \\
  &\leq & C||B(n,0)||^{1-\frac{\theta}3 }.
  \end{eqnarray*}

\smallskip
\noindent {\it Case 2.} When $||B(m,0) || > ||B(n,0) ||^{\frac {\theta}3}$, we use the trivial bound of $|| {}^t B_0(n,m)||$ by $||B(n,m)||$ and the following inequality, which is proved below: for all $\va >0$ 
$$ ||B(n,m)|| \;||B(m,0)|| \leq C(\va) ||B(n,0)||^{1+\va}.$$
We then obtain, taking $\va = \frac 1{12}\theta \eta $
\begin{eqnarray*}
 ||\,^t B_0(n,m)||\,||B(m,0)||^{1-\frac 12 \eta} &\leq & ||B(n,m)||\;||B(m,0) ||^{1-\frac 12 \eta} \\
 & \leq & C ||B(n,0)||^{1+ \frac 1{12}\theta \eta} ||B(n,0)||^{-\frac 16 \theta \eta} \\
 & \leq & C ||B(n,0)||^{1- \frac 1{12}\theta \eta}.
 \end{eqnarray*}
 To prove that $ ||B(n,m)|| \;||B(m,0)|| \leq C(\va) ||B(n,0)||^{1+\va}$, we may assume that $n\leq \hat n_{-2d+4}$. 
 Then, there exist integers $k$, $l$ with $n \leq \hat n_{k} \leq m \leq \hat n_l$ such that $l=k+2d-3$ so that  all coefficients of $B(\hat n_k, \hat n_l )$ are positive (by the Lemma in $\S$~2.3 of \cite{MMY1}) and hence at least equal to $1$. Furthermore,  by condition (a), there exists $C_1=C_1(\va)$ such that $||B(\hat n_k , \hat n_l)||\leq C_1 ||B(n,0)||^{\frac{ \va}{2}}$. This gives
 \begin{eqnarray*}
||B(n,m)|| \;||B(m,0)|| 
& \leq & C(\va)^2 ||B(n,\hat n_k)|| \;||B(\hat n_l,0)|| \; ||B(n,0)||^{\va}\\ 
  & \leq & C(\va)^2  ||B(n,0)||^{1+ \va},
\end{eqnarray*}
as required.

\smallskip

We conclude that
$$  ||S^\sharp (0,n) \psi||_{C^0} \leq C ||\psi||_{C^{\eta}}\, n \;||B(n,0)||^{1- \frac 1{12}\theta \eta}.$$

Since we also know that $||B(n,0)||$ grows exponentially fast with relation to Zorich time, this is sufficient to conclude that $n ||B(n,0)||^{-\va}$ converges to $0$ for any $\va>0$. This concludes the proof of the proposition.

\end{proof}

\section{Distributional limit shapes}\label{sec:centraldistributions}

Let $\mathcal R$ be a Rauzy class on an alphabet $\A$ of cardinality $d$, $\cD$ be the associated diagram. Let $g$ be the genus of the associated translation surfaces and $s$ the cardinality of the marked set on such surfaces. The rank of the matrices $\Omega_{\pi}$, $\pi \in \mathcal R$ is equal to $2g$ and we have $d = 2g+s-1$ (see $\S$~\ref{sec:transl}).  We assume throughout this section that $s>1$.
%

\smallskip
Consider a point $\underline T=(\pi, \lambda, \tau)$ in $\mathcal R \times (\Rset^+)^{\A} \times \Theta_{\pi}$ which is \emph{typical} for the dynamics of the natural extension 
$\hat Q_{{\rm RV}}$ (defined in \ref{sec:algorithms})  and typical (in the sense of Oseledets genericity) for the Kontsevich-Zorich cocycle. Let $T:= T_{\pi, \lambda}$ be the associated i.e.m. acting on the interval $I$. For $n \in \Zset$, 
let $\underline T^{(n)}=(\pi^{(n)}, \lambda^{(n)}, \tau^{(n)})$ be the image of  $\underline T$ under $\hat Q_{{\rm RV}}^n$, and let $T^{(n)}$ be the associated i.e.m acting on the interval $I^{(n)}$. For any integers $n,n'\in \mathbb{Z}$ with  $n \geq n'$, $T^{(n')}$ is the first return map of $T^{(n)}$ in $I^{(n')}$.

\smallskip

For each $n\in \Zset$, we denote respectively by $\Gamma_s^{(n)}$, $\Gamma_u^{(n)}$, $\Gamma_c^{(n)}$ the stable, unstable and central subspaces of the KZ-cocycle at $(\pi^{(n)}, \lambda^{(n)}, \tau^{(n)})$ (see $\S$~\ref{sec:absoluteRoth}). By hyperbolicity of the Kontsevich-Zorich cocycle (proved in {\Green \cite{For2}}), we have
$$\dim (\Gamma_s^{(n)}) = \dim (\Gamma_u^{(n)})=g, \quad \dim (\Gamma_c^{(n)})=s-1,$$
$$ \Gamma_s^{(n)} \oplus \Gamma_u^{(n)} = {\rm Im} \left(\Omega_{\pi^{(n)}}\right).$$

\smallskip
We will say that $\underline\chi:=(\chi^{(n)})_{n \in \Zset}$ is a sequence of \emph{central functions} if $\chi^{(0)}=\chi \in \Gamma_c^{(0)}$ and
\begin{align*} 
\chi^{(n)} & = 
 S(0,n) \chi ,  &  \text{for}\ n> 0, \\ 
 \chi^{(n)} & = 
 S(n,0)^{-1} \chi , &  \text{for}\ n< 0.
 \end{align*}
A particularly interesting example of central functions can be constructed from characteristic functions as shown in \S~\ref{sec:correctedchi}.

\subsection{Corrected characteristic functions}\label{sec:correctedchi}
Assume only for this subsection that $s=1$. 
%
%
Fix a point $\xi^{(0)} \in I^{(0)}$. 
\begin{definition}\label{def:past}
A sequence $\underline\xi:=(\xi^{(n)})_{n \in \Zset}$ is  \emph{compatible} with $\xi^{(0)} \in I^{(0)}$ if:
\begin{itemize}
\item for each $n \in \Zset$, $\xi^{(n)}$ is a point of $I^{(n)}$;
\item  for $n \geq n'$, $\xi^{(n)}$ is the first entry point in $I^{(n)}$ when iterating $(T^{(n')})^{-1}$ from $\xi^{(n')}$.
\end{itemize}
We will say in this case that $\underline\xi$ is a compatible sequence.
\end{definition}
\begin{remark}\label{rem:compatiblesequences}
It is clear that $\xi^{(n_0)}$ determines $\xi^{(n)}$ for all $n\geq n_0$. On the other hand, given $\xi^{(n)}$ for $n \geq n_0$, one can for instance choose $\xi^{(n)}= \xi^{(n_0)}$ for $n\leq n_0$. 
\end{remark} 
In particular, by Remark \ref{rem:compatiblesequences},  sequences compatible with $\xi^{(0)}$ do exist. For any such sequence, 
for any $n \in \Zset$, let $\wt\chi^{(n)}$ be the function defined on $\iin$ and equal to the characteristic function of the interval $(0,\xi^{(n)})$. 

\smallskip
Let $\Gamma^{(n)}$ be the $d$-dimensional vector space (canonically isomorphic to $\RA$) of functions defined on $\iin$ and constant on each $I_\alpha^{(n)}$. Finally, let $\Gamma^{(n)}(\xi)$ be the $d$-dimensional affine space given by
\[ \Gamma^{(n)}(\xi):= \wt\chi^{(n)} + \Gamma^{(n)}.\]

The following Proposition shows that any sequence ${(\wt\chi^{(n)})}_{n \in \Zset}$ of characteristic functions  as  above 
 can be \emph{corrected} by adding a function constant on the subintervals exchanged by $T^{(0)}$, to produce a sequence $(\chi^{(n)})_{n \in \Zset}$ of central functions:

\begin{proposition}[Projection to central sequences]\label{prop:correction}
For any  sequence $\underline\xi:=(\xi^{(n)})_{n \in \Zset}$ compatible with $\xi^{(0)} \in I^{(0)}$, for any intergers $n,n'$ with $n \geq n'$, the special Birkhoff sum operator $S(n',n)$ (cf.~\S~\ref{ssKZ}) is an isomorphism from $\Gamma^{(n')}$ onto $\Gamma^{(n)}$ and from $\Gamma^{(n')}(\xi)$ onto $\Gamma^{(n)}(\xi)$.

\smallskip
Furtheremore, there exists a unique sequence $(\chi^{(n)})_{n \in \Zset}$ such that $\chi^{(n)}$ belongs to  $\Gamma^{(n)}(\xi)$ for any $n \in \mathbb{Z}$, 
$$S(n',n)(\chi^{(n')})= \chi^{(n)}$$ 
for all $n, n' \in \Zset$ with $n\geq n'$ and
\begin{equation}\label{subexp_growth}
\lim_{n \rightarrow \pm \infty} \frac {1}{Z(n)} \log ||\chi^{(n)}||_{\infty} =0,
\end{equation}
where $Z(n)$ is the Zorich time (see \S~\ref{secRV}).
\end{proposition}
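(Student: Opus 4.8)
The plan is to first establish the isomorphism claims, then construct the central sequence by a limiting (projection) argument exploiting hyperbolicity of the Kontsevich--Zorich cocycle. For the isomorphism statements: we already know from \S~\ref{ssKZ} that $S(n',n)$ maps $\Gamma^{(n')}$ isomorphically onto $\Gamma^{(n)}$ with matrix $B(n',n) \in SL(\Zset^{\A})$, so this part is immediate. For the affine spaces, it suffices to observe that $S(n',n)(\wt\chi^{(n')})$ differs from $\wt\chi^{(n)}$ by an element of $\Gamma^{(n)}$: indeed $S(n',n)(\wt\chi^{(n')})$ counts, for $x \in I^{(n)}_\alpha$, the number of visits of the $T^{(n')}$-orbit segment of $x$ (up to first return to $I^{(n)}$) to the interval $(0,\xi^{(n')})$; by compatibility of $\underline\xi$, the point $\xi^{(n)}$ is precisely the first return of $\xi^{(n')}$ into $I^{(n)}$ under $(T^{(n')})^{-1}$, which means the orbit segment hits $(0,\xi^{(n')})$ in a number of points that equals $\mathbf{1}_{(0,\xi^{(n)})}(x)$ plus a quantity depending only on which $I^{(n')}_\beta$'s the segment crosses, i.e. plus an element of $\Gamma^{(n)}$. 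Hence $S(n',n)$ carries $\Gamma^{(n')}(\xi)$ onto $\Gamma^{(n)}(\xi)$.

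For the construction of $(\chi^{(n)})$: fix any reference compatible choice and consider, for each $n$, the element $\wt\chi^{(n)} \in \Gamma^{(n)}(\xi)$. We seek $c^{(n)} \in \Gamma^{(n)}$ with $\chi^{(n)} := \wt\chi^{(n)} + c^{(n)}$ satisfying the cocycle relation $S(n',n)\chi^{(n')} = \chi^{(n)}$; equivalently, writing $S(n',n)\wt\chi^{(n')} = \wt\chi^{(n)} + b(n',n)$ with $b(n',n) \in \Gamma^{(n)}$ (a cocycle-type correction term), we need $B(n',n) c^{(n')} = c^{(n)} - b(n',n)$. This is an inhomogeneous cohomological problem over the KZ-cocycle. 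Since $s > 1$ the central space $\Gamma_c^{(n)}$ is nontrivial and is the part of $\Gamma^{(n)}$ complementary to $\mathrm{Im}(\Omega_{\pi^{(n)}}) = \Gamma_s^{(n)} \oplus \Gamma_u^{(n)}$; decomposing along stable, unstable and central directions, the stable and unstable components of the correction are uniquely determined by summing the respective geometric series (convergent by positivity, resp.\ negativity, of the corresponding Lyapunov exponents along the Oseledets-generic orbit), exactly as in the limit-shape constructions of \cite{MMY3}; the central component is pinned down by requiring the subexponential growth \eqref{subexp_growth}, which forces it to be the unique bounded-in-central-direction solution. Uniqueness: if $(\chi^{(n)})$ and $(\chi'^{(n)})$ both satisfy all the requirements, their difference $\eta^{(n)} = \chi^{(n)} - \chi'^{(n)} \in \Gamma^{(n)}$ satisfies $B(n',n)\eta^{(n')} = \eta^{(n)}$ and has $\frac{1}{Z(n)}\log\|\eta^{(n)}\|_\infty \to 0$; Oseledets genericity then forces $\eta^{(0)}$ to have zero stable, unstable and central components, hence $\eta^{(0)} = 0$.

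The main obstacle I expect is the careful bookkeeping in the geometric-series argument: one must control the partial sums $\sum_{m} \Pi_s B(m,n) \, b(m,m-1)$ etc.\ using the Oseledets splitting along a \emph{generic but non-uniformly-hyperbolic} orbit, so the exponential decay estimates hold only up to subexponential errors measured in Zorich time; this is precisely the type of estimate handled in \cite{MMY3}, and the cleanest route is to cite that machinery and only indicate the (minor) changes, namely that here we work with the central-space projection rather than a positive non-leading exponent. A secondary technical point is verifying that the discrepancy terms $b(n',n)$ genuinely lie in $\Gamma^{(n)}$ and satisfy the additive cocycle identity $b(n,n'') = b(n',n'') + B(n',n'')\,b(n,n')$ under composition of special Birkhoff sums --- this follows from the cocycle relation $S(n,n'') = S(n',n'')\circ S(n,n')$ combined with the first part of the proposition, but should be stated explicitly since the whole construction rests on it.
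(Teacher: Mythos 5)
Your treatment of the isomorphism statement and of the stable/unstable corrections via convergent series is essentially the paper's argument, and the cocycle identity $b(n,n'') = b(n',n'') + B(n',n'')\,b(n,n')$ you flag is indeed the (trivial but load-bearing) bookkeeping step. However, there is a genuine gap in your handling of the ``central component'': you write ``Since $s>1$ the central space $\Gamma_c^{(n)}$ is nontrivial'' and then try to pin down the central part of the correction by the subexponential-growth condition. This misreads the paper's hypotheses: \S~\ref{sec:correctedchi} explicitly restricts to $s=1$ for this subsection (overriding the blanket $s>1$ assumption made at the start of \S~\ref{sec:centraldistributions}), so that $d=2g$, $\Gamma^{(n)}=\Gamma_s^{(n)}\oplus\Gamma_u^{(n)}$, and the central space is \emph{trivial}.

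This is not a cosmetic point. If $\Gamma_c$ were nontrivial, your uniqueness argument would fail: for a central vector $v$, the orbit $B(0,n)v$ already grows subexponentially in Zorich time (the central Lyapunov exponent is $0$), so a nonzero central difference $\eta^{(n)}$ between two candidate sequences is perfectly compatible with \eqref{subexp_growth}. Hence ``subexponential growth forces zero central component'' is false, and so is ``the unique bounded-in-central-direction solution'' — there is no canonical choice; any central component at time $0$ propagates to a subexponentially growing sequence, so the correction $c^{(0)}$ would be undetermined up to $\Gamma_c^{(0)}$ and the proposition would simply be false. The entire uniqueness (and the well-posedness of the construction) rests precisely on $s=1$ forcing $\Gamma_c=\{0\}$, which is also why the paper can conclude the uniqueness in one line by invoking hyperbolicity of the (restricted) KZ-cocycle. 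If you replace the ``$s>1$'' clause with ``$s=1$, hence $\Gamma_c^{(n)}=\{0\}$ and $\Gamma^{(n)}=\Gamma_s^{(n)}\oplus\Gamma_u^{(n)}$'', delete the sentence about pinning down a central component, and keep only the stable/unstable series (summed for $n'\leq n$ and $n'>n$ respectively with convergence from the Oseledets exponents), your argument becomes correct and coincides with the paper's.
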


\begin{proof}
The first part of the assertion is already known (see \cite{MMY1}). For the second part, let $n \geq n'$ and consider $S(n',n)\wt\chi^{(n')}$. It is locally constant on $\iin$, except at $\xi^{(n)}$ and at the singularities of $T^{(n)}$. Moreover, the difference between the left and right limits at 
$\xi^{(n)}$ is equal to $1$. Therefore $S(n',n)\wt\chi^{(n')}$ belongs to $\Gamma^{(n)}(\xi)$ and the first part of the proposition follows.
\item Let $\Gamma^{(n)} = \Gamma_s^{(n)} \oplus \Gamma_u^{(n)}$ be the Oseledets decomposition into stable and unstable subspaces. 
Write then for $n \in \Zset$
$$S(n-1,n)\wt\chi^{(n-1)} = \wt\chi^{(n)} + \Delta \wt\chi_s^{(n)} + \Delta \wt\chi_u^{(n)}$$
with $\Delta \wt\chi_s^{(n)} \in \Gamma_s^{(n)}$ and $\Delta \wt\chi_u^{(n)} \in \Gamma_u^{(n)}$. Observe that, since $\wt\chi^{(n)}$ is bounded by $1$ and $n$ here indexes Rauzy-Veech elementary steps,   we have that $||S(n-1,n)\wt\chi^{(n-1)}||_{\infty} \leq 2$.

\smallskip

We look for $\chi^{(n)} = \wt\chi^{(n)}+ \Delta \chi_s^{(n)} + \Delta \chi_u^{(n)}$ satisfying the requirements of the Proposition. In particular, we must have
$$\Delta \chi_s^{(n)}=\Delta \wt \chi_s^{(n)}+ S(n-1,n)\Delta \chi_s^{(n-1)},$$ 
$$\Delta \chi_u^{(n)}=\Delta \wt \chi_u^{(n)}+ S(n-1,n)\Delta \chi_u^{(n-1)}.$$ 
This leads to the formulas
$$ \Delta \chi_s^{(n)} = \sum_{n' \leq n} S(n',n)\Delta \wt \chi_s^{(n')},$$ 
$$ \Delta \chi_u^{(n)} = -\sum_{n' > n} (S(n,n'))^{-1}\Delta \wt \chi_u^{(n')}.$$
We will now show that the convergence of these series follows from the hyperbolicity of the  Kontsevich-Zorich cocycle and this will conclude the proof. Let us first remark that, as $\underline T=(\pi, \lambda, \tau)$ is typical
and, as already observed, $||S(n-1,n)\wt\chi^{(n-1)}||_{\infty} \leq 2$, there is, for every $\va >0$, a constant $C:=C(\va, \underline T)$ such that, for all $n \in \Zset$
$$|| \Delta \wt \chi_s^{(n)}||_{\infty} + || \Delta \wt \chi_u^{(n)}||_{\infty} \leq C \exp (\va |Z(n)|).$$
As the cocycle is hyperbolic, there exists a constant $C':=C'(\va, \underline T)$ such that, for all $n \geq m$, all $v_s$ in $\Gamma_s^{(m)}$, all 
$v_u$ in $\Gamma_u^{(n)}$, denoting by $\theta_g$ the smallest positive exponent of the KZ-cocycle w.r.t the Zorich dynamics $\hat Q_{{\rm Z}}$, we have
$$||S(m,n) v_s||_{\infty} \leq C' \exp [(Z(m)-Z(n))\theta_g + (|Z(m)|+|Z(n)|)\va] ||v_s||_{\infty},$$
$$||(S(m,n))^{-1} v_u||_{\infty} \leq C' \exp [(Z(m)-Z(n))\theta_g + (|Z(m)|+|Z(n)|)\va] ||v_u||_{\infty}.$$
These equations show that the series above converge.
%
\smallskip
As $\va$ is arbitrary, the sequence $\chi^{(n)}$ is well-defined and satisfies the required properties. 

\smallskip
The uniqueness of the sequence with the required properties follows from the hyperbolicity of the KZ-cocycle: the difference between two solutions would grow subexponentially (in Zorich time)
and thus must be equal to $0$.
\end{proof}

%
\begin{remark}
This Proposition uses the assumption, assumed throughout this section, that $\underline T=(\pi, \lambda, \tau)$  is \emph{typical} for the dynamics of 
$\hat Q_{{\rm RV}}$ and Oseledets generic for the Kontsevich-Zorich cocycle. In $\S$~\ref{sec:homological} we will prove a similar result in the homological context under weaker assumptions on $(\pi, \lambda, \tau)$ than Oseledets genericity (see in particular the notion of KZ hyperbolic introduced in \S~\ref{ssKZhyp}). Furthermore, one can prove the existence of a sequence  $\Gamma^{(n)}(\xi)$  
as in the conclusion of Proposition \ref{prop:correction} but satisfying  \eqref{subexp_growth} only for $n \to + \infty$ (resp.~$n \to - \infty$) assuming only negative (resp.~positive) KZ hyperbolicity (see, in $\S$~\ref{ssKZhyp}, Propositions \ref{propKZpos} and \ref{propKZneg} respectively). In this case though uniqueness only holds modulo $\Gamma_s^{(n)}$ (resp.~$\Gamma_u^{(n)}$).
\end{remark}



\subsection{The functions $\Omega_{\alpha}^{(n)}(\pi,\tau,\chi)$}\label{sec:functions}
We will now define an object analogous to the \emph{limit shapes} defined in  \cite{MMY3},  but suited to study Birkhoff sums of central functions $\chi \in \Gamma_c^{(0)}$.  

\smallskip
The largest Lyapunov exponent of the Kontsevich-Zorich cocycle is simple and the associated $1$-dimensional eigenspace $F_1(\pi, \tau)$ does not depend on the $\lambda$ variable. Moreover, $F_1(\pi, \tau)$ is generated by a vector $q(\pi, \tau)$ contained in the positive cone  $(\Rset^+)^{\A}$.  
We normalize $q(\pi, \tau)$ by asking that its $\ell^2$-norm is equal to $1$.
For $n\leq 0$, we write
$$ (S(n,0))^{-1} q(\pi,\tau)=:q^{(n)}(\pi,\tau) = \Theta_1^{(n)} q(\pi^{(n)}, \tau^{(n)})$$
with $\Theta_1^{(n)} \in \Rset ^+$ satisfying
$$\lim_{n \rightarrow -\infty} \frac{1}{Z(n)} \log \Theta_1^{(n)} = \theta_1.$$
($\theta_1$ is the largest Lyapunov exponent of the KZ-cocycle.)


\smallskip 
Let $\chi \in \Gamma_c^{(0)}$.  Write $\chi^{(n)} := (S(n,0))^{-1}(\chi)$ for $n\leq 0$. For any $\alpha \in \A$ and any $n\leq 0$ , we define  a function $\Omega_{\alpha}^{(n)} =\Omega_{\alpha}^{(n)}(\pi,\tau,\chi)$ on the interval $[0,q_{\alpha}(\pi,\tau)]$ by the following requirements:
\begin{enumerate}
\item recall that $ B_\alpha (n,0)$ (defined in \eqref{columsum}) is the return time in $I^{(0)}$ of $I_{\alpha}^{(0)}$ under $T^{(n)}$; for $0 \leq j < B_\alpha(n,0)$, let $\beta(\alpha,j)$ be the letter in $\A$ such that $(T^{(n)})^j (I_{\alpha}^{(0)}) \subset I_{\beta(\alpha,j)}^{(n)}$; for $0 \leq \ell \leq B_\alpha(n,0)$, set
$$ Sq(\ell) = \sum_{0\leq j < \ell} q_{\beta(\alpha,j)}^{(n)}(\pi, \tau). $$
Observe that $Sq(0) = 0$ and $Sq(B_\alpha(n,0)) = q_{\alpha}(\pi,\tau)$. The function $\Omega_{\alpha}^{(n)}$
is continuous on $[0,q_{\alpha}(\pi,\tau)]$ and its restriction to each interval $[Sq(\ell), Sq(\ell +1) ] $ is affine.
\item For $0 \leq \ell < B_\alpha(n,0)$, one has
$$\Omega_{\alpha}^{(n)}(Sq(\ell +1)) - \Omega_{\alpha}^{(n)}(Sq(\ell))= \chi^{(n)}( (T^{(n)})^{\ell} (I_{\alpha}^{(0)}) )$$
\item The function $\Omega_{\alpha}^{(n)}$ is defined by the first two conditions up to the addition of a constant function. This constant is now determined by the requirement that $\Omega_{\alpha}^{(n)}$ should have mean $0$ on $[0,q_{\alpha}(\pi,\tau)]$.
\end{enumerate}

\begin{figure}
   \begin{subfigure}{0.45\textwidth}
    \includegraphics[width=\textwidth]{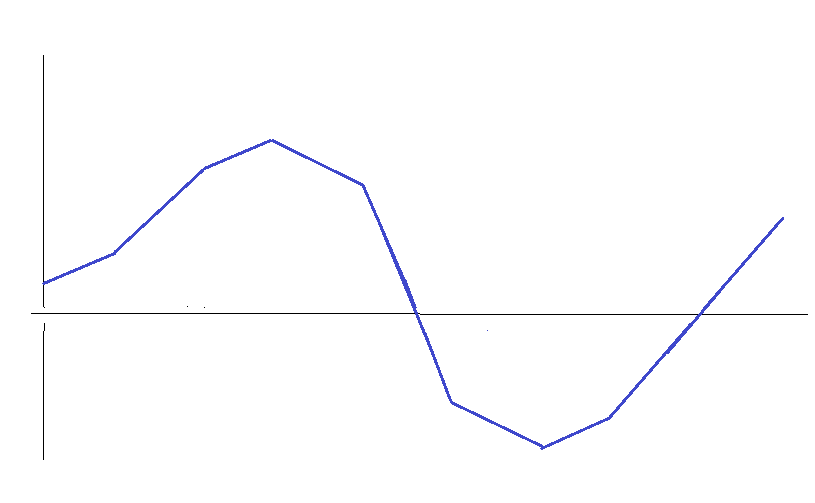}
    \caption{$\qquad \Omega_{\alpha}^{(n)}(\pi,\tau,\chi)$  \label{fig:limitshape1}}
   
  \end{subfigure}
 \vspace{12mm}
  \begin{subfigure}{0.45\textwidth}
    \includegraphics[width=\textwidth]{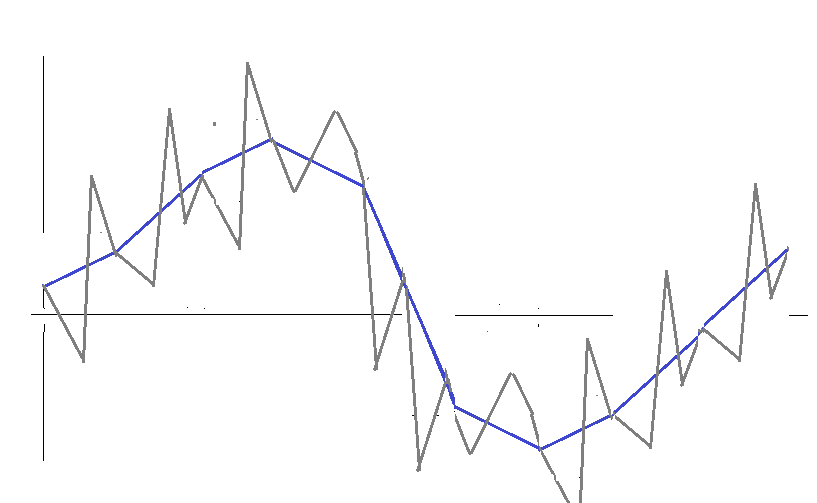}
    \caption{$\qquad \Omega_{\alpha}^{(n')}(\pi,\tau,\chi)$ \label{fig:limitshape2}}
  \end{subfigure}
	\caption{For $n' \leq n \leq 0$,  $\quad \Omega_{\alpha}^{(n)}(\pi,\tau,\chi)\ $ refines $\quad \Omega_{\alpha}^{(n')}(\pi,\tau,\chi)$.\label{fig:comparison}}
\end{figure}

\subsection{Comparing the functions $\Omega_{\alpha}^{(n)}(\pi,\tau,\chi)$ and $\Omega_{\alpha}^{(n')}(\pi,\tau,\chi)$}\label{sec:comparingfunctions}

\smallskip

Let $n' \leq n \leq 0$. We will compare the functions $\Omega_{\alpha}^{(n)}(\pi,\tau,\chi)$ and $\Omega_{\alpha}^{(n')}(\pi,\tau,\chi)$, which are defined on the same interval $[0,q_{\alpha}(\pi,\tau)]$.

\begin{lemma}\label{comparison}
 For every $\alpha \in \A$ and every $0 \leq \ell \leq B_\alpha(n,0)$, we have
$$\Omega_{\alpha}^{(n')}(Sq(\ell)) -\Omega_{\alpha}^{(n)}(Sq(\ell))= \Omega_{\alpha}^{(n')}(0) -\Omega_{\alpha}^{(n)}(0).$$

\smallskip
Moreover, for every $\alpha \in \A$ and every $0 \leq \ell < B_\alpha(n,0)$, the restriction of $\Omega_{\alpha}^{(n')}$ to  $[Sq(\ell), Sq(\ell +1) ] $  for 
$$0 \leq x \leq Sq(\ell +1) -Sq(\ell)= q_{\beta(\alpha,\ell)}^{(n)}(\pi, \tau)= \Theta_1^{(n)} q_{\beta(\alpha,\ell)}(\pi^{(n)}, \tau^{(n)})$$
satisfies 
$$ \Omega_{\alpha}^{(n')}(Sq(\ell) +x) = c(\ell) + \Omega_{\beta(\alpha, \ell)}^{(n'-n)}(\pi^{(n)}, \tau^{(n)}, \chi^{(n)})\left(\frac {x}{\Theta_1^{(n)}}\right),$$
where $c(\ell)$ is the mean value of $\Omega_{\alpha}^{(n')}$ on $[Sq(\ell), Sq(\ell +1) ] $.
\end{lemma}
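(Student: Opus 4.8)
The plan is to unwind the definitions of $\Omega_\alpha^{(n)}$ and $\Omega_\alpha^{(n')}$ and exploit the cocycle relation $S(n',0) = S(n,0)\circ S(n',n)$ for special Birkhoff sums, together with the refinement structure of the Rauzy-Veech partitions. First I would observe that the points $Sq(\ell)$, $0\le \ell \le B_\alpha(n,0)$, used to define $\Omega_\alpha^{(n)}$ on $[0,q_\alpha(\pi,\tau)]$, are by construction among the breakpoints of $\Omega_\alpha^{(n')}$: indeed each subinterval $(T^{(n)})^\ell(I_\alpha^{(0)}) \subset I_{\beta(\alpha,\ell)}^{(n)}$ decomposes further, under iteration of $T^{(n')}$, into the pieces $(T^{(n')})^j(I_\alpha^{(0)})$ visiting the level-$n'$ subintervals, and this decomposition of $I_{\beta(\alpha,\ell)}^{(n)}$ into level-$n'$ pieces is exactly the one encoded by $B(n',n)$. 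So the partition of $[0,q_\alpha(\pi,\tau)]$ used for $\Omega_\alpha^{(n')}$ is a refinement of the one used for $\Omega_\alpha^{(n)}$, with the $\ell$-th subinterval $[Sq(\ell),Sq(\ell+1)]$ (of length $q^{(n)}_{\beta(\alpha,\ell)}(\pi,\tau) = \Theta_1^{(n)}\,q_{\beta(\alpha,\ell)}(\pi^{(n)},\tau^{(n)})$) being subdivided according to the first return of $T^{(n')}$ to $I^{(n)}$ on the interval $I^{(n)}_{\beta(\alpha,\ell)}$.

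For the first identity, I would compute the increment of $\Omega_\alpha^{(n')}$ across a whole block $[Sq(\ell),Sq(\ell+1)]$. By condition (2) in the definition of $\Omega_\alpha^{(n')}$, summing the increments over the level-$n'$ sub-breakpoints inside this block telescopes to
$$
\Omega_\alpha^{(n')}(Sq(\ell+1)) - \Omega_\alpha^{(n')}(Sq(\ell)) = \sum_{j} \chi^{(n')}\bigl((T^{(n')})^{j}(I_\alpha^{(0)})\bigr),
$$
where the sum is over the return orbit of $I_\alpha^{(0)}$ inside $I^{(n)}_{\beta(\alpha,\ell)}$; by the definition of special Birkhoff sums this is precisely $S(n',n)\chi^{(n')}$ evaluated on $I^{(n)}_{\beta(\alpha,\ell)}$, i.e. $\chi^{(n)}((T^{(n)})^\ell(I_\alpha^{(0)}))$ since $\chi^{(n)} = S(n',n)\chi^{(n')}$ (as $\chi$ is a central sequence — this is where I use the compatibility of the $\chi^{(n)}$). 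But by condition (2) for $\Omega_\alpha^{(n)}$ this equals $\Omega_\alpha^{(n)}(Sq(\ell+1))-\Omega_\alpha^{(n)}(Sq(\ell))$. Hence the two functions have equal increments across each block $[Sq(\ell),Sq(\ell+1)]$, so their difference is constant on the set $\{Sq(\ell)\}$, giving the claimed identity $\Omega_\alpha^{(n')}(Sq(\ell))-\Omega_\alpha^{(n)}(Sq(\ell)) = \Omega_\alpha^{(n')}(0)-\Omega_\alpha^{(n)}(0)$ by induction on $\ell$.

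For the second identity, I would work inside a fixed block $[Sq(\ell),Sq(\ell+1)]$ and rescale: putting $x = \Theta_1^{(n)} y$ with $y \in [0, q_{\beta(\alpha,\ell)}(\pi^{(n)},\tau^{(n)})]$, the breakpoints and increments of $\Omega_\alpha^{(n')}$ restricted to this block are, after dividing lengths by $\Theta_1^{(n)}$, exactly those defining $\Omega_{\beta(\alpha,\ell)}^{(n'-n)}(\pi^{(n)},\tau^{(n)},\chi^{(n)})$ — this is because the return decomposition of $I^{(n)}_{\beta(\alpha,\ell)}$ under $T^{(n')}$ is the level-$(n'-n)$ decomposition for the shifted data $(\pi^{(n)},\lambda^{(n)},\tau^{(n)})$, the heights $q^{(n')}$ rescale by the same factor $\Theta_1^{(n)}$, and the values $\chi^{(n')} = S(n'-n, 0)^{-1}$ applied within the shifted system coincide with the analogous construction for $\chi^{(n)}$. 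Since both $\Omega_\alpha^{(n')}|_{[Sq(\ell),Sq(\ell+1)]}$ and $\Omega_{\beta(\alpha,\ell)}^{(n'-n)}(\cdots)(y)$ are piecewise affine with the same breakpoints and the same increments, they differ by a constant; normalizing by the mean-zero condition (3), that constant is forced to be $c(\ell) := $ the mean value of $\Omega_\alpha^{(n')}$ on $[Sq(\ell),Sq(\ell+1)]$, since $\Omega_{\beta(\alpha,\ell)}^{(n'-n)}$ has mean zero by its own normalization.

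The main obstacle I anticipate is purely bookkeeping: carefully matching the indexing of the two return-orbit decompositions (the "outer" level-$n$ breakpoints $Sq(\ell)$ and the "inner" level-$n'$ refinement) and verifying that the rescaling factor is uniformly $\Theta_1^{(n)}$ across the block, i.e. that $q^{(n')}_\beta(\pi,\tau) = \Theta_1^{(n)} q^{(n'-n)}_\beta(\pi^{(n)},\tau^{(n)})$ for every $\beta$ appearing. This follows from $F_1$ being one-dimensional and $(S(n',0))^{-1} = (S(n,0))^{-1}\circ (S(n'-n', n-n'... ))^{-1}$ — more precisely from applying the normalization of $q$ at levels $n$ and $n'$ — but one must be careful that the single scalar $\Theta_1^{(n)}$ (and not a $\beta$-dependent factor) appears, which is exactly the content of $q(\pi,\tau)$ generating the simple top eigenspace. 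No genuinely hard estimate is needed here; the lemma is a compatibility/refinement statement and the proof is a careful unfolding of definitions plus the cocycle relation for $S$.
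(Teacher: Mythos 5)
Your proof is correct and takes essentially the same approach as the paper; the paper's own proof is a one-line statement that the lemma "follows directly from the definitions and the relations $S(n',n)q^{(n')}(\pi,\tau) = q^{(n)}(\pi,\tau)= \Theta_1^{(n)} q(\pi^{(n)}, \tau^{(n)})$, $S(n',n)\chi^{(n')} = \chi^{(n)}$," and what you have written is precisely a careful unfolding of those two relations via the refinement of breakpoints and the cocycle identity $S(n',0)=S(n,0)\circ S(n',n)$.
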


\begin{proof}
The statements of the Proposition follow directly from the definition of the functions $\Omega_{\alpha}^{(n')}, \; \Omega_{\alpha}^{(n)}$ and from the relations
$$S(n',n)q^{(n')}(\pi,\tau) = q^{(n)}(\pi,\tau)= \Theta_1^{(n)} q(\pi^{(n)}, \tau^{(n)}), \quad S(n',n)\chi^{(n')} = \chi^{(n)}.$$
\end{proof}

\begin{figure}
   \begin{subfigure}{0.4\textwidth}
    \includegraphics[width=\textwidth]{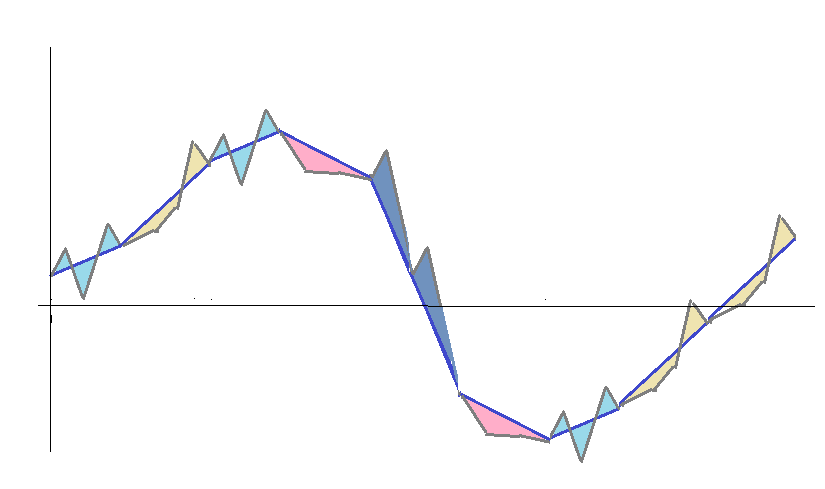}
    \caption{Copies of $\Omega_{\beta}^{(n'-n)}(\underline{T}^{(n)})$ in the difference (where $\underline{T}^{(n)}:= (\pi^{(n)}, \tau^{(n)}, \chi^{(n)})$) in different colours for different $\beta \in A$. \label{fig:difference1}}
   
  \end{subfigure}
 \hspace{20mm}
  \begin{subfigure}{0.4\textwidth}
    \includegraphics[width=\textwidth]{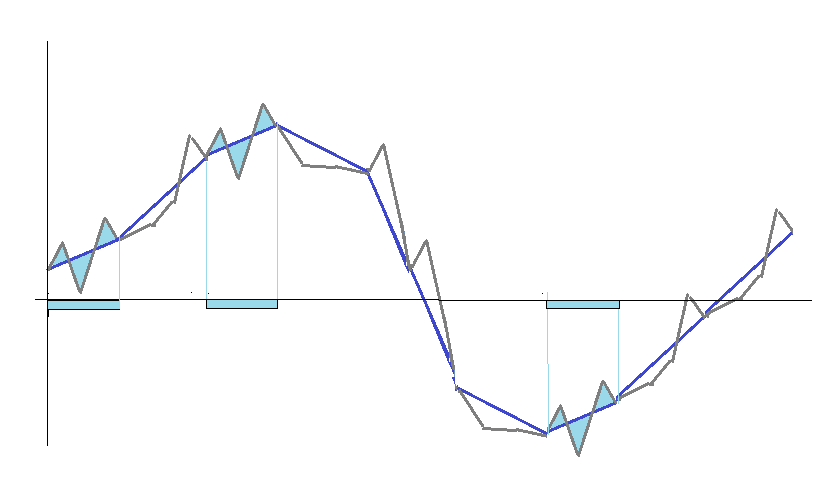}
    \caption{Occurrences of $\Omega_{\beta}^{(n'-n)}(\underline{T}^{(n)})$ with $\beta \in \A$ fixed (where $\underline{T}^{(n)}:= (\pi^{(n)}, \tau^{(n)}, \chi^{(n)})$) correspond to a dual Birkhoff sum.\label{fig:difference2}}
    
  \end{subfigure}
	\caption{The difference $\, \Omega_{\alpha}^{(n)}(\pi,\tau,\chi) - \Omega_{\alpha}^{(n')}(\pi,\tau,\chi)$, where $n' \leq n \leq 0$.\label{fig:difference}}
\end{figure}

\subsection{Distributional convergence of the sequence $\Omega_{\alpha}^{(n)}(\pi,\tau,\uvec{\xi})$ as $n \rightarrow - \infty$
}
\label{sec:convergence}

\smallskip
We saw in the previous section that, for  $n'<n$, the function $\Omega_{\alpha}^{(n')}$ refines a (translate of) the function $\Omega_{\alpha}^{(n)}$ (as illustrated in Figure~\ref{fig:comparison}).  
One can show that the sequence of functions $\Omega_{\alpha}^{(n)}$, as $n \to -\infty$, does not converge pointwise. Nevertheless, we will show that (under the dual Roth type condition) there exists a limit object, in the sense that the sequence converge in the space of distributions acting on H\"older functions. This is the content of the following result. 

\begin{theorem}\label{thm:convergence}
Assume that $(\pi, \tau)$ is of dual Roth type. For $\alpha \in \A$, let $\psi$ be a mean zero function defined  on the interval $[0,q_{\alpha}(\pi,\tau)]$. Assume that $\psi$ is H\"older continuous for some exponent $\eta \in (0,1)$. Then the sequence 
$$ \int_0^{q_{\alpha}(\pi,\tau)} \psi(x)\ \Omega_{\alpha}^{(n)}(x) dx$$
is convergent as $n \rightarrow - \infty$ . More precisely, there exists a  $\delta = \delta(\eta)$ such that, for $n' \leq n \leq 0$ 
$$  | \int_0^{q_{\alpha}(\pi,\tau)} \psi(x) (\Omega_{\alpha}^{(n')}(x)-\Omega_{\alpha}^{(n)}(x)) dx|< C(\eta) ||\psi||_{C^{\eta}}||B(n,0)||^{-\frac{\delta}4}. $$
\end{theorem}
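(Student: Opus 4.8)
The strategy is to reduce the statement about the piecewise affine functions $\Omega_\alpha^{(n)}$ to the estimate on dual special Birkhoff sums already proved in Proposition~\ref{estimatedualBS}, using the refinement structure established in Lemma~\ref{comparison}. First I would fix $\alpha\in\A$, a H\"older exponent $\eta\in(0,1)$ and a mean-zero $\psi\in C^\eta([0,q_\alpha(\pi,\tau)])$, and observe that by telescoping it suffices to control $\int_0^{q_\alpha(\pi,\tau)}\psi(x)\bigl(\Omega_\alpha^{(n-1)}(x)-\Omega_\alpha^{(n)}(x)\bigr)dx$ and sum the resulting geometric-type series (this is where exponential growth of $\|B(n,0)\|$ in Zorich time enters, exactly as at the end of the proof of Proposition~\ref{estimatedualBS}); alternatively, and more efficiently, I would directly estimate the difference for a general pair $n'\le n\le 0$.

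\textbf{Key steps.} The main computation is to insert the formula from Lemma~\ref{comparison}. On each interval $[Sq(\ell),Sq(\ell+1)]$ the function $\Omega_\alpha^{(n')}$ equals a constant $c(\ell)$ plus a rescaled copy of $\Omega_{\beta(\alpha,\ell)}^{(n'-n)}(\pi^{(n)},\tau^{(n)},\chi^{(n)})$, while $\Omega_\alpha^{(n)}$ is affine there; the constant parts contribute $\Omega_\alpha^{(n')}(0)-\Omega_\alpha^{(n)}(0)$ plus affine pieces whose pairing against $\psi$ I handle using that $\psi$ has mean zero and is H\"older (so the pairing of $\psi$ against an affine function on a short interval of length $|L_{\beta(\alpha,\ell)}^{(n)}|$, rescaled appropriately, is $O(\|\psi\|_{C^\eta}\,\text{(interval length)}^{1+\eta})$ after subtracting the local mean of $\psi$). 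The genuinely oscillatory part is $\sum_\ell \int \psi(Sq(\ell)+\Theta_1^{(n)}y)\,\Omega_{\beta(\alpha,\ell)}^{(n'-n)}(\pi^{(n)},\tau^{(n)},\chi^{(n)})(y)\,\Theta_1^{(n)}\,dy$. Here I would group the terms by the value $\beta=\beta(\alpha,\ell)$: for each fixed $\beta$ the collection of copies of $\Omega_\beta^{(n'-n)}$ appearing is precisely indexed the way the decomposition \eqref{decompL} indexes the shifted copies $j^{(n',n)}_{\ell,\alpha,\beta}(L_\beta^{(n')})$ inside $L_\alpha^{(n)}$ — this is the content of Figure~\ref{fig:difference} — so after the change of variables $y\mapsto$ position in $L_\alpha^{(n)}$, the sum over $\ell$ becomes, up to the factor $\Theta_1^{(n)}$ and a constant depending only on $q$, a dual special Birkhoff sum $S^\sharp(n',n)$ applied to $\Omega_\beta^{(n'-n)}$, tested against the pullback of $\psi$.

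\textbf{Finishing.} At this point I apply Proposition~\ref{estimatedualBS}: the function $\Omega_\beta^{(n'-n)}(\pi^{(n)},\tau^{(n)},\chi^{(n)})$ has mean zero by construction (requirement (3) in \S~\ref{sec:functions}) and is H\"older continuous with a norm I must bound — here I use that its graph is piecewise affine with slopes given by values of $\chi^{(n)}$, together with the subexponential growth of $\|\chi^{(n)}\|_\infty$ from Proposition~\ref{prop:correction} (or rather its analogue for central functions, which holds by Oseledets genericity) and the dual length control of Proposition~\ref{lengthsL}, so that $\|\Omega_\beta^{(n'-n)}\|_{C^\eta}$ is at most $\|B(n,0)\|^\epsilon$ times a constant for every $\epsilon>0$. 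Proposition~\ref{estimatedualBS} then gives a gain of $\|B(n,0)\|^{-\theta\eta/20}$ (with $\theta$ the dual spectral gap exponent), and combined with the factor $\Theta_1^{(n)}$ — which, by normalization of $q^{(n)}$ and Proposition~\ref{p4}, is comparable to $\|B(n,0)\|^{-1}$ up to subexponential factors, and is compensated by the Jacobian of the change of variables — plus the H\"older-mean-zero estimate on the affine parts, one obtains a bound of the shape $C(\eta)\|\psi\|_{C^\eta}\|B(n,0)\|^{-\delta/4}$ for a suitable $\delta=\delta(\eta)>0$ (one may take $\delta$ proportional to $\theta\eta$). The main obstacle I anticipate is purely bookkeeping: matching the indexing of the shifted copies of $\Omega_\beta^{(n'-n)}$ in the refinement against the indexing in \eqref{decompL} so that the sum is genuinely recognized as $S^\sharp(n',n)$ applied to a single fixed mean-zero H\"older function — and tracking the subexponential (in Zorich time) error factors coming from $\Theta_1^{(n)}$ and $\|\chi^{(n)}\|_\infty$, which are harmless because the main term decays like a positive power of $\|B(n,0)\|$ which grows exponentially in Zorich time.
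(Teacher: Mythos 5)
Your reduction to Lemma~\ref{comparison} and the plan to invoke Proposition~\ref{estimatedualBS} are the right ingredients, and your observation that one must control the case of general $n'\leq n$ by a telescoping argument (or, equivalently, by restricting to pairs with $\Vert B(n',n)\Vert \leq C\Vert B(n,0)\Vert^{\delta/4}$, as the paper does) is correct. However, there is a structural error in how you propose to apply the dual Birkhoff sum estimate. After inserting Lemma~\ref{comparison} and grouping the cells $[Sq(\ell),Sq(\ell+1)]$ by the letter $\bar\beta=\beta(\alpha,\ell)$, the oscillatory part takes the form
\begin{equation*}
\sum_{\bar\beta\in\A}\int_{L^{(n)}_{\bar\beta}}\widetilde\Omega^*_{\bar\beta}\!\left(\tfrac{x}{\Theta_1^{(n)}}\right)\Bigl(\sum_{\ell:\,\beta(\alpha,\ell)=\bar\beta}\psi(x+Sq(\ell))\Bigr)dx,
\end{equation*}
and the inner sum over $\ell$ is precisely $S^\sharp(0,n)\psi(x)$ after extending $\psi$ by zero to $L^{(0)}$. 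The dual special Birkhoff sum operator is applied to the test function $\psi$ — not to $\Omega_{\bar\beta}^{(n'-n)}$. Your description of this as ``a dual special Birkhoff sum $S^\sharp(n',n)$ applied to $\Omega_\beta^{(n'-n)}$'' does not match the actual structure of the expression (and $S^\sharp(n',n)$ for $n'\leq n$ is not even defined in the paper's conventions, which only define $S^\sharp(n,n')$ for $n'\leq n$). The pulled-back copies of $\Omega_{\bar\beta}^{(n'-n)}$ that appear in the cells form the \emph{transpose} of a dual Birkhoff sum, which is not what Proposition~\ref{estimatedualBS} controls.

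This misidentification propagates: you then need to bound $\Vert\Omega_\beta^{(n'-n)}\Vert_{C^\eta}$, which is both unnecessary (only its $C^0$, or equivalently $L^1$, norm is required) and delicate — its H\"older norm involves the slopes $\chi^{(n')}_\gamma/q_\gamma^{(n'-n)}$ on cells whose lengths shrink like $\Vert B(n',n)\Vert^{-1}$, and it can only be controlled after the reduction $\Vert B(n',n)\Vert\leq C\Vert B(n,0)\Vert^{\delta/4}$, which you do not explicitly invoke at that point. Relatedly, your claim that $\Theta_1^{(n)}$ is ``compensated by the Jacobian of the change of variables'' undercuts the bookkeeping: in the correct balance, $\Theta_1^{(n)}\leq C\Vert B(n,0)\Vert^{-1+\delta/4}$ is \emph{not} cancelled but supplies the crucial factor $\Vert B(n,0)\Vert^{-1}$ that offsets the $\Vert B(n,0)\Vert^{1-\delta}$ produced by Proposition~\ref{estimatedualBS} applied to $\psi$; without it the main term would diverge. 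So the correct chain is: $\Vert S^\sharp(0,n)\psi\Vert_{C^0}\leq C\Vert\psi\Vert_{C^\eta}\Vert B(n,0)\Vert^{1-\delta}$, then $\int|\widetilde\Omega^*_{\bar\beta}(x/\Theta_1^{(n)})|\,dx=\Theta_1^{(n)}\int|\widetilde\Omega^*_{\bar\beta}|\leq C\Vert B(n,0)\Vert^{-1+\delta/4}\cdot\Vert B(n',n)\Vert\,\Vert\chi^{(n')}\Vert_\infty\leq C\Vert B(n,0)\Vert^{-1+3\delta/4}$, and the product gives $\Vert B(n,0)\Vert^{-\delta/4}$. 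Fix these two points — apply the dual Birkhoff sum estimate to $\psi$ and keep $\Theta_1^{(n)}$ as the decaying factor — and your argument converges to the paper's.
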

\begin{proof}
Fix $n' \leq n \leq 0$. 
It is sufficient to prove the inequality of the proposition under the hypothesis
$$ ||B(n',n)|| \leq C ||B(n,0)||^{\frac {\delta}4}.$$
By decomposing the interval of integration into intervals of the form $[ Sq(\ell), Sq(\ell +1)]$ and  
$$
\int_0^{q_{\alpha}(\pi,\tau)} \psi(x) (\Omega_{\alpha}^{(n')}(x)-\Omega_{\alpha}^{(n)}(x)) dx = \sum_{0 \leq \ell < B_{\alpha}(n,0)} \int_{Sq(\ell)}^{Sq(\ell +1)} \psi(x) (\Omega_{\alpha}^{(n')}(x)-\Omega_{\alpha}^{(n)}(x)) dx $$
 $$= \sum_{\overline \beta \in \A} \sum_{0 \leq \ell < Q_{\alpha}^{(n)}, \overline \beta(\alpha, \ell) = \overline \beta} \int_{Sq(\ell)}^{Sq(\ell +1)} \psi(x) (\Omega_{\alpha}^{(n')}(x)-\Omega_{\alpha}^{(n)}(x)) dx $$
 $$= \sum_{\overline \beta \in \A} \int_0 ^{q^{(n)}_{\overline\beta}(\pi,\tau)} \widetilde\Omega^*_{\overline \beta}(\frac {x}{\Theta_1^{(n)}})
 \sum_{0 \leq \ell < Q_{\alpha}^{(n)}, \overline \beta(\alpha, \ell) = \overline \beta} \psi(x + Sq(\ell)) dx. $$
 
 Here, $\widetilde\Omega^*_{\overline \beta}$ is according to Lemma \ref{comparison}, the function on $[0,q_{\overline\beta}(\pi^{(n)}, \tau^{(n)})]$ which differs from $\Omega_{\overline\beta}^{(n'-n)}(\pi^{(n)}, \tau^{(n)}, \xi^{(n)} )$ by an affine function and satisfies
 $$ \widetilde\Omega^*_{\overline \beta} (0)=\widetilde\Omega^*_{\overline \beta}(q_{\overline\beta}(\pi^{(n)}, \tau^{(n)})) = \Omega_{\alpha}^{(n')}(0)-\Omega_{\alpha}^{(n)}(0).$$

 Let us consider $\psi$, which is defined on $L^{(0)}_\alpha=[0,q_\alpha]$, as a function on $L^{(0)}= \bigsqcup_{\beta \in \A} L^{(0)}_\beta$ which is zero on each $L^{(0)}_\beta$ for $\beta \neq \alpha$ and remark that $\psi$ is H\"older on the disjoint union $L^{(n)}$.  
Since we are assuming that $(\pi, \tau)$ is of dual Roth type, by the estimates of dual Birkhoff sums given in Proposition \ref{estimatedualBS}, we hence have that, for $x \in L^{(0)}_\alpha$, 
\begin{align*}
 \left| \sum_{0 \leq \ell < Q_{\alpha}^{(n)}, \overline \beta(\alpha, \ell) \overline \beta} \psi(x + Sq(\ell))\right|  =
 \left| \sum_{\alpha \in \A} \sum_{0 \leq \ell < Q_{\alpha}^{(n)}, \overline \beta(\alpha, \ell) \overline \beta} \psi(x + Sq(\ell))\right| 
 & = \left| S^\sharp(0,n) \psi(x) \right| \\ & 
 \leq C(\eta) ||\psi||_{C^{\eta}}||B(n,0)||^{1-\delta},
\end{align*} 
for some $\delta>0$ depending on $\eta$. 
It follows that
\begin{equation}\label{difference}
 |\int_0^{q_{\alpha}(\pi,\tau)} \psi(x)  (\Omega_{\alpha}^{(n')}(x)-\Omega_{\alpha}^{(n)}(x)) dx| 
    \leq C(\eta) ||\psi||_{C^{\eta}}||B(n,0)||^{1-\delta}
\sum_{\overline \beta \in \A} \int_0 ^{q^{(n)}_{\overline\beta}(\pi,\tau)} \left|\widetilde\Omega^*_{\overline \beta}\left(\frac {x}{\Theta_1^{(n)}}\right)\right|dx.
\end{equation} 
Changing integration variable,  we have
$$\int_0 ^{q^{(n)}_{\overline\beta}(\pi,\tau)} \left|\widetilde\Omega^*_{\overline \beta}\left(\frac {x}{\Theta_1^{(n)}}\right)\right|dx= 
\Theta_1^{(n)} \int_0 ^{q_{\overline\beta}(\pi^{(n)},\tau^{(n)})} |\widetilde\Omega^*_{\overline \beta}( x)|dx.$$
Since we also have that
$$ \Theta_1^{(n)} \leq C ||B(n,0)||^{-1+\frac 14 \delta},$$
and also that
$$ \left|\Omega_{\overline\beta}^{(n'-n)}(\pi^{(n)}, \tau^{(n)}, \xi^{(n)})\right|_{C^0}  \leq C ||B(n',n)|| ||\chi^{(n')}||_{\infty} ,
$$
with $||B(n',n)|| \leq C ||B(n,0)||^{\frac {\delta}4}$ by hypothesis and
$$ ||\chi^{(n')}||_{\infty} \leq C||B(n',0)||^{\frac {\delta}8} \leq C (||B(n',n)||\;\;||B(n,0)||)^{\frac {\delta}8} \leq C ||B(n,0)||^{\frac {\delta}4},$$
this gives that 
\[
\sum_{\overline \beta \in \A} \int_0 ^{q^{(n)}_{\overline\beta}(\pi,\tau)} \left|\widetilde\Omega^*_{\overline \beta}\left(\frac {x}{\Theta_1^{(n)}}\right)\right|dx \leq C  ||B(n,0)||^{-1+  \frac {3}{4}\delta}.
\]
Combining this with \eqref{difference} gives the required inequality.
\end{proof}

\section{Homological interpretation}\label{sec:homological}


In this section we give a different intepretation of the distributional limit shapes introduced in the previous section and we relate them to (relative) homology classes and sections of the boundary operator.  
In \S~\ref{ssHom} we  recall some homology classes in $H_1(M, \Sigma, \Zset)$ and $H_1(M \setminus \Sigma, \Zset)$ associated to the Rauzy-Veech algorithm.  In \S~\ref{sec:piecewise_affine_paths} we describe in the homological context a construction which is very similar to the constuction of central functions $(\xi^{(n)})_{n \in \Zset}$ correcting the {\Green characteristic} functions $(\widetilde{\xi}^{(n)})_{n \in \Zset}$ done in \S~\ref{sec:correctedchi}. There are however several differences ({\Green besides} the context). In particular, we use weaker assumptions than the typical Oseledets hyperbolic behaviour for the KZ-cocycle assumed in \S~\ref{sec:correctedchi}. In particular, we make separate assumptions on the backward and forward rotation numbers,  called respectively positive KZ-hyperbolicity (for future behaviour) and negative KZ-hyperbolicity (for past behaviour).   One of the points of introducing such assumptions is that there are explicit cases where it might be possible to check  these assumptions while Oseledets hyperbolic behaviour might be false or very difficult to check.  
Finally, in \S~\ref{ssOmega} and \S~\ref{sec:piecewise_affine_paths} we construct the distributional analogue of limit shapes in this homological context. 

%

\subsection{Bases of homology associated to the Rauzy-Veech algorithm}\label{ssHom}
Let us define bases for the punctured surface homology $H_1(M \setminus \Sigma, \Zset)$ and the relative homology $H_1(M, \Sigma,\Zset)$ naturally induced by the zippered rectangle presentation of a translation surface. The setting and notations here used are those of \S~\ref{ssRV}. We also refer the reader to the lecture notes \cite{Y4} (see in particular Section 4.5 in \cite{Y4}) for more details on the homological interpretation of the bases induced by zippered rectangles. 

\smallskip

\begin{figure}
 \begin{subfigure}{0.45\textwidth}
    \includegraphics[width=.7\textwidth]{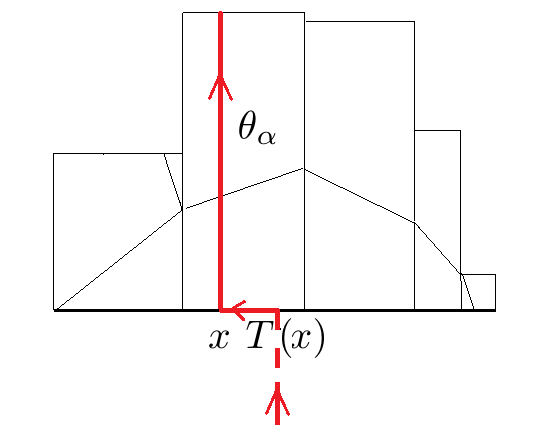}
    \caption{ The classes $\theta_{\alpha} \in H_1(M \setminus \Sigma, \Zset)$.\label{fig:theta}}
  \end{subfigure} \hspace{6mm}
  \begin{subfigure}{0.45\textwidth}
    \includegraphics[width=.7\textwidth]{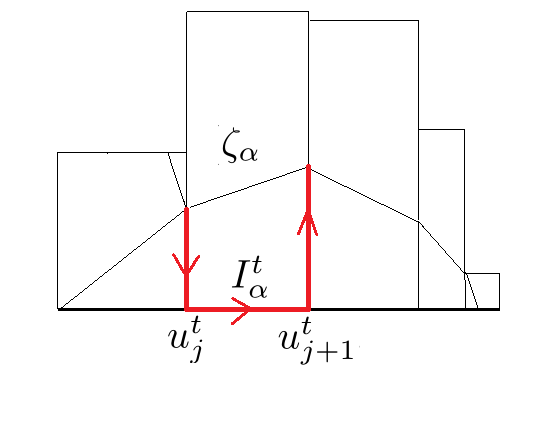}
    \caption{The class  $\zeta_{\alpha} \in H_1(M, \Sigma,\Zset)$.\label{fig:zeta}}

  \end{subfigure}
	\caption{The bases for $H_1(M \setminus \Sigma, \Zset)$ and $ H_1(M, \Sigma,\Zset)$ described in $\S$~\ref{ssHom}.\label{bases}}
\end{figure}

\noindent {\bf  The classes $\theta_{\alpha} \in H_1(M \setminus \Sigma, \Zset)$.} 

Let $\alpha \in \A$, $x \in \iat$. Consider the simple loop in $M \setminus \Sigma$ consisting of the upwards vertical segment (of length $q_{\alpha}$) joining $x$ to $T(x)$ followed by the horizontal segment contained in $I$ joining $T(x)$ to $x$ (as shown in Figure~\ref{fig:theta}). The homology class in $H_1(M \setminus \Sigma, \Zset)$ of this loop does not depend on $x$ and is denoted by $\theta_{\alpha}$. The classes $(\theta_{\alpha})_{\alpha \in \A}$ form a basis of $H_1(M \setminus \Sigma, \Zset)$. One has
$$ \int_{\theta_{\alpha}} \omega = -\delta_{\alpha} + i q_{\alpha}.$$


\medskip

\noindent {\bf The classes $\zeta_{\alpha} \in H_1(M, \Sigma,\Zset)$.} 

Let $\alpha \in \A$. Write $\iat = (u_j^t, u_{j+1}^t)$, with the convention that $u_0^t=u_0^b=u_0$, $u_d^t = u_d^b = u_d$. Consider the following path in $M$  (shown in Figure~\ref{fig:zeta}) with endpoints in $\Sigma$: the first part is a vertical separatrix segment from a point of $\Sigma$ to $u_j^t$ (this part is reduced to a point if $j=0$) , the second part is the horizontal segment $\iat \subset I$ from $u_j^t$ to $u_{j+1}^t$, the third part is a vertical separatrix segment from $u_{j+1}^t$ to a point of $\Sigma$. The homology class of this path in $H_1(M,\Sigma,\Zset)$ is denoted by $\zeta_{\alpha}$. The classes $(\zeta_{\alpha})_{\alpha \in \A}$ form a basis of $H_1(M, \Sigma,\Zset)$. One has 

$$ \int_{\zeta_{\alpha}} \omega = \lambda_{\alpha} + i \tau_{\alpha}.$$

Write $\iab = (u_{\ell}^b,u_{\ell +1}^b)$. An alternative path representing $\zeta_{\alpha}$ is made of a vertical separatrix segment connecting a point of $\Sigma$ to $u_{\ell}^b$, the horizontal segment $\iab$ connecting $u_{\ell}^b$ to $u_{\ell +1}^b$, and a vertical separatrix segment connecting $u_{\ell +1}^b$ to a point of $\Sigma$.

\medskip

{\bf Duality and relation with $\Omega_{\pi}$.} The intersection form $i(\cdot,\cdot)$ defines a non degenerate bilinear pairing between
 $H_1(M, \Sigma,\Zset)$ and $H_1(M \setminus \Sigma, \Zset)$ for which the bases   $(\zeta_{\alpha})_{\alpha \in \A}$ and $(\theta_{\alpha})_{\alpha \in \A}$ are dual: one has 
$$ i( \zeta_{\alpha}, \theta_{\beta}) = \delta_{\alpha\, \beta},$$
where $\delta_{\alpha\, \beta}$ is the Kronecker symbol.

\smallskip
Consider the homomorphism $\iota $ from $H_1(M \setminus \Sigma, \Zset)$ to $H_1(M, \Sigma,\Zset)$ obtained by composing the natural homomorphisms from $H_1(M \setminus \Sigma, \Zset)$ to $H_1(M,\Zset)$ and from $H_1(M,\Zset)$ to $H_1(M, \Sigma,\Zset)$. For $\alpha, \beta \in \A$, one has

$$ \iota(\theta_{\alpha}) = - \sum_{\beta}  \Omega_{\alpha \, \beta} \; \zeta_{\beta}.$$

\subsection{Change of bases associated to the Rauzy-Veech algorithm}

Let $(T^{(n)})_{n \in \Zset}$ be the sequence of i.e.m deduced from $(\pi, \lambda,\tau)$ by the Rauzy-Veech algorithm. Notice that all translation surfaces $M(\pi^{(n)}, \lambda^{(n)}, \tau^{(n)})$ are canonically isomorphic
to $M:=M^{(0)}$. Therefore, for each $n \in \Zset$, the previous constructions define a basis $(\theta_\alpha^{(n)})_{\alpha \in \A}$ of $H_1(M \setminus \Sigma, \Zset)$ and a basis $(\zeta^{(n)}_{\alpha})_{\alpha \in \A}$ of $H_1(M, \Sigma,\Zset)$.

\smallskip

These bases are related, for $m \leq n$,  by
\begin{align*}
\theta_\alpha^{(n)}& = \sum_{\beta} B_{\alpha\, \beta}\; \theta^{(m)}_\beta,    & \textrm{where}\ & B = B(m,n)\\ 
\zeta_\alpha^{(n)}& = \sum_{\beta} B^*_{\alpha\, \beta} \;\zeta^{(m)}_\beta,    & \textrm{where}\ & B = B^* = ^t\negmedspace B(m,n)^{-1}.
\end{align*}
Let $\varUpsilon $ be a homology class in $H_1(M, \Sigma,\Rset)$. For $n \in \Zset$, write
$$ \vU = \sum_\alpha \chi_{\alpha}^{(n)} \zeta_\alpha^{(n)}.$$
The coefficients $\chi^{(n)} = (\chi_{\alpha}^{(n)})_{\alpha \in \A}$ are related, for $m \leq n$, by
$$ \chi^{(n)} = B(m,n) \chi^{(m)},$$
which is the usual homological interpretation of the Kontzevich-Zorich cocycle. Notice that 
$$ \chi_{\alpha}^{(n)} = i(\vU, \theta_\alpha^{(n)}).$$

\subsection{KZ-hyperbolic translation surfaces}\label{ssKZhyp}

%

Consider the homology exact sequence
$$ 0 \lr  H_1(M,\Zset) \lr H_1(M, \Sigma,\Zset) \lr H_0(\Sigma,\Zset) \lr \Zset \lr 0,$$
and the corresponding sequence with real coefficients
$$ 0 \lr  H_1(M,\Rset) \lr H_1(M, \Sigma,\Rset) \lr H_0(\Sigma,\Rset) \lr \Rset \lr 0.$$

We identify $H_0(\Sigma,\Rset)$ with $\RS$. Then the image of $H_1(M, \Sigma,\Rset)$ is the hyperplane $\RSO$ consisting of vectors with vanishing coordinate sum. Under an appropriate hypothesis on $(\pi,\lambda,\tau)$, we will construct a preferred section $\nu: \RSO \to H_1(M, \Sigma,\Rset)$ of the boundary map $\partial: H_1(M, \Sigma,\Rset) \to H_0(\Sigma,\Rset)$.

\smallskip
In the following definition, we use the notation of the end of \S~\ref{ssHom}.

\begin{definition}
Assume that $M$ has no vertical connection. The {\it stable subspace} of \linebreak $H_1(M, \Sigma,\Rset)$, denoted by $\Gamma^s$, is 
$$ \Gamma^s = \{ \vU \in H_1(M, \Sigma,\Rset) \, \vert \, \exists \sigma >0, C>0, \Vert \chi^{(n)} \Vert \leq C \Vert B(0,n) \Vert^{-\sigma} \Vert \chi^{(0)} \Vert,\; \forall n>0\}. $$
Assume that $M$ has no horizontal connection. The {\it unstable subspace} of $H_1(M, \Sigma,\Rset)$, denoted by $\Gamma^u$, is 
$$ \Gamma^u = \{ \vU \in H_1(M, \Sigma,\Rset) \, \vert \, \exists \sigma >0, C>0, \Vert \chi^{(n)} \Vert \leq C \Vert B(n,0) \Vert^{-\sigma} \Vert \chi^{(0)} \Vert,\; \forall n<0\}. $$
\end{definition}

Notice that both $\Gamma^s$ and $\Gamma^u$ are subspaces of $H_1(M,\Rset)$ (more precisely of the image of $H_1(M,\Rset)$ in $H_1(M, \Sigma,\Rset)$). Moreover, when $H_1(M,\Rset)$ is equipped with the symplectic intersection form, both $\Gamma^s$ and $\Gamma^u$ are isotropic. Notice also that, as $H_1(M, \Sigma,\Rset)$ is finite-dimensional, one can choose an exponent $\sigma >0$ which works for all vectors in $\Gamma^s$ and all vectors in $\Gamma^u$.

When we identify $H_1(M, \Sigma,\Rset)$ with $\RA$ through the basis $(\zeta_{\alpha}^{(0)})$, The subspace $\Gamma^s$ only depend on $(\pi, \lambda)$ and the subspace $\Gamma^u$ on $(\pi,\tau)$.

\begin{definition}\label{defKZposhyp}
The translation surface $M = M(\pi, \lambda, \tau)$ is {\it positively  KZ-hyperbolic} if
$M$ has no vertical connection and the following properties hold
\begin{enumerate}
\item for any $\tau >0$, one has 
$$ \sum_{n>0} \Vert B(0,n) \Vert^{-\tau} < + \infty.$$
\item
the dimension of $\Gamma^s$ is equal to $g$; 
\item  (Coherence) For any $\tau >0$, there exists $C_\tau >0$ such that,
for any $\vU \in \Gamma^s$, any $0 \leq m \leq n$, one has 
$$ \Vert \chi^{(n)} \Vert \leq C_\tau \Vert B(0,n) \Vert^{\tau} \Vert \chi^{(m)} \Vert;$$
\end{enumerate}

\end{definition}

\begin{definition}\label{defKZneghyp}
The translation surface $M = M(\pi, \lambda, \tau)$ is {\it negatively  KZ-hyperbolic} if
$M$ has no horizontal connection and the following properties hold
\begin{enumerate}
\item for any $\tau >0$, one has 
$$ \sum_{n<0} \Vert B(0,n) \Vert^{-\tau} < + \infty.$$
\item
the dimension of $\Gamma^u$ is equal to $g$; 
\item  (Coherence) For any $\tau >0$, there exists $C_\tau >0$ such that,
  for any $\vU \in \Gamma^u$, any $0 \geq m \geq n$, one has 
$$ \Vert \chi^{(n)} \Vert \leq C_\tau \Vert B(0,n) \Vert^{\tau} \Vert \chi^{(m)} \Vert.$$
\end{enumerate}

\end{definition}

\begin{definition}\label{defKZhyp}
The translation surface $M = M(\pi, \lambda, \tau)$ is {\it  KZ-hyperbolic} if it is both positively and negatively KZ-hyperbolic and one has 
$$ H_1(M,\Rset) = \Gamma^s \oplus \Gamma^u.$$
\end{definition}
%

\begin{remark}\label{remKZ0}
By Forni's results in {\Green \cite{For2}}, for any combinatorial data $\pi$, almost all data $(\lambda,\tau)$ produce KZ-hyperbolic translation surfaces.
\end{remark}


\begin{remark}\label{remKZ1}
Assume that $M$ is positively KZ-hyperbolic. For $\vU \in H_1(M, \Sigma, \Rset)$ and $n \geq 0$, define 
$$ \Vert \vU \Vert_{n,\star s} := \min \{ \, \Vert \chi \Vert_{\infty}\,  \vert \, \quad  \vU - \sum \chi_\alpha \zeta_\alpha^{(n)} \in \Gamma^s\}.$$

It follows from the symplecticity of the restricted KZ-cocycle that one has, for $\vU \in H_1(M,\Rset)$, $n \geq m \geq 0$, and any $\tau >0$
\begin{eqnarray*}
 \Vert \vU \Vert_{0,\star s} &\leq& C \Vert B(0,n) \Vert^{-\sigma} \Vert \vU \Vert_{n, \star s} ;\\
 \Vert \vU \Vert_{m,\star s} &\leq& C_\tau \Vert B(0,n) \Vert^{\tau} \Vert \vU\Vert_{n,\star s}.
\end{eqnarray*}
{\Green The proof is an adaptation of the results discussed in Section 3.2 of  \cite{MY}}. 
\end{remark}

\begin{remark}\label{remKZ2}
Similarly, assume that $M$ is negatively KZ-hyperbolic. For $\vU \in H_1(M, \Sigma, \Rset)$ and $n \leq 0$, define 
$$ \Vert \vU \Vert_{n,\star u} := \min \{\,  \Vert \chi \Vert_{\infty}\, \vert \, \quad  \vU - \sum \chi_\alpha \zeta_\alpha^{(n)} \in \Gamma^u\}.$$

Then one has, for $\vU \in H_1(M,\Rset)$, $n \leq m \leq 0$, and any $\tau >0$
\begin{eqnarray*}
 \Vert \vU \Vert_{0,\star u} &\leq& C \Vert B(n,0) \Vert^{-\sigma} \Vert \vU \Vert_{n, \star u} ;\\
 \Vert \vU \Vert_{m,\star u} &\leq& C_\tau \Vert B(n,0) \Vert^{\tau} \Vert \vU\Vert_{n,\star u}.
\end{eqnarray*}
\end{remark}

\begin{proposition}\label{propKZpos}
Assume that $M$ is positively KZ-hyperbolic. Then, for every $\upsilon \in  \RSO$, there exists a  class $\vU= \sum \chi_{\alpha}^{(n)} \zeta_{\alpha}^{(n)} \in H_1(M, \Sigma,\Rset)$ such that $\partial \vU = \upsilon$ and
$$ \varlimsup_{n \to +\infty} \frac{\log (\Vert  \chi^{(n)} \Vert / \Vert \chi^{(0)} \Vert)}{\log  \Vert B(0,n) \Vert} =0.$$ 
Moreover, two such classes differ by an element in $\Gamma^s$. Therefore the Proposition defines a linear section from $\RSO$ to $H_1(M, \Sigma,\Rset) / \Gamma^s$.
\end{proposition}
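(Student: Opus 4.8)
The plan is to construct $\vU$ by the limiting/correction procedure familiar from the proof of Proposition~\ref{prop:correction} and from Section 3.2 of \cite{MY}, the only difference being that, having only the \emph{forward} hyperbolic behaviour at our disposal, we will correct only the stable part and the outcome will be canonical merely modulo $\Gamma^s$. First, using that the image of $\partial$ on $H_1(M,\Sigma,\Rset)$ is exactly $\RSO$ (as recalled above), that for each vertex $\pi'\in\R$ the restriction of the boundary operator to the piecewise-constant functions is onto $\RSO$ (Proposition~\ref{propboundary}), and that the Rauzy diagram is finite, I would choose for every $n\ge 0$ a vector $\chi^{[n]}\in\RA$ with $\partial_{\pi^{(n)}}(\chi^{[n]})=\upsilon$ and $\Vert\chi^{[n]}\Vert_\infty\le C\Vert\upsilon\Vert$, the constant $C$ and the choice being uniform and linear in $\upsilon$. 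Setting $\vU_n:=\sum_\alpha\chi^{[n]}_\alpha\zeta^{(n)}_\alpha$, the homological interpretation of $\partial$ (Remark~\ref{remhomology}) gives $\partial\vU_n=\upsilon$, so $\vU_n-\vU_{n+1}\in\ker\partial=H_1(M,\Rset)$.

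Next I would show that $([\vU_n])_{n\ge 0}$ converges in $H_1(M,\Sigma,\Rset)/\Gamma^s$. In the basis $(\zeta^{(n)}_\alpha)$ the difference $\vU_n-\vU_{n+1}$ has coordinates $E^{(n)}:=\chi^{[n]}-B(n,n+1)^{-1}\chi^{[n+1]}$, which is uniformly bounded since $B(n,n+1)^{\pm1}$ are elementary; and $\vU_n-\vU_{n+1}\in H_1(M,\Rset)$, so the first a priori estimate of Remark~\ref{remKZ1} yields
\[ \big\Vert [\vU_n]-[\vU_{n+1}]\big\Vert_{0,\star s}\ \le\ C\Vert B(0,n)\Vert^{-\sigma}\Vert E^{(n)}\Vert_{n,\star s}\ \le\ C'\Vert\upsilon\Vert\,\Vert B(0,n)\Vert^{-\sigma}. \]
By condition (1) of positive KZ-hyperbolicity the series $\sum_n\Vert B(0,n)\Vert^{-\sigma}$ converges, so $([\vU_n])$ is Cauchy; let $[\vU]$ be its limit. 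Each $[\vU_n]$ maps to $\upsilon$ under the boundary map induced on $H_1(M,\Sigma,\Rset)/\Gamma^s$, hence so does $[\vU]$, i.e.\ $\partial\vU=\upsilon$ for any representative.

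For the forward-growth estimate I would run the same computation based at level $n$, obtaining $[\chi^{(n)}_\vU]=\lim_m[B(n,m)^{-1}\chi^{[m]}]$ and, using the uniform form of condition (1) (which follows as in \cite{MY}) and the coherence condition (3), that $\Vert[\chi^{(n)}_\vU]\Vert_{n,\star s}$ grows subexponentially in $\Vert B(0,n)\Vert$. The remaining point is to pass from this quotient-norm bound to the honest coordinate bound $\Vert\chi^{(n)}_\vU\Vert=\Vert B(0,n)\Vert^{o(1)}$: one first notes that, by the coherence condition (3), adding an element of $\Gamma^s$ changes $\Vert\chi^{(n)}\Vert$ only by a subexponential amount, so the validity of the conclusion depends only on $[\vU]$; then one checks that $[\vU]$, lying in the central subspace of the induced cocycle on $H_1(M,\Sigma,\Rset)/\Gamma^s$, admits a representative whose forward coordinates grow subexponentially. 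This last step is the homological analogue of the decomposition established in Section 3.2 of \cite{MY}, and it is exactly where the full strength of positive KZ-hyperbolicity (not just the existence of a stable subspace) enters.

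Finally, uniqueness modulo $\Gamma^s$ is easy: if $\vU,\vU'$ both satisfy the conclusion, then $\vU-\vU'\in\ker\partial=H_1(M,\Rset)$ has $\Vert\chi^{(n)}_{\vU-\vU'}\Vert=\Vert B(0,n)\Vert^{o(1)}$, hence $\Vert\vU-\vU'\Vert_{n,\star s}$ is subexponential in $\Vert B(0,n)\Vert$, and the first estimate of Remark~\ref{remKZ1} forces $\Vert\vU-\vU'\Vert_{0,\star s}=0$, i.e.\ $\vU-\vU'\in\Gamma^s$. Linearity of $\upsilon\mapsto[\vU]$ is inherited from the linearity of the construction, and by uniqueness this is a well-defined linear section of the induced map $H_1(M,\Sigma,\Rset)/\Gamma^s\to\RSO$. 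I expect the main obstacle to be precisely the passage, in the third paragraph, from the quotient-norm control $\Vert\cdot\Vert_{n,\star s}$ to the honest forward-growth bound on $\Vert\chi^{(n)}\Vert$ — equivalently, exhibiting a genuine representative with subexponential forward coordinates — which relies on the uniform hyperbolic estimates of Remark~\ref{remKZ1} together with coherence; the absence of any canonical unstable complement (we only assume \emph{positive} KZ-hyperbolicity) is the reason the answer cannot be pinned down better than modulo $\Gamma^s$.
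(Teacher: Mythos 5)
Your proposal follows the same route as the paper: pick, at each level $n$, a boundedly-chosen preimage $\vU_n$ of $\upsilon$; observe that $\vU_n-\vU_{n+1}\in H_1(M,\Rset)$ with uniformly bounded coordinates at level $n$; use the first estimate of Remark~\ref{remKZ1} together with the summability in condition~(1) to show convergence in $H_1(M,\Sigma,\Rset)/\Gamma^s$; and derive uniqueness modulo $\Gamma^s$ from the same contraction estimate. Your Cauchy argument and your uniqueness paragraph are both correct and match the paper's proof (the paper packages the construction as a concrete absolutely convergent series $\vU=\varXi(0)+\sum_{n>0}\varXi(n-1,n)$ rather than as a limit in the quotient, but these are the same idea).

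The genuine gap is exactly the one you flag in your third paragraph, and it cannot be offloaded onto Section~3.2 of \cite{MY}, which only furnishes the estimates on $\Vert\cdot\Vert_{m,\star s}$ recorded in Remark~\ref{remKZ1}. Having shown $\Vert\vU\Vert_{m,\star s}\leq C_\tau\Vert B(0,m)\Vert^\tau$ for every $m$ tells you that at each level $m$ there is \emph{some} $\Gamma^s$-correction that makes the $\zeta^{(m)}$-coordinates small, but a priori that correction changes with $m$; you still have to produce a single representative whose entire coordinate sequence $\chi^{(n)}$ is subexponential. Your remark that ``the validity of the conclusion depends only on $[\vU]$'' is true (coherence shows that subtracting any fixed element of $\Gamma^s$ changes $\chi^{(n)}$ only subexponentially), but it does not manufacture a good representative, and neither does saying that $[\vU]$ lies in the central part of the quotient cocycle. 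The paper closes this gap explicitly: fix the representative by demanding its $\Gamma^s$-component at level $0$ to vanish; then write $\vU=\sum_\alpha y_\alpha(m)\zeta_\alpha^{(m)}+\vU(m)$ with $\vU(m)\in\Gamma^s$, $\vU(0)=0$, and $\Vert y(m)\Vert_\infty$ controlled by the quotient-norm bound. Since $B(m-1,m)$ has entries in $\{0,1\}$, the increment $\vU(m)-\vU(m-1)\in\Gamma^s$ has $\zeta^{(m)}$-coordinates bounded by $C_\tau\Vert B(0,m)\Vert^\tau$; telescoping $\vU(m)=\sum_{\ell\leq m}\bigl(\vU(\ell)-\vU(\ell-1)\bigr)$ and propagating each increment from level $\ell$ to level $m$ via the coherence condition~(3) bounds the $\zeta^{(m)}$-coordinates of $\vU(m)$ by $C'_\tau\Vert B(0,m)\Vert^\tau\sum_{\ell\leq m}\Vert B(0,\ell)\Vert^\tau$, and condition~(1) plus monotonicity of $\Vert B(0,\cdot)\Vert$ bound the sum by $C\Vert B(0,m)\Vert^{2\tau}$. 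This telescoping-plus-coherence recursion is the missing ingredient; with it in place, your proposal and the paper's proof coincide.
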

The proof of this proposition is given below. 
Remark that  the content of the proposition is non-empty only when $s \geq 2$.

\smallskip 
We also have the following symmetric Proposition.
\begin{proposition}\label{propKZneg}
Assume that $M$ is negatively KZ-hyperbolic. Then, for every $\upsilon \in  \RSO$, there exists a  class $\vU = \sum \chi_{\alpha}^{(n)} \zeta_{\alpha}^{(n)}\in H_1(M, \Sigma,\Rset)$ such that $\partial \vU = \upsilon$ and
$$ \varlimsup_{n \to -\infty} \frac{\log (\Vert  \chi^{(n)} \Vert / \Vert \chi^{(0)} \Vert)}{\log  \Vert B(n,0) \Vert} =0.$$
Moreover, two such classes differ by an element in $\Gamma^u$. Therefore the proposition defines a linear section from $\RSO$ to $H_1(M, \Sigma,\Rset) / \Gamma^u$.
\end{proposition}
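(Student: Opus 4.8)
The plan is to deduce this statement from Proposition \ref{propKZpos} by the standard time-reversal symmetry of the Rauzy--Veech algorithm, rather than repeating the argument. First I would observe that the negative KZ-hyperbolicity of $M = M(\pi,\lambda,\tau)$ is, by the symmetry between the forward and backward Rauzy--Veech algorithms described in \S~\ref{sec:backwardRauzyVeech} and \S~\ref{sec:dualBS}, equivalent to a positive KZ-hyperbolicity statement for the ``reversed'' data. Concretely, the sequence of matrices $B(n,0)$ for $n<0$ plays for the backward algorithm the role that $B(0,n)$ for $n>0$ plays for the forward one, and the subspace $\Gamma^u$ (which depends only on $(\pi,\tau)$, as noted after the definition of $\Gamma^u$) is the analogue of $\Gamma^s$. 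Thus conditions (1),(2),(3) in Definition~\ref{defKZneghyp} are literally conditions (1),(2),(3) in Definition~\ref{defKZposhyp} read along the reversed index set $n\leq 0$, and Remark~\ref{remKZ2} is the exact counterpart of Remark~\ref{remKZ1}. So the cleanest route is to carry out the same construction as in the proof of Proposition~\ref{propKZpos} with all inequalities $n\geq 0$ replaced by $n\leq 0$, $B(0,n)$ replaced by $B(n,0)$, $\Gamma^s$ by $\Gamma^u$, $\|\cdot\|_{n,\star s}$ by $\|\cdot\|_{n,\star u}$, and $+\infty$ by $-\infty$.

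The key steps, in order, would be: (i) fix $\upsilon \in \RSO$ and, using that $\partial : H_1(M,\Sigma,\Rset)\to \RSO$ is onto (Proposition~\ref{propboundary}(3), (5)), pick any class $\vU_0$ with $\partial \vU_0 = \upsilon$; (ii) for each $n\leq 0$ write $\vU_0 = \sum_\alpha \chi^{(n)}_{0,\alpha}\zeta^{(n)}_\alpha$ and correct $\vU_0$ by an element of $\Gamma^u$ (this does not change $\partial\vU_0$ since $\Gamma^u\subset H_1(M,\Rset)=\ker\partial$) so as to kill the ``unstable-growing'' part of the coordinates as $n\to-\infty$ — i.e.\ project onto the complement of $\Gamma^u$ in the appropriate sense, mimicking the series construction $\Delta\chi_u^{(n)} = -\sum_{n'>n}(S(n,n'))^{-1}\Delta\tilde\chi_u^{(n')}$ from the proof of Proposition~\ref{prop:correction} but now in the homological language and in the backward direction; (iii) use the quantitative estimates of Remark~\ref{remKZ2} (decay by $\|B(n,0)\|^{-\sigma}$ on $\Gamma^u$, and the near-isometry $\|\cdot\|_{m,\star u}\leq C_\tau\|B(n,0)\|^{\tau}\|\cdot\|_{n,\star u}$ for $n\leq m\leq 0$) together with the summability condition (1) of Definition~\ref{defKZneghyp} to show the correction series converges and that the resulting class $\vU$ satisfies $\varlimsup_{n\to-\infty}\frac{\log(\|\chi^{(n)}\|/\|\chi^{(0)}\|)}{\log\|B(n,0)\|}=0$; (iv) prove uniqueness modulo $\Gamma^u$: if $\vU,\vU'$ both work, then $\vU-\vU'\in\ker\partial=H_1(M,\Rset)$, and decomposing via $H_1(M,\Rset)=\Gamma^s\oplus\Gamma^u$ (available since $M$ is KZ-hyperbolic — though here we only assume negative KZ-hyperbolicity, so one argues instead directly that a class in $H_1(M,\Rset)\setminus\Gamma^u$ has coordinates growing like a genuine positive power of $\|B(n,0)\|$ as $n\to-\infty$, contradicting the subpolynomial bound), forcing $\vU-\vU'\in\Gamma^u$; (v) conclude that $\upsilon\mapsto[\vU]$ is the desired linear section $\RSO\to H_1(M,\Sigma,\Rset)/\Gamma^u$, linearity being immediate from the uniqueness and the linearity of the construction.

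I expect the main obstacle to be step (iv), the uniqueness statement, specifically the subtlety that Proposition~\ref{propKZneg} only assumes negative KZ-hyperbolicity and does \emph{not} assume the splitting $H_1(M,\Rset)=\Gamma^s\oplus\Gamma^u$ of Definition~\ref{defKZhyp}. Without that splitting one cannot simply ``project to $\Gamma^u$'', so one must argue that any nonzero class in $H_1(M,\Rset)$ whose Rauzy--Veech coordinates $\chi^{(n)}$ grow \emph{sub}polynomially (in $\log\|B(n,0)\|$) as $n\to-\infty$ necessarily lies in $\Gamma^u$ — this is where the dimension count $\dim\Gamma^u=g$ from Definition~\ref{defKZneghyp}(2) and the coherence estimate (3) are essential, and it parallels the classical fact that the Oseledets stable space is characterized by subexponential decay. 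The construction and convergence in steps (ii)--(iii) are routine once one has set up the backward-indexed analogues, being word-for-word the argument in the proof of Proposition~\ref{propKZpos} with the dictionary above; the only care needed is to verify that the norms $\|\cdot\|_{n,\star u}$ are genuinely comparable across scales via Remark~\ref{remKZ2}, which handles the fact that $\Gamma^u$ itself varies with the basepoint $n$.
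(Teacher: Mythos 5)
Your proposal is correct and takes the same route as the paper: the paper explicitly omits the proof of Proposition~\ref{propKZneg}, stating only that it is analogous to that of Proposition~\ref{propKZpos} under the dictionary $n\geq 0 \leftrightarrow n\leq 0$, $B(0,n)\leftrightarrow B(n,0)$, $\Gamma^s\leftrightarrow\Gamma^u$, $\Vert\cdot\Vert_{n,\star s}\leftrightarrow\Vert\cdot\Vert_{n,\star u}$, Remark~\ref{remKZ1}$\leftrightarrow$Remark~\ref{remKZ2}, which is exactly the dictionary you wrote down. Your worry in step (iv) is already resolved by your own parenthetical: the paper's uniqueness argument for Proposition~\ref{propKZpos} does not invoke the splitting $H_1(M,\Rset)=\Gamma^s\oplus\Gamma^u$ either, but only the first (contraction) inequality of Remark~\ref{remKZ1}, whose mirror in Remark~\ref{remKZ2} gives $\Vert\vU-\vU'\Vert_{0,\star u}\leq C\Vert B(n,0)\Vert^{-\sigma}\Vert\vU-\vU'\Vert_{n,\star u}\to 0$ as $n\to-\infty$ once the subpolynomial growth is fed in, forcing $\vU-\vU'\in\Gamma^u$.
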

We omit the proof of Proposition \ref{propKZneg} since it is analogous to the proof of Proposition \ref{propKZpos} given below. 

Finally, combining the two Propositions, we have the following Corollary.

\begin{corollary}
Assume that $M$ is  KZ-hyperbolic. Then, for every $\upsilon \in  \RSO$, there exists a unique  class $\vU= \sum \chi_{\alpha}^{(n)} \zeta_{\alpha}^{(n)} \in H_1(M, \Sigma,\Rset)$ such that $\partial \vU = \upsilon$ and
$$ \varlimsup_{n \to +\infty} \frac{\log (\Vert  \chi^{(n)} \Vert / \Vert \chi^{(0)} \Vert)}{\log  \Vert B(0,n) \Vert} = \varlimsup_{n \to -\infty} \frac{\log (\Vert  \chi^{(n)} \Vert / \Vert \chi^{(0)} \Vert)}{\log  \Vert B(n,0) \Vert} = 0.$$ 
The section $\upsilon \mapsto \vU$ from $\RSO$ to $H_1(M, \Sigma,\Rset)$ is linear.
\end{corollary}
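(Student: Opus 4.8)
The plan is to deduce the corollary by combining Proposition \ref{propKZpos} and Proposition \ref{propKZneg}, the only new ingredient being that the two sections they produce (one defined modulo $\Gamma^s$, one modulo $\Gamma^u$) can be simultaneously realized by a single honest section once $H_1(M,\Rset) = \Gamma^s \oplus \Gamma^u$. First I would fix $\upsilon \in \RSO$. By Proposition \ref{propKZpos} there is a class $\vU_+ \in H_1(M,\Sigma,\Rset)$ with $\partial \vU_+ = \upsilon$ and $\varlimsup_{n\to+\infty} \log(\Vert\chi_+^{(n)}\Vert/\Vert\chi_+^{(0)}\Vert)/\log\Vert B(0,n)\Vert = 0$, unique modulo $\Gamma^s \subset H_1(M,\Rset)$; by Proposition \ref{propKZneg} there is $\vU_-$ with $\partial \vU_- = \upsilon$ and the symmetric condition as $n\to-\infty$, unique modulo $\Gamma^u \subset H_1(M,\Rset)$. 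Then $\vU_+ - \vU_-$ lies in the kernel of $\partial$ restricted to $H_1(M,\Sigma,\Rset)$, which by Proposition \ref{propboundary}(4) (or directly by the homology exact sequence in \S\ref{ssKZhyp}) is the image of $H_1(M,\Rset)$; write $\vU_+ - \vU_- = w_s + w_u$ with $w_s \in \Gamma^s$, $w_u \in \Gamma^u$, using the direct sum decomposition from Definition \ref{defKZhyp}. Set $\vU := \vU_+ - w_s = \vU_- + w_u$.

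Next I would check that this $\vU$ has both asymptotic properties. Since $\vU$ differs from $\vU_+$ by $w_s \in \Gamma^s$, and by the definition of $\Gamma^s$ the coefficient vector of any element of $\Gamma^s$ decays like $\Vert B(0,n)\Vert^{-\sigma}$ as $n\to+\infty$, the two coefficient sequences differ by a term negligible on the logarithmic scale $\log\Vert B(0,n)\Vert$; hence $\varlimsup_{n\to+\infty} \log(\Vert\chi^{(n)}\Vert/\Vert\chi^{(0)}\Vert)/\log\Vert B(0,n)\Vert = 0$ still holds for $\vU$. (A small point to address: the normalization by $\Vert\chi^{(0)}\Vert$ versus $\Vert\chi_+^{(0)}\Vert$ only changes the numerator by a bounded additive constant, which is negligible against $\log\Vert B(0,n)\Vert \to \infty$.) Symmetrically, $\vU - \vU_- = w_u \in \Gamma^u$ decays as $n\to-\infty$, giving the corresponding $\varlimsup$ at $-\infty$. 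Thus $\vU$ satisfies $\partial\vU = \upsilon$ and both limsup conditions.

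For uniqueness, suppose $\vU'$ also satisfies $\partial\vU' = \upsilon$ and both limsup conditions. Then $\vU - \vU' \in \ker(\partial|_{H_1(M,\Sigma,\Rset)}) = H_1(M,\Rset) = \Gamma^s \oplus \Gamma^u$; write $\vU - \vU' = v_s + v_u$. The $n\to+\infty$ condition for both $\vU$ and $\vU'$ forces $\vU-\vU'$ to have coefficient growth sublinear on the scale $\log\Vert B(0,n)\Vert$ as $n\to+\infty$; but on an element $v_s + v_u$ with $v_u \neq 0$, the unstable part $v_u$ grows (its coefficients grow like a positive power of $\Vert B(0,n)\Vert$, by the symplectic pairing / coherence argument used in Remark \ref{remKZ1}, or simply because $\Gamma^u$ is Oseledets-like expanding forward), contradicting $\varlimsup = 0$ unless $v_u = 0$. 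Symmetrically the $n\to-\infty$ condition forces $v_s = 0$. Hence $\vU = \vU'$. Finally, linearity of $\upsilon \mapsto \vU$ follows from the linearity of the two sections in Propositions \ref{propKZpos} and \ref{propKZneg} together with the linearity of the splitting $H_1(M,\Rset) = \Gamma^s\oplus\Gamma^u$: if $\vU_1,\vU_2$ are the classes for $\upsilon_1,\upsilon_2$, then $a\vU_1 + b\vU_2$ has boundary $a\upsilon_1 + b\upsilon_2$ and inherits both limsup conditions by the triangle inequality on logarithms, so by uniqueness it is the class attached to $a\upsilon_1+b\upsilon_2$.

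The main obstacle I expect is the quantitative claim in the uniqueness step: that a nonzero element of $\Gamma^u$ (respectively $\Gamma^s$) genuinely has coefficient vectors growing at a definite exponential rate relative to $\Vert B(0,n)\Vert$ in forward (respectively backward) time, so that it cannot be absorbed into a $\varlimsup = 0$ estimate. This is not literally the defining property of $\Gamma^u$ (which only controls backward decay), and one needs the coherence hypotheses in Definitions \ref{defKZposhyp}–\ref{defKZneghyp} plus the symplecticity of the restricted cocycle — exactly the package packaged in Remarks \ref{remKZ1} and \ref{remKZ2} via the auxiliary seminorms $\Vert\cdot\Vert_{n,\star s}$, $\Vert\cdot\Vert_{n,\star u}$. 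So in practice the cleanest route is to phrase the whole argument in terms of those seminorms rather than raw coefficient norms: the section $\vU$ is the one with $\Vert\vU\Vert_{n,\star s}$ subexponential forward and $\Vert\vU\Vert_{n,\star u}$ subexponential backward, and uniqueness is immediate because $\Vert\cdot\Vert_{n,\star s}$ vanishes exactly on $\Gamma^s$ and $\Vert\cdot\Vert_{n,\star u}$ on $\Gamma^u$, whose intersection is $0$.
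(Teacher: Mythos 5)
Your proof is correct and follows the approach the paper has in mind (the paper states the Corollary as an immediate consequence of "combining the two Propositions"). You take $\vU_+$ from Proposition~\ref{propKZpos} and $\vU_-$ from Proposition~\ref{propKZneg}, observe $\vU_+-\vU_-\in\ker\partial = H_1(M,\Rset)$, split this difference along $\Gamma^s\oplus\Gamma^u$ (which is the whole point of assuming KZ-hyperbolicity rather than just positive and negative KZ-hyperbolicity), and patch to a single representative $\vU$; uniqueness and linearity then come from $\Gamma^s\cap\Gamma^u=\{0\}$.

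One remark: the middle paragraph where you worry about needing a quantitative lower bound on the forward growth of nonzero elements of $\Gamma^u$ is a detour you do not need. Propositions~\ref{propKZpos} and~\ref{propKZneg} already state, as part of their conclusions, that two admissible classes differ by an element of $\Gamma^s$ (resp.\ $\Gamma^u$); uniqueness of $\vU$ is therefore immediate from those uniqueness clauses together with $\Gamma^s\cap\Gamma^u=\{0\}$, with no independent growth estimate required. You correctly identify this cleaner route in your final sentences, so the fix is simply to delete the speculative paragraph and lead with the direct argument.
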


\begin{proof}[Proof of Proposition \ref{propKZpos}]
Uniqueness modulo $\Gamma^s$ is clear from the first  inequality in Remark \ref{remKZ1}. 
To prove existence, we choose, for each $n \geq 0$, a class  $\varXi(n) =  \sum_\alpha \wh\chi_{\alpha}^{(n)} \zeta_\alpha^{(n)} \in H_1(M, \Sigma,\Rset)$ such that $\partial \vX(n) = \upsilon$  and
$$ \Vert \wh\chi^{(n)} \Vert \leq C = C(\upsilon).$$
We have therefore (as the coefficients of $B(n-1,n)$ are equal to $0$ or $1$)
$$ \Vert \varXi(n) -\varXi(n-1) \Vert_{n,\star s} \leq C.$$
As $\varXi(n) -\varXi(n-1) $ belongs to $H_1(M,\Rset)$, we deduce from the first inequality in remark \ref{remKZ1} that
$$ \Vert \varXi(n) -\varXi(n-1)  \Vert_{0,\star s} \leq C  \Vert B(0,n) \Vert^{-\sigma}  .$$
We can therefore choose a class $\varXi(n-1,n) = \sum x_\alpha(n) \zeta_\alpha^{(0)} \in H_1(M,\Rset)$ such that $\varXi(n) -\varXi(n-1) - \varXi(n-1,n) \in \Gamma^s$ and 
 $$ \Vert x(n) \Vert_{\infty} \leq C  \Vert B(0,n) \Vert^{-\sigma}.$$
Therefore the series $ \sum_{n>0} \varXi(n-1,n) $ is convergent. We will see that 
$$ \vU := \varXi(0) + \sum_{n>0} \varXi(n-1,n)$$
satisfies the conclusions of the proposition. Clearly we have $\partial \vU = \upsilon$.

Next we estimate $\Vert \vU \Vert_{m,\star s}$. One has
$$ \vU = \varXi(m) + \sum_{n>m} \varXi(n-1,n)\quad {\rm mod.} \; \Gamma^s.$$
From the choice of $\varXi(m)$, one has $\Vert \varXi(m) \Vert_{m,\star s} \leq C$. To estimate $\Vert \varXi(n-1,n) \Vert_{m,\star s}$, there are two cases
\begin{itemize}
\item If $\Vert B(0,n) \Vert^{\sigma/2} \leq \Vert B(0,m) \Vert$, we use the second inequality in remark \ref{remKZ1} to get, for any $\tau >0$
$$ \Vert \varXi(n-1,n) \Vert_{m,\star s} \leq C_{\tau} \Vert B(0,m) \Vert^{\tau}.$$
\item If $\Vert B(0,n) \Vert^{\sigma/2} \geq \Vert B(0,m) \Vert $, we have
\begin{eqnarray*}
\Vert \varXi(n-1,n) \Vert_{m,\star s} &\leq& \Vert B(0,m) \Vert \Vert \varXi(n-1,n) \Vert_{0,\star s} \\
& \leq & C  \Vert B(0,m) \Vert  \Vert B(0,n) \Vert^{-\sigma} \\
& \leq & C \Vert B(0,n) \Vert^{-\sigma/2} .
\end{eqnarray*}
\end{itemize}
From the first condition in Definition \ref{defKZposhyp}, we have 
$$ \sum \Vert B(0,n) \Vert^{-\sigma/2} \leq C,$$
and 
$$ \sum_{\Vert B(0,n) \Vert^{\sigma/2} \leq \Vert B(0,m) \Vert} \Vert B(0,m) \Vert^{\tau} \leq C_{\tau}  \Vert B(0,m) \Vert^{2\tau}.$$
After renaming $\tau$, we get $\Vert \vU \Vert_{m,\star s} \leq C_{\tau} \Vert B(0,m) \Vert^{\tau}.$

Thus, for each $m \geq 0$,  we can write 
$$ \vU = \sum y_{\alpha}(m) \zeta_\alpha^{(m)}   + \vU(m),$$
with $\Vert y(m)\Vert_{\infty} \leq C_{\tau} \Vert B(0,m) \Vert^{\tau}$ and $\vU(m) \in \Gamma^s$. We can also take $\vU(0) =0$. As the coefficients of $B(m-1,m)$ are equal to $0$ or $1$, we have, for $m>0$ 
$$ \vU(m) - \vU(m-1) = \sum \wt y_{\alpha}(m) \zeta_\alpha^{(m)}, \quad \Vert \wt y(m)\Vert_{\infty} \leq C_{\tau} \Vert B(0,m) \Vert^{\tau}.$$
From the coherence property in Definition \ref{defKZposhyp}, we finally obtain
$\vU(m) = \sum \wh y_\alpha (m) \zeta_\alpha^{(m)} $ with
$$  \Vert \wh y(m)\Vert_{\infty} \leq C'_{\tau} \Vert B(0,m) \Vert^{\tau} \sum_{\ell \leq m} \Vert B(0,\ell) \Vert^{\tau} \leq C''_\tau \Vert B(0,m) \Vert^{3\tau}.$$
This concludes the proof of the proposition.

\end{proof}

\subsection{Piecewise-affine paths in $H_1(M \setminus \Sigma, \Rset)$}\label{sec:piecewise_affine_paths}

We assume that $M$ has no horizontal connection. Let $n < 0$; denote by $\alpha_w$ the winner of the arrow $\gamma(n,n+1)$, by $ \alpha_\ell$ its loser. Recall that we have
$ \theta_\alpha^{(n+1)} =  \theta_\alpha^{(n)}$ for $\alpha \ne \alpha_\ell$ and 
$$  \theta_{\alpha_{\ell}}^{(n+1)} =  \theta_{\alpha_{\ell}}^{(n)}  +  \theta_{\alpha_w}^{(n)} .$$

\smallskip
To the arrow $\gamma = \gamma(n,n+1)$, we associate a substitution transformation $\varsigma_\gamma$ on the alphabet $\A$ defined as follows:
\begin{itemize}
\item If $\gamma$ is of top type, $\varsigma_\gamma$ is defined by
$$ \alpha_\ell  \mapsto \alpha_\ell \, \alpha_w , \qquad \alpha \mapsto \alpha, \quad\forall \alpha \ne \alpha_\ell.$$

\item If $\gamma$ is of bottom type, $\varsigma_\gamma$ is defined by
$$ \alpha_\ell  \mapsto \alpha_w \ \alpha_\ell , \qquad \alpha \mapsto \alpha, \quad \forall \alpha \ne \alpha_\ell.$$
\end{itemize}
For $\alpha \in \A$, we then define inductively a sequence of words $(W_\alpha (n))_{n\leq 0}$ by 
$$ W_\alpha (0) = \alpha, \quad W_\alpha(n) = \varsigma_{\gamma(n,n+1)} (W_\alpha (n+1)), \quad \forall n<0.$$

One can see that for any $n\leq 0$ and $\alpha,\beta \in \A$, the number of occurrences of $\beta$ in the word $W_{\alpha}(n)$ is equal to $\left(B(n,0)\right)_{\alpha\, \beta}$.

\smallskip
The relation to the Rauzy-Veech algorithm is given by the following Proposition. 
\begin{proposition} \label{proptheta}
Write $W_{\alpha}(n) = \beta_0 \ldots \beta_{N-1}$. Then we have, for $0 \leq j <N$
$$ (T^{(n)})^j (I_\alpha^{(0)} ) \subset I_{\beta_j} ^{(n)}.$$
\end{proposition}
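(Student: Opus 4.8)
The statement asserts a combinatorial dictionary: the word $W_\alpha(n)$, built by iterating the substitutions $\varsigma_{\gamma(n,n+1)}$, records the itinerary of the interval $I_\alpha^{(0)}$ under the map $T^{(n)}$ with respect to the partition $\{I_\beta^{(n)}\}_{\beta\in\A}$. The natural approach is \emph{downward induction on $n\leq 0$}, the base case $n=0$ being trivial since $W_\alpha(0)=\alpha$ and $(T^{(0)})^0(I_\alpha^{(0)})=I_\alpha^{(0)}=I_\alpha^{(0)}$, consistently with the convention $I_\alpha^{(0)}=I_\alpha^t(T^{(0)})$. (One should note in passing that $W_\alpha(n)$ has length $N=\sum_\beta (B(n,0))_{\alpha\beta}=B_\alpha(n,0)$, which is exactly the return time of $I_\alpha^{(n)}$-sized pieces, so the index range $0\le j<N$ in the statement is the correct one; this is the remark made just before the Proposition and follows from the occurrence count of letters in $W_\alpha(n)$.)

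\textbf{The inductive step.} Assume the claim holds at level $n+1$ (with $n<0$), i.e.\ writing $W_\alpha(n+1)=\beta_0\cdots\beta_{M-1}$ one has $(T^{(n+1)})^j(I_\alpha^{(0)})\subset I_{\beta_j}^{(n+1)}$ for $0\le j<M$. Now $T^{(n)}$ is the i.e.m.\ \emph{before} the elementary Rauzy--Veech step producing $T^{(n+1)}$; equivalently $T^{(n+1)}$ is the first return map of $T^{(n)}$ to $I^{(n+1)}\subset I^{(n)}$, and the return time is $1$ or $2$. The key local fact is: for a point $x$ in the level-$(n+1)$ subinterval $I_\gamma^{(n+1)}$, the $T^{(n)}$-orbit segment realizing one step of $T^{(n+1)}$ is either $x\mapsto T^{(n)}x$ (return time $1$) when $\gamma$ is not the loser, or $x\mapsto T^{(n)}x\mapsto (T^{(n)})^2 x$ (return time $2$) passing through $I_{\alpha_w}^{(n)}$ when $\gamma=\alpha_\ell$ is the loser of the arrow $\gamma(n,n+1)$. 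One must also record in which $I_\beta^{(n)}$ the point $x$ itself lies: for $\gamma\ne\alpha_\ell$ one has $I_\gamma^{(n+1)}\subset I_\gamma^{(n)}$, while $I_{\alpha_\ell}^{(n+1)}\subset I_{\alpha_\ell}^{(n)}$ or $\subset I_{\alpha_w}^{(n)}$ according to the top/bottom type of the arrow — and this is precisely the first letter produced by the substitution $\varsigma_\gamma$ applied to $\gamma$ ($\alpha_\ell\mapsto\alpha_\ell\alpha_w$ in the top case, $\alpha_\ell\mapsto\alpha_w\alpha_\ell$ in the bottom case). Substituting each letter $\beta_j$ of $W_\alpha(n+1)$ by $\varsigma_\gamma(\beta_j)$ and concatenating thus exactly matches: expand each one-step $T^{(n+1)}$-transition into its one- or two-step $T^{(n)}$-realization, and read off the resulting itinerary of $I_\alpha^{(0)}$ with respect to the level-$n$ partition. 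This gives $(T^{(n)})^{j'}(I_\alpha^{(0)})\subset I_{\gamma_{j'}}^{(n)}$ where $\gamma_0\cdots\gamma_{N-1}=\varsigma_\gamma(W_\alpha(n+1))=W_\alpha(n)$, which is the claim at level $n$.

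\textbf{Main obstacle.} The routine part is the induction bookkeeping; the genuinely delicate point is getting the orientation/order conventions right in the two-step case — specifically, verifying that when the return time is $2$ the intermediate point lands in $I_{\alpha_w}^{(n)}$ and that the \emph{order} of the two produced letters ($\alpha_\ell\alpha_w$ versus $\alpha_w\alpha_\ell$) correctly tracks whether $x\in I^{(n+1)}_{\alpha_\ell}$ precedes or follows its image, i.e.\ matching the top/bottom-type definition of $\varsigma_\gamma$ with the geometry of the elementary Rauzy--Veech step (where the winner $\alpha_t$ or $\alpha_b$, having $\pi_t(\alpha_t)=d$ resp.\ $\pi_b(\alpha_b)=d$, is ``cut'' by the loser). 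This is best handled by fixing the concrete description of one elementary step from \S\ref{secRV} and tracing through both a top-type and a bottom-type arrow once, carefully; once these two cases are checked the general statement follows by concatenation and induction. It is also worth a sentence to confirm the degenerate case where some $I_\beta^{(n+1)}$ is hit with multiplicity — there is none, since the $\beta_j$ range over a genuine coding of disjoint subintervals — and to observe that the inductive hypothesis is applied to the \emph{same} fixed starting interval $I_\alpha^{(0)}$ at level $0$, not to $I_\alpha^{(n+1)}$, which is consistent because $W_\alpha(n+1)$ was itself defined by the analogous downward recursion from $W_\alpha(0)=\alpha$.
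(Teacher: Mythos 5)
Your downward induction on $n$ via the substitutions $\varsigma_\gamma$ is exactly the natural argument the paper treats as self-evident (``clear from the definition of the Rauzy--Veech algorithm''), and the mechanism is correct: each one-step transition of $T^{(n+1)}$ expands to the one- or two-step realization under $T^{(n)}$, and concatenating these gives $W_\alpha(n)=\varsigma_{\gamma(n,n+1)}(W_\alpha(n+1))$. One factual imprecision to repair when you do the trace you prescribe: the intermediate point of the two-step orbit segment does \emph{not} always lie in $I_{\alpha_w}^{(n)}$. For a top-type arrow $x\in I_{\alpha_\ell}^{(n+1)}$ sits in $I_{\alpha_\ell}^{(n)}$ and $T^{(n)}x$ lands in the cut-off piece, which lies inside the last top interval $I_{\alpha_t}^{t}=I_{\alpha_w}^{(n)}$; for a bottom-type arrow, however, $x$ itself sits in $I_{\alpha_w}^{(n)}$ (the right portion of $I_{\alpha_b}^{t}$), and $T^{(n)}x$ lands in the cut-off piece $(u_{d-1}^t,u_d)=I_{\alpha_t}^{t}=I_{\alpha_\ell}^{(n)}$. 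This asymmetry is precisely what the two substitution rules $\alpha_\ell\mapsto\alpha_\ell\alpha_w$ and $\alpha_\ell\mapsto\alpha_w\alpha_\ell$ encode, so once that sentence is corrected the argument goes through as you describe.
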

The proof is clear from the definition of the Rauzy-Veech algorithm.

\smallskip
{\Blue 
\begin{remark}\label{rk:leaveinterpretation}
Equivalently, for any $n<0$ and $\alpha,\beta \in \A$, we also have the following geometric interpretation of the word $W_{\alpha}(n) = \beta_0 \ldots \beta_{N-1}$. The rectangle $R^{(0)}_\alpha$ can be seen as a union of rectangles (of the same width, but shorter height) each fully contained in a rectangle $R^{(b)}_\beta$ for some $\beta \in A$ (refer to the bottom part of Figure~\ref{fig:pastsums}). These rectangles, in the order from the bottom to the top of $R^{(0)}_\alpha$, are contained in $R^{(n)}_{\beta_\ell}$ for $\ell=0,1,\dots, N-1$. In other words, if one considers a leaf of the vertical flow from a point $x \in I^{(0)}_\alpha$ of length $q_\alpha^{(0)}$, it crosses $I^{(n)}$ exactly $N$ times, and the intersections belong, in order, to $I^{(n)}_{\beta_0}, I^{(n)}_{\beta_1}, \dots, I^{(n)}_{\beta_{N-1}}$. 
\end{remark}}

In other words (cf.~Remark~\ref{rk:leaveinterpretation}), if one considers a leaf of the horizontal flow starting from a point $x \in L^{(0)}_\alpha$ and crossing all the rectangle $R^{(0)}_\alpha$ horizontally (see for example the top part of Figure \ref{fig:pastsums}), then its intersections with $L^{(n)}$ will be contained, in order, in the intervals $L^{(n)}_{\beta_\ell}$, for $\ell=0,1, \dots, N-1$.

\medskip
To the word $W_{\alpha}(n) = \beta_0 \ldots \beta_{N-1}$, we associate a broken line $\mathcal W_{\alpha}(n)$ in $H_1(M \setminus \Sigma, \Rset)$ as follows:
\begin{itemize}
\item for $0 \leq j \leq N$, let $\mathcal W_{\alpha,j}(n) := \sum_{0 \leq i <j} \theta_{\beta_i}^{(n)} \in H_1(M \setminus \Sigma,\Zset)$;
\item set $\mathcal W_\alpha (n)$ {\Green to be} the concatenation of the segments from $\mathcal W_{\alpha,j}(n) $ to $\mathcal W_{\alpha,j+1}(n)$ for $0 \leq j <N$. 
\end{itemize}
The points $\mathcal W_{\alpha,j}(n)$, for $0 \leq j \leq N$, are called the {\it vertices} of $\mathcal W_\alpha(n)$. 

\smallskip
The following properties hold.
\begin{itemize}
\item For $n\leq 0$, any vertex of $\mathcal W_\alpha(n)$ is also a vertex of  $\mathcal W_\alpha(n-1)$.
\item The first vertex $\mathcal W_{\alpha,0}(n)$ is equal to $0$.
\item The last vertex $\mathcal W_{\alpha,N}(n)$ is equal to $\theta_\alpha^{(0)}$. 
\end{itemize}

Let $p_v: H_1(M \setminus \Sigma, \Rset) \to \Rset$ be the linear form determined by integration of $\Im \omega$. The restriction of $p_v$ to $\mathcal W_\alpha(n)$ is a piecewise-affine homeomorphism  onto the interval $[0,q_\alpha^{(0)}]$. The image of the segment from $\mathcal W_{\alpha,j}(n) $ to $\mathcal W_{\alpha,j+1}(n)$ is a subinterval of length $q_{\beta_j}^{(n)}$.

\subsection{The functions $\Omega_\alpha^{(n)} (\vU)$} \label{ssOmega}

In this subsection, we assume that $M$ is negatively KZ-hyperbolic of \emph{dual Roth type}. Let $\upsilon \in \RSO$, and let $\vU \in H_1(M , \Sigma, \Rset)$ 
%
be a class satisfying the conclusion of Proposition \ref{propKZneg}. Let $p_{\vU}: H_1(M \setminus \Sigma, \Rset) \to \Rset^2$ be the linear map defined by
$$ p_{\vU} (\theta) = (p_v(\theta), i( \vU, \theta)).$$

From the above considerations (cf.~$\S$~\ref{sec:piecewise_affine_paths}), the image $p_{\vU} (\mathcal W_\alpha (n))$ is the graph of a piecewise-affine function $\Omega_\alpha^{(n)} (\vU) : [0,q_\alpha^{(0)}] \to \Rset$. This construction is analogous  in the homological context to the construction which was done in \S~\ref{sec:functions} (see the functions $\Omega^{(n)}_\alpha(\pi,\tau,\chi)$ associated in \S~\ref{sec:functions} to  a sequence $\chi=(\chi^{(n)})_n$ of central functions). Following the same arguments {\Green as in} \S~\ref{sec:comparingfunctions} and  \S~\ref{sec:convergence}, one hence has the following result (analogous in this context to Theorem \ref{thm:convergence}): 

\begin{proposition}\label{thm:homologicalconvergence}
Assume that $(\pi, \tau)$ is of dual Roth type. For any $\alpha \in \A$, $\Omega_\alpha^{(n)} (\vU) : [0,q_\alpha^{(0)}] \to \Rset$ converge as $n\rightarrow - \infty $ as a distribution on  H\"older functions, i.e.\ for any function $\psi$ on the interval $[0,q_{\alpha}(\pi,\tau)]$ which is mean zero  and  H\"older continuous   for some exponent $\eta \in (0,1)$,  the sequence 
$$ \int_0^{q_{\alpha}(\pi,\tau)} \psi(x)\ \Omega_{\alpha}^{(n)}(\vU) (x) dx$$
is convergent as $n \rightarrow - \infty$. 
\end{proposition}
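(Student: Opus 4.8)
The plan is to reduce Proposition~\ref{thm:homologicalconvergence} to Theorem~\ref{thm:convergence} by identifying the homological piecewise-affine functions $\Omega_\alpha^{(n)}(\vU)$ with the functions $\Omega_\alpha^{(n)}(\pi,\tau,\chi)$ built in \S~\ref{sec:functions} from a suitable sequence of central functions $\chi=(\chi^{(n)})_{n\le 0}$. Concretely, first I would fix $\upsilon\in\RSO$ and a class $\vU\in H_1(M,\Sigma,\Rset)$ as in Proposition~\ref{propKZneg}, and write $\vU=\sum_\alpha \chi^{(n)}_\alpha \zeta_\alpha^{(n)}$, so that $\chi^{(n)}=B(n,0)^{-1}\chi^{(0)}$ by the change-of-bases formula in \S~\ref{sec:piecewise_affine_paths}. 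The key algebraic identity to record is $\chi^{(n)}_\alpha = i(\vU,\theta^{(n)}_\alpha)$: combined with Proposition~\ref{proptheta} (describing $W_\alpha(n)=\beta_0\cdots\beta_{N-1}$ as the itinerary of $I_\alpha^{(0)}$ under $T^{(n)}$), this shows that the increments of the piecewise-affine function obtained by projecting $\mathcal W_\alpha(n)$ under $p_{\vU}$ are exactly $i(\vU,\theta^{(n)}_{\beta_j})=\chi^{(n)}_{\beta_j}=\chi^{(n)}((T^{(n)})^j(I_\alpha^{(0)}))$, which is precisely requirement (2) in the definition of $\Omega^{(n)}_\alpha(\pi,\tau,\chi)$ in \S~\ref{sec:functions}; the horizontal coordinate $p_v$ produces the same breakpoints $Sq(\ell)$ since the $j$-th segment has $p_v$-length $q^{(n)}_{\beta_j}$. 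One must also check that $\chi^{(0)}=\sum_\alpha\chi^{(0)}_\alpha\zeta^{(0)}_\alpha$ lies in (or projects to) the central space $\Gamma_c^{(0)}$: since $\vU$ has $\partial\vU=\upsilon\neq 0$ it is not in $\mathrm{Im}\,\Omega_{\pi^{(0)}}$, but by Proposition~\ref{propKZneg} its negative-time growth is subpolynomial, so its projection to $\Gamma_c^{(n)}$ along $\Gamma^u$ (equivalently modulo $\Gamma^u$) behaves like a central sequence, which is exactly what the construction of \S~\ref{sec:functions} uses (only the values $\chi^{(n)}((T^{(n)})^\ell(I_\alpha^{(0)}))$ enter, and a constant is subtracted to normalize the mean to zero, so the $\Gamma^u$-ambiguity is harmless).

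Second, once this identification is in place, I would invoke Theorem~\ref{thm:convergence} verbatim: under the dual Roth type hypothesis on $(\pi,\tau)$, for any mean-zero $\eta$-Hölder $\psi$ on $[0,q_\alpha(\pi,\tau)]$ the sequence $\int_0^{q_\alpha}\psi(x)\,\Omega^{(n)}_\alpha(x)\,dx$ is Cauchy, with the explicit rate $|\int \psi(\Omega^{(n')}_\alpha-\Omega^{(n)}_\alpha)|\le C(\eta)\|\psi\|_{C^\eta}\|B(n,0)\|^{-\delta/4}$, and $\|B(n,0)\|\to\infty$ (exponentially in Zorich time) by $\infty$-completeness of the backward rotation number. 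Since the comparison lemma (Lemma~\ref{comparison}) and the convergence argument in \S~\ref{sec:convergence} only used (i) the refinement structure of the $\Omega^{(n)}_\alpha$, (ii) the growth bound $\|\chi^{(n')}\|_\infty\le C\|B(n',0)\|^{\delta/8}$ (here supplied by Proposition~\ref{propKZneg}), and (iii) the estimate on dual special Birkhoff sums of Hölder functions (Proposition~\ref{estimatedualBS}), all three of which are available in the present homological setting, the same chain of inequalities goes through with no change.

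The main obstacle — really the only genuine content beyond bookkeeping — is making the dictionary between the two constructions fully rigorous at the level of the constant (mean-zero) normalization and of the $\Gamma^u$-ambiguity in $\vU$. I would handle this by noting that replacing $\vU$ by $\vU+\vW$ with $\vW\in\Gamma^u$ changes each increment $\chi^{(n)}_{\beta_j}$ by $i(\vW,\theta^{(n)}_{\beta_j})$, whose partial sums are dual special Birkhoff sums of a sequence decaying like $\|B(n,0)\|^{-\sigma}$; feeding this into Proposition~\ref{estimatedualBS} shows the resulting change in $\Omega^{(n)}_\alpha$ has $C^0$-norm $\to 0$, hence contributes nothing in the distributional limit. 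With that checked, the argument of \S~\ref{sec:comparingfunctions}–\S~\ref{sec:convergence} applies mutatis mutandis and yields the stated convergence of $\int_0^{q_\alpha(\pi,\tau)}\psi(x)\,\Omega^{(n)}_\alpha(\vU)(x)\,dx$ as $n\to-\infty$, with the same quantitative rate as in Theorem~\ref{thm:convergence}.
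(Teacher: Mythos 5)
Your overall strategy -- identify $\Omega_\alpha^{(n)}(\vU)$ with the piecewise-affine graphs of \S~\ref{sec:functions} built from the coefficient sequence $\chi^{(n)}_\alpha = i(\vU,\theta^{(n)}_\alpha)$, then run the proof of Theorem~\ref{thm:convergence} verbatim -- is exactly the one the paper takes (the paper gives no independent argument, it simply points to \S~\ref{sec:comparingfunctions}--\S~\ref{sec:convergence}). The dictionary in your first paragraph is correct: Proposition~\ref{proptheta} makes the itineraries match, $p_v(\theta^{(n)}_{\beta_j}) = q^{(n)}_{\beta_j}$ gives the same breakpoints $Sq(\ell)$, and $i(\vU,\theta^{(n)}_{\beta_j})=\chi^{(n)}_{\beta_j}$ gives the same increments, so the two constructions produce the same function up to a constant that is killed by integrating against a mean-zero $\psi$.

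Two remarks, one minor and one a genuine error. First, you need not project anything to $\Gamma_c$ nor approximate: an inspection of the proof of Theorem~\ref{thm:convergence} shows that the only property of the sequence $\chi^{(n)}$ actually used is the subpolynomial backward growth $\|\chi^{(n')}\|_\infty \leq C_\tau\|B(n',0)\|^{\tau}$ for all $\tau>0$, which here is supplied verbatim by Proposition~\ref{propKZneg}. Centrality in \S~\ref{sec:functions} was merely the way this growth was achieved in the Oseledets-generic setting, not a structural hypothesis of the argument. So for a fixed representative $\vU$ the identification is exact, and the worry in your third paragraph is unnecessary.

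Second -- and this is a real error -- your claim that replacing $\vU$ by $\vU+\vW$ with $\vW\in\Gamma^u$ changes $\Omega_\alpha^{(n)}$ by something whose $C^0$-norm tends to $0$ is false. The increments $i(\vW,\theta^{(n)}_{\beta_j})$ are indeed each $O(\|B(n,0)\|^{-\sigma})$, but there are $O(\|B(n,0)\|)$ of them, and these partial sums are not dual special Birkhoff sums of a H\"older function, so Proposition~\ref{estimatedualBS} does not apply to them. The correct statement is Proposition~\ref{propmodgammau}: using the space decomposition of \cite{MY}, the partial sums at scale $n_\ell$ contribute $O(\|B(n_\ell,0)\|^{-\sigma})$ each, and the geometric sum over scales converges -- but to a possibly nonzero H\"older function. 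Thus the limiting distribution genuinely depends on the representative $\vU$ in its coset modulo $\Gamma^u$. Fortunately this is irrelevant for Proposition~\ref{thm:homologicalconvergence}, which asserts convergence for each fixed $\vU$; the dependence on the choice of $\vU$ is addressed separately in Proposition~\ref{propmodgammau} and should not (and cannot) be argued away here.
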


Since in this section we only made assumptions on the backward rotation number $(\pi, \tau)$, the homology class 
$\vU \in H_1(M, \Sigma, \Rset)$ (and hence the limiting distribution) is only well-determined by its image under the boundary operator modulo the unstable subspace $\Gamma^u$ (cf.~Proposition \ref{propKZneg}). However, we also have the following result. 


\begin{proposition}\label{propmodgammau}
Let $\vU'$ be another class in $H_1(M,\Sigma,\Rset)$ such that $\vU' - \vU $ belongs to $\Gamma^u$. 
For any $n \leq 0$, any $\alpha \in \A$, the difference $\Omega_\alpha^{(n)} (\vU') - \Omega_\alpha^{(n)} (\vU)$ is H\"older continuous, with a H\"older exponent and a H\"older constant independent of $n$ and $\alpha$. Moreover, the sequence $\Omega_\alpha^{(n)} (\vU') - \Omega_\alpha^{(n)} (\vU)$ converges uniformly when $n$ goes to $-\infty$ to a H\"older continuous function on $ [0,q_\alpha^{(0)}] $.
\end{proposition}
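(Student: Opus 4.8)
The plan is to exploit the fact that $\vU' - \vU \in \Gamma^u$ is a class in the \emph{unstable} subspace of $H_1(M,\Rset)$, so that by the definition of $\Gamma^u$ (and the coherence property in Definition \ref{defKZneghyp}) the backward iterates of its coordinate vector decay: writing $\vU' - \vU = \sum_\alpha \rho_\alpha^{(n)} \zeta_\alpha^{(n)}$, we have $\Vert \rho^{(n)} \Vert \leq C \Vert B(n,0) \Vert^{-\sigma} \Vert \rho^{(0)} \Vert$ for $n \leq 0$. Geometrically, $\Omega_\alpha^{(n)}(\vU') - \Omega_\alpha^{(n)}(\vU)$ is the piecewise-affine function on $[0, q_\alpha^{(0)}]$ whose increments along the broken line $\mathcal W_\alpha(n)$ are governed by $i(\vU' - \vU, \theta_{\beta_j}^{(n)}) = \rho_{\beta_j}^{(n)}$; that is, it is exactly the function built from the sequence $(\rho^{(n)})_{n \leq 0}$ in the same way that $\Omega_\alpha^{(n)}(\pi,\tau,\chi)$ is built from $(\chi^{(n)})_n$ in \S~\ref{sec:functions}, except that here the ``central'' sequence is replaced by a \emph{stably decaying} one. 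So the first step is to identify $\Omega_\alpha^{(n)}(\vU') - \Omega_\alpha^{(n)}(\vU)$ with $\Omega_\alpha^{(n)}(\pi,\tau,\rho^{(0)})$ (up to the normalization constant pinning the mean), using linearity of $p_{\vU}$ in $\vU$.

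Next I would establish the uniform H\"older bound. The key estimate is the analogue of Lemma \ref{comparison}: for $n' \leq n \leq 0$, the restriction of $\Omega_\alpha^{(n')}(\vU') - \Omega_\alpha^{(n')}(\vU)$ to the subinterval $[Sq(\ell), Sq(\ell+1)]$ of length $q_{\beta(\alpha,\ell)}^{(n)} = \Theta_1^{(n)} q_{\beta(\alpha,\ell)}(\pi^{(n)},\tau^{(n)})$ is, up to an additive constant, a rescaled copy of $\Omega_{\beta(\alpha,\ell)}^{(n'-n)}(\pi^{(n)},\tau^{(n)},\rho^{(n)})$. The $C^0$-norm of the latter is bounded by $C \Vert B(n',n) \Vert \, \Vert \rho^{(n)} \Vert_\infty$, and since $\Vert \rho^{(n)} \Vert_\infty \leq C \Vert B(n,0) \Vert^{-\sigma} \Vert \rho^{(0)} \Vert_\infty$ while (by dual Roth type condition (a), via the estimate $\Vert B(n',n) \Vert \Vert B(n,0) \Vert \leq C_\va \Vert B(n',0) \Vert^{1+\va}$ already used in the proof of Proposition \ref{estimatedualBS}) one controls $\Vert B(n',n) \Vert$, these oscillations decay polynomially in $\Vert B(n,0) \Vert$ as $n \to -\infty$. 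Combined with the length estimate $\Vert \Theta_1^{(n)} \Vert \leq C \Vert B(n,0)\Vert^{-1+\va}$ and Proposition \ref{lengthsL} controlling $|L_\beta^{(n)}|$ from below, this gives that the oscillation of the difference function over an interval of length $h$ is bounded by $C h^{\bar\eta}$ for a fixed exponent $\bar\eta>0$: when $h$ is comparable to a scale-$n$ interval one uses the decay just described; for smaller $h$ one descends to a finer level. This is the standard telescoping/interpolation argument used to prove H\"older regularity of limit shapes in \cite{MMY3}, and it yields a H\"older bound uniform in $n$ and $\alpha$.

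Finally, for uniform convergence as $n \to -\infty$: writing $\Omega_\alpha^{(n')}(\vU') - \Omega_\alpha^{(n')}(\vU)$ minus $\Omega_\alpha^{(n)}(\vU') - \Omega_\alpha^{(n)}(\vU)$ as a sum, over the intervals $[Sq(\ell), Sq(\ell+1)]$, of rescaled copies of $\Omega_{\beta(\alpha,\ell)}^{(n'-n)}(\pi^{(n)},\tau^{(n)},\rho^{(n)})$ with their mean subtracted, one bounds the $C^0$-norm of this difference by $\max_\beta \Vert \Omega_\beta^{(n'-n)}(\pi^{(n)},\tau^{(n)},\rho^{(n)}) \Vert_{C^0} \leq C \Vert B(n',n) \Vert \Vert \rho^{(n)} \Vert_\infty \leq C_\va \Vert B(n,0) \Vert^{-\sigma + \va}$ for $n' \leq n \leq 0$, using again $\Vert \rho^{(n)} \Vert_\infty \leq C\Vert B(n,0) \Vert^{-\sigma} \Vert \rho^{(0)} \Vert_\infty$ and controlling $\Vert B(n',n)\Vert$ against $\Vert B(n',0) \Vert^{\va}$ and then against $\Vert B(n,0) \Vert$ via dual Roth type condition (a) on the appropriate accelerated times. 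Since $\Vert B(n,0) \Vert$ grows (exponentially in $|Z(n)|$, hence to infinity) and $-\sigma + \va < 0$ for $\va$ small, the sequence is Cauchy in $C^0$, hence converges uniformly; the limit is H\"older by the uniform bound from the previous paragraph. The main obstacle I expect is bookkeeping the interaction between the decay coming from $\Gamma^u$ (which is what makes this case \emph{easier} than Theorem \ref{thm:convergence} — there we had no decay and only distributional convergence) and the growth factors $\Vert B(n',n)\Vert$ in the rescaling, i.e. checking that the exponents genuinely combine to something negative; this requires keeping careful track of which accelerated subsequence of induction times one is using and applying dual Roth type condition (a) in the form of Remark \ref{dualremRoth0}.
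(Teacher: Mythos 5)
Your proposal is correct and follows essentially the same route as the paper's proof. The paper is very terse: it records the two scale estimates (upper and lower bounds on $q_\alpha^{(n)}$ in terms of $\Vert B(n,0)\Vert$, the lower one using dual Roth type; note the paper's displayed inequality for $\min_\alpha q_\alpha^{(n)}$ has a sign typo and should read $\geq$, which is what Proposition~\ref{lengthsL} gives and what you correctly use), bounds the increment along an edge of $\mathcal W_\alpha(n)$ by $\vert i(\vU'-\vU,\theta_{\beta_j}^{(n)})\vert \leq C\Vert B(n,0)\Vert^{-\sigma}$ via $\vU'-\vU\in\Gamma^u$, and then defers entirely to the space decomposition of Section~3.8 of \cite{MY} for the telescoping/interpolation step. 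You reconstruct exactly that last step (identify the increment estimate, sum over scales using dual Roth condition (a) to control the number of level-$m$ intervals and combine with the length bounds to extract a positive H\"older exponent), so your proof fills in the argument the paper cites rather than replacing it. Your preliminary identification of $\Omega_\alpha^{(n)}(\vU')-\Omega_\alpha^{(n)}(\vU)$ with the affine graph built from the coordinate sequence $\rho^{(n)}$ of $\vU'-\vU$ is a harmless reorganization (the construction of $\S$~\ref{sec:functions} does not actually require $\chi$ to lie in the central space), and your uniform-convergence estimate via the comparison in Lemma~\ref{comparison} is the natural Cauchy argument that the paper leaves implicit.
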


\begin{proof}
We have en estimate
$$  \max_\alpha q_\alpha^{(n)} \leq C^{-1} \Vert B(n,0) \Vert^{-1}, \quad \forall n \leq 0.$$
As $(\pi, \tau)$ is of dual Roth type, we also have, for all $\tau >0$
$$  \min _\alpha q_\alpha^{(n)} \leq C_\tau^{-1} \Vert B(n,0) \Vert^{-1-\tau}.$$
 On the other hand, for two consecutive vertices  $\mathcal W_{\alpha,j}(n) $, $\mathcal W_{\alpha,j+1}(n)$ of $\mathcal W_\alpha (n)$, one has
 \begin{eqnarray*}
  \vert i(\vU' - \vU, \mathcal W_{\alpha,j+1}(n)) - i(\vU' - \vU, \mathcal W_{\alpha,j}(n))\vert & =&  i(\vU' - \vU, \theta_{\beta_j}^{(n)})\\
  &\leq & C  \Vert B(n,0) \Vert^{-\sigma}.
  \end{eqnarray*}
 
Using the space decomposition of \cite{MY} (see Section 3.8 in \cite{MY}),
 one gets the conclusions of the theorem from these estimates.
\end{proof}
Since by Proposition~\ref{thm:homologicalconvergence}, for any $\alpha \in \A$ the sequence $\Omega_\alpha^{(n)} (\vU)$ converge to a distribution on Holder continous functions, 
Proposition~\ref{propmodgammau}  shows that the singular nature of the limit distribution 
 only depends the projection of $\vU \in H_1(M,\Sigma,\Rset)$ to the stable space $\Gamma^s$.

%
%
%

{\Blue 

\subsection{Piecewise-affine paths in $H_1(M ,  \Sigma, \Rset)$}\label{sec:piecewise_affine_paths_dual}
In this section, we show  that exploiting the duality between horizontal and vertical flows and between $H_1(M \backslash  \Sigma, \Rset)$ and $H_1(M ,  \Sigma, \Rset)$, we present  a  construction and results dual to the one in the previous two sections, namely \S~\ref{sec:piecewise_affine_paths} and \S~\ref{ssOmega}, which in particular allow to construct distributional limit shapes for relative homology classes with subpolynomial deviations for the \emph{horizontal} flow. This dual result requires the KZ \emph{negative} hyperbolicity assumption, together with the \emph{direct} {Roth type} condition (and is based on the study of special Birkohff sums instead than dual ones).  

\medskip
Let us first construct piecewise-affine paths in $H_1(M ,  \Sigma, \Rset)$, in a dual way to the definition of piecewise-affine paths in $H_1(M \backslash \Sigma, \Rset)$ in \S~\ref{sec:piecewise_affine_paths}. 
\smallskip

We assume that $M$ has no \emph{vertical} connection. Let $n > 0$. To the arrow $\gamma = \gamma(n,n+1)$, we associate a substitution transformation $\varsigma_\gamma$ on the alphabet $\A$ defined as follows. 
Denote by $\alpha_w$ the winner of arrow $\gamma$, by $ \alpha_\ell$ its loser. 
\begin{itemize}
\item If $\gamma$ is of top type, $\varsigma_\gamma$ is defined by
$$ \alpha_w  \mapsto \alpha_w \, \alpha_\ell , \qquad \alpha \mapsto \alpha, \quad\forall \alpha \ne \alpha_\ell.$$

\item If $\gamma$ is of bottom type, $\varsigma_\gamma$ is defined by
$$ \alpha_w  \mapsto \alpha_w \, \alpha_\ell , \qquad , \alpha_\ell \mapsto \alpha_w \qquad \alpha \mapsto \alpha, \quad \forall \alpha \notin \{ \alpha_\ell, \alpha_w\}.$$
\end{itemize}
For $\alpha \in \A$, we then define inductively a sequence of words $(W^*_\alpha (n))_{n\geq 0}$ by 
$$ W^*_\alpha (0) = \alpha, \quad W_\alpha^*(n+1) = \varsigma_{\gamma(n,n+1)} (W^*_\alpha (n)), \quad \forall n \geq 0.$$

One can see that for any $n\geq 0$ and $\alpha,\beta \in \A$, the number of occurrences of $\beta$ in the word $W^*_{\alpha}(n)$ is equal to $\left(B(0,n)\right)_{\alpha\, \beta}$.

\smallskip
The relation to the Rauzy-Veech algorithm is the following. For any $n\geq 0$ and $\alpha \in \A$ the rectangle $R^{(0)}_\alpha$ of base $I^{(0)}_\alpha$ can be seen as union of rectangles (of same height but shorter width) fully contained in the rectangles $R^{(0)}_\beta$, $\beta \in A$ (refer to the right hand side of Figure \ref{fig:futuresums}). In particular, the interval $I^{(0)}_\alpha$ can be seen as a union of translates of the intervals $I^{(n)}_\beta$. If we write $$W^*_{\alpha}(n) = \beta_0 \ldots \beta_{N-1},$$ 
then we have that $I^{(0)}_\alpha$ is union, from left to right, of translates of the intervals $I^{(n)}_{\beta_0}$,  $I^{(n)}_{\beta_1}, \dots , I^{(n)}_{\beta_{N-1}}$, or, more precisely
$$
I_{\alpha}^{(0)}= 
 =  \bigcup_{0 \leq \ell \leq N-1} j_{\alpha,\ell}^{(0,n)} (I_{\beta_\ell}^{(n)}), \qquad \text{where}\quad j_{\alpha,\ell}^{(0,n)}(x) = x+ \sum_{0\leq i< \ell} \lambda^{(n)}_{\beta_i}.
$$
In other words (cf.~Remark~\ref{rk:leaveinterpretation}), if one considers a leaf of the horizontal flow starting from a point $x \in L^{(0)}_\alpha$ and crossing all the rectangle $R^{(0)}_\alpha$ horizontally (see for example the top part of Figure \ref{fig:pastsums}), then its intersections with $L^{(n)}$ will be contained, in order, in the intervals $L^{(n)}_{\beta_\ell}$, for $\ell=0,1, \dots, N-1$.

\medskip
To the word $W^*_{\alpha}(n) = \beta_0 \ldots \beta_{N-1}$, we associate a broken line $\mathcal W^*_{\alpha}(n)$ in $H_1(M, \Sigma, \Rset)$ as follows:
\begin{itemize}
\item for $0 \leq j \leq N$, let $\mathcal W^*_{\alpha,j}(n) := \sum_{0 \leq i <j} \zeta_{\beta_i}^{(n)} \in H_1(M , \Sigma,\Zset)$;
\item set $\mathcal W^*_\alpha (n)$ {\Green to be} the concatenation of the segments from $\mathcal W^*_{\alpha,j}(n) $ to $\mathcal W^*_{\alpha,j+1}(n)$ for $0 \leq j <N$. 
\end{itemize}
The points $\mathcal W^*_{\alpha,j}(n)$, for $0 \leq j \leq N$, are called the {\it vertices} of $\mathcal W^*_\alpha(n)$. 

\smallskip
The following properties hold.
\begin{itemize}
\item For $n\geq 1$, any vertex of $\mathcal W^*_\alpha(n)$ is also a vertex of  $\mathcal W^*_\alpha(n-1)$.
\item The first vertex $\mathcal W^*_{\alpha,0}(n)$ is equal to $0$.
\item The last vertex $\mathcal W^*_{\alpha,N}(n)$ is equal to $\zeta_\alpha^{(0)}$. 
\end{itemize}

Let $p_h: H_1(M , \Sigma, \Rset) \to \Rset$ be the linear form determined by integration of $\Re \omega$. The restriction of $p_h$ to $\mathcal W^*_\alpha(n)$ is a piecewise-affine homeomorphism  onto the interval $[0,\lambda_\alpha^{(0)}]$. The image of the segment from $\mathcal W^*_{\alpha,j}(n) $ to $\mathcal W^*_{\alpha,j+1}(n)$ is a subinterval of length $\lambda_{\beta_j}^{(n)}$.

\subsection{The functions ${\Omega^*}_\alpha^{(n)} (\vU)$} \label{ssOmega}

In this subsection, we assume that $M$ is \emph{positively} KZ-hyperbolic of (\emph{direct}) \emph{Roth type}. Let $\upsilon \in \RSO$.

Using the indentification between  $ H_1(M \backslash \Sigma, \Rset)$ and  $ H_1(M \backslash \Sigma, \Rset)$, similarly to Proposition \ref{propKZposg}, one can show the existence of classes $\vU \in H_1(M \backslash \Sigma, \Rset)$ 
%
be a class satisfying the conclusion of Proposition \ref{propKZposg}.  Let $\vU \in H_1(M \backslash \Sigma, \Rset)$ be any such class.

Let $p_{\vU}: H_1(M , \Sigma, \Rset) \to \Rset^2$ be the linear map defined by
$$ p_{\vU} (\zeta) = (p_h(\zeta), i( \vU, \zeta)).$$

From the above considerations (cf.~$\S$~\ref{sec:piecewise_affine_paths_dual}), the image $p_{\vU} (\mathcal W^*_\alpha (n))$ is the graph of a piecewise-affine function ${\Omega^*}_\alpha^{(n)} (\vU) : [0,\lambda_\alpha^{(0)}] \to \Rset$. With arguments similar to the proof of Proposition~\ref{thm:homologicalconvergence}, where the role of dual Birkhoff sums is replaced by special Birkhoff sums and 
  using the bounds for Birkhoff sums of H\"older under the dual Roth type condition proved in this paper,  
  one can prove the following.

\begin{proposition}\label{thm:homologicalconvergence_dual}
Assume that $(\pi, \lambda)$ is of dual Roth type. For any $\alpha \in \A$, ${\Omega^*}_\alpha^{(n)} (\vU) : [0,\lambda_\alpha^{(0)}] \to \Rset$ converge as $n\rightarrow + \infty $ as a distribution on  H\"older functions, i.e.\ for any function $\psi$ on the interval $[0,\lambda_{\alpha}^{(0)}]$ which is mean zero  and  H\"older continuous   for some exponent $\eta \in (0,1)$,  the sequence 
$$ \int_0^{\lambda_{\alpha}} \psi(x)\ \Omega_{\alpha}^{(n)}(\vU) (x) dx$$
is convergent as $n \rightarrow + \infty$. 
\end{proposition}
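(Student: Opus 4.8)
The statement to prove (Proposition~\ref{thm:homologicalconvergence_dual}) is the dual analogue of Proposition~\ref{thm:homologicalconvergence}, with the roles of horizontal and vertical flows interchanged, and of $H_1(M\setminus\Sigma,\Rset)$ and $H_1(M,\Sigma,\Rset)$ swapped. The plan is to carry out the same three-step argument that proves Theorem~\ref{thm:convergence} and Proposition~\ref{thm:homologicalconvergence}, but now using \emph{direct} special Birkhoff sums $S(m,n)$ (and the \emph{direct} Roth type condition together with positive KZ-hyperbolicity) in place of the dual special Birkhoff sums $S^\sharp(n,n')$ (and the dual Roth type condition with negative KZ-hyperbolicity). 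First I would record, exactly as in \S~\ref{sec:comparingfunctions}, the refinement relation: for $n'\geq n\geq 0$ (note the reversed direction of monotonicity here, since we now go forward in Rauzy-Veech time), the piecewise-affine function ${\Omega^*}_\alpha^{(n')}(\vU)$ on $[0,\lambda_\alpha^{(0)}]$ is obtained from ${\Omega^*}_\alpha^{(n)}(\vU)$ by replacing each affine piece (over a subinterval of length $\lambda_{\beta}^{(n)}$ corresponding to a copy of $I_\beta^{(n)}$ inside $I_\alpha^{(0)}$, as encoded by the word $W^*_\alpha(n)=\beta_0\ldots\beta_{N-1}$) with a rescaled translated copy of ${\Omega^*}_{\beta}^{(n,n')}$ built from the data at level $n$. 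The change of basis formulas of \S~\ref{ssHom}, together with the identity $\int_{\zeta_\alpha}\omega=\lambda_\alpha+i\tau_\alpha$ and the linear map $p_{\vU}$, make this a direct transcription of Lemma~\ref{comparison}.

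Second I would express the difference $\int_0^{\lambda_\alpha}\psi(x)({\Omega^*}_\alpha^{(n')}(x)-{\Omega^*}_\alpha^{(n)}(x))\,dx$ as in the proof of Theorem~\ref{thm:convergence}: decompose the interval of integration over the pieces $j_{\alpha,\ell}^{(0,n)}(I_{\beta_\ell}^{(n)})$, group the contributions according to the letter $\beta$, change variables on each piece, and observe that the coefficient multiplying the (rescaled) copy of the level-$n$ shape is precisely a \emph{special} Birkhoff sum $S(0,n)\psi(x)$ of $\psi$ viewed as a function on $\sqcup I_\gamma^{(0)}$ (supported on $I_\alpha^{(0)}$). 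This is the point where the construction in \S~\ref{sec:piecewise_affine_paths_dual} is used: the words $W^*_\alpha(n)$ describe exactly how $I_\alpha^{(0)}$ decomposes into translates of the $I_\beta^{(n)}$, hence the reorganization of $\sum_\ell \psi(x+\text{shift}_\ell)$ is a direct special Birkhoff sum. Third, I would invoke the growth bound for direct special Birkhoff sums of H\"older functions under the (direct) Roth type condition --- i.e. the Proposition at the end of \S~2.3 of \cite{MMY1}, which is the direct analogue of Proposition~\ref{estimatedualBS} --- to get $\|S(0,n)\psi\|_{C^0}\leq C(\eta)\|\psi\|_{C^\eta}\|B(0,n)\|^{1-\delta}$ for some $\delta=\delta(\eta)>0$. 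Combined with the bounds $\Theta_1^{(n)}\leq C\|B(0,n)\|^{-1+\frac14\delta}$ (from the simplicity of the top Lyapunov exponent), the control $\|{\Omega^*}_\beta^{(n,n')}\|_{C^0}\leq C\|B(n,n')\|\,\|\chi^{(n)}\|_\infty$, the subexponential growth $\|\chi^{(n')}\|_\infty\leq C\|B(n',0)\|^{\delta/8}$ coming from positive KZ-hyperbolicity (Proposition~\ref{propKZpos}), and the Roth type estimate $\|B(0,n')\|\leq C(\va)\|B(0,n)\|^{1+\va}\|B(n,n')\|^{-1}$-type inequalities, one obtains $|\int\psi({\Omega^*}_\alpha^{(n')}-{\Omega^*}_\alpha^{(n)})|\leq C(\eta)\|\psi\|_{C^\eta}\|B(0,n)\|^{-\delta'}$ for some $\delta'>0$, which gives Cauchyness of the sequence and hence convergence as a distribution on H\"older functions.

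\textbf{Main obstacle.} The genuinely new input compared to \S~\ref{ssOmega} is the existence of the class $\vU\in H_1(M\setminus\Sigma,\Rset)$ (or the appropriate relative class) with the subpolynomial growth property; the statement refers to a ``Proposition~\ref{propKZposg}'' which is the analogue of Proposition~\ref{propKZpos} obtained through the duality identification between $H_1(M\setminus\Sigma,\Rset)$ and $H_1(M,\Sigma,\Rset)$ and the intersection pairing of \S~\ref{ssHom}. I expect the main care to be needed in setting up this duality correctly: one must check that positive KZ-hyperbolicity of $M$, which is a condition on $(\pi,\lambda)$, is exactly what makes the dual section $\RSO\to H_1(M\setminus\Sigma,\Rset)/\Gamma^{s,*}$ well defined, and that the transpose relation between the cocycles $B(0,n)$ on $\theta$-classes and $B^*=\,^tB(0,n)^{-1}$ on $\zeta$-classes is handled with the correct norms (using $\|B\|=\|^tB\|$ for the sum-of-entries norm, as already fixed in \S~\ref{sec:dualRothcondition}). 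The rest is a faithful, if somewhat lengthy, transcription of the arguments of \S~\ref{sec:comparingfunctions}--\S~\ref{sec:convergence} and \S~\ref{ssOmega}, exchanging ``horizontal $\leftrightarrow$ vertical'', ``$S^\sharp\leftrightarrow S$'', ``dual Roth $\leftrightarrow$ Roth'', ``negative $\leftrightarrow$ positive KZ-hyperbolic'', and ``$q_\alpha,\Im\omega\leftrightarrow\lambda_\alpha,\Re\omega$'', so no essentially new estimate is required beyond those already established in the paper and in \cite{MMY1}.
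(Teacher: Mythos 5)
Your proposal is correct and follows essentially the same route the paper indicates: the paper itself only sketches this proof as a transcription of the argument for Proposition~\ref{thm:homologicalconvergence} with dual special Birkhoff sums replaced by direct ones (and the $C^0$ growth bound of \cite{MMY1} in place of Proposition~\ref{estimatedualBS}), which is exactly what you lay out. You also read the hypothesis correctly as requiring the \emph{direct} Roth type condition on $(\pi,\lambda)$ together with positive KZ-hyperbolicity, consistent with the preamble of \S~\ref{sec:piecewise_affine_paths_dual}, despite the statement's apparent slip in writing ``dual Roth type,'' and you correctly identify the dualized version of Proposition~\ref{propKZpos} as the only genuinely new input.
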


Finally, one can also prove a dual version of Proposition~\ref{propmodgammau} and thus show that the singular nature of the limit distribution 
 only depends the projection of $\vU \in H_1(M \backslash \Sigma,\Rset)$ to the unstable space $\Gamma^u \subset H_1(M \backslash \Sigma,\Rset)$.
}

\appendix

\section{Completeness of backward rotation numbers}\label{secbac}

In this section we prove Proposition 
\ref{p7} (see \S~\ref{completenessrotnumb}), which shows that when $M$ has no horizontal connection, the backward rotation number $\ug$ defined in \S~\ref{sec:backwardRauzyVeech} is $\infty$-complete.
The proof which we present exploits the combinatorics of Rauzy-Veech classes and is surprisingly involved. It would be interesting to know if it is possible to find a simpler, perhaps more geometric, proof. 

\smallskip

\subsection{prove Proposition  \ref{p7}}\label{completenessrotnumb}

Let $M$ be a translation surface constructed from combinatorial data $\pi \in \R$, length data $\lambda \in C$ and suspension data $\tau \in \Theta_\pi$. We assume throughout this Appendix that  $M$ has no horizontal saddle connections. We associate to $\tau$ the quantities 

$$
 H^t_k (\pi,\tau):= \sum_{\pi_t \alpha \leq k} \tau_\alpha, \quad H^b_k (\pi,\tau) := \sum_{\pi_b \alpha \leq k} \tau_\alpha , \quad\forall 1 \leq k <d.
$$

We also define $H^t_d(\pi,\tau) = H^b_d(\pi,\tau) = H_d(\pi,\tau) = \sum_\alpha \tau_\alpha$ and $H^t_0(\pi,\tau) = H^b_0(\pi,\tau) = H_0(\pi,\tau) = 0$.

The domain for the suspension data $\tau$ is   is the open cone $\Theta_\pi$ in $\RA$ defined by the  Veech conditions on suspension data:
$$
H^t_k (\pi,\tau) >0, \qquad H^b_k (\pi,\tau)  <0, \qquad\forall 1 \leq k <d.
$$

As usual, we write $\zeta_\alpha = \lambda_\alpha + i \tau_\alpha$.
 Since $M$ has no horizontal saddle connections, by iterating the \emph{backward} Rauzy Veech algorithm defined in \S~\ref{sec:backwardRauzyVeech}, we obtain a {\it backward rotation number}, i.e an infinite  path $\ug = \ldots \gamma_{-1} \star \gamma_0$ in $\D$ with terminal point $\pi^{(0)}$. We want to show that it is $\infty$-complete.

\subsection{The quantity $H(n)$}
Given $T$ with no connection, let us denote as usual by  $T^{(n)}$  the i.e.m. with data $(\pi^{(n)}, \lambda^{(n)})$ obtained via Rauzy-Veech induction.  
We define for $n \leq 0$ the quantities
\begin{align*}
H_k^t(n)&:= H^t_k (\pi^{(n)}, \lambda^{(n)}), \qquad \text{for}\ 1 \leq k <d; \\
H_b^t(n)&:= H^b_k (\pi^{(n)}, \lambda^{(n)}), \qquad \text{for}\ 1 \leq k <d;\\
  H(n)&:= \sum_{1 \leq k <d} H_k^t(n) - H_k^b (n) .
 \end{align*}
This is the sum of $(2d-2)$ positive numbers. We have

$$
H(n-1) = H(n) + H^b_{k-1}(n) - H^t_{d-1}(n),
$$
 
 with $k:= \pi_b^{(n)}(\alpha_w)$, when $\gamma(n,n+1)$ is of top type. Similarly
 
$$
H(n-1) = H(n) - H^t_{k-1}(n) + H^b_{d-1}(n),
$$
 
 with $k:= \pi_t^{(n)}(\alpha_w)$, when $\gamma(n,n+1)$ is of bottom type.  
 
Therefore we always have $H(n-1)<H(n)$; the sequence $H(n)$ thus converges to a limit $H(-\infty) \geq 0$ when $n$ goes to $-\infty$. 
 
\begin{lemma}\label{l2}
 For any $\alpha \in \A$, any $n \leq0$, one has 
$$ \vert \tau_\alpha^{(n)} \vert \leq H(n).$$ 
\end{lemma}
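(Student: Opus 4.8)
The plan is to prove the bound $|\tau_\alpha^{(n)}|\le H(n)$ for every $\alpha\in\A$ and every $n\le 0$ by relating $\tau_\alpha^{(n)}$ to the partial sums $H^t_k(n)$ and $H^b_k(n)$, all of which are (by construction of $\Theta_{\pi^{(n)}}$) controlled individually by $H(n)$. The starting observation is that, since $\tau^{(n)}$ is a suspension vector for $\pi^{(n)}$, for each letter $\alpha$ with $\pi^{(n)}_t(\alpha)=k$ we can write
$$
\tau_\alpha^{(n)} = H^t_k(n) - H^t_{k-1}(n),
$$
where we use the conventions $H^t_0(n)=0$ and $H^t_d(n)=H_d(n)=\sum_\beta\tau^{(n)}_\beta$ (and similarly through the bottom ordering). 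Thus $\tau_\alpha^{(n)}$ is a difference of two of the quantities appearing (up to sign) in the definition of $H(n)$.

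The key step is then the elementary estimate that each of $H^t_k(n)$ and $-H^b_k(n)$ is positive and bounded above by $H(n)$. Indeed $H(n)=\sum_{1\le k<d}\bigl(H^t_k(n)-H^b_k(n)\bigr)$ is a sum of $2d-2$ strictly positive terms (the Veech conditions $H^t_k(n)>0>H^b_k(n)$ for $1\le k<d$), so each individual term $H^t_k(n)$ and each $-H^b_k(n)$ is at most $H(n)$. For the boundary index $k=d$ one has $H_d(n)=H^t_{d-1}(n)+\tau^{(n)}_{\alpha_t}$ where $\alpha_t$ is the last letter in the top order; one checks directly that $|H_d(n)|\le H(n)$ as well, e.g.\ by observing $H_d(n)=\sum_\beta\tau^{(n)}_\beta$ and splitting this sum using either the top or the bottom ordering together with the sign conditions — whichever gives the needed one-sided bound. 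Combining, for $\alpha$ with $\pi^{(n)}_t(\alpha)=k$ we get
$$
|\tau_\alpha^{(n)}| = |H^t_k(n)-H^t_{k-1}(n)| \le \max\bigl(H^t_k(n),\,H^t_{k-1}(n)\bigr)\le H(n)
$$
when $\tau^{(n)}_\alpha$ has a fixed sign; if $H^t_k(n)$ and $H^t_{k-1}(n)$ straddle the value so that the difference is not immediately a single term, one instead bounds $|\tau^{(n)}_\alpha|\le H^t_k(n) + |H^t_{k-1}(n)|$ only in the degenerate boundary cases, and checks these separately using the bottom ordering representation $\tau_\alpha^{(n)} = H^b_j(n)-H^b_{j-1}(n)$ with $j=\pi^{(n)}_b(\alpha)$, which gives the complementary estimate.

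The main obstacle — really the only subtlety — is the bookkeeping at the extreme indices, i.e.\ handling the letters $\alpha$ with $\pi^{(n)}_t(\alpha)=1$, $\pi^{(n)}_t(\alpha)=d$, $\pi^{(n)}_b(\alpha)=1$, or $\pi^{(n)}_b(\alpha)=d$, for which one of the two partial sums in the telescoping expression is $0$ or equals $H_d(n)$ rather than a ``genuine'' interior partial sum. For these one must decide, depending on the sign of $\tau^{(n)}_\alpha$, whether to use the top-order or the bottom-order expression for $\tau^{(n)}_\alpha$: the point is that $\alpha$ cannot simultaneously be first (or last) in both orders by irreducibility, so at least one of the two representations expresses $\tau^{(n)}_\alpha$ as a difference of two interior partial sums, or as an interior partial sum itself, and hence is bounded by $H(n)$. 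Once this small case analysis is carried out the Lemma follows; no use of the Roth-type or dual Roth-type hypotheses is needed, only the defining inequalities of $\Theta_{\pi^{(n)}}$.
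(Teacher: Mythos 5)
Your proof is correct and takes essentially the same route as the paper: express $\tau_\alpha^{(n)}=H^t_k(n)-H^t_{k-1}(n)$ (or the bottom analogue), observe that each interior partial sum $H^t_k(n)$, resp. $-H^b_k(n)$, for $1\le k<d$ is nonnegative and at most $H(n)$ since $H(n)$ is a sum of $2d-2$ positive terms containing it, and use irreducibility to switch to the bottom ordering for the one letter with $\pi_t^{(n)}(\alpha)=d$. The detour through $|H_d(n)|\le H(n)$ and the discussion of a ``straddling'' case are unnecessary — for $a,b\in[0,H(n)]$ one already has $|a-b|\le H(n)$ with no sign hypothesis — but these do not affect correctness.
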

\begin{proof} Indeed, if for instance $\pi_t(n)(\alpha)=:k<d$, one has
$$  \vert \tau_\alpha^{(n)} \vert \leq \vert H_k^t(n) - H_{k-1}^t(n) \vert,$$
with $H_k^t(n), \,H_{k-1}^t(n) \in [0,H(n)]$.
\end{proof}

\subsection{Switching times}

\begin{lemma}\label{l3}
The path $\ug$ contains infinitely many arrows of each type.
\end{lemma}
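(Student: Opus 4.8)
The statement to prove is Lemma~\ref{l3}: the backward rotation number $\ug$ contains infinitely many arrows of each type. Suppose for contradiction that it does not; by symmetry (the roles of top and bottom are interchanged by swapping $\pi_t \leftrightarrow \pi_b$ and reversing the sign of $\tau$) we may assume that $\ug$ contains only finitely many arrows of \emph{bottom} type. Then there is some $n_* \leq 0$ such that for all $n \leq n_*$ the arrow $\gamma(n,n+1)$ is of top type. Recall from \S~\ref{sec:backwardRauzyVeech} that the type of the backward step at level $n$ is determined by the sign of $\sum_\alpha \tau_\alpha^{(n)} = H_d(n)$: the step $\gamma(n-1,n)$ is (the inverse of) a top step precisely when $\sum_\alpha \tau_\alpha^{(n)} < 0$. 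So our assumption means $H_d(n) = \sum_\alpha \tau_\alpha^{(n)} < 0$ for all $n \leq n_*$.

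\textbf{Key steps.} First I would track what a long string of top-type backward steps does to the combinatorial data. In a top backward step $\pi^{(n-1)} = R_t^{-1}(\pi^{(n)})$ the letter $\alpha_w$ with $\pi_t^{(n)}(\alpha_w) = d$ is fixed at the last position of $\pi_t$ (so $\pi_t^{(n-1)} = \pi_t^{(n)}$), while $\pi_b$ changes; moreover on the length side $\zeta_{\alpha_w}^{(n-1)} = \zeta_{\alpha_w}^{(n)} + \zeta_{\alpha_\ell}^{(n)}$ and all other $\zeta_\alpha$ are unchanged, where $\alpha_\ell$ is the letter immediately after $\alpha_w$ in the bottom order. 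Thus throughout $n \leq n_*$ the top combinatorial datum $\pi_t$ is \emph{constant}, say equal to a fixed bijection $\pi_t^*$ with $\pi_t^*(\beta_*) = d$ for a fixed letter $\beta_* = \alpha_w$; and $\tau_\alpha^{(n)}$ is constant in $n$ for every $\alpha \neq \beta_*$, while $\tau_{\beta_*}^{(n)}$ is (weakly) decreasing as $n$ decreases (each step adds $\tau_{\alpha_\ell}^{(n)}$, and $\tau_{\alpha_\ell}^{(n)} = H_{k}^b(n) - H_{k-1}^b(n)$ for the appropriate index $k$; one checks this has a definite sign — indeed $\alpha_\ell$ is a letter occurring in position $< d$ in $\pi_b$, and the Veech suspension inequalities $H_k^b < 0$ must fail at some point for the sum to stay controlled). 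This is where the real work is: I expect to need the precise combinatorial description of how $R_t^{-1}$ acts on $\pi_b$ (it is a cyclic-type rearrangement moving a block) together with the fact that, since $\pi_b$ takes values in a finite set, the sequence $(\pi_b^{(n)})_{n \leq n_*}$ must either be eventually periodic or stabilize.

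\textbf{Deriving the contradiction.} Once I know $\tau_\alpha^{(n)}$ is constant for $\alpha \neq \beta_*$ and $\tau_{\beta_*}^{(n)}$ decreases while $\sum_\alpha \tau_\alpha^{(n)} < 0$ stays negative, I get a contradiction with the finiteness of $H(-\infty)$ established just above: by Lemma~\ref{l2}, $|\tau_{\beta_*}^{(n)}| \leq H(n) \leq H(n_*)$ is bounded, so $\tau_{\beta_*}^{(n)}$ is a bounded decreasing sequence, hence Cauchy, hence the increments $\tau_{\alpha_\ell(n)}^{(n)}$ tend to $0$; but these increments are differences of the \emph{fixed} values $\{\tau_\alpha : \alpha \neq \beta_*\}$ of partial sums, so they cannot tend to $0$ unless they are eventually exactly $0$ — and an increment being $0$ would mean $\tau_{\alpha_\ell}^{(n)} = 0$, i.e.\ a vanishing partial sum of the $\tau^{(n)}$, which (because $\sum_\alpha \tau_\alpha^{(n)} \neq 0$ and the indices involved are strictly between $0$ and $d$) would correspond to a horizontal saddle connection, contradicting our standing hypothesis. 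Alternatively and more cleanly: since $\pi_b^{(n)}$ lives in a finite set, some combinatorial datum $\pi^\bullet$ recurs infinitely often as $n \to -\infty$; between two such occurrences $B(n',n)$ is a fixed nonnegative integer matrix with a $1$ somewhere, forcing $\|B(n',n)\| \to \infty$, while the constancy of $\tau_\alpha^{(n)}$ for $\alpha\neq\beta_*$ combined with $\tau^{(n)} = \,{}^tB(n,0)^{-1}\tau^{(0)}$ (recall $\tau_\alpha^{(n)} = \sum_\beta B(n,0)^*_{\alpha\beta}\tau^{(0)}_\beta$) forces all but one coordinate of the growing backward cocycle applied to $\tau^{(0)}$ to stay bounded, which is incompatible with the structure of $R_t^{-1}$ unless $\tau^{(0)}$ lies in a proper rational subspace — again a saddle-connection-type degeneracy ruled out by hypothesis. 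Either route closes the argument; I would write up the first (more elementary, sign-chasing) version, with the main obstacle being the bookkeeping of the $R_t^{-1}$-action on $\pi_b$ and pinning down the sign of the increment $\tau_{\alpha_\ell}^{(n)}$.
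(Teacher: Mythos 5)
There is a genuine gap: the monotonicity claim on which your entire ``more elementary, sign-chasing version'' rests is unjustified, and in fact false in general. You assert that $\tau_{\beta_*}^{(n)}$ is (weakly) decreasing as $n$ decreases because the increment $\tau_{\alpha_\ell}^{(n)}=H^b_k(n)-H^b_{k-1}(n)$ ``has a definite sign''. But both $H^b_k(n)$ and $H^b_{k-1}(n)$ are negative (suspension conditions), so their difference has no a priori sign, and indeed the per-step increments $\tau_{\alpha_\ell}$ can be of either sign. Without monotonicity, boundedness of $\tau_{\beta_*}^{(n)}$ (Lemma~\ref{l2}) does not give Cauchy behaviour, so the increments need not tend to $0$ and the ``finite set of nonzero increments forces a vanishing $\tau_\alpha$'' conclusion doesn't follow. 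Your second route is also not an argument: ``incompatible with the structure of $R_t^{-1}$ unless $\tau^{(0)}$ lies in a proper rational subspace'' is exactly the nontrivial content that would need to be proved.

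The missing idea — and the step the paper uses the convergence of $H(n)$ for — is to first pin down the position $k=\pi_b^{(n)}(\alpha_w)$ of the winner in the bottom row. Since $H^b_{k-1}(n)$ is a constant sequence (the letters at positions $<k$ are untouched by backward top steps) and appears with a fixed sign in the telescoping increment $H(n-1)-H(n)=H^b_{k-1}(n)-H^t_{d-1}(n)\le 0$, the convergence of the decreasing nonnegative sequence $H(n)$ forces $H^b_{k-1}\equiv 0$, hence $k=1$. Only then does the backward dynamics on $\pi_b$ become a clean $(d-1)$-cycle, the letters $\alpha\neq\alpha_w$ each occurring once as loser per cycle, so that $\tau_{\alpha_w}^{(n-d+1)}=\tau_{\alpha_w}^{(n)}+\sum_{\alpha\neq\alpha_w}\tau_\alpha = \tau_{\alpha_w}^{(n)}+H^t_{d-1}$. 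This block-increment is strictly positive, so $\tau_{\alpha_w}^{(n)}\to+\infty$ as $n\to-\infty$ — note this is \emph{increasing}, the opposite of what you claimed — which contradicts $\sum_\alpha\tau_\alpha^{(n)}<0$ (required for all steps to be of top type). The crucial insight you are missing is that monotonicity only emerges after establishing $k=1$, and even then only for the block sums over $(d-1)$ consecutive steps, not step by step.
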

 \begin{proof}
If the conclusion of the lemma does not hold, we may assume, after truncating $\ug$, that all arrows $\gamma(n,n+1)$ have the same type. Assume for instance that all arrows are of top type. Then they have all the same winner $\alpha_w$. Let $\pi_b^{(0)}(\alpha_w) =k$. We must have $k=1$ because $H_{k-1}^b(n)$ does not depend on $n$ and therefore must vanish (as $H(n)$ is convergent). The $\tau_\alpha^{(n)}$ with $\alpha \ne \alpha_w$ do not depend on $n$. One also has 
 $$ \tau_{\alpha_w}^{(n-d+1)} =  \tau_{\alpha_w}^{(n)}+ \sum_{\alpha \ne \alpha_w} \tau_\alpha, \quad \forall n \leq 0.$$
 However, we have $\sum _{\alpha \ne \alpha_w} \tau_\alpha = H^t_{d-1} >0$, hence $\tau_{\alpha_w}$ cannot be negative for all $n \leq 0$, a contradiction.
 \end{proof}
 
 Denote by $\T$ (resp. $\B$) the set of $n\leq 0$ such that $\gamma(n,n+1)$ is of top type (resp. bottom type). 
 
 \begin{lemma}\label{l4}
 One has 
 $$
 \lim_{n \to -\infty,  n \in \T} H_{d-1}^t(n) =   \lim_{n \to -\infty, n \in \B} H_{d-1}^b(n) =0. 
 $$ 
 \end{lemma}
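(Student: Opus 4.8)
Recall that $H(n)=\sum_{1\le k<d}\bigl(H_k^t(n)-H_k^b(n)\bigr)$ is a sum of $2d-2$ nonnegative quantities, is strictly increasing in $n$, and converges to a finite limit $H(-\infty)\ge 0$ as $n\to-\infty$. The key recursion, already recorded above, is that for $n\in\T$ (top type)
\[
H(n-1)=H(n)+H^b_{k-1}(n)-H^t_{d-1}(n),\qquad k=\pi_b^{(n)}(\alpha_w),
\]
and symmetrically for $n\in\B$. Since $H(n-1)-H(n)\to 0$ and every term $H^t_j(n),\;-H^b_j(n)$ is nonnegative, both $H^t_{d-1}(n)$ (which appears with a minus sign in the increment) \emph{and} $H^b_{k-1}(n)$ must tend to $0$ \emph{along $n\in\T$}; and symmetrically $H^b_{d-1}(n)\to 0$ and $H^t_{k-1}(n)\to0$ along $n\in\B$. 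By Lemma~\ref{l3} both $\T$ and $\B$ are infinite, so these limits along subsequences are meaningful.

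First I would make precise the elementary observation that the increment $H(n)-H(n-1)$ equals a single term of the form $H^t_{d-1}(n)-H^b_{k-1}(n)$ (top case) which is $\le H^t_{d-1}(n)$, together with the fact that $H^t_{d-1}(n)$ is itself one of the $2d-2$ summands of $H(n)$ and hence is bounded by $H(n)\le H(0)$. More importantly, since $\sum_{m\le 0}\bigl(H(m)-H(m-1)\bigr)=H(0)-H(-\infty)<\infty$ converges, the general term of this series tends to $0$; that is exactly $H(n)-H(n-1)\to 0$. Now split the sum according to whether $n\in\T$ or $n\in\B$: for $n\in\T$ the increment is $H^t_{d-1}(n)-H^b_{k-1}(n)\ge H^t_{d-1}(n)-H^b_{d-1}(n)\ge 0$ if one is careful — actually I would argue directly from nonnegativity of the two separate pieces $H^t_{d-1}(n)\ge 0$ and $-H^b_{k-1}(n)\ge 0$ (note $H^b_{k-1}(n)\le 0$ by the Veech conditions, so $-H^b_{k-1}(n)\ge 0$), wait: here $H^t_{d-1},H^b_{k-1}$ are computed from the \emph{length} data $\lambda^{(n)}$, which are all positive, so $H^t_{d-1}(n)>0$ while $H^b_{k-1}(n)$ is a sum of positive $\lambda^{(n)}_\alpha$ and hence also positive. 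So the increment $H^t_{d-1}(n)-H^b_{k-1}(n)$ is a difference of two positive quantities, not obviously signed. The correct bookkeeping is the one the paper already used: $H(n-1)<H(n)$ means $H^b_{k-1}(n)>H^t_{d-1}(n)$ for $n\in\T$, so $0<H^t_{d-1}(n)<H^b_{k-1}(n)\le H(n)$; and since $H^b_{k-1}(n)-H^t_{d-1}(n)=H(n)-H(n-1)\to 0$ while both quantities stay bounded, the point is subtler: one needs $H^t_{d-1}(n)\to 0$, not just that the difference goes to $0$.

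\textbf{The main obstacle} is precisely this: controlling $H^t_{d-1}(n)$ (for $n\in\T$) rather than merely the increment. The plan is to observe that $H^t_{d-1}(n)=H(n)-H(n-1)+H^b_{k-1}(n)$, so it suffices to show $H^b_{k-1}(n)\to 0$ along $n\in\T$. For this I would track how the quantity $H^b_{k-1}(n)$ — a partial sum $\sum_{\pi_b^{(n)}\alpha\le k-1}\lambda^{(n)}_\alpha$ of lengths — evolves under the backward algorithm. Each backward step of top type \emph{decreases} the winner's length (since forward RV adds, backward RV subtracts) and leaves the other lengths fixed; a step of bottom type likewise. The relevant partial sum $H^b_{k-1}(n)$ with $k=\pi_b^{(n)}(\alpha_w)$ consists of the lengths of letters $\beta$ that come strictly before $\alpha_w$ in the bottom order; one checks (this is the combinatorial heart, carried out in the subsequent lemmas of the appendix which I may invoke) that these partial sums form, along $n\in\T$, a nonincreasing-up-to-bounded-error sequence of positive reals whose successive gaps are summable, forcing convergence, and the limit must be $0$ because otherwise some length $\lambda^{(n)}_\beta$ would stay bounded below while its letter never wins, contradicting $\infty$-completeness of the \emph{forward} direction or, more elementarily, contradicting that $H(n)$ would then fail to increase. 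Concretely, I would argue: if $\liminf_{n\in\T}H^b_{k_n-1}(n)=c>0$, then infinitely often a fixed collection of lengths sums to at least $c$ while the relevant winners shrink, which by the convergence of $H(n)$ is impossible. Thus $H^b_{k-1}(n)\to 0$ along $\T$, hence $H^t_{d-1}(n)\to 0$ along $\T$. The statement for $\B$ is the mirror image, obtained by the top/bottom symmetry of the construction. I would conclude by writing the proof as: convergence of $\sum(H(m)-H(m-1))$ gives the increment $\to 0$; combined with $H^t_{d-1}(n)\le H(n)$ bounded and the recursion, plus the auxiliary claim $H^b_{k-1}(n)\to 0$ (resp.\ $H^t_{k-1}(n)\to 0$) which follows from monitoring the length partial sums along the respective switching set, one gets both limits.

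\begin{proof}[Proof of Lemma~\ref{l4} (sketch)]
Since the sequence $\bigl(H(n)\bigr)_{n\le 0}$ is increasing and bounded above by $H(0)$, the telescoping series $\sum_{m\le 0}\bigl(H(m)-H(m-1)\bigr)=H(0)-H(-\infty)$ converges; in particular $H(n)-H(n-1)\to 0$ as $n\to-\infty$. Recall from the displayed recursions above that for $n\in\T$ one has
\[
H(n)-H(n-1)=H^t_{d-1}(n)-H^b_{k-1}(n),\qquad k=\pi_b^{(n)}(\alpha_w),
\]
and for $n\in\B$ one has $H(n)-H(n-1)=H^b_{d-1}(n)-H^t_{k-1}(n)$ with $k=\pi_t^{(n)}(\alpha_w)$; here all $H^{t}_{j}(n),H^{b}_{j}(n)$ are partial sums of the positive lengths $\lambda^{(n)}_\alpha$, hence positive, and the inequality $H(n-1)<H(n)$ forces $0<H^t_{d-1}(n)<H^b_{k-1}(n)$ for $n\in\T$ (resp.\ $0<H^b_{d-1}(n)<H^t_{k-1}(n)$ for $n\in\B$).

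We show $H^b_{k-1}(n)\to 0$ along $n\to-\infty$, $n\in\T$; then $H^t_{d-1}(n)=\bigl(H(n)-H(n-1)\bigr)+H^b_{k-1}(n)\to 0$ along $\T$. For $n\in\T$, the letters $\beta$ with $\pi_b^{(n)}(\beta)\le k-1$ are exactly those preceding $\alpha_w$ in the bottom order; a backward step does not increase any length, and only the current winner's length strictly decreases. Monitoring $S(n):=H^b_{k-1}(n)$ along the (infinite, by Lemma~\ref{l3}) set $\T$, one sees $S$ changes by amounts whose absolute values are dominated by the changes in the lengths of finitely many letters, and these total variations are controlled by $H(0)-H(-\infty)<\infty$ through the recursion; hence $S(n)$ converges along $\T$. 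If the limit were $c>0$, then infinitely often a fixed nonempty set of lengths would sum to at least $c/2$ while the winners associated to these steps shrink without bound, so $H(n)$ would have to increase by a definite amount infinitely often, contradicting its convergence. Therefore $c=0$, i.e.\ $H^b_{k-1}(n)\to 0$ along $\T$, and consequently $H^t_{d-1}(n)\to 0$ along $\T$. The assertion $H^b_{d-1}(n)\to 0$ along $\B$ follows by the top/bottom symmetry of the backward algorithm.
\end{proof}
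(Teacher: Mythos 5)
Your proof does not work, and the reason is a misidentification of what the quantities $H^t_k(n)$, $H^b_k(n)$ actually are. They are the partial sums of the \emph{suspension data} $\tau^{(n)}$, not of the length data $\lambda^{(n)}$. (The formula in the paper displaying $H^t_k(\pi^{(n)},\lambda^{(n)})$ contains a typo — it should read $\tau^{(n)}$: the preceding definitions give $H^t_k(\pi,\tau)=\sum_{\pi_t\alpha\le k}\tau_\alpha$, the Veech suspension conditions are imposed on $\tau$, the subsequent Lemma~\ref{l2} bounds $|\tau^{(n)}_\alpha|$ by $H(n)$, and the claim that $H(n)$ is a sum of $2d-2$ positive numbers would be vacuous for $\lambda$.) Under the Veech conditions for $(\pi^{(n)},\tau^{(n)})$, one has $H^t_j(n)>0$ and $H^b_j(n)<0$ for $1\le j<d$, with $H^{t}_0(n)=H^b_0(n)=0$ by convention. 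This completely changes the shape of the argument.

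With the correct signs the lemma is immediate. For $n\in\T$ the recursion gives
\[
H(n)-H(n-1)=H^t_{d-1}(n)+\bigl(-H^b_{k-1}(n)\bigr),
\]
a sum of \emph{two nonnegative} terms. Since $(H(n))_{n\le 0}$ is increasing and bounded above, $H(n)-H(n-1)\to 0$ as $n\to-\infty$, and therefore each nonnegative summand tends to $0$ separately; in particular $H^t_{d-1}(n)\to 0$ along $\T$. The case $n\in\B$ is symmetric. This is exactly what the paper's one-line proof "follows from the formulas relating $H(n)$ and $H(n-1)$" refers to.

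Because you treated the $H^b$'s as positive length sums, the increment looked to you like a \emph{difference} of two positives, which you correctly identified would be inconclusive — that is your "main obstacle." But the obstacle is an artifact of the sign error, and your attempted repair (tracking a quantity $S(n)=H^b_{k-1}(n)$ as a sum of lengths, arguing its convergence from bounded total variation "through the recursion," and then deriving a contradiction from a positive limit) is not a proof as written: the convergence of $S$ along $\T$ does not follow from the convergence of $H$ in the way you assert, the bookkeeping of which indices contribute to $S(n)$ changes with $n$ (since $k=\pi_b^{(n)}(\alpha_w)$ moves), and the claimed contradiction "$H(n)$ would have to increase by a definite amount infinitely often" is never actually established. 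Also note that even under your reading you would still have to show $-H^b_{k-1}(n)\to 0$, but you in fact claim the opposite sign of the inequality $H(n-1)<H(n)$ implies (you wrote $H^b_{k-1}(n)>H^t_{d-1}(n)$, whereas the recursion gives $H^b_{k-1}(n)<H^t_{d-1}(n)$). None of this machinery is needed once the signs of the $H$-quantities are corrected.
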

 
 \begin{proof}
 This follows from the formulas relating $H(n)$ and $H(n-1)$.
 \end{proof}
 
\begin{lemma}\label{l5}
If $\gamma(n,n+1)$ is of top type, one has 
$$H_{d-1}^t(n-1) = H_{d-1}^t(n),\qquad H_{d-1}^b(n-1)=H_{d}(n).$$

 If $\gamma(n,n+1)$ is of bottom type, one has 
 $$H_{d-1}^b(n-1) = H_{d-1}^b(n), \qquad H_{d-1}^t(n-1)=H_{d}(n).$$  
\end{lemma}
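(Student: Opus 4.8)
The plan is to verify Lemma~\ref{l5} by unwinding the definitions of $H^t_k$, $H^b_k$ and the backward Rauzy--Veech step, since the statement is essentially a bookkeeping identity about which letters enter the partial sums $H^t_{d-1}$ and $H^b_{d-1}$ after an inverse elementary step. Recall that $H^t_k(n) = \sum_{\pi^{(n)}_t(\alpha)\le k}\tau^{(n)}_\alpha$ and similarly for $H^b_k(n)$, with $H_d(n) = \sum_\alpha \tau^{(n)}_\alpha$ independent of whether one reads it from the top or the bottom order. The key observation is that a backward step changes only one period, namely $\zeta^{(n-1)}_{\alpha_w} = \zeta^{(n)}_{\alpha_w} + \zeta^{(n)}_{\alpha_\ell}$ (hence $\tau^{(n-1)}_{\alpha_w} = \tau^{(n)}_{\alpha_w} + \tau^{(n)}_{\alpha_\ell}$) while all other $\tau_\alpha$ are unchanged, and it changes the combinatorial data $\pi^{(n)}\mapsto\pi^{(n-1)}$ by the inverse of $R_t$ or $R_b$.

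First I would treat the case where $\gamma(n,n+1)$ is of top type. Here (by the forward description recalled in \S\ref{secRV}, read backwards) the winner of the arrow is $\alpha_w$ with $\pi^{(n)}_t(\alpha_w)=d$, the loser is $\alpha_\ell$ with $\pi^{(n)}_b(\alpha_\ell)=d$, and going from $\pi^{(n)}$ to $\pi^{(n-1)}=R_t^{-1}(\pi^{(n)})$ one modifies the bottom order by moving $\alpha_\ell$ from last position to just after $\alpha_w$; the top order is unchanged, so $\pi^{(n-1)}_t=\pi^{(n)}_t$. For $H^t_{d-1}(n-1)$: the set $\{\alpha : \pi^{(n-1)}_t(\alpha)\le d-1\}$ equals $\{\alpha:\pi^{(n)}_t(\alpha)\le d-1\}=\A\setminus\{\alpha_w\}$, and on this set $\tau^{(n-1)}_\alpha=\tau^{(n)}_\alpha$ (the only change is to $\tau_{\alpha_w}$, which is excluded). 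Hence $H^t_{d-1}(n-1)=H^t_{d-1}(n)$. For $H^b_{d-1}(n-1)$: the letter in bottom position $d$ for $\pi^{(n-1)}$ is $\alpha_w$ (since $\alpha_\ell$ was pulled back), so $\{\alpha:\pi^{(n-1)}_b(\alpha)\le d-1\}=\A\setminus\{\alpha_w\}$; summing $\tau^{(n-1)}$ over this set gives $\sum_{\alpha\ne\alpha_w}\tau^{(n)}_\alpha = H_d(n)-\tau^{(n)}_{\alpha_w}$. But also $H_d(n-1)=H_d(n)$ (total unchanged since $\tau^{(n-1)}_{\alpha_w}+\sum_{\alpha\ne\alpha_w}\tau^{(n)}_\alpha = \tau^{(n)}_{\alpha_w}+\tau^{(n)}_{\alpha_\ell}+\dots$ — I would just recompute directly), and $\sum_{\alpha\ne \alpha_w}\tau^{(n)}_\alpha$ is precisely $H^b_{d-1}$ plus or minus the contribution of $\alpha_w$ in the $\pi^{(n)}_b$ order; the cleanest route is to note $H^b_{d-1}(n-1)=\sum_{\alpha\ne \alpha_w}\tau^{(n)}_\alpha = H_d(n)-\tau^{(n)}_{\alpha_w} = H_d(n)$ once one checks $\tau^{(n)}_{\alpha_w}$ is absorbed correctly — here I must be careful and instead argue: $H^b_{d-1}(n-1)$ omits only $\alpha_w$ from the full sum computed with $\tau^{(n-1)}$, and $\tau^{(n-1)}_{\alpha_w}=\tau^{(n)}_{\alpha_w}+\tau^{(n)}_{\alpha_\ell}$, so $H^b_{d-1}(n-1)=H_d(n-1)-\tau^{(n-1)}_{\alpha_w}= H_d(n) - \tau^{(n)}_{\alpha_w}-\tau^{(n)}_{\alpha_\ell}$; comparing with $H^b_{d-1}(n)=H_d(n)-\tau^{(n)}_{\alpha_\ell}$ (since $\alpha_\ell$ is in bottom position $d$ for $\pi^{(n)}$) this is not obviously $H_d(n)$, so I would go back to the explicit combinatorics of which symbol sits where and track it symbol-by-symbol rather than via these shortcuts.

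The bottom-type case is entirely symmetric under exchanging the roles of top and bottom (and of $R_t,R_b$), so once the top case is done carefully it can be obtained by that symmetry. The \textbf{main obstacle} I anticipate is precisely the one flagged above: keeping straight the distinction between $\tau^{(n)}$ and $\tau^{(n-1)}$ and between the orders $\pi^{(n)}_t,\pi^{(n)}_b$ and their images under $R_t^{-1}$ (or $R_b^{-1}$) — in particular identifying exactly which letter occupies bottom (resp.\ top) position $d$ before and after the step, and checking that the one period that changes is or is not included in each partial sum. The safe approach is to fix the standard convention for the forward step from \cite{MMY1} p.~829 (in a top step the loser $\alpha_b$ is reinserted immediately after the winner $\alpha_t$ in the bottom line, all other relative orders preserved), invert it explicitly, and then compute each of the four quantities $H^t_{d-1}(n-1), H^b_{d-1}(n-1)$ (and their bottom-type analogues) directly from its definition as a sum of $\tau^{(n-1)}_\alpha$ over an explicitly described subset of $\A$, comparing term by term with $H^t_{d-1}(n), H^b_{d-1}(n), H_d(n)$. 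No deep idea is needed beyond this careful case analysis; the identities $H^t_{d-1}(n-1)=H^t_{d-1}(n)$ and $H^b_{d-1}(n-1)=H_d(n)$ will fall out once the combinatorics is pinned down.
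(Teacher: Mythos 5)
Your plan (unpack the definitions and track which letters enter each partial sum across a backward elementary step) is exactly what the paper's one-sentence ``proof'' invokes, but the spot where you got stuck in the top-type case comes from a concrete positional mix-up, not a genuine subtlety. You describe $R_t^{-1}$ as ``moving $\alpha_\ell$ from last position to just after $\alpha_w$''; that is $R_t$ itself, not its inverse. In the paper's convention for the backward top step, $\alpha_\ell$ is defined by $\pi^{(n)}_b(\alpha_\ell)=\pi^{(n)}_b(\alpha_w)+1$, i.e.\ $\alpha_\ell$ sits just \emph{after} $\alpha_w$ at level $n$, and applying $R_t^{-1}$ sends this letter to the \emph{last} bottom position, so $\pi^{(n-1)}_b(\alpha_\ell)=d$. (Equivalently: $\alpha_\ell$ is the loser of the \emph{forward} arrow from $\pi^{(n-1)}$ to $\pi^{(n)}$, so of course it occupies bottom position $d$ at level $n-1$, not at level $n$.) Your two claims ``$\pi^{(n)}_b(\alpha_\ell)=d$'' and ``the letter in bottom position $d$ for $\pi^{(n-1)}$ is $\alpha_w$'' are therefore both wrong, and that is precisely why your shortcut produced $H_d(n)-\tau^{(n)}_{\alpha_w}-\tau^{(n)}_{\alpha_\ell}$ instead of $H_d(n)$.

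Once the position is corrected, the identity you could not close is a one-liner and there is no need for the symbol-by-symbol fallback you contemplate:
\begin{align*}
H^b_{d-1}(n-1) &= \sum_{\pi^{(n-1)}_b(\alpha)\le d-1}\tau^{(n-1)}_\alpha
 = \sum_{\alpha\ne\alpha_\ell}\tau^{(n-1)}_\alpha
 = \tau^{(n-1)}_{\alpha_w} + \sum_{\alpha\ne\alpha_w,\alpha_\ell}\tau^{(n)}_\alpha\\
 &= \bigl(\tau^{(n)}_{\alpha_w}+\tau^{(n)}_{\alpha_\ell}\bigr) + \sum_{\alpha\ne\alpha_w,\alpha_\ell}\tau^{(n)}_\alpha
 = H_d(n),
\end{align*}
using only $\pi^{(n-1)}_b(\alpha_\ell)=d$, $\tau^{(n-1)}_\alpha=\tau^{(n)}_\alpha$ for $\alpha\ne\alpha_w$, and $\tau^{(n-1)}_{\alpha_w}=\tau^{(n)}_{\alpha_w}+\tau^{(n)}_{\alpha_\ell}$. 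Your argument for $H^t_{d-1}(n-1)=H^t_{d-1}(n)$ is already correct as written (the top order is unchanged and $\alpha_w$, the only letter whose period changes, is excluded from the sum), and the bottom-type case is indeed the mirror image under exchanging $t\leftrightarrow b$.
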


\begin{proof}
Clear from the formulas of an elementary Rauzy-Veech step.
\end{proof}

\begin{definition}\label{defin1}
A negative integer $n$ is a {\it switching time} if $\gamma(n,n+1)$ and $\gamma(n+1,n+2)$ are of different types. Let $\mathcal {SW}$ be the set of switching times. It is infinite by lemma \ref{l3}.
\end{definition}
 
 \begin{lemma}\label{l6}
 One has 
 $$
 \lim_{n \to -\infty,  n \in \SW} H_{d-1}^t(n) =   \lim_{n \to -\infty, n \in \SW} H_{d-1}^b(n) =0. 
 $$ 
 \end{lemma}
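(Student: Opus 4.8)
The goal is to show that $H^t_{d-1}(n) \to 0$ and $H^b_{d-1}(n) \to 0$ as $n \to -\infty$ along the set $\SW$ of switching times. The key point is to compare a switching time $n$ with the \emph{next} switching time $n' > n$ in the backward direction (i.e. the smallest switching time with $n' > n$), or — thinking forward in the Rauzy-Veech arrow ordering — to look at the maximal run of arrows of a single type that \emph{precedes} a given switching time. The idea is that if $n \in \SW$, then in the block of steps immediately following $n$ in the backward algorithm all arrows have one fixed type, say bottom; then by Lemma~\ref{l5} the quantity $H^b_{d-1}$ is preserved along that whole block and equals $H^b_{d-1}(n)$, while at the same time by Lemma~\ref{l4} we know $H^b_{d-1}(m) \to 0$ along $\B$. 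So we must locate, near a switching time, a nearby index that lies in $\B$ (or in $\T$) and whose $H^b_{d-1}$ (resp. $H^t_{d-1}$) value coincides with, or controls, the value at the switching time.

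\textbf{Key steps.} First, fix a switching time $n$. By Definition~\ref{defin1}, $\gamma(n,n+1)$ and $\gamma(n+1,n+2)$ have different types; say $\gamma(n,n+1)$ is of top type and $\gamma(n+1,n+2)$ is of bottom type (the other case is symmetric). Then $n \in \T$, so Lemma~\ref{l4} already gives $H^t_{d-1}(n) \to 0$ along the subsequence of switching times where $\gamma(n,n+1)$ is of top type. For the remaining quantity $H^b_{d-1}(n)$: since $\gamma(n,n+1)$ is of top type, Lemma~\ref{l5} gives $H^b_{d-1}(n-1) = H_d(n)$. Now I would run backward from $n$ through the maximal block of consecutive arrows of top type, i.e. let $m \le n$ be the largest integer such that all of $\gamma(m,m+1), \dots, \gamma(n,n+1)$ are of top type (this block is nonempty and finite by Lemma~\ref{l3}). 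Applying Lemma~\ref{l5} repeatedly along this block, $H^t_{d-1}$ is constant and equal to $H^t_{d-1}(n)$, while $H^b_{d-1}$ gets reset to $H_d$ at each step; in particular $m-1 \notin \T$, so $m-1 \in \B$, and $\gamma(m-1,m)$ is of bottom type. Then $H^b_{d-1}(m-1) \to 0$ along $\B$ by Lemma~\ref{l4}; and one relates $H^b_{d-1}(m-1)$ back to $H^b_{d-1}(n)$. The cleanest route is: observe $H^b_{d-1}(n) = H^b_{d-1}(m) $ need not hold, but $H^b_{d-1}(n) \le H(n)$ trivially, and more to the point, since $n' := m-1 \in \B$ and $n \ge n'$, monotonicity of $n \mapsto H(n)$ plus the explicit update formulas let one squeeze $H^b_{d-1}(n)$ between $0$ and a quantity tending to $0$. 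Concretely, $H^b_{d-1}(n) = H_d(n+1) $-type identities combined with $H_d(n) \to H(-\infty) $ forces a relation; but in fact $H_d(n) = H^t_{d-1}(n) - $ (a preceding partial sum) and all these partial sums lie in $[0, H(n)]$ with differences $\to 0$.

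\textbf{Main obstacle.} The real difficulty is bookkeeping: making precise which of $H^t$, $H^b$, $H_d$ is preserved and which is reset across a block of same-type arrows, and then chaining these identities from a switching time to a nearby index in $\T$ or $\B$ so that the already-proved Lemma~\ref{l4} applies. I expect the argument to hinge on showing that at a switching time $n$ where the type changes from top (backward step $\gamma(n,n+1)$) to bottom (backward step $\gamma(n+1,n+2)$), \emph{both} $H^t_{d-1}(n)$ and $H^b_{d-1}(n)$ are controlled: the former directly by Lemma~\ref{l4} since $n \in \T$, and the latter because the update formula for a top-type step shows $H^b_{d-1}(n) $ equals (up to the reset to $H_d$) a value that appeared at the \emph{end} of the preceding bottom-run, where Lemma~\ref{l4} again applies along $\B$. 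So the proof is essentially: \emph{every switching time is adjacent (through a same-type block) to an index lying in the ``other'' set, and along both $\T$ and $\B$ the relevant $H_{d-1}$ goes to $0$ by Lemma~\ref{l4}, and the update formulas transport this bound to the switching time.} I would write this out by fixing one switching time, distinguishing the two type-change cases, and in each case exhibiting the explicit finite chain of Lemma~\ref{l5} identities linking $H^t_{d-1}(n)$ and $H^b_{d-1}(n)$ to values at nearby elements of $\T \cup \B$ on which Lemma~\ref{l4} gives decay.
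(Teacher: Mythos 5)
Your overall strategy — use Lemma~\ref{l4} for the quantity directly controlled by the type of $n$, and transport the other quantity via Lemma~\ref{l5} to a nearby index in the opposite set — is the right idea, and indeed the paper's proof is just ``this follows from Lemmas~\ref{l4} and~\ref{l5}.'' But the details in your argument go in the wrong direction and would not close.

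You try to control $H^b_{d-1}(n)$ by stepping \emph{backward} from $n$: applying Lemma~\ref{l5} at index $n$ gives $H^b_{d-1}(n-1) = H_d(n)$, and iterating down a block of top-type arrows gives $H^b_{d-1}(j-1) = H_d(j)$ for $j = m, \dots, n$. None of these identities involve $H^b_{d-1}(n)$ itself; the chain tells you about $H^b_{d-1}$ at indices $< n$, which is not what you need. This is why your ``relate $H^b_{d-1}(m-1)$ back to $H^b_{d-1}(n)$'' step turns into a vague appeal to monotonicity of $H(n)$, which cannot work since $H(n)$ does not tend to $0$. You also write ``$H^b_{d-1}(n) = H_d(n+1)$-type identities'' and ``$H_d(n) \to H(-\infty)$'', both of which are false: Lemma~\ref{l5} at index $n+1$ (bottom type) gives $H^t_{d-1}(n) = H_d(n+1)$, not $H^b_{d-1}(n)$, and $H_d(n)$ alternates in sign.

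The correct step is to look \emph{forward} to $n+1$, which requires no chaining at all. Since $n \in \SW$ with $\gamma(n,n+1)$ of top type, $\gamma(n+1,n+2)$ is of bottom type, so $n+1 \in \B$ by definition. Lemma~\ref{l5} applied at index $n+1$ gives $H^b_{d-1}(n) = H^b_{d-1}(n+1)$, and Lemma~\ref{l4} along $\B$ gives $H^b_{d-1}(n+1) \to 0$, so $H^b_{d-1}(n) \to 0$. Combined with $H^t_{d-1}(n) \to 0$ along $\T$ (Lemma~\ref{l4}), and the symmetric argument for switching times in $\B$, the lemma follows in one step; no block decomposition and no squeeze is needed.
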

\begin{proof}
This follows from lemmas \ref{l4} and \ref{l5}.
\end{proof}

 \begin{lemma}\label{l7}
 One has also
 
 $$
 \lim_{n \to -\infty,  n \in \SW} H_{d}(n)  =0. 
 $$ 

 \end{lemma}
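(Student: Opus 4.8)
\textbf{Proof plan for Lemma \ref{l7}.}
The statement to prove is that $H_d(n) \to 0$ as $n \to -\infty$ along switching times $n \in \SW$. Recall $H_d(n) = \sum_\alpha \tau_\alpha^{(n)}$, and that the sign of $H_d(n)$ determines the type of the backward step $\gamma(n-1,n)$: if $H_d(n) < 0$ we undo a top step, if $H_d(n)>0$ we undo a bottom step. The plan is to combine Lemma \ref{l6} (which gives $H^t_{d-1}(n) \to 0$ and $H^b_{d-1}(n)\to 0$ along $\SW$) with the elementary Rauzy--Veech step relations of Lemma \ref{l5}, exploiting the fact that at a switching time the type of $\gamma(n,n+1)$ differs from that of $\gamma(n+1,n+2)$.

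First I would fix a switching time $n \in \SW$ and split into the two cases according to the type of $\gamma(n+1,n+2)$. Suppose $\gamma(n+1,n+2)$ is of bottom type while $\gamma(n,n+1)$ is of top type (the other case is symmetric, with the roles of $t$ and $b$ and the signs exchanged). Apply Lemma \ref{l5} to the bottom step $\gamma(n+1,n+2)$: this gives $H^t_{d-1}(n) = H_d(n+1)$. Now I want to relate $H_d(n+1)$ to quantities at level $n$ or to $H^t_{d-1}(n), H^b_{d-1}(n)$. One has $H_d(n+1) = \sum_\alpha \tau^{(n+1)}_\alpha$; using the elementary step formula from $T^{(n)}$ to $T^{(n+1)}$ (a top step with winner $\alpha_w$, $\pi_t^{(n)}(\alpha_w)=d$), only the component $\tau_{\alpha_w}$ changes, decreasing by $\tau^{(n)}_{\alpha_\ell}$ of the loser, so $H_d(n+1) = H_d(n) - \tau^{(n)}_{\alpha_\ell}$; alternatively, and more usefully, one has the identity $H_d(n) = H^t_{d-1}(n) + \tau^{(n)}_{\alpha_t}$ where $\alpha_t = (\pi_t^{(n)})^{-1}(d)$, and similarly $H_d(n) = H^b_{d-1}(n) + \tau^{(n)}_{\alpha_b}$. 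The key extra input is that along $\SW$ we also control the individual $\tau^{(n)}_\alpha$: by Lemma \ref{l2}, $|\tau^{(n)}_\alpha| \le H(n)$, but this only gives a bound by $H(-\infty)$, not convergence to zero, so I need to be more careful.

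The cleaner route: from Lemma \ref{l5} applied at $n$ (top type), $H^b_{d-1}(n-1) = H_d(n)$, i.e. $H_d(n) = H^b_{d-1}(n-1)$, and $n-1$ — hmm, $n-1$ need not be a switching time. Let me instead use: since $n \in \SW$ and $\gamma(n,n+1)$ is top while $\gamma(n+1,n+2)$ is bottom, Lemma \ref{l5} at $n+1$ (bottom) yields $H^t_{d-1}(n) = H_d(n+1)$ and $H^b_{d-1}(n) = H^b_{d-1}(n+1)$. Also $n+1 \in \SW$ is false in general; however $\gamma(n,n+1)$ top means Lemma \ref{l5} at $n$ gives $H^t_{d-1}(n-1) = H^t_{d-1}(n)$ and $H^b_{d-1}(n-1) = H_d(n)$. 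So $H_d(n) = H^b_{d-1}(n-1)$. Now I claim $n-1$ can be taken in $\SW$ after possibly passing to a subsequence, or better: iterate. If $\gamma(n-1,n)$ is also top, then $H^b_{d-1}(n-1)$ equals, by Lemma \ref{l5} at $n-1$, the value $H^b_{d-1}(n-2)$... this stays constant through a run of top steps and equals $H_d(m)$ where $m$ is the switching time ending that run. Thus for each $n \in \SW$, there is a switching time $m \le n$ (the start of the maximal top-run preceding, or $n$ itself) with $H_d(n) = H^b_{d-1}(m)$ or $H^t_{d-1}(m)$, and $m \to -\infty$ as $n \to -\infty$. Then Lemma \ref{l6} forces $H_d(n) \to 0$. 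The main obstacle is bookkeeping the runs correctly — making precise which switching time $m$ controls $H_d(n)$ and confirming $m\to-\infty$ — but once the run structure is set up, convergence is immediate from Lemma \ref{l6}. I would write this as: trace backward from $n$ through the maximal block of consecutive steps of the same type as $\gamma(n-1,n)$, use Lemma \ref{l5} repeatedly to see $H_d(n)$ equals $H^{b}_{d-1}$ or $H^{t}_{d-1}$ evaluated at the switching time at the far end of that block, then apply Lemma \ref{l6}.
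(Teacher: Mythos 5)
Your proposal goes astray at the chain-back step. You write ``If $\gamma(n-1,n)$ is also top, then $H^b_{d-1}(n-1)$ equals, by Lemma~\ref{l5} at $n-1$, the value $H^b_{d-1}(n-2)\ldots$ this stays constant through a run of top steps,'' but Lemma~\ref{l5} says the opposite: across a top-type step the quantity preserved from level $n-1$ to level $n-2$ is $H^t_{d-1}$, while $H^b_{d-1}(n-2)=H_d(n-1)$ is a \emph{new} value, in general different from $H^b_{d-1}(n-1)=H_d(n)$. So $H^b_{d-1}$ is not constant along a run of top steps --- you have conflated the two halves of Lemma~\ref{l5}, since $H^b_{d-1}$ is preserved across a \emph{bottom} step, precisely the opposite case from the one you are in --- your trace-back does not carry $H_d(n)=H^b_{d-1}(n-1)$ to a switching time, and Lemma~\ref{l6} is never legitimately invoked. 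The alternative identity you noticed first, $H^t_{d-1}(n)=H_d(n+1)$, together with Lemma~\ref{l6} only yields $H_d(n+1)\to 0$ along $n\in\SW$; since $\{n+1:n\in\SW\}$ is not $\SW$, that does not give Lemma~\ref{l7} either.

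The paper's argument is of a different nature and does not trace through runs at all. Using the elementary-step identity for $H(n)$ established at the start of Section~\ref{secbac}, it writes $H_d(n)$ as the increment $H(n+1)-H(n)$ plus a correction term whose sign is opposite to that of $H_d(n)$ in the case under consideration; because the sequence $H(n)$ is monotone and convergent as $n\to-\infty$, $H(n+1)-H(n)\to 0$, and the sign-definiteness of the correction then forces $|H_d(n)|\le H(n+1)-H(n)\to 0$ directly. Any repair of your approach would have to find a genuinely different mechanism for controlling $H_d(n)$ itself (rather than only $H^t_{d-1}(n)$ or $H^b_{d-1}(n)$), and Lemma~\ref{l5} simply does not propagate $H^b_{d-1}$ across top runs the way you need.
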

\begin{proof}
Let $n \in \SW$. Assume for instance that $\gamma_{n+1}=\gamma(n,n+1)$ is of top type and $\gamma_{n+2}=\gamma(n+1,n+2)$ is of bottom type. Let $\alpha_w$ be the winner of  $\gamma_{n+2}$  and define $k:= \pi_b(n+1)(\alpha_w) \in [1,d)$. We have 
$$ H_d(n) = H(n+1) - H(n) + H^b_{k+1}(n+1) . $$
When $n$ is large, $H(n)$ and $H(n+1)$ are close,
 $H_d(n)$ is positive (as $\gamma(n)$ is of bottom type) and $H^b_{k+1}(n+1)$ is negative (even when $k=d-1$ as $\gamma_{n+2}$ is of top type). Therefore $H_d(n)$ is small.
\end{proof}

\subsection{The subset $\A' \subset \A$}

Let $\A'$ be the set of letters in $\A$ which are the winners of at most finitely many arrows of $\ug$. After cutting a terminal subpath off $\ug$, we may assume that 
none of the letters  in $\A'$ is the winner of an arrow of $\ug$. Therefore the quantities
$\tau_\alpha^{(n)}$, for $\alpha \in \A'$, do not depend on $n$. Observe that, as $M$ has no horizontal saddle-connection, these quantities are \emph{different from $0$}.

\smallskip

\begin{lemma}\label{l8}
For any $\alpha \in \A \setminus \A'$, we have 
$$ \lim_{n \to -\infty, n\in \SW} \tau_\alpha^{(n)} =0.$$
\end{lemma}

\begin{proof}
 Let $\alpha \in  \A \setminus \A'$. Between consecutive switching times, all arrows have the same winner. Let $0 >  n_1>\ldots >n_k> \ldots$ be the switching times such that $\alpha$ is the winner of $\gamma(n_k,n_{k+1})$, and let $n'_k <n_k$ be the switching 
 time immediately inferior to $n_k$. The value of  $\tau_\alpha^{(n)}$ stays the same for
 $n_{k+1} \leq n \leq n'_k$.  On the other hand, it follows from Lemmas \ref{l6} and \ref{l7} that 
 
 $$
 \lim_{k \to +\infty} \tau_\alpha^{(n_k)} =0.
 $$
\end{proof}

\begin{remark}
At this stage of the proof we \emph{do not claim} that $\tau_\alpha^{(n)}$ is small for $n'_k <n<n_k$.
\end{remark}
 
We have to prove that $\A'$ is empty. \emph{We assume by contradiction that $\A'$ is not empty}. 

 \begin{lemma}\label{l9}
One has 
$$
\sum_{\alpha \in \A'} \tau_\alpha =0.
$$
\end{lemma}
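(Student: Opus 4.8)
\textbf{Proof plan for Lemma \ref{l9}.}

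The plan is to exploit the convergence of $H(n)$ as $n \to -\infty$ together with the behaviour of the quantity $H_d(n) = H_d(\pi^{(n)}, \lambda^{(n)}) = \sum_\alpha \tau_\alpha^{(n)}$ along switching times. First I would recall that by Lemma~\ref{l7} we have $H_d(n) \to 0$ as $n \to -\infty$ with $n \in \SW$. The key point is to decompose this sum according to whether the letter lies in $\A'$ or not: for $n \in \SW$,
$$
H_d(n) = \sum_\alpha \tau_\alpha^{(n)} = \sum_{\alpha \in \A'} \tau_\alpha^{(n)} + \sum_{\alpha \in \A \setminus \A'} \tau_\alpha^{(n)} = \sum_{\alpha \in \A'} \tau_\alpha + \sum_{\alpha \in \A \setminus \A'} \tau_\alpha^{(n)},
$$
where in the last equality I used that (after truncating $\ug$ as arranged) the components $\tau_\alpha^{(n)}$ with $\alpha \in \A'$ are independent of $n$. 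Now I would pass to the limit along the infinite set $\SW$ (which is infinite by Lemma~\ref{l3}): the left-hand side tends to $0$ by Lemma~\ref{l7}, the first sum on the right is the constant $\sum_{\alpha \in \A'} \tau_\alpha$, and each term of the second sum tends to $0$ by Lemma~\ref{l8} (a finite sum of quantities each going to $0$ along $\SW$ still goes to $0$). Hence $\sum_{\alpha \in \A'} \tau_\alpha = 0$, as claimed.

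I do not anticipate a genuine obstacle here — the lemma is essentially a bookkeeping consequence of the estimates already assembled (Lemmas~\ref{l7} and~\ref{l8}). The only point requiring a word of care is that $\A'$ is finite (it is a subset of the finite alphabet $\A$), so that the limit of the finite sum $\sum_{\alpha \in \A \setminus \A'} \tau_\alpha^{(n)}$ along $\SW$ is indeed the sum of the limits; and that $\SW$ is infinite, so the limit along it is meaningful. Both facts are already available. This lemma is the step that converts the ``smallness along switching times'' information into an exact linear relation on the frozen coordinates $\{\tau_\alpha\}_{\alpha \in \A'}$, which will be the source of the contradiction in the subsequent argument (the relation $\sum_{\alpha \in \A'}\tau_\alpha = 0$ will eventually be shown incompatible with $M$ having no horizontal saddle connection, since the $\tau_\alpha$, $\alpha \in \A'$, are all nonzero).
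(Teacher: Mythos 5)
Your proof is correct and follows essentially the same route as the paper: decompose $H_d(n)=\sum_{\alpha\in\A'}\tau_\alpha+\sum_{\alpha\in\A\setminus\A'}\tau_\alpha^{(n)}$ along switching times, note the first piece is constant, and let the second piece vanish by Lemma~\ref{l8}. The only cosmetic difference is that you invoke Lemma~\ref{l7} ($H_d(n)\to 0$ along $\SW$) whereas the paper instead uses the weaker observation that $H_d(n)$ alternates in sign along switching times, which likewise forces the constant to be zero; both are immediate given Lemmas~\ref{l7}--\ref{l8} and lead to the same conclusion.
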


\begin{proof}
Indeed, as the switching time $n$ goes to $-\infty$, the quantity $H_d(n)$ is alternately positive and negative and the $\tau_{\alpha}^{(n)}$ with $\alpha \in \A \setminus \A'$ converge to zero.
\end{proof}

\subsection{Decompositions of $\A \setminus \A'$}

Let $n$ be a switching time. Define

$$ 
\A^t_0(n) = \{ \alpha \in \A \setminus \A' \ \vert \sum_{{\Small \begin{array}{cc}\beta \in \A'\\ \pi_t^{(n)} (\beta) < \pi_t^{(n)} (\alpha)\end{array}}} \tau_\beta =0\},
$$
 
 $$ 
\A^t_{>0}(n) = \{ \alpha \in \A \setminus \A' \ \vert \sum_{{\Small \begin{array}{cc}\beta \in \A'\\ \pi_t^{(n)} (\beta) < \pi_t^{(n)} (\alpha)\end{array}}} \tau_\beta >0\},
$$
$$ 
\A^b_0(n) = \{ \alpha \in \A \setminus \A' \ \vert \sum_{{\Small\begin{array}{cc}\beta \in \A'\\ \pi_b^{(n)} (\beta) < \pi_b^{(n)} (\alpha)\end{array}}} \tau_\beta =0\},
$$
$$ 
\A^b_{<0}(n) = \{ \alpha \in \A \setminus \A' \ \vert \sum_{{\Small\begin{array}{cc}\beta \in \A'\\ \pi_b^{(n)}) (\beta) < \pi_b^{(n)} (\alpha)\end{array}}} \tau_\beta <0\},
$$

\begin{lemma}\label{l10}
For $n$ negative enough, one has $\A^t_0(n) = \A^b_0(n)=\A \setminus \A'$
\end{lemma}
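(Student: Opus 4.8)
\textbf{Proof plan for Lemma \ref{l10}.}

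The plan is to show that for all sufficiently negative switching times $n$, no letter $\beta \in \A'$ can occur strictly between the extremal positions in either the top or bottom ordering, so that the ``partial $\A'$-sums'' appearing in the definitions of $\A^t_0(n)$ and $\A^b_0(n)$ are all empty, hence zero. Concretely, first I would use the fact, established at the end of \S~\ref{secbac} above (Lemma \ref{l9}), that $\sum_{\alpha \in \A'} \tau_\alpha = 0$, combined with the hypothesis that $M$ has no horizontal saddle connection, which forces each individual $\tau_\alpha$ with $\alpha \in \A'$ to be nonzero. So $\A'$ contains at least one letter with $\tau_\alpha > 0$ and at least one with $\tau_\alpha < 0$ whenever $\A' \neq \emptyset$; if $\A' = \emptyset$ the statement is vacuous, so assume $\A' \neq \emptyset$.

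The key step is to control the position of the letters of $\A'$ inside the permutations $\pi^{(n)}$ for $n \in \SW$ very negative. Recall that letters in $\A'$ are never winners (after truncating an initial segment of $\ug$), and that a Rauzy--Veech step moves only the loser of the arrow. I would track how the relative order of a fixed pair $\beta \in \A'$, $\alpha \in \A \setminus \A'$ can change: it changes only when one of them is the loser of an elementary arrow, and since $\beta \in \A'$ is never a winner, such a change is driven entirely by arrows whose winner is in $\A \setminus \A'$. Using Lemma \ref{l8}, the quantities $\tau_\alpha^{(n)}$ for $\alpha \in \A \setminus \A'$ tend to $0$ along switching times, while the $\tau_\beta$ for $\beta \in \A'$ are constant and nonzero. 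Then I would argue that if some $\beta \in \A'$ lay in the ``interior'' of the top (resp. bottom) ordering relative to a given $\alpha \in \A \setminus \A'$ at infinitely many switching times, the corresponding partial sum $H^t_k(n)$ (resp. $H^b_k(n)$) at an appropriate index $k$ would be bounded below (resp. above) by a positive (resp. negative) constant coming from the $\tau_\beta$, contradicting Lemma \ref{l6} (which gives $H^t_{d-1}(n), H^b_{d-1}(n) \to 0$ along $\SW$) together with Lemma \ref{l7} ($H_d(n) \to 0$ along $\SW$) and the sign structure of the Veech inequalities. This will show that for $n$ negative enough all letters of $\A'$ must sit at one extreme end of the top ordering and at one extreme end of the bottom ordering, so that for every $\alpha \in \A \setminus \A'$ the set $\{\beta \in \A' : \pi_t^{(n)}(\beta) < \pi_t^{(n)}(\alpha)\}$ is either empty or all of $\A'$, and likewise on the bottom; in the latter case the sum is $\sum_{\A'}\tau_\beta = 0$ by Lemma \ref{l9}. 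In all cases the defining sums vanish, so $\A^t_0(n) = \A^b_0(n) = \A \setminus \A'$.

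The main obstacle I expect is the bookkeeping of \emph{where exactly} the block $\A'$ sits and how the loser positions interleave with it across a sequence of Rauzy--Veech steps between consecutive switching times: a priori $\tau_\alpha^{(n)}$ for $\alpha \in \A \setminus \A'$ need only be small \emph{at} switching times (cf. the Remark after Lemma \ref{l8}), not in between, so the estimate must be applied carefully at the switching instants and then propagated. I would handle this by noting that between two consecutive switching times all arrows share a winner, hence the permutation restricted to the complement of that single winner is essentially frozen, which pins down the relative order of the remaining letters — in particular of all of $\A'$ and of all but one letter of $\A \setminus \A'$ — and lets the switching-time estimates suffice. Once the extremal placement of $\A'$ is secured, the conclusion is immediate from the definitions and Lemma \ref{l9}.
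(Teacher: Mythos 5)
Your plan diverges from what the lemma actually asserts and from what the paper's proof delivers, and the gap is genuine.

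First, you aim at a strictly stronger conclusion than the lemma states: that for $n$ negative enough, all letters of $\A'$ sit at one extreme of the top ordering and one extreme of the bottom ordering. The lemma only asserts that the $\A'$-partial sums appearing in the definitions of $\A^t_0(n)$ and $\A^b_0(n)$ vanish for every $\alpha\in\A\setminus\A'$. That conclusion is equivalent to saying that, between consecutive $\A\setminus\A'$-letters in the top (resp.\ bottom) ordering, the contiguous $\A'$-block has zero $\tau$-sum. It does \emph{not} require $\A'$ to be a single block at an end, and indeed the paper's subsequent use of Lemma~\ref{l10} in the final step of the proof of Proposition~\ref{p7} only extracts one contiguous zero-sum $\A'$-block (starting at $m$) and its intermediate positive partial sums; it never asserts or uses that $\A'$ is at an extreme. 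So you are trying to prove something you do not need, and, more importantly, something your sketch does not establish.

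Second, the claimed contradiction is not there. Lemma~\ref{l6} controls only $H^t_{d-1}(n)$ and $H^b_{d-1}(n)$, and Lemma~\ref{l7} controls only $H_d(n)$; neither says anything about $H^t_k(n)$ for a general index $k$ in the interior. And a \emph{positive} $\A'$-partial sum at some $\alpha$ (i.e.\ $\alpha\in\A^t_{>0}(n)$) is perfectly compatible with the Veech inequalities ($H^t_k>0$) and with Lemmas~\ref{l6}--\ref{l7}; there is no sign contradiction to exploit there. The one direction that does come for free from the suspension conditions is that a \emph{negative} $\A'$-partial sum in the top row (resp.\ positive in the bottom row) is eventually impossible because the $\A\setminus\A'$ contributions vanish along $\SW$ by Lemma~\ref{l8}; this gives the decomposition $\A\setminus\A'=\A^t_0(n)\sqcup\A^t_{>0}(n)=\A^b_0(n)\sqcup\A^b_{<0}(n)$, which is the easy half. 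The hard half — that $\A^t_{>0}(n)$ and $\A^b_{<0}(n)$ are eventually empty — is exactly what your sketch does not address. The paper handles it with a monotonicity ("shrinking-set") argument across consecutive switching times $n_{k+1}<n_k$: if all arrows in between are of top type, $\pi_t$ is unchanged; if they are of bottom type, one case-splits on whether the winner $\alpha_w$ lies in $\A^t_0(n_k)$ or $\A^t_{>0}(n_k)$, and in the second case one tracks the loser of the arrow at $n_{k+1}+1$ to show that $\A^t_0(n_{k+1})\supsetneq\A^t_0(n_k)$ when the cardinality changes. Then, since a letter that stayed in $\A^t_{>0}(n)$ forever would have to win infinitely often with the set strictly shrinking each time it wins a bottom block, one gets emptiness. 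Your closing paragraph gestures at "bookkeeping" of losers between switching times, but the resolution you propose (that the permutation off the single winner is "essentially frozen") does not recover this case analysis or the monotonicity of the sets $\A^t_{>0}(n)$, which is the crux.
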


\begin{proof}
We first observe that, for $n$ negative enough, we must have

$$
\A \setminus \A' = \A^t_0(n) \sqcup \A_{>0}^t(n) =  \A^b_0(n) \sqcup \A_{<0}^b(n).
$$ 

Indeed, if some $\alpha \in \A \setminus \A'$ satisfies 

$$
\sum_{{\Small \begin{array}{cc}\beta \in \A'\\ \pi_t(n) (\beta) < \pi_t(n) (\alpha)\end{array}}} \tau_\beta <0
$$ 
with $n\in \SW$ large and negative, we would contradict the suspension data conditions
as $\tau_\beta(n)$ goes to zero  for $\beta \in \A \setminus \A'$  (Lemma \ref{l8}). Therefore we have $\A \setminus \A' = \A^t_0(n) \sqcup \A_{>0}^t(n)$ for $n$ negative enough.  We prove that  $\A \setminus \A' =  \A^b_0(n) \sqcup \A_{<0}^b(n)$ in the same way.
\smallskip

To prove that $\A_{>0}^t(n) $ is eventually empty, we prove that this set is shrinking
as $n\in \SW$ goes to $-\infty$. 
\smallskip

Consider $n_{k+1} <n_k$ two consecutive switching times.

\smallskip
If $\gamma_{n+1}=\gamma(n,n+1)$ is of top type for $n_{k+1} < n \leq n_k$, then $\pi_t^{(n_k)} = \pi_t^{(n_{k+1})}$ hence $\A_{>0}^t(n_k)= \A_{>0}^t(n_{k+1}) $.
Observe that, by Lemma \ref{l9}, the winner of the arrows $\gamma_{n+1}$, for $n_{k+1} < n \leq n_k$, belongs to $\A_0^t(n_k)$. 

\smallskip

Assume that $\gamma_{n+1}=\gamma(n,n+1)$ is of bottom type for $n_{k+1} < n \leq n_k$. 
Let $\alpha_w$ be the winner of the arrows $\gamma_{n+1}$, $n_{k+1} < n \leq n_k$. 
Define $m:= \pi_t^{(n_k)}(\alpha_w)$. The successive losers of the arrows 
$\gamma_{n_k -j }$, $j=1,\ldots, n_{k}-n_{k+1} $ are the letters $\alpha(\bar j)$ with $\pi_t^{(n_k)}(\alpha(\bar j)) = m  +\bar j$, $1 \leq \bar j \leq d-m$ and $j \equiv \bar j$ mod.$(d-m)$.
There are two cases.

\begin{itemize}
\item Assume that  the letter $\alpha_w$ belongs to $\A^t_0(n_k)$. Then the loser of 
$\gamma_{n_{k+1}+2}$, which is also the winner of $\gamma_{n_{k+1}+1}$, also belongs to $\A^t_0(n_k)$ (from Lemmas \ref{l6} and \ref{l7}). One has $\A_{>0}^t(n_k)= \A_{>0}^t(n_{k+1}) $.
\item Assume that  the letter $\alpha_w$  belongs to $\A^t_{>0}(n_k)$. Then we must have (from Lemmas \ref{l6} and \ref{l7}) that the loser $\alpha_\ell$ of $\gamma_{n_{k+1} +2}$, which is also the winner of $\gamma_{n_{k+1}+1}$, satisfies $\alpha_\ell \in \A \setminus \A'$, $\pi_t^{(n_k)}(\alpha_w) < \pi_t^{(n_k)}(\alpha_\ell)\leq d-2$ and
$$
\sum_{{\Small \begin{array}{cc}\beta \in \A'\\ \pi_t^{(n_k)}(\alpha_w)< \pi_t^{(n_k)} (\beta) < \pi_t^{(n_k)} (\alpha_\ell)\end{array}}} \tau_\beta =0
$$
Therefore $\alpha_\ell$ belongs to $\A^t_{>0}(n_k)$ and to $\A^t_0(n_{k+1})$. On the other hand, no element $\alpha$ of $\A_0^t(n_k)$ satisfies $ \pi_t^{(n_k)}(\alpha_w)< \pi_t^{(n_k)} (\alpha) < \pi_t^{(n_k)} (\alpha_\ell)$, therefore $\A_0^t(n_k) \subset \A_0^t(n_{k+1})$.

\end{itemize}

We now can conclude that $\A^t_{>0}(n)$, $n\in \SW$, is eventually empty. Indeed, a letter which would belong to $\A^t_{>0}(n)$ for all $n \in \SW$ is the winner of $\gamma_n$ for infinitely many $\gamma_n$, $n \in \SW$. But we have seen that it cannot be the winner of infinitely many arrows of top type, and that each time that it is the winner of an arrow of bottom type, the cardinality of $\A^t_{>0}(n)$ is decreasing.

\smallskip

We prove similarly that $\A_{<0}^b(n)$, $n \in \SW$, is eventually empty.
\end{proof}

\subsection{Proof of Proposition~\ref{p7}}

 Take $n\in \SW$ large enough such that $\A_0^t(n) = \A \setminus \A'$. Let $m$ be the smallest value of $\pi_t^{(n)}(\alpha)$, for $\alpha \in \A'$. From lemma \ref{l10}, there exists $m' >m$ such that $\beta \in \A'$ for $m \leq \pi_t^{(n)}(\beta) \leq m'$, and
 $$
 \sum_{m \leq  \pi_t(n)(\beta) \leq m'} \tau_\beta =0 , \quad \sum_{m \leq  \pi_t^{(n)}(\beta) \leq m_1} \tau_\beta >0, \quad \forall m \leq m_1 <m'.
 $$
  Then we have an horizontal saddle connection!

\section{Adaptation of the proofs of results on  cohomological equation for absolute  Roth type i.e.m.}\label{app:proofs}

 Our goal in this Appendix is to check that the conclusion of Theorem 3.10 of \cite{MY}, namely Theorem~\ref{thmHolder1}, as well as the conclusions of Theorem 3.11 and Theorem 3.22 of \cite{MY}, are still valid under the assumption that the i.e.m is of {restricted} \emph{absolute} Roth type (instead that under the stronger requirement of being of restricted Roth type). The generalization of  Theorem 3.10 of \cite{MY} was already stated as Theorem \ref{thmHolder1} in \S~\ref{sec:results}.  We begin the Appendix by  stating the generalizations of Theorems 3.11 and 3.22 of \cite{MY} (stated as Theorem~\ref{thmcohom2} and~\ref{thmHolder2} below), then review their proofs 
indicating which (small) modifications are needed.
\subsection{Estimates for special Birkhoff sums with $C^1$ data}\label{ssC1}
The following result extends Theorem 3.11 of \cite{MY}, which was proved for i.e.m.\ of restricted Roth type,  to i.e.m. of restricted \emph{absolute} Roth type.
\begin{theorem}\label{thmcohom2}({\bf Generalization of Theorem 3.11 of \cite{MY}})
Let $T$ be an i.e.m. which satisfies conditions  (a'), (c), (d) of sections \ref{ssRoth}  and \ref{ssabs}. Let  
$\Gamma_u(T)$ be a subspace of $\Gamma_\partial (T)$
which is supplementing $\Gamma_s(T)$ in $\Gamma_\partial (T)$. The operator $L_1: \varphi \mapsto \chi$ of Theorem \ref{thmHolder1} extends to a bounded operator
 from $C^1_{0} (\sqcup I_\alpha (T))$ to $\Gamma_u(T)$. For $\varphi \in C^1_{0} (\sqcup I_\alpha (T))$, the function $\chi= L_1(\varphi) \in \Gamma_u(T)$ is characterized by the property that the special Birkhoff sums of $\varphi -\chi$ satisfy, for any $\tau >0$

 $$ \Vert S(0,n) (\varphi -\chi) \Vert_{C^0} \leq C(\tau) \Vert \varphi \Vert_{C^1} \Vert B(0,n)\Vert^\tau .$$ 

\end{theorem}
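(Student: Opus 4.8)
The statement is the generalization of Theorem 3.11 of \cite{MY} (the $C^1$-estimate for special Birkhoff sums) from restricted Roth type to restricted \emph{absolute} Roth type. The strategy is to revisit the original proof in \cite{MY} and isolate exactly where Condition (a) of Roth type is invoked, then replace each such use by the analogous consequence of Condition (a)' established in \S~\ref{sec:crucialestimates} of this paper. The original proof proceeds in roughly three movements: (i) one first proves, for $\varphi \in C^1_0(\sqcup I_\alpha)$, a \emph{decomposition} $\varphi = \chi + \varphi_0$ where $\chi \in \Gamma_u(T)$ is the candidate correction and $\varphi_0$ has good vanishing properties (mean zero on each induced subinterval at all levels, or something close to it); (ii) one controls the $C^0$-norm of $\varphi_0$, and more generally of the special Birkhoff sums $S(0,n)\varphi_0$, by exploiting that the derivative is bounded and that $S(0,n)$ restricted to $\Gamma(T)$ is the matrix $B(0,n)$; (iii) one feeds in the coherence condition (c) and the hyperbolicity condition (d) to handle the splitting $\Gamma(T) = \Gamma_s(T) \oplus \Gamma_u(T) \oplus (\text{complement})$ and obtain the final subpolynomial bound $\Vert S(0,n)(\varphi - \chi)\Vert_{C^0} \leq C(\tau)\Vert\varphi\Vert_{C^1}\Vert B(0,n)\Vert^\tau$.

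First I would recall that the only place Condition (a) enters the proof of Theorem 3.11 in \cite{MY} is through the length estimates for induced subintervals: specifically, the upper bound $|I_\alpha^{(n)}| \leq C_\tau |I^{(0)}|\,\Vert B(0,n)\Vert^{-1+\tau}$ (Proposition \ref{propo1}, upper half) is used to estimate the oscillation of $\varphi$ on each $I_\alpha^{(n)}$ — since $\varphi'$ is bounded, the oscillation of $\varphi$ on $I_\alpha^{(n)}$ is $O(|I_\alpha^{(n)}|)$, hence $O(\Vert B(0,n)\Vert^{-1+\tau})$ — and this is what drives the bound on $S(0,n)\varphi_0$ when $\varphi_0$ has zero mean on each $I_\alpha^{(n)}$: summing the oscillations over the $B_\alpha(0,n) = \sum_\beta B_{\alpha\beta}(0,n)$ iterates that constitute the return orbit gives a factor $\Vert B(0,n)\Vert$ times $\Vert B(0,n)\Vert^{-1+\tau} = \Vert B(0,n)\Vert^\tau$. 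Now, the key observation is that this upper bound is \emph{exactly} Corollary \ref{c1} of the present paper, which is proved from Condition (a)' alone (via Proposition \ref{p3} and Lemma \ref{l1}). So the part of the argument that genuinely needs the matrix-growth hypothesis goes through verbatim with (a) replaced by (a)', \emph{provided the decomposition step (i) and the argument producing $\varphi_0$ with zero mean on each $I_\alpha^{(n)}$ does not itself secretly use the lower length bound}. I would check this carefully: in \cite{MY} the decomposition is built by successively subtracting, at each level, the average of the current function on each subinterval and tracking the correction in the cocycle; this uses only the integrability/cocycle relations $\int_{I^{(n)}} S(0,n)\psi = \int_{I^{(0)}}\psi$ and the coherence condition (c) to sum the resulting series — none of which involves Condition (a) or (a)'. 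The remaining input, handling the $\Gamma_u$ versus $\Gamma_s$ splitting, uses (c) and (d), which are \emph{unchanged} between restricted Roth type and restricted absolute Roth type.

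Therefore the proof outline is: (1) reproduce the decomposition $\varphi = \chi + \varphi_0$ from \cite{MY}, noting it depends only on (c), (d) and the cocycle relations; (2) estimate, using Corollary \ref{c1} (which is where Condition (a)' replaces (a)), the oscillation of $\varphi_0$ on each $I_\alpha^{(n)}$, and hence $\Vert S(0,n)\varphi_0\Vert_{C^0} \leq C(\tau)\Vert\varphi\Vert_{C^1}\Vert B(0,n)\Vert^\tau$; (3) conclude that $\chi = L_1(\varphi)$ is characterized by this subpolynomial growth, since any two corrections differing by a nonzero element of $\Gamma_u(T)$ would have special Birkhoff sums growing like a genuine power of $\Vert B(0,n)\Vert$ (this uses (d) and the fact that $\Gamma_u$ meets $\Gamma_s$ trivially, together with the spectral-gap/coherence control), which contradicts the bound; (4) verify boundedness of $L_1: C^1_0(\sqcup I_\alpha) \to \Gamma_u(T)$ by tracking constants through (1)–(3). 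The main obstacle — really the only delicate point — is step (3), the \emph{uniqueness/characterization} of $\chi$: one must be sure that for $0 \neq \chi' \in \Gamma_u(T) \subset \Gamma_\partial(T)$, the growth of $\Vert B(0,n)\chi'\Vert$ is bounded \emph{below} by $\Vert B(0,n)\Vert^{\sigma'}$ for some $\sigma' > 0$, which is precisely what the hyperbolicity condition (d) combined with coherence (c) is designed to give, exactly as in \cite{MY}; since (c) and (d) are identical in the absolute setting, this transfers without change. I expect no new difficulties beyond carefully confirming that the decomposition machinery of \cite{MY} is insensitive to the weakening from (a) to (a)', which is why, for completeness, I would present this as a short commentary pointing to the relevant pages of \cite{MY} rather than re-deriving everything.
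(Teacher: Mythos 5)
Your overall strategy — trace through the proof of Theorem 3.11 in \cite{MY}, replace the use of condition (a) by its absolute analogue (a)', and observe that the upper length bound (Corollary~\ref{c1}) is the key surviving estimate while the lower bound is never used — is exactly what the paper does, and your observation that conditions (c) and (d) are literally identical in the two settings, so the characterization/uniqueness argument for $\chi$ transfers verbatim, is also correct.

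However, there is a genuine gap in the middle of your argument, at the place you yourself flagged as needing care. You assert that the decomposition step producing $\varphi = \chi + \varphi_0$ "uses only the integrability/cocycle relations $\int_{I^{(n)}}S(0,n)\psi = \int_{I^{(0)}}\psi$ and the coherence condition (c) to sum the resulting series — none of which involves Condition (a) or (a')." This is false: coherence (c) gives only \emph{polynomial} bounds $\Vert B_s(m,n)\Vert = \mathcal{O}(\Vert B(0,n)\Vert^\tau)$ and $\Vert (B_\flat(m,n))^{-1}\Vert = \mathcal{O}(\Vert B(0,n)\Vert^\tau)$, which cannot by themselves sum the telescoping series of level-by-level corrections. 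To sum expressions like $\sum_{\ell} \Vert B(0,n_\ell)\Vert^{-\delta}$ (and the complementary partial sums $\sum_{\ell\leq k}\Vert B(0,n_\ell)\Vert^{\delta}$) one needs a quantitative growth estimate on $\Vert B(0,n_\ell)\Vert$ along the accelerated subsequence, and this does come from the matrix-growth hypothesis. Under condition (a) of ordinary Roth type this growth is immediate from the fact that $B(\tilde n_{k-1},\tilde n_k)$ is a strictly positive integer matrix, so its norm is at least $d$ and the sequence doubles in a bounded number of steps; but once the accelerated times $(n_k)$ are defined via the cones $\mathcal{C}(\pi)$ rather than full positivity, the matrices $B(n_{k-1},n_k)$ need not be positive and this argument breaks. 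One must instead re-derive the growth from Lemma~\ref{l1} (the eccentricity bound on the cones $\mathcal{C}(\pi)$) combined with condition (a)'. This is exactly what Propositions~\ref{propo3} and~\ref{propo4} do, and the paper explicitly flags these as used "at many places in the proof of Theorems 3.10 and 3.11." Your proposal would need to prove these summability estimates (or something equivalent) from (a)' before the decomposition step can be completed; pointing to \cite{MY} is not enough, because the corresponding step there relies on a consequence of positivity that (a)' does not directly deliver.
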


 The inequality of the theorem for special Birkhoff sums implies a similar inequality for general Birkhoff sums:
\begin{corollary}\label{remtime} For any $T$ and $\varphi$ as in Theorem \ref{thmcohom2}, for all $\tau>0$, we have 
$$ \Vert \sum_{j=0}^{N-1} (\varphi -\chi) \circ T^j \Vert_{C^0} \leq C(\tau) \Vert \varphi \Vert_{C^1} N^\tau .$$
\end{corollary}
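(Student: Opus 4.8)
The statement is a standard "special Birkhoff sums control general Birkhoff sums" passage, so the plan is to deduce the estimate on $\sum_{j=0}^{N-1}(\varphi-\chi)\circ T^j$ from the estimate $\Vert S(0,n)(\varphi-\chi)\Vert_{C^0}\leq C(\tau)\Vert\varphi\Vert_{C^1}\Vert B(0,n)\Vert^{\tau}$ of Theorem~\ref{thmcohom2}, together with the crucial length estimates of \S~\ref{sec:crucialestimates} (Corollary~\ref{c1} and Proposition~\ref{p6}) which give $\Vert B(0,n)\Vert$ comparable, up to subpolynomial factors, to $|I^{(0)}|/|I^{(n)}_\alpha|$. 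Write $\phi:=\varphi-\chi$; note $\phi\in C^1_0(\sqcup I_\alpha)$ and has zero mean (since $\chi\in\Gamma_u(T)\subset\Gamma_\partial(T)\subset\Gamma_0(T)$, and $\varphi$ has zero mean by $\varphi\in C^r_0$... actually zero mean follows because the boundary operator kills it and the mean is determined; more simply $\int\chi=0$ as $\chi$ is a coboundary-correction, so $\int\phi=\int\varphi$; one uses the normalization $\int\varphi=0$ which is part of the hypothesis $C^1_0$). The point is that $S(0,n)\phi$ is a Birkhoff sum of $\phi$ over $T=T^{(0)}$ of length $r(x)=B_\alpha(0,n)$, and these return times range over an interval of integers whose size is $\asymp \Vert B(0,n)\Vert$ up to subpolynomial error.

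\textbf{Key steps.} First I would fix $N\geq 1$ and $x\in I$ and choose $n=n(N)$ to be the largest integer such that every return time $B_\alpha(0,n)$, $\alpha\in\A$, is $\leq N$; equivalently, using that $B_\alpha(0,n)$ is non-decreasing in $n$ and that $\max_\alpha B_\alpha(0,n)$ grows (being comparable to $|I^{(0)}|/\min_\alpha|I^{(n)}_\alpha|$, hence to a power of $\Vert B(0,n)\Vert$ up to $\tau$-loss by Corollary~\ref{c1} and Proposition~\ref{p6}), one gets $\Vert B(0,n)\Vert\leq C(\tau)N^{1+\tau}$ and, at the next step, $N\leq C(\tau)\Vert B(0,n+1)\Vert\leq C'(\tau)\Vert B(0,n)\Vert^{1+\tau}$; combined, $N\asymp\Vert B(0,n)\Vert$ up to arbitrary polynomial loss. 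Second, decompose the orbit segment $x,Tx,\dots,T^{N-1}x$: from this point $y_0:=x$, the first return to $I^{(n)}$ occurs after $r_0\leq \max_\alpha B_\alpha(0,n)$ steps, landing at $y_1=T^{(n)}y_0$; iterate. This gives $\sum_{j=0}^{N-1}\phi(T^j x)=\sum_{i} S(0,n)\phi(y_i) + (\text{a remainder of at most } \max_\alpha B_\alpha(0,n)\text{ terms})$, where the number of full blocks is at most $N$. But each block contributes $S(0,n)\phi(y_i)=0$... no — $S(0,n)\phi$ need not vanish; rather $|S(0,n)\phi(y_i)|\leq \Vert S(0,n)\phi\Vert_{C^0}\leq C(\tau)\Vert\varphi\Vert_{C^1}\Vert B(0,n)\Vert^\tau$. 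The number of blocks is $\leq N/(\min_\alpha B_\alpha(0,n))\leq N\cdot C(\tau)\Vert B(0,n)\Vert^{-1+\tau}\leq C'(\tau)\Vert B(0,n)\Vert^{\tau'}$ since $N\leq C\Vert B(0,n)\Vert^{1+\tau}$; and the tail remainder is bounded by $\max_\alpha B_\alpha(0,n)\cdot\Vert\phi\Vert_{C^0}\leq C(\tau)N^{1+\tau}\cdot\Vert\varphi\Vert_{C^1}$ — wait, that is too large.

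\textbf{Correcting the bookkeeping; the main obstacle.} The subtlety, and the main technical point, is that a single block of length up to $\max_\alpha B_\alpha(0,n)\asymp N$ must itself be re-decomposed: one does not estimate $\sum_{j=0}^{N-1}\phi(T^jx)$ by a single level-$n$ renormalization but by the standard telescoping over all scales $0,1,\dots,n$, writing the orbit segment as a bounded-per-scale number of special Birkhoff sum blocks $S(k,k+1)\psi_k$ plus a uniformly bounded leftover, exactly as in the classical Rauzy–Veech "Ostrowski-type" expansion (cf.\ the proof of Theorem 3.11 in \cite{MY}). At level $k$ there are at most $\Vert B(k,k+1)\Vert\leq C$ (bounded, for Rauzy–Veech elementary steps) incomplete blocks, so the total is $\sum_{k=0}^{n}$ (bounded number) $\cdot\, \Vert S(0,k)\phi\Vert_{C^0}\leq C\sum_{k=0}^n C(\tau)\Vert\varphi\Vert_{C^1}\Vert B(0,k)\Vert^\tau\leq C(\tau)\,n\,\Vert\varphi\Vert_{C^1}\Vert B(0,n)\Vert^\tau$. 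Since $\Vert B(0,n)\Vert$ grows exponentially in Zorich time $n$ is $O(\log\Vert B(0,n)\Vert)$, so $n\Vert B(0,n)\Vert^\tau\leq C(\tau')\Vert B(0,n)\Vert^{\tau'}$ for any $\tau'>\tau$; and $\Vert B(0,n)\Vert\leq C(\tau)N^{1+\tau}$ as established. Putting it together, $\Vert\sum_{j=0}^{N-1}\phi\circ T^j\Vert_{C^0}\leq C(\tau)\Vert\varphi\Vert_{C^1}N^{\tau}$ after renaming $\tau$. The main obstacle is thus purely combinatorial: justifying that the multi-scale decomposition of an arbitrary orbit segment of length $N$ uses a bounded number of blocks per Rauzy–Veech level and that the level count $n$ is logarithmic in $N$ (hence contributes only subpolynomially); both facts are exactly as in \cite{MMY1, MY} and require no change under the absolute Roth type hypothesis, since the only inputs that actually changed — the length estimates of Corollary~\ref{c1} and Proposition~\ref{p6} and the special Birkhoff sum bound of Theorem~\ref{thmcohom2} — have already been re-established above in the absolute Roth type setting. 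I would then simply remark that the argument is identical to \cite{MY} once Theorem~\ref{thmcohom2} is granted, and state the corollary.
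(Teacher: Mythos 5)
Your overall strategy is the right one — multi-scale Ostrowski-type decomposition into special Birkhoff sum blocks, with a bounded number of incomplete blocks at each Rauzy--Veech elementary level, then summing the bound of Theorem~\ref{thmcohom2} over levels and using a growth estimate to absorb the number of levels. You also correctly self-correct the naive ``single-scale'' decomposition. However, the final bookkeeping step contains a genuine error: the claim that ``$n$ is $O(\log \Vert B(0,n)\Vert)$'' is \emph{false} when $n$ denotes Rauzy--Veech time, which is what appears in your sum $\sum_{k=0}^{n}$. It is the \emph{Zorich} time $Z(n)$ that is $O(\log\Vert B(0,n)\Vert)$; the number $n$ of elementary Rauzy--Veech steps can be much larger, since a single Zorich step of length $m$ contributes $m$ elementary steps while only multiplying $\Vert B(0,\cdot)\Vert$ by roughly $m+1$. (For instance, with Zorich step lengths $m_k\asymp k$, one gets $n\asymp k^2$ but $\log\Vert B(0,n_k)\Vert\asymp k\log k$.) You cannot trade the elementary-step decomposition (which is what makes the per-level block count bounded) for the logarithmic level count of the Zorich decomposition (in which the per-level block count is unbounded) without paying.

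What does hold, and what suffices, is the weaker estimate $n\leq C(\tau')\Vert B(0,n)\Vert^{\tau'}$ for all $\tau'>0$. This is where the (absolute) Roth type matrix-growth condition actually enters: grouping the elementary steps $0,\dots,n$ into blocks between consecutive times $\wt n_{k-1}<\wt n_k$ of Remark~\ref{remRoth0}, each block has length $\wt n_k-\wt n_{k-1}\leq d\,\Vert B(\wt n_{k-1},\wt n_k)\Vert\leq C(\tau')\Vert B(0,\wt n_{k-1})\Vert^{\tau'}$ by condition~(a) (or~(a')), and the sequence $\Vert B(0,\wt n_k)\Vert$ grows geometrically in $k$ (the $B(\wt n_{k-1},\wt n_k)$ are positive integer matrices), so the block lengths sum up to $O(\Vert B(0,n)\Vert^{\tau'})$; Proposition~\ref{propo4} gives exactly this kind of control. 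Substituting this corrected bound for your logarithmic claim, your $n\Vert B(0,n)\Vert^{\tau}\leq C\Vert B(0,n)\Vert^{\tau+\tau'}$, and the rest of your argument (including $N\asymp\Vert B(0,n)\Vert$ up to $\tau$-loss via Corollary~\ref{c1} and Proposition~\ref{p6}) closes as you intended. Note that the paper does not spell out a proof of this corollary, so there is no ``paper's route'' to compare against; the point is simply that the logarithmic claim, as written, would fail and must be replaced by the subpolynomial bound that the Roth type condition actually provides.
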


\subsection{Higher differentiability}\label{higherdiff}
To formulate a result allowing for smoother solutions of the cohomological equation which extend Theorem 3.22 in \cite{MY}, we introduce the same finite-dimensional spaces than in \cite{MMY2} (although the notations are slightly different). In the following definitions, $T$ is an i.e.m., {\Green $\kappa$} is a nonnegative integer, and  $r$ is a real number $\geq 0$.

\begin{definition}\label{defcrD} We denote by 

\begin{itemize}
\item $C^{{\Green \kappa}+r}_{{\Green \kappa}}(\sqcup I_\alpha (T))$ the subspace of 
$C^{{\Green \kappa}+r}(\sqcup I_\alpha (T))$ consisting of functions $\varphi$ which satisfy $\partial {\Green \kappa}^i \varphi =0$ for $0 \leq i \leq {\Green \kappa}$.
\item  $\Gamma({\Green \kappa},T)$ the subspace of $C^{\infty}_{{\Green \kappa}}(\sqcup I_\alpha (T))$ consisting  of functions $\chi$ whose restriction to each $[u_{i-1}(T), u_i(T)]$ is a polynomial of degree $\leq {\Green \kappa}$.
\item  $\Gamma_s ({\Green \kappa}, T)$ the subspace of $\Gamma({\Green \kappa},T)$ consisting of functions $\chi$ which can be written as $\chi = \psi \circ T - \psi$, for some function $\psi$ of class $C^{{\Green \kappa}}$ on the closure of $[u_0(T), u_d(T)]$.
\end{itemize}
\end{definition} 

The proof given in \cite{MMY2} (p.1602) that if  $T$ is of  restricted Roth type then  the dimension of $\Gamma ({\Green \kappa},T)$ is equal to
$2g + {\Green \kappa}(2g-1)$, and that the dimension of $\Gamma_s ({\Green \kappa},T)$ is equal to $g+{\Green \kappa}$, 
 also holds, essentially unmodified, under the assumption that $T$ is of  absolute restricted Roth type.

\begin{theorem}\label{thmHolder2} ({\bf Generalization of Theorem 3.22 of \cite{MY}})
Let $T$ be an i.e.m.  of absolute restricted Roth type. Let 
${\Green \kappa}$  be a nonnegative integer, and let $r$ be a real number $> 1$. Let 
$\Gamma_u({\Green \kappa},T)$ be a subspace of $\Gamma ({\Green \kappa},T)$ 
 supplementing $\Gamma_s ({\Green \kappa},T)$.  There exist a real number $\bar \delta >0$, and bounded linear operators $L_0: \varphi \mapsto \psi$ from 
 $C^{{\Green \kappa}+r}_{{\Green \kappa}} (\sqcup I_\alpha (T))$ to $C^{{\Green \kappa}+ \bar \delta} ([u_0(T),u_d(T)])$ and $L_1: \varphi \mapsto \chi$ from 
$C^{{\Green \kappa}+r}_{{\Green \kappa}} (\sqcup I_\alpha (T))$ to $\Gamma_u({\Green \kappa},T)$ such that any $\varphi \in C^{{\Green \kappa}+r}_{{\Green \kappa}} (\sqcup I_\alpha (T))$ satisfies
$$ \varphi = \chi + \psi \circ T - \psi.$$
The number $\bar \delta$ is the same than in Theorem \ref{thmHolder1}.
\end{theorem}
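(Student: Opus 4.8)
The statement (Theorem~\ref{thmHolder2}, the generalization of Theorem 3.22 of \cite{MY}) asserts that for an i.e.m.\ of absolute restricted Roth type the cohomological equation $\varphi = \chi + \psi\circ T - \psi$ admits, for data $\varphi \in C^{\kappa+r}_\kappa(\sqcup I_\alpha(T))$ with $r>1$, a solution $\psi$ of class $C^{\kappa+\bar\delta}$ together with a finite-dimensional correction $\chi \in \Gamma_u(\kappa,T)$, with $\bar\delta$ the same exponent as in Theorem~\ref{thmHolder1}. The plan is to follow the proof of Theorem 3.22 of \cite{MY} essentially verbatim, pointing out that the only places where the restricted Roth type hypothesis is used are (i) through the two length estimates on induced subintervals along the Rauzy--Veech orbit, and (ii) through the spectral gap, coherence and hyperbolicity conditions (b), (c), (d); the latter three are \emph{unchanged} in the definition of absolute Roth type, while the former have been re-established under condition (a${}'$) in \S~\ref{sec:crucialestimates} (Corollary~\ref{c1} and Proposition~\ref{p6}).

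\textbf{Key steps.} First I would recall the reduction to the case $\kappa=0$: by differentiating $\kappa$ times one reduces a $C^{\kappa+r}_\kappa$ datum to a $C^r$ problem, exactly as in \S~3.5 of \cite{MMY2} and \cite{MY}; this reduction is purely formal and uses only properties (1)--(2) of the boundary operator (Proposition~\ref{propboundary}) and the finite-dimensionality statements on $\dim\Gamma(\kappa,T)$ and $\dim\Gamma_s(\kappa,T)$, whose proofs carry over since they rely on (b), (c), (d) and not on (a). This leaves exactly the $\kappa=0$ statement, which is Theorem~\ref{thmHolder1}. Second, I would therefore concentrate on proving Theorem~\ref{thmHolder1} itself (whose proof is promised in Appendix~\ref{app:proofs}): the argument of \cite{MY} proceeds by (a) the estimate for special Birkhoff sums of $C^1$ data, i.e.\ Theorem~\ref{thmcohom2}, which isolates the correction $\chi = L_1\varphi \in \Gamma_u(T)$ so that $\Vert S(0,n)(\varphi-\chi)\Vert_{C^0} = O(\Vert B(0,n)\Vert^\tau)$; and (b) a telescoping/summation argument over Rauzy--Veech times that upgrades this subpolynomial growth to Hölder regularity of the transfer function $\psi$, using the decomposition $C^0(\sqcup I_\alpha) = \Gamma(T) \oplus \{\text{mean zero on each } I_\alpha\}$ and the space decomposition of \S~3.8 of \cite{MY}. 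In both (a) and (b) the only analytic inputs are: the upper length bound $\vert I_\alpha^{(n)}\vert \le C_\tau\vert I^{(0)}\vert\,\Vert B(0,n)\Vert^{-1+\tau}$ (Corollary~\ref{c1}), the lower length bound along the accelerated sequence $(n_\ell)$ (Proposition~\ref{p6}), and conditions (b), (c) expressed in terms of $B_s$ and $B_\flat$. Each appeal to Proposition 3.x of \cite{MMY1} or Lemma 3.y of \cite{MY} that invokes Proposition~\ref{propo1} should be replaced by the corresponding appeal to Corollary~\ref{c1} or Proposition~\ref{p6}; where \cite{MY} uses Remark~\ref{maxmin} (the $\max/\min$ comparison of lengths), one uses instead the combination of Corollary~\ref{c1} and Proposition~\ref{p6}, which together give, for any $\tau>0$, a constant $C_\tau$ with $\max_\alpha\vert I_\alpha^{(n_\ell)}\vert \le C_\tau (\min_\alpha\vert I_\alpha^{(n_k)}\vert)^{1-\tau}$ for the appropriate $k=k(\ell,\alpha)\ge\ell$. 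Third, I would record that the exponent $\bar\delta$ is built only from $r$ and the constants $\theta$ (spectral gap) and $\sigma$ (stable-space exponent), none of which changes between Roth and absolute Roth type, so $\bar\delta$ is literally the same number as in \cite{MY}; this is the content of the Remark following Theorem~\ref{thmHolder1}.

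\textbf{Main obstacle.} The genuine subtlety — and the reason the Appendix is needed at all rather than a one-line citation — is that Proposition~\ref{p6} gives the lower bound on $\vert I_\alpha^{(n)}\vert$ only at the \emph{accelerated} times $n_\ell$, and with $\Vert B(0,n)\Vert$ replaced by $\Vert B(0,n_k)\Vert$ for a data-dependent $k \ge \ell$ (the largest $k$ with $\sum_\beta B_{\beta\alpha}(n_\ell,n_k)<s$). In the proof of \cite{MY} the lower length bound is used at \emph{every} level $n$, so I must interpolate: for general $n$ one brackets $n_{\ell-1} < n \le n_\ell$ and uses that the matrices $B(n,0)$ grow sub-multiplicatively along the accelerated scale by condition (a${}'$), i.e.\ $\Vert B(0,n_\ell)\Vert \le C_\tau\Vert B(0,n_{\ell-1})\Vert^{1+\tau} \le C_\tau\Vert B(0,n)\Vert^{1+\tau}$, and similarly to control $\Vert B(0,n_k)\Vert$ in terms of $\Vert B(0,n)\Vert$ using that $k-\ell$ is bounded in terms of $d$ and $s$ (since each step of the accelerated induction past a complete block multiplies the relevant column sum, so $\sum_\beta B_{\beta\alpha}(n_\ell,n_k)<s$ forces $k-\ell$ bounded, as in the proof of Proposition~\ref{p6}). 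Threading these interpolations through the telescoping estimates of \cite{MY}, \S\S~3.6--3.8, while checking that the loss is absorbed into the arbitrary $\tau$, is the technical heart; once it is done, the rest of the proof of Theorem~\ref{thmHolder1}, of Theorem~\ref{thmcohom2}, and hence of Theorem~\ref{thmHolder2} and its Corollary~\ref{remtime}, goes through word for word as in \cite{MY} and \cite{MMY2}.
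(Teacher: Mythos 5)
Your overall plan coincides with the paper's: reduce the $\kappa>0$ case to $\kappa=0$ by the argument of \cite{MMY2} (the paper cites \S~3.2, p.~1602--1603; you cite \S~3.5 — a small slip), keep the dimension counts for $\Gamma(\kappa,T)$ and $\Gamma_s(\kappa,T)$ since they rest on (b), (c), (d), and then prove Theorem~\ref{thmHolder1} by running the proof of Theorem~3.10/3.11 of \cite{MY} with Proposition~\ref{propo1} replaced by the two estimates of \S~\ref{sec:crucialestimates}. This is exactly the structure of Appendix~\ref{app:proofs}, and your observation that $\bar\delta$ depends only on $r,\theta,\sigma$ matches the Remark after Theorem~\ref{thmHolder1}.

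The one genuine gap is in your \textbf{Main obstacle} paragraph, where you assert that $k-\ell$ is ``bounded in terms of $d$ and $s$'' and justify this by saying each accelerated step ``past a complete block multiplies the relevant column sum.'' Two problems here. First, the sequence $(n_k)$ entering condition (a${}'$) is \emph{not} defined by complete blocks (that is the sequence $\hat n_k$ used for Roth type, or $\wt n_k$ for positivity of $B$); for absolute Roth type $n_k$ is defined by the cone condition $\overline{B(n_{k-1},n)(\C(\pi^{(n_{k-1})}))}\subset\C(\pi^{(n)})\cup\{0\}$ on the $2g$-dimensional absolute cone $\C(\pi)$, which is a strictly weaker requirement on the matrices. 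Second, even if it were complete blocks, the paper nowhere claims, and nowhere uses, that the $k$ of Proposition~\ref{p6} stays within bounded distance of $\ell$, and there is no obvious reason why it should. What the paper actually does in the ``General Hölder estimates'' subsection of Appendix~\ref{app:proofs} is keep the data-dependent index $k=k(J)$ attached to each interval $J\in\mathcal P(\ell)$ of the space decomposition, define $k_{\min}$ as the minimum of $k(J)$ over the decomposition of $(x_-,x_+)$, and then split the sum over levels into $m\le k_{\min}$ and $m> k_{\min}$, controlling both pieces with Propositions~\ref{propo3} and~\ref{propo4}. The lower bound $|x_+-x_-|\geq C_\tau^{-1}|I^{(0)}|\,\Vert B(0,n_{k_{\min}})\Vert^{-\tau-1}$ is then matched against $|\psi(x_+)-\psi(x_-)|\leq C''\Vert\varphi\Vert_{C^r}\Vert B(0,n_{k_{\min}})\Vert^{-\delta_2/2}$. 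If you were to carry out your version literally, the $k-\ell$ boundedness step would not go through; replace it by the $k_{\min}$ bookkeeping and the telescoping split, and the rest of your proposal (absorbing the loss into the arbitrary $\tau$) is the right mechanism and matches the paper.
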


The derivation of Theorem 3.22 from Theorem 3.10 (the case ${\Green \kappa} = 0$)   is easy and done in Section 3.2 of \cite{MMY2} (see p.1602-1603). Following the same arguments, Theorem~\ref{thmHolder2} follows from Theorem~\ref{thmHolder1}. We hence will now focus on the adaptation of the {\Green proofs} of Theorems 3.10 and 3.11, needed to prove Theorems~\ref{thmHolder1} and \ref{thmcohom2}.

\subsection{Growth of $B(0,n_\ell)$}
 
At many places in the proof of Theorems 3.10 and 3.11, one uses the following estimate, which we now prove under condition $\mathrm{(a')}$ of absolute Roth type  instead than condition $\mathrm{(a)}$ of Roth type. 

\begin{proposition}\label{propo3}
Assume that condition $\mathrm{(a')}$ of the definition of absolute Roth type is satisfied. Then, for any $\delta >0$, there exists a constant $C_\delta$ such that, for all $k \geq 0$, we have
$$
\sum_{\ell \geq k} \Vert B(0,n_\ell)\Vert^{-\delta} \leq C_\delta \Vert B(0,n_k)\Vert^{-\delta}.
$$
\end{proposition}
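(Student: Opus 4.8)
The statement is an elementary gap-filling / geometric-series estimate: condition $\mathrm{(a')}$ controls the multiplicative jumps $\Vert B(n_{\ell-1},n_\ell)\Vert$ in terms of $\Vert B(0,n_{\ell-1})\Vert^{\tau}$ for arbitrarily small $\tau$, and from this one wants to conclude that the sequence $\Vert B(0,n_\ell)\Vert$ grows geometrically along the accelerated times $n_\ell$, which then makes the tail sum $\sum_{\ell\ge k}\Vert B(0,n_\ell)\Vert^{-\delta}$ comparable to its first term. The first thing I would do is record the key consequence of $\mathrm{(a')}$: fixing $\tau$ small (to be chosen at the end in terms of $\delta$ and $d$), there is $C_\tau$ with $\Vert B(n_{\ell-1},n_\ell)\Vert\le C_\tau\Vert B(0,n_{\ell-1})\Vert^{\tau}$, and by the cocycle relation $B(0,n_\ell)=B(n_{\ell-1},n_\ell)B(0,n_{\ell-1})$ together with submultiplicativity of the norm, $\Vert B(0,n_\ell)\Vert\le C_\tau\Vert B(0,n_{\ell-1})\Vert^{1+\tau}$. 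This gives only an upper bound on the growth; I also need a genuine lower bound showing the norms do not stagnate. For that I would invoke the completeness of the accelerated path: each $\gamma(n_{\ell-1},n_\ell)$ is complete, so every letter of $\A$ is a winner at least once, which forces at least one new increment and hence $\Vert B(n_{\ell-1},n_\ell)\Vert\ge 2$ (the matrix is not the identity and all entries of a product of $\ell$ elementary matrices are bounded below), in fact iterating over a fixed number $d$ of consecutive blocks one gets all entries of $B(n_{\ell-d},n_\ell)$ positive, hence $\Vert B(0,n_\ell)\Vert\ge 2\Vert B(0,n_{\ell-d})\Vert$ or similar. More simply: along the Rauzy--Veech algorithm $\Vert B(0,n)\Vert$ is non-decreasing and strictly increases infinitely often, and completeness of each block guarantees a definite multiplicative increase over every fixed number of blocks, say $\Vert B(0,n_{\ell})\Vert\ge 2\,\Vert B(0,n_{\ell-d+1})\Vert$ for all $\ell$.

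The core estimate then runs as follows. Set $a_\ell:=\Vert B(0,n_\ell)\Vert$. The lower bound gives, for some fixed integer $p$ (one can take $p=d$ using the completeness lemma quoted from \cite{MMY1} $\S$1.2.2), that $a_{\ell+p}\ge 2 a_\ell$ for all $\ell\ge 0$; equivalently $a_\ell\ge 2^{\lfloor (\ell-k)/p\rfloor}a_k$ for $\ell\ge k$. Hence
\begin{align*}
\sum_{\ell\ge k} a_\ell^{-\delta}\le \sum_{\ell\ge k} 2^{-\delta\lfloor(\ell-k)/p\rfloor}a_k^{-\delta}= a_k^{-\delta}\sum_{m\ge 0}\#\{\ell:\lfloor(\ell-k)/p\rfloor=m\}\,2^{-\delta m}\le a_k^{-\delta}\, p\sum_{m\ge 0}2^{-\delta m},
\end{align*}
and the last sum is $C_\delta':=p/(1-2^{-\delta})<\infty$, giving the claim with $C_\delta=C_\delta'$. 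This argument in fact does not even need $\mathrm{(a')}$ — only the lower bound coming from completeness. That is worth pausing over: the Proposition as stated uses $\mathrm{(a')}$, but the geometric growth of $\Vert B(0,n_\ell)\Vert$ along the complete blocks is automatic; I would therefore present the proof via the completeness lemma and remark that $\mathrm{(a')}$ is not logically required here, or, to stay faithful to the paper's phrasing, note that $\mathrm{(a')}$ is invoked only to pin down the constant $p$ uniformly (the number of complete blocks needed for positivity depends only on $\R$, namely $p=2d-3$, or $p=2$ when $d=2$).

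The main obstacle — such as it is — is making the lower bound on $a_{\ell+p}/a_\ell$ precise: one must argue that over $p=2d-3$ accelerated blocks the matrix $B(n_\ell,n_{\ell+p})$ has all entries strictly positive (hence $\ge 1$), so $\Vert B(0,n_{\ell+p})\Vert=\Vert B(n_\ell,n_{\ell+p})B(0,n_\ell)\Vert\ge$ (sum over a row of $B(0,n_\ell)$ weighted by entries $\ge 1$, summed over all columns) $\ge$ a fixed multiple $>1$ of $\Vert B(0,n_\ell)\Vert$; concretely, positivity of $B(n_\ell,n_{\ell+p})$ and the fact that $\Vert B(0,n_\ell)\Vert=\max_\alpha\sum_\beta B(0,n_\ell)_{\alpha\beta}$ with at least two nonzero terms in the winning row force $\Vert B(0,n_{\ell+p})\Vert\ge 2\Vert B(0,n_\ell)\Vert$ after possibly one more block. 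I would cite the Lemma in $\S$1.2.2, p.~833 of \cite{MMY1} for the positivity statement (already quoted elsewhere in this excerpt) and then the geometric-series bookkeeping above finishes the proof.
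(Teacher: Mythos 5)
There is a genuine gap, coming from a misidentification of the sequence $(n_\ell)$ in Proposition~\ref{propo3}. This is the sequence introduced in $\S$\ref{ssabs} for condition $(\mathrm{a}')$: $n_\ell$ is the smallest $n>n_{\ell-1}$ such that $\overline{B(n_{\ell-1},n)(\C(\pi^{(n_{\ell-1})}))}\subset\C(\pi^{(n)})\cup\{0\}$, where $\C(\pi)=\C\cap H(\pi)$ is the positive cone intersected with the $2g$-dimensional absolute-homology subspace. It is \emph{not} the sequence $(\hat n_k)$ of complete blocks from condition $(\mathrm{a})$, nor the sequence $(\wt n_k)$ of Remark~\ref{remRoth0} along which $B(\wt n_{k-1},\wt n_k)$ has all entries positive. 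When $s>1$, $\C(\pi)$ is a proper slice of the full positive cone, and a path can satisfy the cone inclusion long before it is complete --- this is the whole point of the weakening $(\mathrm{a}')$. Hence your key premise ``each $\gamma(n_{\ell-1},n_\ell)$ is complete'' is false, $B(n_\ell,n_{\ell+2d-3})$ need not have all entries positive, and the uniform geometric growth $\Vert B(0,n_{\ell+p})\Vert\ge 2\Vert B(0,n_\ell)\Vert$ for a fixed $p$ is not available.

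Your remark that the argument ``does not even need $(\mathrm{a}')$'' should have been a warning: $(\mathrm{a}')$ is exactly what compensates for the failure of uniform doubling along this accelerated sequence. The paper's proof fixes $v\in\C(\pi^{(n_0)})$, sets $v(\ell)=B(0,n_\ell)v$, and invokes Lemma~\ref{l1} --- whose hypothesis is precisely the cone inclusion built into the definition of $(n_\ell)$ --- to bound the eccentricity of $v(\ell)$ by $C\Vert B(n_{\ell-1},n_\ell)\Vert$. This gives the one-step increment $\vert v(\ell+1)\vert_1\ge(1+c\Vert B(n_{\ell-1},n_\ell)\Vert^{-1})\vert v(\ell)\vert_1$, which may tend to $1$; condition $(\mathrm{a}')$ caps $\Vert B(n_{\ell-1},n_\ell)\Vert$ by $C_\tau\Vert B(0,n_{\ell-1})\Vert^\tau$, so the number of steps needed to double from index $\ell$ is at most $C_\tau\Vert B(0,n_\ell)\Vert^\tau$. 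This grows with $\ell$, but slowly enough that the geometric-series bookkeeping along the dyadic sequence of doubling times (with $\tau$ taken small relative to $\delta$) still closes. The shape of your final summation is correct; what is missing is that the gaps between successive doublings are only polynomially bounded, not uniformly bounded, and that polynomial bound is exactly where $(\mathrm{a}')$ together with Lemma~\ref{l1} enters.
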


\begin{proof} {\Green The desired estimate will be a consequence of the exponential growth of $\Vert B(0,n_\ell)\Vert$,
implied by the absolute Roth type condition, which will guarantee that the whole sum is controlled by its first term.} 
Let $v$ be a fixed vector in $\C(\pi(n_0))$. The sequence $\vert B(0,n_\ell) v\vert_1$ (where $\vert \ \vert_1$ is the $\ell_1$-norm on $\RA$) is non decreasing and we will show that it grows fast enough to obtain the conclusion of the proposition. Let $v(\ell):= 
B(0,n_\ell) v$. By Lemma \ref{l1}, one has 
$$
\max_\alpha v(\ell)_\alpha \leq C \Vert B(n_{\ell -1},n_\ell) \Vert \min_\alpha v(\ell)_\alpha .
$$

We have therefore

$$
\vert v(\ell+1) \vert_1 \geq \vert B(n_\ell, n_\ell +1)v(\ell) \vert_1 \geq (1 + c \Vert B(n_{\ell -1},n_\ell) \Vert^{-1}) \vert v(\ell) \vert_1
$$

It follows that the smallest $\ell'$ such that $\vert v(\ell') \vert_1 \geq 2 \vert v(\ell)  \vert_1$ satisfies, for any $\tau >0$ (and an appropriate constant $C_\tau$)
$$ \ell' \leq \ell + C_\tau \Vert B(0,n_\ell) \Vert^\tau.$$

{\Green Since for any fixed vector  $v\in\C(\pi(n_0))$
the sequence $\vert v (\ell)\vert_1$ doubles in at most $C_\tau \Vert B(0,n_\ell) \Vert^\tau$ steps, the same holds for the 
norm $\Vert B(0,n_\ell) \Vert$. Let $\ell_0=n_k$ and define recursively $\ell_{j+1}$ equal to the the smallest 
$\ell'$ such that $\Vert B(0,n_{\ell'})\Vert\geq 2\Vert B(0,n_{\ell_j})\Vert$. The bound on $\ell' - \ell$ obtained above gives
$$
\sum_{\ell \geq k} \Vert B(0,n_\ell)\Vert^{-\delta} \leq \Vert B(0,n_k)\Vert^{-\delta} \sum_{j\ge 0} 
C_\tau \Vert B(0,n_{\ell_j)} \Vert^\tau 2^{-\delta}
\Vert B(0,n_{\ell_j})\Vert^{-\delta}.
$$
}
Taking $\tau = \frac 12 \delta$ gives the required inequality.
\end{proof}

Along the same lines, one also have (with a very similar proof):
\begin{proposition}\label{propo4}
Assume that condition (a') is satisfied. Then, for any $\delta >0$, there exists a constant $C_\delta$ such that, for all $k \geq 0$, we have
$$
\sum_{0 \leq \ell \leq k} \Vert B(0,n_\ell)\Vert^{\delta} \leq C_\delta \Vert B(0,n_k)\Vert^{\delta}.
$$
\end{proposition}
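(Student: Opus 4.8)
The statement to prove is Proposition~\ref{propo4}, a near-twin of Proposition~\ref{propo3}: under condition $\mathrm{(a')}$ of absolute Roth type, for every $\delta>0$ there is $C_\delta$ with $\sum_{0\leq \ell \leq k}\Vert B(0,n_\ell)\Vert^{\delta}\leq C_\delta \Vert B(0,n_k)\Vert^{\delta}$. The mechanism is exactly the one exploited in the proof of Proposition~\ref{propo3}: the norms $\Vert B(0,n_\ell)\Vert$ grow at least geometrically \emph{along a subsequence whose gaps are subpolynomially small}, so the partial sum is dominated by its \emph{last} term rather than its first. First I would reuse the growth lemma established inside the proof of Proposition~\ref{propo3}: fixing $v\in\C(\pi^{(n_0)})$ and writing $v(\ell):=B(0,n_\ell)v$, Lemma~\ref{l1} gives $\max_\alpha v(\ell)_\alpha\leq C\Vert B(n_{\ell-1},n_\ell)\Vert\min_\alpha v(\ell)_\alpha$, whence
$$
\vert v(\ell+1)\vert_1 \geq \bigl(1+c\,\Vert B(n_{\ell-1},n_\ell)\Vert^{-1}\bigr)\vert v(\ell)\vert_1 ,
$$
so that, using condition $\mathrm{(a')}$ to bound $\Vert B(n_{\ell-1},n_\ell)\Vert$ by $C_\tau\Vert B(0,n_{\ell-1})\Vert^{\tau}$, the first index $\ell'$ with $\vert v(\ell')\vert_1\geq 2\vert v(\ell)\vert_1$ satisfies $\ell'\leq \ell+C_\tau\Vert B(0,n_\ell)\Vert^{\tau}$. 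Equivalently, by Proposition~\ref{p4} (comparability of $\Vert B\Vert$ and $\Vert B_{\vert H(\pi)}\Vert$) and since $\vert v(\ell)\vert_1$ is comparable to $\Vert B(0,n_\ell)_{\vert H(\pi^{(n_0)})}\Vert$ up to constants depending on $v$, the quantity $\Vert B(0,n_\ell)\Vert$ itself at most doubles over any block of length $C_\tau\Vert B(0,n_\ell)\Vert^{\tau}$.

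Next I would run the geometric-block argument \emph{downward from $k$} instead of upward. Set $m_0:=n_k$ and define recursively $m_{j+1}$ to be the largest index $m< m_j$ (among the accelerated times $n_\ell$) such that $\Vert B(0,m)\Vert \leq \tfrac12 \Vert B(0,m_j)\Vert$; if no such index exists one simply stops, since then all remaining terms are within a factor $2$ of each other and there are at most $O(1)$ of them up to the bound just proved applied at the smallest scale (the number of indices $\ell$ with $\Vert B(0,n_\ell)\Vert$ in a fixed dyadic range is at most $C_\tau$ times a power of that range). The key point is that by construction $\Vert B(0,m_{j+1})\Vert \leq \tfrac12\Vert B(0,m_j)\Vert$, hence $\Vert B(0,m_j)\Vert \leq 2^{-j}\Vert B(0,n_k)\Vert$; and by the growth lemma above, the number of accelerated indices $\ell$ lying in the block $(m_{j+1},m_j]$ is at most $C_\tau \Vert B(0,m_j)\Vert^{\tau}$. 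Therefore
$$
\sum_{0\leq\ell\leq k}\Vert B(0,n_\ell)\Vert^{\delta}
\ \leq\ \sum_{j\geq 0}\ \bigl(\#\{\ell:\ m_{j+1}<n_\ell\leq m_j\}\bigr)\,\Vert B(0,m_j)\Vert^{\delta}
\ \leq\ \sum_{j\geq 0} C_\tau\,\Vert B(0,m_j)\Vert^{\tau+\delta}
\ \leq\ C_\tau\sum_{j\geq 0} 2^{-j(\tau+\delta)}\,\Vert B(0,n_k)\Vert^{\tau+\delta}.
$$
Choosing $\tau=\tfrac12\delta$ (to match the convention in Proposition~\ref{propo3}) the geometric series converges and one obtains $\sum_{0\leq\ell\leq k}\Vert B(0,n_\ell)\Vert^{\delta}\leq C_\delta\Vert B(0,n_k)\Vert^{3\delta/2}$; renaming $\delta$ (replacing $\delta$ by $2\delta/3$ throughout) yields the stated inequality with exponent exactly $\delta$. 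One should also note at the outset that $\Vert B(0,n_\ell)\Vert\to\infty$, which follows because $\mathrm{(a')}$ forces $\Vert B(0,n_\ell)\Vert$ to be unbounded (each step $B(n_{\ell-1},n_\ell)$ is a product of at least one elementary matrix and the accelerated blocks are complete), guaranteeing the recursion defining the $m_j$ terminates in finitely many steps.

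\textbf{Main obstacle.} The only genuinely delicate point is making precise the claim that ``the number of accelerated indices in a dyadic block of $\Vert B(0,\cdot)\Vert$ is subpolynomially bounded.'' This is really a restatement of the growth lemma from the proof of Proposition~\ref{propo3}, but one must be careful that the constants there depend on the chosen positive vector $v\in\C(\pi^{(n_0)})$ and on $\mathcal R$ only (via Proposition~\ref{p4} and Lemma~\ref{l1}), not on $\ell$ or $k$ — which is indeed the case, since the finitely many cones $\C(\pi)$, $\pi\in\mathcal R$, and the comparability constant of Proposition~\ref{p4} are uniform. Once that uniformity is invoked, the rest is the elementary summation above; I do not expect any further difficulty, and in fact the proof is word-for-word the ``downward'' mirror of Proposition~\ref{propo3}, which is presumably why the excerpt says it has ``a very similar proof.''
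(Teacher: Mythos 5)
Your proof takes exactly the route the paper intends: Proposition~\ref{propo4} is the downward mirror of Proposition~\ref{propo3}, and you reproduce the doubling-lemma mechanism (via Lemma~\ref{l1} and condition~(a')) and then run the recursion backward from $k$, halving the norm at each step, bounding each block's cardinality by the doubling estimate, and summing the resulting geometric series. The uniformity of the constants across $\pi\in\mathcal R$ (via Proposition~\ref{p4} and the finiteness of the Rauzy class) is correctly flagged, and the termination of the backward recursion at the bottom scale is handled appropriately. This is precisely the ``very similar proof'' the paper alludes to.

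One step deserves a caveat: the closing ``renaming'' argument. You correctly derive $\sum_{0\le\ell\le k}\Vert B(0,n_\ell)\Vert^\delta\le C_\tau\Vert B(0,n_k)\Vert^{\delta+\tau}$ for arbitrary $\tau>0$, and then propose to replace $\delta$ by $\tfrac{2\delta}{3}$ to reach exponent exactly $\delta$ on the right. But that substitution changes the exponent on \emph{both} sides, producing $\sum_\ell\Vert B(0,n_\ell)\Vert^{2\delta/3}\le C\Vert B(0,n_k)\Vert^{\delta}$ — a different inequality from the one in Proposition~\ref{propo4}. Since $\tau>0$ cannot be sent to $0$ without $C_\tau$ blowing up, the block argument as given produces right-hand exponent $\delta+\tau$ strictly larger than $\delta$. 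This is, however, exactly the same exponent loss that appears in the paper's own proof of Proposition~\ref{propo3} (there the block decomposition yields $C_\tau\Vert B(0,n_k)\Vert^{\tau-\delta}$ rather than $\Vert B(0,n_k)\Vert^{-\delta}$), and it is harmless: in Appendix~\ref{app:proofs} these propositions are always invoked together with a further arbitrary-$\tau$ slack, so only a statement of the form ``for every $\tau>0$ there is $C_\tau$ with $\sum\le C_\tau\Vert B(0,n_k)\Vert^{\delta+\tau}$'' is actually needed. So your renaming step does not literally close the gap, but the substance of your proof mirrors the paper's approach faithfully, and the residual imprecision is shared with (and inherited from) Proposition~\ref{propo3}.
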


\subsection{Adaptation of the proof of Theorems 3.10 and 3.11 in \cite{MY}}
We here follow Section 3.5 of \cite{MY}.  
As mentioned above (Corollary \ref{c1}), the bound from above for $\vert I_\alpha^{(n)} \vert$ is still valid under condition (a'). The bound from below in Lemma 3.14 of \cite{MY} is not used in the proof of theorem 3.11.

\smallskip

The sequence $(n_k)$ which appears in the proof of theorem 3.11 (and was defined at the end of Section 2.5 of \cite{MY}) is the sequence that we have called  $(\wt n_k)$ in Section \ref{ssabs} of the present text. We have to replace it by the sequence $(n_k)$ of condition (a'). The only property satisfied by $(\wt n_k)$ under condition (a) which is not satisfied by $(n_k)$ under condition (a') is the lower bound for $\vert I_\alpha^{(n)}\vert$ in Lemma 3.14.

\subsubsection{Adaptation of the proof of Lemma 3.15 in \cite{MY}}

In \cite{MY}, we refer for the proof  to an annex in \cite{MMY2}. The proof in this paper depends on a claim (Proposition \ref{p5} below) which is still true, but the proof  has to be modified.

\medskip

\begin{proposition}\label{p5}
Assume that the rotation number $\ug$ satisfies condition  (a').
For every $\tau >0$, there exists $C(\tau)>0$ such that, for any $0 \leq \ell \leq k$, one has 
$$ \Vert B(n_0,n_k) \Vert \leq \Vert B(n_0,n_{\ell}) \Vert \Vert B(n_{\ell},n_k) \Vert \leq C(\tau) 
\Vert B(n_0,n_k) \Vert^{1+\tau}
$$
\end{proposition}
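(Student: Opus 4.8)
The plan is to prove Proposition \ref{p5} by reducing it, as in the case of condition (a), to the matrix growth statement along the accelerated subsequence $(n_k)$, combined with the cocycle relation and the submultiplicativity of the norm. The left inequality $\Vert B(n_0,n_k)\Vert \leq \Vert B(n_0,n_\ell)\Vert \Vert B(n_\ell,n_k)\Vert$ is immediate from the cocycle relation \eqref{matrix_cocycle_eq} and submultiplicativity of the operator norm, so the entire content is the right inequality.

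First I would observe that, since $n\mapsto \Vert B(n_0,n)\Vert$ is non-decreasing, we have $\Vert B(n_0,n_\ell)\Vert \leq \Vert B(n_0,n_k)\Vert$, so it suffices to bound $\Vert B(n_\ell,n_k)\Vert$ by $C(\tau)\Vert B(n_0,n_k)\Vert^{\tau}$. To this end I would telescope $B(n_\ell,n_k)$ along the accelerated times as $B(n_\ell,n_k)=B(n_{k-1},n_k)B(n_{k-2},n_{k-1})\cdots B(n_\ell,n_{\ell+1})$ and apply submultiplicativity once more to get $\Vert B(n_\ell,n_k)\Vert \leq \prod_{j=\ell+1}^{k}\Vert B(n_{j-1},n_j)\Vert$. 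Condition (a)' gives, for any $\tau'>0$, a constant $C_{\tau'}$ with $\Vert B(n_{j-1},n_j)\Vert \leq C_{\tau'}\Vert B(n_0,n_{j-1})\Vert^{\tau'}\leq C_{\tau'}\Vert B(n_0,n_k)\Vert^{\tau'}$ for each $j\leq k$, so the product is bounded by $(C_{\tau'})^{k-\ell}\Vert B(n_0,n_k)\Vert^{(k-\ell)\tau'}$. This is not yet good enough because the number of factors $k-\ell$ is unbounded; the main obstacle is precisely to absorb this combinatorial factor into a small power of $\Vert B(n_0,n_k)\Vert$.

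The key to overcoming this is the exponential growth of $\Vert B(n_0,n_k)\Vert$ in $k$, exactly as exploited in the proof of Proposition \ref{propo3}: fixing a vector $v\in \C(\pi^{(n_0)})$ and using Lemma \ref{l1} together with condition (a)', one shows that $\vert B(n_0,n_{\ell+1})v\vert_1 \geq (1+c\Vert B(n_{\ell-1},n_\ell)\Vert^{-1})\vert B(n_0,n_\ell)v\vert_1$, hence $\Vert B(n_0,n_k)\Vert$ at least doubles over any block of accelerated steps whose length exceeds $C_{\tau''}\Vert B(n_0,n_k)\Vert^{\tau''}$. Consequently $k-\ell \leq C_{\tau''}\Vert B(n_0,n_k)\Vert^{\tau''}\log_2(\Vert B(n_0,n_k)\Vert/\Vert B(n_0,n_\ell)\Vert) \leq C'_{\tau''}\Vert B(n_0,n_k)\Vert^{2\tau''}$, so $k-\ell$ grows at most polynomially (with arbitrarily small exponent) in $\Vert B(n_0,n_k)\Vert$. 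Plugging this back, $(C_{\tau'})^{k-\ell}\leq \exp(C'_{\tau''}\Vert B(n_0,n_k)\Vert^{2\tau''}\log C_{\tau'})$; since any positive power of $\Vert B(n_0,n_k)\Vert$ eventually dominates such a subexponential factor, and since $(k-\ell)\tau'$ can be made $\leq \tau/2$ by first choosing $\tau'$ small relative to $\tau$ and $\tau''$, we conclude $\Vert B(n_\ell,n_k)\Vert \leq C(\tau)\Vert B(n_0,n_k)\Vert^{\tau}$, which gives the proposition. I would present the doubling argument by citing the proof of Proposition \ref{propo3} rather than repeating it, keeping the write-up short.
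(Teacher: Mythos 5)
The left inequality is trivial; the problem is the telescoping approach to the right one. The prefactor $(C_{\tau'})^{k-\ell}$, combined with your doubling bound $k-\ell \leq C'_{\tau''}\Vert B(n_0,n_k)\Vert^{2\tau''}$, becomes $\exp\bigl(C'_{\tau''}(\log C_{\tau'})\Vert B(n_0,n_k)\Vert^{2\tau''}\bigr)$, which is a \emph{stretched exponential} in $\Vert B(n_0,n_k)\Vert$. This \emph{dominates} (rather than being dominated by) any power $\Vert B(n_0,n_k)\Vert^{\delta}=\exp(\delta\log\Vert B(n_0,n_k)\Vert)$, so the absorption step fails. The same problem hits the exponent: $\tau'$ must be fixed before $k,\ell$ are given, so $(k-\ell)\tau'$ is unbounded and $\Vert B(n_0,n_k)\Vert^{(k-\ell)\tau'}$ is not $O\bigl(\Vert B(n_0,n_k)\Vert^{\tau}\bigr)$; there is no uniform choice making $(k-\ell)\tau'\leq \tau/2$. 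Structurally, the issue is that submultiplicativity loses a great deal over long products of positive matrices (e.g.\ $\mathrm{diag}(N,1)\cdot\mathrm{diag}(1,N)$ has norm $N$, not $N^2$), and a telescope accumulates this loss over all $k-\ell$ factors; no after-the-fact patching can recover it.

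The paper's proof avoids the telescope entirely and uses the cone-contraction of Lemma~\ref{l1} exactly once. Fix $v\in\C(\pi^{(n_0)})$ and set $w:=B(n_0,n_\ell)v$, so $B(n_\ell,n_k)w=B(n_0,n_k)v$. From Proposition~\ref{p4} one gets
$$\frac{\Vert B(n_0,n_\ell)\Vert\,\Vert B(n_\ell,n_k)\Vert}{\Vert B(n_0,n_k)\Vert}\leq C(\pi^{(n_0)})\,\frac{\Vert B(n_\ell,n_k)\Vert\,\Vert w\Vert}{\Vert B(n_\ell,n_k)w\Vert}.$$
Lemma~\ref{l1} says $w$ is nearly balanced, $\max_\alpha w_\alpha\leq C\Vert B(n_{\ell-1},n_\ell)\Vert\min_\alpha w_\alpha$, and for a nonnegative matrix $B$ and positive vector $w$ one has $\Vert Bw\Vert\geq\Vert B\Vert\min_\alpha w_\alpha$, so the ratio is $\leq C'\Vert B(n_{\ell-1},n_\ell)\Vert$. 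A \emph{single} use of~(a') then gives $\Vert B(n_{\ell-1},n_\ell)\Vert\leq C_\tau\Vert B(n_0,n_{\ell-1})\Vert^{\tau}\leq C_\tau\Vert B(n_0,n_k)\Vert^{\tau}$, which proves the proposition. The positivity/cone structure must enter once at level $n_\ell$, not factor-by-factor.
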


\begin{proof}
The first inequality is trivial. To prove the second, choose $v \in \C(\pi^{(n_0)})$, $v' \in \C(\pi^{(n_\ell)})$, and define $w:= B(n_0, n_\ell)\cdot v$. One has 

\begin{eqnarray*} 
\Vert B(n_0,n_k) \Vert &\geq& \Vert v \Vert^{-1} \Vert B(n_0,n_k) v \Vert = \Vert v \Vert^{-1} \Vert B(n_\ell,n_k) w \Vert , \\
\Vert B(n_0,n_\ell) \Vert & \leq&  C(\pi(n_0))  \Vert v \Vert^{-1} \Vert B(n_0,n_\ell) v \Vert =  C(\pi(n_0))  \Vert v \Vert^{-1} \Vert w \Vert, 
\end{eqnarray*}
and therefore
$$
\frac { \Vert B(n_0,n_\ell) \Vert  \Vert B(n_\ell,n_k) \Vert}{\Vert B(n_0,n_k) \Vert }
\leq  C(\pi^{(n_0)}) \frac {\Vert B(n_\ell,n_k) \Vert \Vert w \Vert }{ \Vert B(n_\ell,n_k) w \Vert}.
$$
\smallskip

From Lemma \ref{l1}, one has  
$$ \max_\alpha w_\alpha \leq C \Vert B(n_{\ell -1},n_\ell) \Vert \min_\alpha w_\alpha.$$
and therefore 
$$ \frac {\Vert B(n_\ell,n_k) \Vert \Vert w \Vert }{ \Vert B(n_\ell,n_k) w \Vert}  \leq C'  \Vert B(n_{\ell -1},n_\ell) \Vert  \leq C_{\tau} \Vert B(n_0,n_k) \Vert^{\tau},$$
giving the required estimate.

\end{proof}


 The proof of Lemma 3.16 in \cite{MY} is completely similar to the proof of Theorem 3.11 of \cite{MY} and the same remarks for its generalization apply.

\subsubsection{Space and Time decompositions}
Following sections 3.7 and 3.8 in \cite{MY},  
one has just to remember that the sequence $n_k$ in the present setting is not the same than in \cite{MY}. Otherwise, the same formalism works and yields the same type of space and time decompositions. 

\medskip

Section 3.9 also does not need any modification (except, as always, for the change of definition of the $(n_k)$).

\subsubsection{General H\"older estimates}
We now follow Section 3.10 of \cite{MY}.  The bound from below in Lemma 3.14 is used (for the first and last time) at the end of this section.  However, Proposition \ref{p6} above is an adequate substitute. 

As in Section 3.10 in \cite{MY}, we write any interval $(x_-,x_+)$ as a countable union of intervals
$J$ which belong to some $\mathcal P(\ell)$, $\ell \geq \ell_{min}$ (We use $\ell_{min}$ instead of $k$ because we are going to use Proposition \ref{p6}).

Let $J \in \mathcal P(\ell)$ be one of these intervals. It is a translated copy of some 
$I_\alpha^{(n_\ell)}$. We apply Proposition \ref{p6}. It gives an integer $k$ such that
\begin{itemize}
\item $J$ is the union of at most $(s-1)$ intervals of $\mathcal P(k)$;
\item $\vert J \vert \geq C^{-1}_{\tau} \ \vert I^{(0)} \vert \ \Vert B(0,n_k) \Vert^{-\tau-1}.$
\end{itemize}

From Lemma 3.20 of \cite{MY} and the first item above, we have that 
$$(\star) \qquad  \vert \Delta(J) \vert \leq C \Vert B(0,n_k) \Vert^{-\delta_2} \Vert \varphi \Vert_{C^r}.$$
(where $\Delta(J)$ is the difference between the values of $\psi$ at the endpoints of $J$)

Let $k_{min}$ be the minimal value of $k$ when $J$ runs among the intervals of the space decomposition of $(x_-,x_+)$. We have then
$$ \vert x_+ -x_- \vert \geq C^{-1}_{\tau} \ \vert I^{(0)} \vert \ \Vert B(0,n_{k_{min}}) \Vert^{-\tau-1}$$
For $\ell_{min} \leq m $, there are at most $\Vert B(n_m,n_{m+1}) \Vert$
intervals in $\mathcal P(m)$ in the space decomposition of $(x_-,x_+)$. 
For  intervals $J$ with $m \leq k_{min}$, we use $(\star)$. For the other intervals  with $m > k_{min}$, we just use Lemma 3.20. We get

\begin{align*}
\vert \psi(x_+) - \psi(x_-) \vert & \leq C  \Vert \varphi \Vert_{C^r} \lbrace  \Vert B(0,n_{k_{min}}) \Vert^{-\delta_2} \left(\sum_{\ell_{min} \leq m < k_{min}} \Vert B(n_m,n_{m+1}) \Vert \right)\\
&+ \sum_{m \geq k_{min}} \Vert B(n_m,n_{m+1}) \Vert  \ \Vert B(0,n_m)) \Vert^{-\delta_2} \rbrace \\
& \leq  C'  \Vert \varphi \Vert_{C^r}  \sum_{m \geq k_{min}}   \ \Vert B(0,n_m)) \Vert^{-\delta_2/2} \\
& \leq C''  \Vert \varphi \Vert_{C^r}   \Vert B(0,n_{k_{min}}) \Vert^{-\delta_2/2}.
\end{align*}

We have used Propositions \ref{propo3} and \ref{propo4} (with $\delta = \delta_2/2$). The proof of the required inequality is complete. 

\smallskip
This concludes the modifications required to prove Theorem~\ref{thmcohom2} and Theorem~\ref{thmHolder1}, and hence also Theorem~\ref{thmHolder2}.


%
%

\end{document}